\author{Stefano Bianchini and Stefano Modena}
\title{On a quadratic functional for scalar conservation laws}
\date{\today}
\newcounter{assu}
\newcounter{clai}
\theoremstyle{definition} \newtheorem{definition}{Definition}[section]
\theoremstyle{definition} \newtheorem{remark}[definition]{Remark}
\theoremstyle{plain} \newtheorem{lemma}[definition]{Lemma}
\theoremstyle{plain} \newtheorem{proposition}[definition]{Proposition}
\theoremstyle{plain} \newtheorem{theorem}[definition]{Theorem}
\theoremstyle{plain} \newtheorem{corollary}[definition]{Corollary}
\theoremstyle{definition} \newtheorem{example}[definition]{Example}
\theoremstyle{plain} \newtheorem{claim}[clai]{Claim}
\newtheorem*{theorem1}{Theorem \ref{main_thm}}
\newtheorem*{theorem2}{Theorem \ref{W_main_thm}}
\newtheorem{theorem01}[assu]{Theorem}
\newtheorem{theorem02}[assu]{Theorem}
\DeclareMathOperator{\conv}{conv}
\DeclareMathOperator{\conc}{conc}
\DeclareMathOperator{\card}{card}
\DeclareMathOperator{\sign}{sign}
\newcommand{\R}{\mathbb{R}}
\newcommand{\N}{\mathbb{N}}
\newcommand{\Z}{\mathbb{Z}}
\newcommand{\TV}{\text{\rm Tot.Var.}}
\newcommand{\I}{\textbf{I}}
\newcommand{\W}{\mathcal{W}}
\newcommand{\C}{\mathcal{C}}
\newcommand{\e}{\varepsilon}
\newcommand{\const}{\mathcal{O}(1)}
\newcommand{\fQ}{\mathfrak{Q}}
\numberwithin{equation}{section} 
\begin{document}

\begin{abstract}
We prove a quadratic interaction estimate for approximate solutions to scalar conservation laws obtained by the wavefront tracking approximation or the Glimm scheme. This quadratic estimate has been used in the literature to prove the convergence rate of the Glimm scheme.

The proof is based on the introduction of a quadratic functional $\mathfrak Q(t)$, decreasing at every interaction, and such that its total variation in time is bounded. 

Differently from other interaction potentials present in the literature, the form of this functional is the natural extension of the original Glimm functional, and  coincides with it in the genuinely nonlinear case.
\end{abstract}

\thanks{The authors wish to thank prof. Fabio Ancona for many helpful discussions}

\maketitle

\tableofcontents

\section{Introduction}
\label{S_introduction}

Consider the Cauchy problem for a scalar conservation law in one space variable
\begin{equation}
\label{cauchy}
\left\{
\begin{array}{lll}
u_t + f(u)_x & = & 0 \\
u(0,x) &       = & \bar{u}(x)
\end{array}
\right.
\end{equation}
where $\bar u \in BV(\R)$, $f: \R \to \R$ smooth (by \emph{smooth} we mean at least of class $C^2(\R,\R)$). It is well known \cite{kru_70} that there exists a unique entropic solution $u(t,\cdot)$ satisfying
\begin{equation}
\label{E_2_lyapunov}
\|u(t,\cdot)\|_{L^\infty} \leq \|\bar u\|_{L^\infty}, \qquad \TV(u(t,\cdot)) \leq \TV(\bar u).
\end{equation}
In particular we can assume w.l.o.g. that for each $u \in \R$, $0 < f'(u) < 1$ and $f''$ uniformly bounded.

The solution to \eqref{cauchy} can be constructed in several ways, for example nonlinear semigroup theory \cite{cra_72}, finite difference schemes \cite{smo_83}, and in particular wavefront tracking \cite{daf_72} and Glimm scheme \cite{gli_65}. In the scalar case, in fact, the above functionals \eqref{E_2_lyapunov} are Lyapunov functionals, i.e. functionals decreasing in time, (or \emph{potential}, as they are usually called in the literature) for all the schemes listed, yielding compactness of the approximated solution (even in the multidimensional case).

In \cite{bia_bre_02} it is shown that there exists an additional Lyapunov functional $Q^{\text{BB}}[u]$ to the ones of \eqref{E_2_lyapunov}. In the case of wavefront tracking solution (see the references above or the beginning of Section \ref{sect_wavefront} for a short presentation), outside the interaction-cancellation times $\{t_j\}_j$, this functional takes the simple form
\begin{equation}
\label{E_cubic_Q_def}
Q^{\text{BB}}(t) = Q^{\text{BB}}[u(t,\cdot)] = \sum_{w \not= w'} |\sigma(w) - \sigma(w')| |w| |w'|,
\end{equation}
where $w$, $w'$ are wavefronts of the approximate solution $u(t,\cdot)$ traveling with speed $\sigma(w)$, $\sigma(w')$ respectively. To avoid confusion/misunderstanding, we will call \emph{wavefronts} the shocks/rarefactions/contact discontinuities of the (approximate) solution, while the word \emph{wave} will be reserved to subpartitions of wavefronts. The precise definition is given at the beginning of Section \ref{Ss_main_results}. Here and in the future, we call the collision between two wavefronts $w$, $w'$ an \emph{interaction} if the wavefronts which collide have the same sign, namely $w \cdot w' > 0$, otherwise we call it a \emph{cancellation} (see Definition \ref{W_int_canc_points}). In \cite{bia_bre_02} a general form of $Q^{\text{BB}}$ valid at all $t$ is given. 

This functional is of fundamental importance when studying the existence of solutions to systems of conservation laws in one space variable, where the total variation of the (approximate) solution is not decreasing in time. In particular, when two wavefronts $w$, $w'$ interact in the point $(\bar t,\bar x)$, it is possible to show that $Q^{\text{BB}}$ decreases of at least
\[
Q^{\text{BB}}(\bar t-) - Q^{\text{BB}}(\bar t+) \leq |\sigma(w) - \sigma(w')| |w| |w'|,
\]
where the speeds are computed at $\bar t-$.

Since it is well known that
\begin{equation}
\label{E_Lip_speed_1}
|\sigma(w) - \sigma(w')| \leq \|f''\|_{L^\infty} \TV(u(t,\cdot)),
\end{equation}
the following estimate holds
\begin{equation}
\label{E_cubic_est}
Q^{\text{BB}}(u(t,\cdot)) \leq \|f''\|_{L^\infty} \TV(u(t, \cdot))^3.
\end{equation}
It is customary to say that $Q^{\text{BB}}$ is a \emph{cubic functional}, referring precisely to the exponent of \eqref{E_cubic_est}. It follows in particular that the \emph{total amount of interaction} is cubic, 
\[
\sum_{\text{interactions}} |\sigma(w) - \sigma(w')| |w| |w'| \leq \|f''\|_{L^\infty} \TV(\bar u)^3.
\]
(See \cite{bia_03} for the general definition.)

In order to prove a convergence rate estimate for the Glimm schemes, in \cite{anc_mar_11_CMP}, \cite{hua_jia_yan_10} it is shown that if $u_\e$ is the approximate solution constructed by the Glimm scheme and $u$ is the entropic solution to a systems of conservation laws in one space dimension, then
\[
\|u(t, \cdot) - u_\e(t, \cdot)\|_{L^1} \leq o(1) \sqrt{\e} |\log \e|,
\]
under the assumption that the following estimate holds:
\begin{equation}
\label{E_quadrati_est}
\sum_{\text{interactions}} \frac{|\sigma(w) - \sigma(w')| |w| |w'|}{|w| + |w'|} \leq \mathcal O(\|f''\|_{L^\infty}) \TV(\bar u)^2.
\end{equation}
(Here and in the following $\mathcal O(\|f''\|_{L^\infty})$ is a constant which can depend on the $L^\infty$ norm of $f''$, but not on the initial datum $\bar u$.)

\noindent The above estimates is written in the case that only two wavefronts at a time interact for simplicity, the general form is presented in the statements of Theorem \ref{main_thm} (for the Glimm scheme) and Theorem \ref{W_main_thm} (for the wavefront tracking algorithm).

The key idea to prove estimate \eqref{E_quadrati_est} is to introduce a suitable functional $Q = Q(t)$, depending on the time, which decreases in time and is of quadratic order with respect to the total variation of the solution, and then to use this functional to get estimate \eqref{E_quadrati_est}.

In the case of genuinely nonlinear or linearly degenerate systems, because of the particular structure of solutions to the Riemann problem, the interaction functional $Q^{\text{GL}}$ introduced by Glimm \cite{gli_65} gives the estimate
\[
\sum_{\text{interactions}} |w| |w'| \leq \mathcal O(\|f''\|_{L^\infty}) \TV(\bar u)^2,
\]
and by the Lipschitz regularity of $\sigma(w)$ (inequality \eqref{E_Lip_speed_1}) it is immediate to deduce \eqref{E_quadrati_est}. However the functional $Q^{\text{BB}}$ introduced above cannot produce a quadratic estimate, being cubic as observed earlier.

Since 2006 many attempts have been made in order to get a proof of estimate \eqref{E_quadrati_est} (in addition to \cite{anc_mar_11_CMP}, \cite{hua_jia_yan_10}, see also \cite{hua_yang_10}). However, the proofs presented in \cite{hua_jia_yan_10}, \cite{hua_yang_10}) have some problems, as it is shown by the counterexamples in \cite{anc_mar_11_DCDS}. 

On the other hand, we discover an incorrect estimate in the proof of \cite{anc_mar_11_CMP}, precisely in Lemma 2, pag. 614,  formulas (4.84), (4.85). An explicit counterexample is presented in Appendix \ref{App_conter}, here we just notice that the 2 cited formulas try to estimate terms which are linear in the total variation (e.g. the difference in speed across interactions) with the decrease of the cubic functional $Q^{\text{BB}}$ defined in \eqref{E_cubic_Q_def}.

\subsection{Main result}
\label{Ss_main_results}

The main result of this paper is a new and correct proof of the estimate \eqref{E_quadrati_est} for approximate solutions constructed by wavefront tracking or by the Glimm scheme, in the case of scalar conservation laws. Our aim is to simplify as much as possible the technicalities in order to single out the ideas behind our approach. In a forthcoming paper we will study the general vector case.

In order to state precisely the two main theorems of this paper (one referring to the Glimm scheme, the other one to the wavefront tracking), we need to introduce what we call an \emph{enumeration of waves} in the same spirit as the \emph{partition of waves} considered in \cite{anc_mar_11_CMP}, see also \cite{anc_mar_10}. Roughly speaking, we assign an index $s$ to each piece of wave, and construct two functions $\mathtt x(t,s)$, $\sigma(t,s)$ which give the position and the speed of the wave $s$ at time $t$, respectively.

More precisely, let $u_\e$ be the Glimm approximate solution, with grid points $(t_n,x_m) = (n\e,m\e)$: for definiteness we assume $u_\e$ to be right continuous in space. Consider the interval $\mathcal W = (0,\TV(u_\e(0, \cdot))]$, which will be called the \emph{set of waves}. In Section \ref{S_front_glimm} we construct for the Glimm scheme a function
\begin{equation}
\label{E_x_def_glimm}
\begin{array}{ccccc}
\mathtt x &:& [0,
+\infty) \times \mathcal W &\to& (-\infty,+\infty] \\
&& (t,s) &\mapsto& \mathtt x(t,s)
\end{array}
\end{equation}
with the following properties:
\begin{enumerate}
\item the set $\{t: \mathtt x(t,s) < +\infty\}$ is of the form $[0,T(s))$ with $T(s) \in (0,+\infty)$: define $\mathcal W(t)$ as the set
\[
\W(t) := \big\{ s \in W \ | \ \mathtt x(t,s) < +\infty \big\};
\]
\item the function $t \mapsto \mathtt x(t,s)$ is increasing, $1$-Lipschitz and linear in each interval $(t_n,t_{n+1}) = (n,n+1) \e$ if $t_n \in [0,T(s))$;
\item if $t_n \in [0,T(s))$, then $\mathtt x(t_n,s) = x_m = m\e$ for some $m \in \Z$, i.e. it takes values in the grid points at each time step;
\item for $s < s'$ such that $\mathtt x(t,s),\mathtt x(t,s') < +\infty$ it holds
\[
\mathtt x(t,s) \leq \mathtt x(t,s');
\]
\item there exists a time-independent function $\mathcal S(s) \in \{-1,1\}$, the \emph{sign} of the wave $s$, such that
\begin{equation}
\label{E_push_forw}
D_x u_\e(t_n, \cdot) = \mathtt x(t_n,\cdot)_\sharp \big( \mathcal S(s) \mathcal L^1 \llcorner_{\W(t)} \big)
\end{equation}
for all $t_n \in [0,T(s))$.
\end{enumerate}
The last formula means that for all test functions $\phi \in C(\R,\R)$ it holds
\[
- \int_\R u_\e(t_n,x) D_x \phi(x) dx = \int_{\W(t_n)} \phi(\mathtt x(t_n,s)) \mathcal S(s) ds.
\]
The fact that $\mathtt x(t,s) = +\infty$ means that the wave has been removed from the solution $u_\e$ by a cancellation occurring at time $T(s)$.

\noindent Formula \eqref{E_push_forw} and a fairly easy argument, based on the monotonicity properties of the Riemann solver and used in the proof of Lemma \ref{W_lemma_eow}, yield that to each wave $s$ it is associated a unique value $\hat u(s)$ (independent of $t$) by the formula
\[
\hat u(s) = \bar u(-\infty) + \int_{\W(t) \cap [0,s]} \mathcal S(s) ds.
\]
We finally define the \emph{speed function} $\sigma : [0,+\infty) \times \W \to [0,1] \cup \{+\infty\}$ as follows: if $t \in [t_n,t_{n+1})$, then
\begin{equation}
\label{E_Glimm_speed}
\sigma(t,s) := \left\{
\begin{array}{ll}
+\infty & \text{if } \mathtt x(t_n,s) = +\infty, \\
\Big(\frac{d}{du}\conv_{[u_\e(t_n,\mathtt x(t_n,s)-),u_\e(t_n,\mathtt x(t_n,s))]}f\Big)\big(\hat u(s)\big) & \text{if } \mathcal{S}(s) = +1, \\ 
\Big(\frac{d}{du}\conc_{[u_\e(t_n,\mathtt x(t_n,s)),u_\e(t_n,\mathtt x(t_n,s)-)]}f\Big)\big(\hat u(s)\big) & \text{if } \mathcal{S}(s) = -1. 
\end{array}
\right.
\end{equation}
In other words, to the wave $s \in \W(t)$ and for $t \in [t_n,t_{n+1}) = [n,n+1) \e$ we assign the speed given by the Riemann solver in $(t_n,x_m) = (t_n,\mathtt x(t_n,s))$ to the wavefront containing the value $\hat u(s)$.

\noindent We can now state our theorem for Glimm approximate solutions.

\begin{theorem01}
\label{main_thm}
The following estimate holds:
\begin{equation}
\label{E_G_est_fin}
\sum_{n=1}^{+\infty} \int_{\W(n\varepsilon)} \big| \sigma(n \varepsilon, s) - \sigma((n-1)\varepsilon, s) \big| \, ds \leq (3 + 2 \log(2)) \lVert f'' \lVert_{L^\infty} \TV(\bar u)^2.
\end{equation}
\end{theorem01}

In the case of wavefront tracking, since the waves $s$ have size $k \e$, with $\e$ the discretization parameter and $k \in \Z$, it is possible to choose $\W \subseteq \N$, and in Section \ref{Front_Waves} it is shown that the function $\mathtt x$ defined in \eqref{E_x_def_glimm} satisfies slightly different properties: Property (3) is meaningless, and Property (5) holds for all $t \in [0,T(s))$.

\noindent The speed $\sigma$ is now defined as
\[
\sigma(t,s) = \frac{d}{dt} \mathtt x(t,s)
\]
outside the interaction/cancellation points, and it is extended to $[0,+\infty)$ by right-continuity. Notice that outside interaction-cancellation times, the strength of the wavefront $w$ at $(t,x)$ is given by
\[
|w| = \e \, \sharp \big\{ s : \mathtt x(t,s) = x \big\},
\]
i.e. the strength of the wavefront $w$ is the sum of the strength of all waves $s$ which are mapped by $\mathtt x$ into $x$.

\noindent The main estimate for the wavefront tracking solution is contained in the following result.

\begin{theorem02}
\label{W_main_thm}
The following holds: if $\{t_j\}_j$ are the interaction-cancellation times, then
\begin{equation}
\label{E_W_est_fin}
\sum_j \sum_{s \in \W(t_j)} |\sigma(t_j, s) - \sigma(t_{j-1}, s)||s| \leq (3+2 \log(2)) \lVert f'' \lVert_{L^\infty} \TV(\bar u)^2,
\end{equation}
where $|s| := \e$ is \emph{the strength of the wave $s$}.
\end{theorem02}

As it is shown in the proof of Theorem \ref{W_decreasing}, formula \eqref{W_zzero}, the estimate \eqref{E_W_est_fin} yields immediately \eqref{E_quadrati_est} for wavefront tracking solution. The corresponding computation for Glimm scheme is given in the proof of Theorem \ref{decrease_thm_enunciato}, formula \eqref{zero}.

We observe here that for the interaction of two wavefronts $w$, $w'$, the quantity
\begin{equation}
\label{E_quadr_singl_inter}
h = \frac{|\sigma(w) - \sigma(w')| |w| |w'|}{|w| + |w'|}
\end{equation}
has a nice geometric interpretation (see Figure \ref{fig:figura36}), in the same spirit as the area interpretation of the cubic functional $Q^{\text{BB}}$ [see bianchini-bressan curve shortening]. In fact, if $w$, $w'$ correspond to the jumps $[u^L,u^M]$, $[u^M,u^R]$ with $u^L < u^M < u^R$, then it is fairly easy to see that \eqref{E_quadr_singl_inter} is equal to the height of the triangle $(u^L,f(u^L))$, $(u^M,f(u^M))$, $(u^R,f(u^R))$, more precisely
\[
h = \frac{|\sigma(w) - \sigma(w')| |w| |w'|}{|w| + |w'|} = f(u^M) - \frac{(u^M - u^L) f(u^R) + (u^R - u^M) f(u^L)}{u^R - u^L}.
\]

For the Glimm scheme, each interaction involves the several wavefronts (not just two), and it corresponds to replacing two adjacent Riemann problems, namely $[u^L,u^M]$ and $[u^M,u^R]$, with the Riemann problem $[u^L,u^R]$: the corresponding quantity is then given by
\[
h = f(u^M) - \conv_{[u^L,u^R]} f(u^M),
\]
assuming again $u^L < u^M < u^R$ for definiteness (see Figure \ref{fig:figura19}). In this way, one can rewrite \eqref{E_quadrati_est} also for the Glimm scheme.

\begin{figure}
  \begin{center}
    \includegraphics[height=5cm,width=8cm]{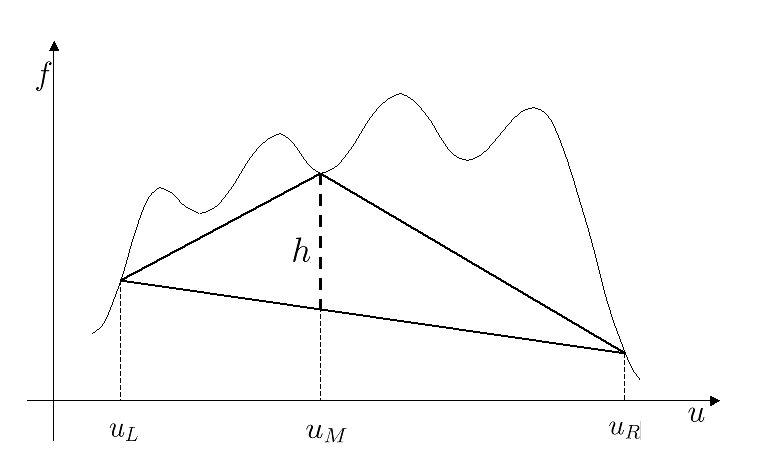}
    \caption{Geometric interpretation of $h$ in the wavefront tracking.}
    \label{fig:figura36}
    \end{center}
\end{figure}

\begin{figure}
  \begin{center}
    \includegraphics[height=7cm,width=10cm]{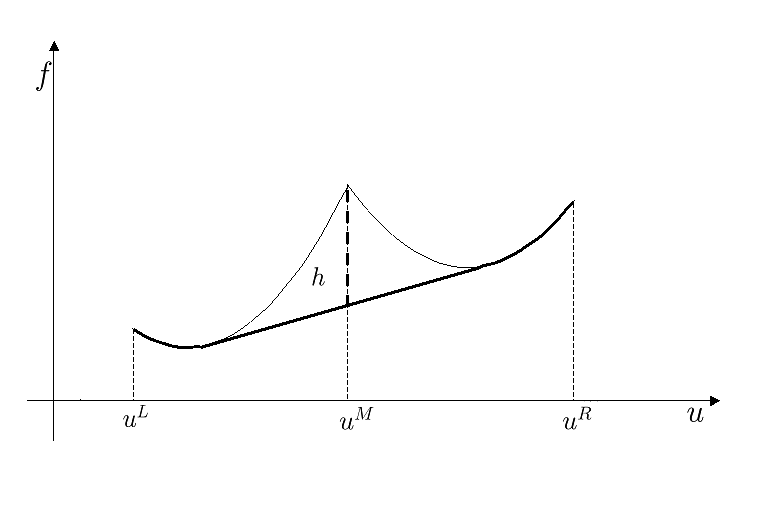}
    \caption{Geometric interpretation of $h$ in the Glimm scheme}
    \label{fig:figura19}
    \end{center}
\end{figure}

Our choice to present two separate theorems is motivated by the following facts.

First of all, due to the number of papers present in the literature concerning this estimate, we believe necessary to give a correct proof in the most simple case, i.e. wavefront tracking for scalar conservation laws.

Since however the estimate has been used mainly to prove the convergence rate of the Glimm scheme, we felt necessary to offer also a direct proof for this approximation scheme. It turns out that even if the fundamental ideas are the same, the two proofs are sufficiently different in some points to justify a separate analysis. In our opinion, in fact, it is not trivial to deduce one from the other.

\subsection{Sketch of the proof}
\label{Sss_sketch_proof}

As observed in \cite{anc_mar_11_DCDS}, one of the main problems in obtaining an estimate of the form \eqref{E_G_est_fin}, \eqref{E_W_est_fin} is that the study of wave interactions cannot be local in time, but one has to take into accoun the whole sequence of interactions-cancellations for every couple of waves. This is a striking difference with the Glimm interaction potential $Q^\text{{GL}}$, where the past history of the solution is irrelevant.

Previous attempts tried to adapt Glimm's idea of finding a \emph{quadratic potential $Q$} which is decreasing in time and at every interaction has a jump of the amount \eqref{E_quadr_singl_inter}. Apart technical variations, the idea is to transform the function $Q^{\text{BB}}$ of \eqref{E_cubic_Q_def} into
\[
Q(t) = Q[u(t,\cdot)] := \sum_{w \not= w'} \frac{|\sigma(w) - \sigma(w')| |w| |w'|}{|w| + |w'|}.
\]
For monotone initial data $\bar u$, this functional is sufficient; however in \cite{anc_mar_11_DCDS} F. Ancona and A. Marson show that $Q$ defined above may be not bounded, so that in \cite{anc_mar_11_CMP} they consider the functional
\begin{equation}
\label{E_Q_AM}
Q^\text{{AM}}(t) := \sum_{w \not= w'} \frac{|\sigma(w) - \sigma(w')| |w| |w'|}{|w| + \TV(u(t,\cdot),(x_w(t),x_{w'}(t))) + |w'|},
\end{equation}
where $x_w(t)$ is the position of the wave $w$ at time $t$. Notice that at the time of interaction of the wavefronts $w$, $w'$ one has $\TV(u(t,\cdot),(x_w(t),x_{w'}(t))) = 0$ (there are no wavefronts between the two interacting), so that for the couple of waves $w$, $w'$ one has
\[
\frac{|\sigma(w) - \sigma(w')| |w| |w'|}{|w| + \TV(u(t,\cdot),(x_w(t),x_{w'}(t))) + |w'|} = \frac{|\sigma(w) - \sigma(w')| |w| |w'|}{|w| + |w'|}.
\]

If the flux function $f$ has no finite inflection points, the waves $s \in \W$ can join in wavefronts (because of an interaction) and split again (because of a cancellation) an arbitrary large number of times. This implies that the functional $Q^\text{{AM}}$ (as well as the other quadratic functionals introduced in the literature) does not decay in time, but can increase due to cancellations. Hence, instead of proving directly that the quadratic functional $Q^\text{{AM}}(t)$ controls the interactions, in \cite{anc_mar_11_CMP} the authors consider a term $G(t)$ which bounds the oscillations of $Q^\text{{AM}}(t)$ for the waves not involved in an interaction, and prove that
\[
\sum_{\text{interactions in }(t_1,t_2]} \frac{|\sigma - \sigma'| s s'}{s + s'} \leq (Q^\text{{AM}}(t_1) + G(t_1)) - \big( Q^\text{{AM}}(t_2) + G(t_2) \big).
\]
Since $Q^\text{{AM}}$ is quadratic by construction because of the Lipschitz regularity of the speed $\sigma(w)$ (inequality \eqref{E_Lip_speed_1}) and $G \leq 0$, $G(+\infty) = 0$, they reduce the quadratic interaction estimate to the following estimate:
\begin{equation}
\label{E_quadratic_G}
\TV^+(G) \leq \mathcal O(\|f''\|_{L^\infty}) \TV(\bar u)^2.
\end{equation}

Our approach is slightly different: we construct a quadratic functional $\mathfrak Q$ such that its total variation in time is bounded by $\mathcal O(\|f''\|_{L^\infty}) \TV(\bar u)^2$, and at any interaction decays at least of the quantity \eqref{E_quadr_singl_inter} (or more precisely of the quantities in the l.h.s. of \eqref{E_G_est_fin} or \eqref{E_W_est_fin} concerning that interaction). The functional can increase due to cancellations, but in this case we show that its positive variation is controlled by the total variation of the solution times the amount of cancellation. Being the total variation a Lyapunov functional, it follows that
\[
\text{positive total variation of} \ \mathfrak Q(t) \leq \mathcal O(\|f''\|_{L^\infty}) \TV(\bar u)^2,
\]
so that, being $\mathfrak Q(0) \leq \mathcal O(\|f''\|_{L^\infty}) \TV(\bar u)^2$, the functional $t \mapsto \mathfrak Q(t)$ has total variation of the order of $\TV(\bar u)^2$. In particular,
\begin{equation*}
\text{left hand side of \eqref{E_G_est_fin} or \eqref{E_W_est_fin} at interactions} \leq \text{negative variation of} \ \mathfrak Q \leq \mathcal O(\|f''\|_{L^\infty}) \TV(\bar u)^2.
\end{equation*}
The estimates \eqref{E_G_est_fin}, \eqref{E_W_est_fin} concerning cancellations is much easier (and already done in the literature, see \cite{anc_mar_11_CMP}), and we present it in Propositions \ref{W_canc_3}, \ref{canc_3}, depending on the approximation scheme considered. In the case of cancellations, in fact, there is a first order functional decreasing, namely the total variation.

For simplicity we present the sketch of the proof in the wavefront tracking case.

We define again a functional $\mathfrak Q(t)$ of a form similar to \eqref{E_Q_AM}, 
\[
\mathfrak{Q}(t) :=  \sum_{\substack{s,s' \in \W(t) \\ s < s'}} \mathfrak{q}(t, s, s') |s||s'|,
\]
but with $4$ main differences.
\begin{enumerate}
\item First of all its definition involves the waves $s$, not the wavefronts $w$.

\item \label{Point_Glimm} If the waves $s$, $s'$ have not yet interacted, then the weight $\mathfrak{q}(t,s,s')$ is a large constant. In our case, it suffices $\|f''\|_{L^\infty}$.

\noindent If the waves have already interacted, then the weight $\mathfrak q$ has the form
\begin{equation}
\label{E_weight_form}
\mathfrak q(t,s,s') := \dfrac{|\Delta \sigma(t,s,s')|}{|\e \, \sharp \{s'' \text{ such that }s \leq s'' \leq s'\}|}.
\end{equation}

\item In the above formula \eqref{E_weight_form}, the quantity $\Delta \sigma(t,s,s')$ is the difference in speed given to the waves $s$, $s'$ by an artificial Riemann problem, which roughly speaking collects all the previous common history of the waves $s$, $s'$. This makes $\mathfrak Q(t)$ not local in time and space.

\item The denominator of \eqref{E_weight_form} is not the total variation of $u$ between two wavefronts $w$, $w'$ (containing $s$, $s'$ respectively) as it is the case in \eqref{E_Q_AM}, but only the total variation between the two waves $s$, $s'$. Observe that
\[
|\e \, \sharp \{s'' \text{ such that } s \leq s'' \leq s'\}| \leq |w| + \TV(u(t,\cdot),(x_w(t),x_{w'}(t))) + |w'|.
\]
\end{enumerate}

By the Lipschitz regularity of the speed given to the waves by solving a Riemann problem, it is fairly easy (and proved in Section \ref{W_functional_Q}) that
\[
\mathfrak Q(t) \leq \|f''\|_{L^\infty} \TV(u(t,\cdot))^2.
\]

We observe first that the functional $\mathfrak Q$ restricted to the couple of waves which have never interacted is exactly (apart from the constant $\|f''\|_{L^\infty}$) the original Glimm functional 
\[
Q^\text{{GL}}(t) = Q^\text{{GL}}[u(t,\cdot)] := \sum_{w \not= w'} |w| |w'|.
\]
Since the couple of waves which have never interacted is decreasing, this part of the functional is decreasing, and as observed before it is sufficient to control the quadratic estimates for couple of waves which have never interacted.

The choice of the denominator in \eqref{E_weight_form} yields that our functional $\mathfrak{Q}$ is not affected by the issue of large oscillations, as observed in \cite{anc_mar_11_DCDS}, even if its form is similar to $Q^\text{{AM}}$: indeed, in our case, the denominator we choose does not depend on the shock component which the waves belong to, and thus cancellations do not affect it.

Next, the Riemann problem used to compute the quantity $\Delta \sigma(t,s,s')$ is made of all waves $s''$ which have interacted with both waves $s$ and $s'$. We now show how it evolves with time.
\begin{description}
\item[interaction] If at $t_j$ an interaction occurs, and $s$, $s'$ are not involved in the interaction, the set of waves which have interacted with both is not changing (since they are separated, at most one of them is involved in the interaction!), which means that $\Delta \sigma(t_j-,s,s') = \Delta \sigma(t_j+,s,s')$. If $s$, $s'$ are involved in the interaction, then the couple disappears from the sum, because when two wavefronts with the same sign interact a single wavefront comes out.

\item[cancellation] If a cancellation occurs at $t_j$, then one can check that again if both $s$, $s'$ are not involved in the wavefront collision then $\Delta \sigma$ is constant. Otherwise the change in $\Delta \sigma$ corresponds to the change in speed obtained by removing some waves in a Riemann problem, and adding all these variations one obtains that the oscillation of $\mathfrak Q$ can be estimated explicitly as in the case of a single cancellation (i.e. the total variation of the solution times the amount of cancellation).
\end{description}

The cancellation case, corresponding to the positive total variation in time of $\mathfrak Q$, is thus controlled by
\[
\mathcal O(\|f''\|_{L^\infty}) \TV(\bar u)^2,
\]
and since $\mathfrak Q(0) \leq \mathcal O(\|f''\|_{L^\infty}) \TV(\bar u)^2$ and $\mathfrak Q \geq 0$ it follows that
\begin{equation}
\label{E_tv_frak_q}
\TV(\mathfrak Q(t)) \leq \mathcal O(\|f''\|_{L^\infty}) \TV(\bar u)^2.
\end{equation}

When an interaction between $w$, $w'$ occurs, the discussion above shows that we can split the waves involved in the interaction into $4$ sets:
\begin{enumerate}
\item a set $\mathcal L_1$ of waves in $w$ which have never interacted with the waves in $w'$;
\item a set $\mathcal L_2$ of waves in $w$ which have interacted with a set $\mathcal R_1$ of waves in $w'$;
\item a set $\mathcal R_2$ of waves in $w'$ which have never interacted with the waves in $w$.
\end{enumerate}
The speed assigned by the artificial Riemann problems for all couples of waves $s \in w$, $s' \in w'$ yields that the decrease of $\mathfrak Q$ at the interaction time is (larger than the one) given by the wave pattern depicted in Figure \ref{fig:figura37}. 
\begin{figure}
  \begin{center}
    \includegraphics[height=6cm,width=11cm]{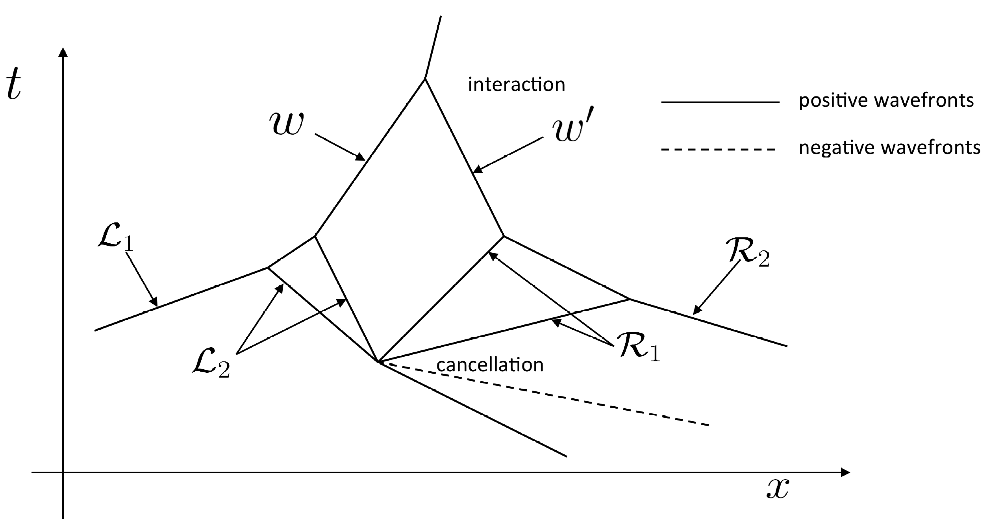}
    \caption{Pattern of waves involved in the interaction at time $t_j$.}
    \label{fig:figura37}
    \end{center}
\end{figure}
By an explicit computation one can check that
\begin{equation*}
\text{left hand side of \eqref{E_W_est_fin} at interaction} \leq \text{negative variation of} \ \mathfrak Q \overset{\eqref{E_tv_frak_q}}{\leq} \mathcal O(\|f''\|_{L^\infty}) \TV(\bar u)^2,
\end{equation*}
thus obtaining the desired estimate \eqref{E_W_est_fin}.

The proof for the Glimm scheme follows the same philosophy, but, as we said, due to the different structure of the approximating scheme it present different technical aspects. 

\subsection{Structure of the paper}
\label{Ss_structure}

The paper is organized as follows.

Section \ref{S_convex_env} provides some useful results on convex envelopes. Part of these results are already present in the literature, others can be deduced with little effort. We decided to collect them for reader's convenience. Two particular estimates play a key role in the main body of the paper: the regularity of the speed function (Theorem \ref{convex_fundamental_thm} and Proposition \ref{convex_fundamental_thm_affine}) and the behavior of the speed assigned to a wave by the solution to Riemann problem $[u^L,u^R]$ when the left state $u^L$ or the right state $u^R$ are varied (Propositions \ref{tocca}, \ref{vel_aumenta}, \ref{differenza_vel} and \ref{incastro}).

The next two sections contain the main results of the paper. As we said, for technical reasons the proofs differ depending on the approximation scheme considered, but, in order to simplify the exposition, we tried to keep a similar structure of both sections.

Section \ref{sect_wavefront} is devoted to the proof of Theorem \ref{W_main_thm}.

\noindent After recalling how a wavefront approximated solution $u_\e$ is constructed, we begin with the construction of the wave map $\mathtt x$ in Section \ref{Front_Waves}. As we said, due to the fixed size $\e$ of the waves, the set of waves $\mathcal W$ will be a finite subset of $\N$, and we replace the properties of $\mathtt x$ given at the beginning of this introduction with the (easier to work with) definition of \emph{enumeration of waves}, Definition \ref{W_eow}. This is the triple $(\mathcal W, \mathtt x,\hat u)$, where $\mathtt x$ is the position of the waves $s$ and $\hat u$ is its right state. The equivalence of the two definitions is straightforward. In Section \ref{W_pswaves} we show that it is possible to construct a function $\mathtt x(t,s)$ such that at any time $(\mathcal W,\mathtt x(t),\hat u)$ is an enumeration of waves, with $\hat u$ independent on $t$.

\noindent Once we have an enumeration of waves, we can start the proof of Theorem \ref{W_main_thm} (Section \ref{section_W_main_thm}). First we study the estimate \eqref{E_W_est_fin} when a single cancellation occurs. This estimate is standard, since the cancellation is controlled by the decay of a first order functional, namely $\TV(u(t,\cdot))$. The precise estimate is reported in Proposition \ref{W_canc_3}, where the dependence w.r.t. $\TV(u(t,\cdot))$ and $\|f''\|_{L^\infty}$ is singled out. Corollary \ref{W_canc_4} completes the estimate \eqref{E_W_est_fin} for the case of cancellation points.

\noindent The rest of Section \ref{sect_wavefront} is the construction and analysis of the functional $\mathfrak Q$ described above, in order to prove Proposition \ref{W_thm_interaction}. This proposition proves \eqref{E_W_est_fin} for the case of interaction points, completing the proof of Theorem \ref{W_main_thm}.

\noindent As we said, one of the main features of the enumeration of waves is that we can speak of couple of waves which have already interacted. In Section \ref{W_waves_collision} we prove some important properties of these couples of waves. Lemma \ref{W_interagite_stesso_segno} shows that they must have the same sign, and, because in the scalar case no new waves are created, all the waves between $s$ and $s'$ have interacted with $s$ and $s'$, Lemma \ref{W_quelle_in_mezzo_hanno_int}. In this section it is also defined the interval of waves $\mathcal I(t,s,s')$, which is the set of waves $p \in \W$ which have interacted with both $s$ and $s'$. Propositions \ref{W_unite_realta} and \ref{W_divise_tocca} prove that, even if the solution to the Riemann problem generated by the waves $\mathcal I(t,s,s')$ assigns artificial speeds to $s$, $s'$, the property of being separated or not in the real solution can be deduced from the solution to the Riemann problem $\mathcal I(t,s,s')$. From this fact we can infer a lot of interesting properties of the Riemann problem $\mathcal I(t,s,s')$: the most important one is that we can know the values $u$ where $f(u) = \conv_{\hat u(\mathcal I(t,s,s'))} f(u)$.

\noindent In Section \ref{W_functional_Q} we write down the functional $\mathfrak Q$ and conclude the proof of Theorem \ref{W_main_thm}. We study separately the behavior of $\mathfrak Q$ at interactions and cancellations. Theorem \ref{W_decreasing} proves that the functional $\mathfrak Q$ decreases at least of the quantity \eqref{E_W_est_fin} at a single interaction point, while Theorem \ref{W_increasing} shows that the increase of $\mathfrak Q$ at each cancellation point is controlled by the total variation of the solution times the cancellation. These two facts conclude the proof of Proposition \ref{W_thm_interaction}, as shown in Section \ref{Sss_sketch_proof}.

Section \ref{S_glimm_scheme} is devoted to the proof of Theorem \ref{main_thm}. 

\noindent As we said, the ideas of the proof are similar, but from the technical point of view there are substantial differences, making this case slightly more complicated. In this introduction we will underline these variations, so that the reader can easily pass from one proof to the other, also because we tried to keep the structure of the two main sections similar.

\noindent As we already said, the first main difference is in the definition of enumeration of waves, Definition \ref{eow}. In fact, for the Glimm scheme the set of waves $\mathcal W$ is a subset of an interval in the real line (namely $(0,\TV(\bar u)]$), and moreover the map $t \mapsto \mathtt x(t,s)$ has to pass trough the grid points. This forces us to define the speed $\sigma(t,s)$ of a wave $s$ at time $t$ not by just taking the time-derivative of $\mathtt x(t,s)$, but by considering the real speed given to $s$ by the Riemann problem at each grid point as in \eqref{E_Glimm_speed}. This analysis is done in Section \ref{pswaves}.

\noindent Another difference is that the Glimm scheme, due to the choice of the sampling points, ``interacts'' with the solution (i.e. it may alter the curve $t \mapsto \mathtt x(t,s)$) even when no real interaction-cancellation occurs. This is why we need to study an additional case, namely when no interaction/cancellation of waves occurs at a given grid point, and this is done in Proposition \ref{P_no_interaction}: the statement is that trivially nothing is going on in these points, but we felt the need of a proof.

\noindent Proposition \ref{canc_3}, namely the case of cancellation points, is analog to the wavefront tracking case. Also the structure and properties of the functional $\mathfrak Q$ we are going to construct in the Glimm scheme case are similar to the wavefront approximation analysis, the main difference being that several interactions and cancellations occur at each time step. We thus require that the set of pairs of waves present in the solution at time step $n \e$ (i.e. not moved to $\mathtt x = +\infty$), namely $\W(n\e) \times \W(n\e)$, can be split into two parts: one part concerns the interactions, and decreases of the right amount, the other one concerns cancellations and increases of a quantity controlled by the total variation of the solution times the amount of cancellation. The remain part of the section is the proof of these two estimates, from which one deduces Theorem \ref{main_thm} along the same line outlined in Section \ref{Sss_sketch_proof}.

\noindent First, we define the notion of waves which have already interacted, Definition \ref{D_Glimm_interacted}, and waves which are separated (or \emph{divided}), Definition \ref{waves_divided}. Notice that even if they occupy the same grid position for some time step, they are considered divided in the real solution if the Riemann problem at that grid point assigns different speeds to them. The statement that the artificial Riemann problems we consider separate waves as in the real solution is completely similar to the wavefront case, Proposition \ref{unite_realta}, but the proof is quite longer.

\noindent In the last section, Section \ref{Ss_glimm_funct_Q} we define the functional $\mathfrak Q$, and due to the continuity of the set of waves $\mathcal W$ we show a regularity property of the weight $\mathfrak q(t,s,s')$ so that no measurability issues arise. The proof of the two estimates (interactions and cancellations) is somehow longer than in the wavefront tracking case but it is based on the same ideas, and this concludes the section.

In Appendix \ref{App_conter} we present a counterexample to formula (4.84) in Lemma 2, pag. 614, of \cite{anc_mar_11_CMP}, which justifies the need of the analysis in the one-dimensional case.

\subsection{Notations}
\label{Sss_notations}

For usefulness of the reader, we collect here some notations used in the subsequent sections.

\begin{itemize}
\item $g(u+) = \lim_{u \rightarrow u^+} g(u)$, $g(u-) = \lim_{u \rightarrow u^-} g(u)$;
\item $g'(u-)$ (resp. $g'(u+)$) is the left (resp. right) derivative of $g$ at point $u$;
\item If $(a_k)_k$ is a sequence of real numbers, we write $a_k \nearrow a$ (resp. $a_k \searrow a$) if $(a_k)_k$ is increasing (resp. decreasing) and $\lim_{k \rightarrow +\infty} a_k = a$;
\item Given $E \subseteq \R^n$, we will denote equivalently by $\mathcal{L}^n(E)$ or by $|E|$ the $n$-dimensional Lebesgue measure of $E$.
\item If $g: [a,b] \to \R$, $h: [b,c] \to \R$ are two functions which coincide in $b$, we define the function $g \cup h: [a,c] \to \R$ as
 \begin{equation*}
 g \cup h (x) = 
 \left\{
 \begin{array}{ll}
 g(x) & \text{if } x \in [a,b], \\
 h(x) & \text{if } x \in [b,c].
 \end{array}
 \right.
 \end{equation*}
\item Sometime we will write $\R_x$ instead of $\R$ (resp. $[0,+\infty)_t$ instead of $[0,+\infty)$) to emphasize the symbol of the variables (resp. $x$ or $t$) we refer to.
\item For any $f: \R \to \R$ and for any $\e >0$, the \emph{piecewise affine interpolation of $f$ with grid size $\e$} is the piecewise affine function $f_\e: \R \to \R$ which coincides with $f$ in the points of the form $m\e$, $m \in \Z$. 
\item By $\const$ we mean a quantity which does not depend on the data of the problem, neither on $f$ nor on the initial datum $u(0,\cdot) = \bar u$.
\item For any $\alpha \in \R$, $[\alpha]^+$ denotes its positive part.
\end{itemize}

\section{Convex Envelopes}
\label{S_convex_env}

In this section we define the convex envelope of a continuous function $f: \R \to \R$ in an interval $[a,b]$ and we prove some related results. The first section provides some well-known results about convex envelopes, while in the second section we prove some propositions which will be frequently used in the paper.

The aim of this section is to collect the statements we will need in the main part of the paper. In particular, the most important results are Theorem \ref{convex_fundamental_thm} and Proposition \ref{convex_fundamental_thm_affine}, concerning the regularity of convex envelopes, and Proposition \ref{tocca}, Proposition \ref{vel_aumenta}, Corollary \ref{stesso_shock}, Proposition \ref{incastro} and Proposition \ref{diff_vel_proporzionale_canc}, referring to the behavior of convex envelopes when the interval $[a,b]$ is varied: these estimates will play a major role for the study of the Riemann problems.

\subsection{Definitions and elementary results}

\begin{definition}
\label{convex_fcn}
Let $f: \R \to \R$ be continuous and $[a,b] \subseteq \R$. We define \emph{the convex envelope of $f$ in the interval $[a,b]$} as
\[
\conv_{[a,b]}f (u) := \sup\bigg\{g(u) \ \Big| \ g: [a,b] \to \R \text{ is convex and } g \leq f\bigg\}.
\]
\end{definition}

A similar definition holds for \emph{the concave envelope of $f$ in the interval $[a,b]$} denoted by $\conc_{[a,b]}f$. All the results we present here for the convex envelope of a continuous function $f$ hold, with the necessary changes, for its concave envelope.

\begin{lemma}
In the same setting of Definition \ref{convex_fcn}, $\conv_{[a,b]}f$ is a convex function and $\conv_{[a,b]}f(u) \leq f(u)$ for each $u \in [a,b]$.  
\end{lemma}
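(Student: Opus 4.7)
The lemma has two claims: that $\conv_{[a,b]}f$ is well-defined and convex, and that it is bounded above by $f$ pointwise on $[a,b]$. Both are standard consequences of the definition as a supremum, and my plan is to handle them in that order.

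First I would observe that the family $\mathcal{F} := \{g : [a,b] \to \R \mid g \text{ convex}, g \leq f\}$ over which the supremum is taken is nonempty and the supremum is finite at every point. Nonemptiness follows because $f$ is continuous on the compact interval $[a,b]$, hence bounded below, so the constant function $g \equiv \inf_{[a,b]} f$ belongs to $\mathcal{F}$. Finiteness of the sup at each $u \in [a,b]$ follows at once from the constraint $g \leq f$, which gives the pointwise bound $g(u) \leq f(u) < +\infty$ for every $g \in \mathcal{F}$. Taking the supremum over $g \in \mathcal{F}$ yields $\conv_{[a,b]}f(u) \leq f(u)$, establishing the second assertion.

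For the convexity assertion, I would invoke the general principle that the pointwise supremum of a family of convex functions is convex. Explicitly, fix $u, v \in [a,b]$ and $\lambda \in [0,1]$, and set $u_\lambda := \lambda u + (1-\lambda) v$. For every $g \in \mathcal{F}$, convexity of $g$ gives
\[
g(u_\lambda) \leq \lambda g(u) + (1-\lambda) g(v) \leq \lambda \conv_{[a,b]}f(u) + (1-\lambda) \conv_{[a,b]}f(v).
\]
Taking the supremum over $g \in \mathcal{F}$ on the left produces
\[
\conv_{[a,b]}f(u_\lambda) \leq \lambda \conv_{[a,b]}f(u) + (1-\lambda) \conv_{[a,b]}f(v),
\]
which is exactly the convexity inequality.

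There is no serious obstacle here: the whole content is the trivial fact that suprema preserve the bound $g \leq f$ and the convexity inequality (since both enter the definition linearly in $g$). The only mild subtlety worth mentioning is ensuring $\mathcal{F}$ is nonempty so that the supremum is not $-\infty$, which is handled by continuity of $f$ on the compact interval. I would keep the proof to a few lines, along the lines just sketched.
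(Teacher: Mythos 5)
Your proof is correct and is the standard argument the paper has in mind; the paper itself omits the proof entirely, remarking only that it is straightforward. Your two observations (suprema of convex functions are convex, and the pointwise bound $g\leq f$ passes to the supremum), together with the nonemptiness check via boundedness of $f$ on $[a,b]$, fully establish the lemma.
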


The proof is straightforward.

Adopting the language of Hyperbolic Conservation Laws, we give the next definition.

\begin{definition}
Let $f$ be a continuous function on $\R$, let $[a,b] \subseteq \R$ and consider $\conv_{[a,b]}f$. A \emph{shock interval} of $\conv_{[a,b]}f$ is an open interval $I \subseteq [a,b]$ such that for each $u \in I$, $\conv_{[a,b]}f(u) < f(u)$.

A \emph{maximal shock interval} is a shock interval which is maximal with respect to set inclusion.
\end{definition}

Notice that, if $u \in [a,b]$ is a point such that $\conv_{[a,b]}f(u) < f(u)$, then, by  continuity of $f$ and $\conv_{[a,b]}f$, it is possible to find a maximal shock interval $I$ containing $u$.

It is fairly easy to prove the following result.

\begin{proposition}
\label{shock}
Let $f: \R \to \R$ be continuous; let $[a,b] \subseteq \R$. Let $I$ be a shock interval for $\conv_{[a,b]}f$. Then $\conv_{[a,b]}f$ is affine on $I$.
\end{proposition}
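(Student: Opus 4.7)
The plan is to argue by contradiction: assume $g := \conv_{[a,b]} f$ is not affine on $I$. Since $g$ is convex but not affine on $I$, there exist $u_1 < u_2 < u_3$ in $I$ for which the chord $h$ from $(u_1, g(u_1))$ to $(u_3, g(u_3))$ strictly exceeds $g$ at $u_2$, i.e.\ $h(u_2) > g(u_2)$. The key idea is to exploit the strict positivity of $f - g$ on the compact set $[u_1, u_3] \subset I$ in order to build a convex competitor $\tilde g$ with $\tilde g \leq f$ and $\tilde g(u_2) > g(u_2)$, thereby contradicting the maximality of $g$ among convex functions dominated by $f$.

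The construction is
\[
\tilde g(u) := \begin{cases} g(u) & u \in [a,u_1] \cup [u_3,b],\\ (1-\varepsilon) g(u) + \varepsilon h(u) & u \in [u_1,u_3], \end{cases}
\]
for a small parameter $\varepsilon > 0$ to be chosen. Three properties need verification: convexity of $\tilde g$ on $[a,b]$, the bound $\tilde g \leq f$, and the strict inequality $\tilde g(u_2) > g(u_2)$. The last is immediate. Convexity inside $[u_1,u_3]$ is automatic as a convex combination of the convex functions $g$ and $h$, and at each junction $u_i$ the function $\tilde g$ is continuous because $h(u_i) = g(u_i)$; convexity across $u_1$ then reduces to the inequality $(1-\varepsilon) g'(u_1+) + \varepsilon m \geq g'(u_1-)$ with $m$ the chord slope, which follows at once from the standard chain $g'(u_1-) \leq g'(u_1+) \leq m$ (both consequences of convexity of $g$), and symmetrically at $u_3$.

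The delicate point I anticipate is the bound $\tilde g \leq f$ on $[u_1,u_3]$, since the chord $h$ itself may well exceed $f$ somewhere inside $(u_1, u_3)$. Here the shock-interval hypothesis enters decisively: by continuity of $f - g$ on the compact set $[u_1,u_3] \subset I$ we have $\delta := \min_{[u_1,u_3]}(f - g) > 0$, while the quantity $M := \max_{[u_1,u_3]}(h - g)$ is finite by continuity. Since $\tilde g - g = \varepsilon (h - g) \leq \varepsilon M$ on $[u_1,u_3]$, choosing $\varepsilon < \delta/M$ ensures $\tilde g \leq g + \delta \leq f$ throughout, which completes the contradiction and shows that $g$ must be affine on $I$.
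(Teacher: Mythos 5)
Your proof is correct, and it takes a genuinely different route from the paper's. The paper argues pointwise: for each $\bar u \in I$ it produces, via a sequential limiting argument, a small symmetric interval $(\bar u - \varepsilon, \bar u + \varepsilon)$ on which the secant line of $\conv_{[a,b]}f$ stays below $f$, replaces $\conv_{[a,b]}f$ by that secant to get a convex competitor, concludes that $\conv_{[a,b]}f$ is \emph{locally} affine, and then invokes a separate lemma that a locally affine function on an open interval is affine. You instead negate the conclusion globally, fix a triple $u_1 < u_2 < u_3$ witnessing non-affineness, and perturb toward the chord with a small weight $\varepsilon$; the constraint $\tilde g \leq f$ is met not by shrinking the interval but by shrinking $\varepsilon$ against the uniform gap $\delta = \min_{[u_1,u_3]}(f-g) > 0$, which is where the shock-interval hypothesis enters in a single compactness step. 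All the delicate points are handled: $[u_1,u_3] \subseteq I$ because $I$ is an interval, the one-sided derivatives of $g$ exist since $u_1, u_3$ lie in the open set $I \subseteq (a,b)$, and the junction inequalities follow from the standard chain $g'(u_1-) \leq g'(u_1+) \leq m \leq g'(u_3-) \leq g'(u_3+)$. Your version is shorter and more self-contained (no auxiliary gluing or locally-affine lemmas, no extracted sequences); the paper's version yields the slightly stronger local picture that $\conv_{[a,b]}f$ coincides with one of its own secants near every point of $I$, but that extra information is not used.
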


The following theorem provides a description of the regularity of the convex envelope of a given function $f$.
 
\begin{theorem}
\label{convex_fundamental_thm}
Let $f$ be a $\mathcal{C}^2$-function. Then:
\begin{enumerate}
\item \label{convex_fundamental_thm_1} the convex envelope $\conv_{[a,b]} f$ of $f$ in the interval $[a,b]$ is differentiable on $[a,b]$; 
\item \label{convex_fundamental_thm_2} for each $u \in (a,b)$, if $f(u) = \conv_{[a,b]}f(u)$, then 
\[
\frac{d}{du}f(u) = \frac{d}{du}\conv_{[a,b]}f(u);
\]
\item \label{convex_fundamental_thm_3} $\frac{d}{du}\conv_{[a,b]} f$ is Lipschtitz-continuous with Lipschitz constant less or equal than $\lVert f''\lVert_{L^\infty(a,b)}$.
\end{enumerate}
\end{theorem}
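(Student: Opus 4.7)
Set $h := \conv_{[a,b]} f$ and $C := \{u \in [a,b] : h(u) = f(u)\}$. I would prove (1) and (2) jointly by a supporting-line argument. At $u \in (a,b)$ with $h(u) < f(u)$, Proposition~\ref{shock} already shows that $h$ is affine in a neighborhood of $u$, hence differentiable. At a contact point $u \in C \cap (a,b)$, both one-sided derivatives $h'(u-), h'(u+)$ exist by convexity; if they were strictly different, any slope $m \in (h'(u-), h'(u+))$ would provide an affine minorant $L(v) := h(u) + m(v-u)$ with $L \leq h \leq f$ on $[a,b]$ and $L(u) = f(u)$, so $u$ would be a local minimum of the $C^2$ function $f - L$, forcing $f'(u) = m$. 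Since $m$ ranges in an open interval, this is absurd; hence $h$ is differentiable at $u$, and specializing the same identity to $m = h'(u)$ yields $h'(u) = f'(u)$, which is (2). One-sided differentiability at the endpoints is automatic for convex functions.

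For (3), let $\phi := h'$; it is continuous and nondecreasing on $[a,b]$ (the derivative of a differentiable convex function is continuous), and by Proposition~\ref{shock} it is constant on each maximal shock interval. Fix $a \leq u < v \leq b$, assume $\phi(u) < \phi(v)$ (else the estimate is trivial), and set
\[
\tilde u := \sup\{w \in [u,v] : \phi(w) = \phi(u)\}, \qquad \tilde v := \inf\{w \in [u,v] : \phi(w) = \phi(v)\}.
\]
Continuity yields $\phi(\tilde u) = \phi(u)$ and $\phi(\tilde v) = \phi(v)$, and the strict inequality $\phi(u) < \phi(v)$ forces $\tilde u < \tilde v$. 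Neither $\tilde u$ nor $\tilde v$ can lie in the interior of a maximal shock interval, because the other endpoint of that interval would still belong to the level set and contradict the extremality defining $\tilde u$ or $\tilde v$; hence $\tilde u, \tilde v \in C$. If both are in $(a,b)$, part (2) together with $|f''| \leq K := \|f''\|_{L^\infty(a,b)}$ gives
\[
\phi(v) - \phi(u) = f'(\tilde v) - f'(\tilde u) \leq K(\tilde v - \tilde u) \leq K(v - u).
\]

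The main obstacle is the endpoint case, since in general $h'$ need not agree with $f'$ at $a$ or $b$ (take $f(u) = -u^2$ on $[0,1]$: then $h(u) = -u$ and $h'(0+) = -1 \neq 0 = f'(0)$). The reduction above nonetheless closes, because $\tilde u = a$ forces $u = a$, and the existence of a maximal shock interval of the form $(a, d_1)$ would contradict $\phi(u) < \phi(v)$ in both possible sub-cases (either $d_1 > v$, trapping $v$ inside and giving $\phi(u) = \phi(v)$; or $d_1 \leq v$, which would give $\tilde u \geq d_1 > a$). Ruling out such a boundary shock interval, $a$ is approached from the right by contact points, and continuity of $\phi$ together with that of $f'$ yields $\phi(a) = f'(a)$. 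The symmetric argument handles $\tilde v = b$, so the estimate above applies in all cases and completes the Lipschitz bound.
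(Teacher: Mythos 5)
Your argument is correct and follows the same overall strategy as the paper's proof (treat shock intervals and contact points separately for differentiability, then reduce the Lipschitz bound to contact points where $h'=f'$), but two ingredients differ and are worth recording. For (1)--(2) you use the supporting-line/subgradient argument: every $m\in[h'(u-),h'(u+)]$ gives an affine minorant $L\le h\le f$ touching $f$ at the interior point $u$, so $f'(u)=m$, which pins down a unique subgradient; the paper instead runs a Taylor-expansion contradiction against $h\le f$. Both work, yours is cleaner. More substantially, in (3) the paper's selection rule reads ``if $h(u)=f(u)$ take $u_1:=u$ and use $h'(u)=f'(u_1)$''; at the endpoint $u=a$ (where $h(a)=f(a)$ always holds) this identity can fail whenever a maximal shock interval $(a,d)$ starts at $a$, since then $h'(a+)=f'(d)$ may be strictly less than $f'(a)$, and the paper's intermediate inequality $h'(v)-h'(u)\le f'(v_1)-f'(u_1)$ points the wrong way (the correct choice there is $u_1=d$). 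Your level-set definition of $\tilde u,\tilde v$, combined with the observation that a boundary shock interval is incompatible with $\phi(u)<\phi(v)$, closes exactly this case, so your write-up is actually more complete than the paper's at the endpoints. The one step I would spell out is the claim that $a$ is then approached from the right by contact points: it follows because the left endpoint of every maximal shock interval is itself a contact point, so if no such interval starts at $a$, contact points must accumulate at $a$ from the right, and continuity of $\phi$ and $f'$ gives $\phi(a)=f'(a)$ as you assert.
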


By `differentiable on $[a,b]$' we mean that it is differentiable on $(a,b)$ in the classical sense and that in $a$ (resp. $b$) the right (resp. the left) derivative exists. While the proof is elementary, we give it for completeness.

\begin{proof}
(\ref{convex_fundamental_thm_1}) and (\ref{convex_fundamental_thm_2}). Let $h := \conv_{[a,b]} f$. Since $h$ is convex, then it admits left and right derivatives at each point $\bar u \in (a,b)$. Moreover in $a$ (resp. $b$) it admits right (resp. left) derivative. 

For each $\bar u \in (a,b)$, it holds 
\begin{equation}
\label{lr_der}
h'(\bar u-) \leq h'(\bar u+).
\end{equation}
In order to prove that $h$ is differentiable at $\bar u$ it is sufficent to prove that equality holds in \eqref{lr_der}.

If $h(\bar u) < f(\bar u)$, then, by Proposition \ref{shock}, $\bar u$ lies in a shock interval and so clearly $h$ is differentiable at $\bar u$ and the derivative is locally constant.

Hence we assume that $\bar u \in (a,b)$ is a point such that $h(\bar u) = f(\bar u)$. We claim that $f'(\bar u) \leq h'(\bar u-)$ and $h'(\bar u+) \leq f'(\bar u)$. By \eqref{lr_der}, this is sufficient to prove that $h$ is differentiable at $\bar u$ and that $h'(\bar u) = f'(\bar u)$. Assume by contradiction that $f'(\bar u) > h'(\bar u-)$. Then, by definition of left derivative, and by the fact that $h(\bar u) = f(\bar u)$, there exist $\eta >0, \delta >0$, such that for each $u \in (\bar u-\delta, \bar u)$, 
\[
f(\bar u) + f'(\bar u)(u -\bar u) < h(u) + \eta (u - \bar u).
\]
By Taylor expansion, for each $u \in (\bar u - \delta, \bar u)$,
\begin{equation}
\label{f_h}
\begin{split}
f(u) =&~ f(\bar u) + f'(\bar u)(u-\bar u) + o(u-\bar u) \\
     <&~ h(u) + \eta (u - \bar u) + o(u-\bar u),
\end{split}
\end{equation}
where $o(u - \bar u)$ is any quantity which goes to zero faster than $u - \bar u$. There exists $\delta'$ such that for each $u \in (\bar u - \delta', \bar u)$
\[
\eta (u - \bar u) + o(u-\bar u) \leq 0
\]
and so by \eqref{f_h}, $f(u) < h(u)$, a contradiction, since $h(u) = \conv_{[a,b]}f(u) \leq f(u)$ for each $u$. 
In a similar way one can prove that $h'(\bar u+) \leq f'(\bar u)$ and thus
\[
f'(\bar u) \leq h'(\bar u-) \leq h'(\bar u+) \leq f'(\bar u).
\]

(\ref{convex_fundamental_thm_3}). Let $u,v \in [a,b]$, $u < v$. Since $h$ is convex and differentiable, $h'(u) \leq h'(v)$. We want to estimate $h'(v) - h'(u)$, so let us assume that $h'(v) > h'(u)$. Then there exist $u_1, v_1 \in [a,b]$ such that $u \leq u_1 < v_1 \leq v$ and $h'(u) = f'(u_1)$, $h'(v) = h'(v_1)$. Indeed, if $h(u) = f(u)$, then you can choose $u_1 := u$; if $h(u) < f(u)$, you choose $u_1 := \sup I$, where $I$ is the maximal shock interval which $u$ belongs to (recall that we already know that $h$ is differentiable). In a similar way you choose $v_1$ and it holds $u_1 < v_1$. Hence
\[
\frac{h'(v)-h'(u)}{v-u} \leq \frac{f'(v_1) - f'(u_1)}{v_1 - u_1} \leq \lVert f'' \Vert_{L^{\infty}[a,b]}
\]
and so $h'$ is Lipschitz continuous with Lipschitz constant less or equal than $\lVert f'' \Vert_{L^{\infty}(a,b)}$.
\end{proof}

A similar result holds for the piecewise affine interpolation of a smooth function $f$.

\begin{proposition}
\label{convex_fundamental_thm_affine}
Let $\e >0$ be fixed. Assume $a,b \in \Z\e$. Let $f$ be a smooth function and let $f_\e$ be its piecewise affine interpolation with grid size $\e$. Then the derivative $\frac{d}{du}\conv_{[a,b]} f_\e$ of its convex envelope is a piecewise constant increasing function defined on $[a,b] \setminus \Z\e$, which enjoys the following Lipschtitz-like property:
for any $m<m'$ in $\Z$ such that $m\e,m'\e \in (a,b]$
\begin{equation}
\label{convex_fundamental_thm_4}
\begin{split}
\bigg(\frac{d}{du}\conv_{[a,b]} f_\e \bigg) \big( ( (m'-1)\e, m'\e ) \big) - \bigg(\frac{d}{du}\conv_{[a,b]} f_\e \bigg)& \big( ( (m-1)\e, m\e ) \big) \\
&\leq \lVert f''\lVert_{L^\infty(a,b)} \big( m'\e - (m-1)\e \big).
\end{split}
\end{equation}
\end{proposition}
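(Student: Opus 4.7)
The plan is to begin by showing that $h := \conv_{[a,b]} f_\e$ is itself piecewise affine with breakpoints confined to the grid $\Z\e \cap [a,b]$. Since $f_\e$ is piecewise affine with corners only at grid points, $h$ coincides with the lower convex hull of the finite vertex set $\{(k\e, f(k\e)) : k\e \in [a,b]\cap\Z\e\}$, so its breakpoints are a subset of these grid points. Writing the successive hull vertices as $a = k_0\e < k_1\e < \dots < k_N\e = b$, $h$ is affine on each $(k_{j-1}\e, k_j\e)$ with slope $T_j := (f(k_j\e) - f(k_{j-1}\e))/((k_j - k_{j-1})\e)$, and convexity forces $T_1 < \dots < T_N$. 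This immediately delivers the first assertion: $\frac{d}{du} h$ is well-defined, piecewise constant, and (weakly) increasing on $[a,b] \setminus \Z\e$, taking the value $T_{i(m)}$ on $J_m := ((m-1)\e, m\e)$, where $i(m)$ is the unique index with $k_{i(m)-1} < m \leq k_{i(m)}$.

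For the Lipschitz-like bound I fix $m < m'$ with $m\e, m'\e \in (a,b]$ and set $i := i(m)$, $i' := i(m')$. The case $i = i'$ is trivial, so assume $i < i'$. The key discrete-convexity observation is that, because every intermediate grid point $(p\e, f(p\e))$ with $k_{i-1} < p < k_i$ lies on or above the chord of slope $T_i$, the rightmost one-step slope within the $i$-th hull edge, $s_{k_i} := (f(k_i\e) - f((k_i - 1)\e))/\e$, satisfies $s_{k_i} \leq T_i$; the symmetric argument at the left end of the $i'$-th edge gives $s_{k_{i'-1}+1} \geq T_{i'}$. Applying the mean value theorem to $f$ on the single-grid intervals $((k_i-1)\e, k_i\e)$ and $(k_{i'-1}\e, (k_{i'-1}+1)\e)$ produces points $\eta, \eta'$ in those respective intervals with $f'(\eta) \leq T_i < T_{i'} \leq f'(\eta')$, and continuity of $f'$ together with the intermediate value theorem then yields $\xi \leq \xi'$ in $[\eta, \eta']$ with $f'(\xi) = T_i$ and $f'(\xi') = T_{i'}$.

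To conclude I apply the Lipschitz estimate for $f'$, $T_{i'} - T_i = f'(\xi') - f'(\xi) \leq \|f''\|_{L^\infty(a,b)}(\xi' - \xi)$, and bound $\xi' - \xi \leq \eta' - \eta < (k_{i'-1} - k_i + 2)\e \leq (m' - m + 1)\e$, using the defining inequalities $k_{i'-1} \leq m' - 1$ and $k_i \geq m$. The main obstacle the argument must clear is precisely this length bound: a naive mean value theorem applied directly to the chord slopes $T_i, T_{i'}$ places $\xi, \xi'$ only inside the full hull edges $(k_{i-1}\e, k_i\e)$ and $(k_{i'-1}\e, k_{i'}\e)$, yielding an uncontrolled length of order $(k_{i'} - k_{i-1})\e$ that has no reason to match $(m' - m + 1)\e$. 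The localisation of the mean value points to the one-step intervals at the extreme hull vertices, forced by the discrete-convexity inequalities $s_{k_i} \leq T_i$ and $s_{k_{i'-1}+1} \geq T_{i'}$, is the decisive step that produces the sharp bound $(m'\e - (m-1)\e)$.
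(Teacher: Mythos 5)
Your proof is correct and is essentially the paper's own argument in different clothing: your discrete-convexity inequalities $s_{k_i}\leq T_i$ and $s_{k_{i'-1}+1}\geq T_{i'}$ at the outer cells of the hull edges are exactly the paper's comparison of the envelope's slope with the one-step slope of $f_\e$ at $\sup I$ (resp.\ $\inf I'$) of the maximal shock interval, followed by the same mean value localisation to a single grid cell. The only (harmless) difference is your extra intermediate value step to get $f'(\xi)=T_i$ exactly; the one-sided bounds $f'(\eta)\leq T_i<T_{i'}\leq f'(\eta')$ already suffice.
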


\begin{proof}
Arguing as in Point (\ref{convex_fundamental_thm_3}) of Theorem \ref{convex_fundamental_thm}, assume that the l.h.s. of \eqref{convex_fundamental_thm_4} is strictly positive. Let $m_1\e:= \sup I$, where $I$ is the maximal shock interval which $\Big((m-1)\e,m\e \Big)$ belongs to (if such an interval $I$ does not exist, set $m_1:=m$). By definition of convex envelope 
$$f_\e\Big((m_1-1)\e\Big) \geq \conv_{[a,b]} f_\e\Big((m_1-1)\e\Big)$$
and by definition of maximal shock interval 
$$f_\e ( m_1\e ) = \conv_{[a,b]} f_\e ( m_1\e ).$$
Hence
\begin{equation}
\label{sinistra}
\begin{split}
\bigg(\frac{d}{du}\conv_{[a,b]} f_\e \bigg) \big( ( (m-1)\e, m\e ) \big) \geq&~
\bigg(\frac{d}{du} f_\e \bigg)\big( ( (m_1-1)\e, m_1\e )\big) \\
=&~ \frac{f_\e(m_1\e) - f_\e((m_1-1)\e)}{\e} \\
\text{(by definition of $f_\e$)} =&~ \frac{f(m_1\e) - f((m_1-1)\e)}{\e} \\
=&~ f'(\xi),
\end{split}
\end{equation}
for some $\xi \in ((m_1-1)\e,m_1\e)$.

Similarly you can find $\xi'$  such that $(m-1)\e \leq \xi < \xi' \leq m'\e$ and
\begin{equation}
\label{destra}
\bigg(\frac{d}{du}\conv_{[a,b]} f_\e \bigg) \big( ( (m'-1)\e, m'\e ) \big) \leq f'(\xi').
\end{equation}
Hence
\begin{equation*}
\begin{split}
\bigg(\frac{d}{du}\conv_{[a,b]} f_\e \bigg) \big( ((m'-1)\e, m'\e) \big) - \bigg(\frac{d}{du}\conv_{[a,b]} f_\e \bigg)& \big( ( (m-1)\e, m\e ) \big) \\
\text{(by \eqref{sinistra} and \eqref{destra})} \leq&~ f'(\xi') - f'(\xi) \\
\leq&~ \lVert f''\lVert_{L^\infty(a,b)} (\xi' - \xi) \\
\leq&~ \lVert f''\lVert_{L^\infty(a,b)} \Big[ m'\e - (m-1)\e \Big], 
\end{split}
\end{equation*}
concluding the proof.
\end{proof}

\subsection{Further estimates}

We are now able to state some useful results about convex envelopes, which we will frequently use in the following sections.

\begin{proposition}
\label{tocca}
Let $f: \R \to \R$ be continuous and let $a < \bar{u} < b$. If $\conv_{[a,b]}f(\bar{u}) = f(\bar{u})$, then 
\[
\conv_{[a,b]}f = \conv_{[a,\bar u]}f \cup \conv_{[\bar u,b]}f.
\]
\end{proposition}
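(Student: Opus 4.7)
The plan is to denote $h := \conv_{[a,b]} f$, $h_1 := \conv_{[a,\bar u]} f$, $h_2 := \conv_{[\bar u, b]} f$, and to prove the two inequalities $h \leq h_1 \cup h_2$ and $h_1 \cup h_2 \leq h$ on $[a,b]$, where the right-hand side is first shown to be a continuous convex function.

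First I would show the easy inequality $h \leq h_1 \cup h_2$. The restriction $h|_{[a,\bar u]}$ is convex on $[a,\bar u]$ and bounded above by $f$, hence by maximality in Definition \ref{convex_fcn} it satisfies $h|_{[a,\bar u]} \leq h_1$. Symmetrically $h|_{[\bar u, b]} \leq h_2$, so $h \leq h_1 \cup h_2$ on all of $[a,b]$. In particular, evaluating at $\bar u$ and using the hypothesis $h(\bar u) = f(\bar u)$ together with $h_1(\bar u) \leq f(\bar u)$ forces $h_1(\bar u) = h(\bar u) = f(\bar u)$, and analogously $h_2(\bar u) = f(\bar u)$. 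This guarantees that $h_1 \cup h_2$ is well defined and continuous at $\bar u$.

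Next I would verify that $h_1 \cup h_2$ is convex on the whole interval $[a,b]$. Convexity on $[a,\bar u]$ and $[\bar u, b]$ separately is automatic, so the only issue is the matching of one-sided derivatives at $\bar u$. Since $h_1 \geq h$ on $[a,\bar u]$ with equality at $\bar u$, the difference $h_1 - h$ attains its minimum at the right endpoint, so for $u' < \bar u$
\[
\frac{h_1(u') - h_1(\bar u)}{u' - \bar u} \;\leq\; \frac{h(u') - h(\bar u)}{u' - \bar u},
\]
and letting $u' \to \bar u^-$ gives $h_1'(\bar u -) \leq h'(\bar u -)$. A symmetric argument on $[\bar u, b]$ yields $h'(\bar u +) \leq h_2'(\bar u +)$. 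Combining with $h'(\bar u -) \leq h'(\bar u +)$, which holds because $h$ is convex on $[a,b]$, gives $h_1'(\bar u -) \leq h_2'(\bar u +)$, which is exactly the condition for $h_1 \cup h_2$ to be convex at $\bar u$.

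Finally, since $h_1 \cup h_2$ is a convex function on $[a,b]$ with $h_1 \cup h_2 \leq f$ (built in by the definitions of $h_1$ and $h_2$), the maximality property of $h$ forces $h_1 \cup h_2 \leq h$, closing the chain of inequalities. The main obstacle is the convexity check at the junction point $\bar u$; everything else is essentially bookkeeping using the maximality property in Definition \ref{convex_fcn}, and it is precisely at that step that the hypothesis $\conv_{[a,b]} f(\bar u) = f(\bar u)$ is used, both to align the values $h_1(\bar u) = h_2(\bar u) = h(\bar u)$ and to produce the one-sided derivative comparison.
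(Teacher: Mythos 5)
Your proof is correct and rests on the same two ingredients as the paper's: the maximality characterization of the convex envelope, and a gluing argument at $\bar u$ whose only delicate point is the one-sided derivative comparison $h_1'(\bar u-) \leq h'(\bar u-) \leq h'(\bar u+) \leq h_2'(\bar u+)$, which in turn uses the hypothesis $\conv_{[a,b]}f(\bar u)=f(\bar u)$ to align the values at the junction. The only difference is organizational — the paper runs the gluing inside a contradiction argument (pasting a hypothetical better local competitor $g$ onto $\conv_{[a,b]}f|_{[\bar u,b]}$) and leaves the junction convexity as ``a direct verification,'' whereas you glue $\conv_{[a,\bar u]}f$ and $\conv_{[\bar u,b]}f$ directly and spell that verification out.
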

\begin{proof}
We have to prove that 
\[
\conv_{[a,b]}f|_{[a, \bar u]} = \conv_{[a, \bar u]}f
\]
and 
\[
\conv_{[a,b]}f|_{[\bar u, b]} = \conv_{[\bar u, b]}f.
\]
Let $h:= \conv_{[a,b]}f|_{[a, \bar u]}$. By contradiction, assume $h \neq  \conv_{[a,\bar u]}f$. Then there exists a function $g$ defined on $[a, \bar u]$, convex, such that $h \leq g \leq f$ on $[a, \bar u]$ and such that $h(\tilde u) < g(\tilde u)$ for some $\tilde u \in (a,\bar u)$. Then a direct verification yields that $g \cup  \conv_{[a,b]}f|_{[\bar u, b]}$ is a convex function, and it is less or equal then $f$ on $[a,b]$. Hence, by definition of convex envelope, 
\[
g(\tilde u) = (g \cup  \conv_{[a,b]}f|_{[\bar u, b]})(\tilde u) \leq \conv_{[a,b]}f(\tilde u) = h(\tilde u) < g(\tilde u),
\]
a contradiction. In a similar way one can prove that $\conv_{[a,b]}f|_{[\bar u, b]} = \conv_{[\bar u, b]}f$. 
\end{proof}


\begin{corollary}
\label{RP_ridotto}
Let $f: \R \to \R$ be continuous and let $a < \bar{u} < b$. Assume that $\bar{u}$ belongs to a maximal shock interval $(u_1,u_2)$ with respect to $\conv_{[a,b]}f$. Then $\conv_{[a,\bar u]}f|_{[a,u_1]} = \conv_{[a,b]}f|_{[a,u_1]}$.
\end{corollary}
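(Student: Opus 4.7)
My plan is to apply Proposition \ref{tocca} twice, once on the interval $[a,b]$ and once on the interval $[a,\bar u]$, using $u_1$ as the splitting point in both cases, and then observe that the two restrictions agree because both coincide with $\conv_{[a,u_1]}f$.

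The first step is to note that because $(u_1,u_2)$ is a \emph{maximal} shock interval of $\conv_{[a,b]}f$ containing $\bar u$, the continuity of $f$ and of $\conv_{[a,b]}f$ forces $\conv_{[a,b]}f(u_1)=f(u_1)$ (otherwise the shock interval could be extended to the left of $u_1$). Assuming $u_1>a$ (the case $u_1=a$ is immediate, since both restrictions reduce to the single value $f(a)$), Proposition \ref{tocca} applied on $[a,b]$ with splitting point $u_1$ yields
\[
\conv_{[a,b]}f \big|_{[a,u_1]} = \conv_{[a,u_1]}f.
\]

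The second step is to establish that also $\conv_{[a,\bar u]}f(u_1)=f(u_1)$. For this I would use the sandwich argument: the restriction of $\conv_{[a,b]}f$ to $[a,\bar u]$ is convex and bounded above by $f$, so by the very definition of convex envelope
\[
\conv_{[a,b]}f(u) \leq \conv_{[a,\bar u]}f(u) \leq f(u) \qquad \text{for all } u\in[a,\bar u].
\]
Evaluating at $u=u_1$ and using the first step gives $f(u_1)\leq \conv_{[a,\bar u]}f(u_1)\leq f(u_1)$, whence equality holds. A second application of Proposition \ref{tocca}, this time on $[a,\bar u]$ with splitting point $u_1$, then produces
\[
\conv_{[a,\bar u]}f \big|_{[a,u_1]} = \conv_{[a,u_1]}f.
\]

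Comparing the two identities yields the desired equality. There is no real obstacle here; the only subtlety worth double-checking is the boundary case $u_1=a$, which needs to be treated separately (but trivially) because Proposition \ref{tocca} requires the splitting point to lie strictly in the interior.
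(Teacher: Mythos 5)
Your proof is correct and follows essentially the same route as the paper's: the paper's (one-line) argument is precisely that maximality forces $\conv_{[a,b]}f(u_1)=f(u_1)$ and then Proposition \ref{tocca} applies, which is what you carry out, together with the natural sandwich argument $\conv_{[a,b]}f \leq \conv_{[a,\bar u]}f \leq f$ on $[a,\bar u]$ needed for the second application of that proposition. The extra care with the boundary case $u_1=a$ is a sensible (if trivial) addition.
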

\begin{proof}
It is an easy consequence of Proposition \ref{tocca}, just observing that by maximality of $(u_1, u_2)$, $\conv_{[a,b]}f(u_1) = f(u_1)$.
\end{proof}

\begin{proposition}
\label{vel_aumenta}
Let $f: \R \to \R$ be continuous; let $a < \bar{u} < b$. Then
\begin{enumerate}
 \item $\big(\frac{d}{du}\conv_{[a,\bar{u}]}f\big)(u+) \geq \big(\frac{d}{du}\conv_{[a,b]}f\big)(u+)$ for each $u \in [a, \bar{u})$;
 \item $\big(\frac{d}{du}\conv_{[a,\bar{u}]}f\big)(u-) \geq \big(\frac{d}{du}\conv_{[a,b]}f\big)(u-)$ for each $u \in (a, \bar{u}]$; 
 \item $\big(\frac{d}{du}\conv_{[\bar{u},b]}f\big)(u+) \leq \big(\frac{d}{du}\conv_{[a,b]}f\big)(u+)$ for each $u \in [\bar{u},b)$;
 \item $\big(\frac{d}{du}\conv_{[\bar{u},b]}f\big)(u-) \leq \big(\frac{d}{du}\conv_{[a,b]}f\big)(u-)$ for each $u \in (\bar{u},b]$.
\end{enumerate}
\end{proposition}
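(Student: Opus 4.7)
Set $g_1 := \conv_{[a,b]}f$, $g_2 := \conv_{[a,\bar u]}f$, $g_3 := \conv_{[\bar u, b]}f$. I would begin by recording the preliminary comparison $g_1 \leq g_2$ on $[a,\bar u]$ and $g_1 \leq g_3$ on $[\bar u, b]$, which is immediate from the definition of convex envelope (the restriction of $g_1$ is convex and dominated by $f$ on the smaller interval, hence bounded above by the envelope there). The plan is to prove (1) and (3) directly, and then deduce (2) and (4) from them by a limit argument. Note that although both value inequalities have the same sign, the asserted derivative inequalities point in \emph{opposite} directions in (1)--(2) versus (3)--(4); this forces the two halves to be handled by structurally different arguments.

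For (1), at a fixed $u \in [a, \bar u)$, I would split on whether $g_1(u) = f(u)$ (no-shock case) or $g_1(u) < f(u)$ (shock case). In the no-shock case, sandwich gives $g_2(u) = f(u) = g_1(u)$, and the right tangent $L(v) := g_1(u) + g_1'(u+)(v-u)$ satisfies $L \leq g_1$ globally on $[a,b]$ (a tangent to a convex function lies below it on both sides), hence $L \leq f$ on $[a,\bar u]$ and thus $L \leq g_2$ there; combined with $L(u) = g_2(u)$, passing to the limit from the right yields $g_2'(u+) \geq g_1'(u+)$. In the shock case, Proposition \ref{shock} places $u$ inside a maximal shock interval $(u_1, u_2)$ of $g_1$ of constant slope $m = g_1'(u+)$, with $g_1(u_1) = f(u_1) = g_2(u_1)$ by sandwich; monotonicity of the slope function of the convex function $g_2$ combined with $g_2(u) \geq g_1(u)$ then gives
\[
g_2'(u+) \geq \frac{g_2(u) - g_2(u_1)}{u - u_1} \geq \frac{g_1(u) - g_1(u_1)}{u - u_1} = m.
\]

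For (3), at $u \in [\bar u, b)$, the shock case is the mirror image: the right endpoint $u_2 \in (\bar u, b]$ of the shock of $g_1$ containing $u$ satisfies $g_3(u_2) = g_1(u_2) = f(u_2)$, and convexity of $g_3$ together with $g_3(u) \geq g_1(u)$ yields the reverse-direction bound
\[
g_3'(u+) \leq \frac{g_3(u_2) - g_3(u)}{u_2 - u} \leq \frac{g_1(u_2) - g_1(u)}{u_2 - u} = m = g_1'(u+).
\]
The no-shock case $g_1(u) = f(u)$ is handled by Theorem \ref{convex_fundamental_thm}(\ref{convex_fundamental_thm_2}), which forces $g_1'(u) = f'(u) = g_3'(u)$ at every point interior to both intervals of definition; the boundary value $u = \bar u$ needs a separate argument, using $g_3 \leq f$ with $g_3(\bar u) = f(\bar u)$ to obtain $g_3'(\bar u +) \leq f'(\bar u)$ directly from the difference quotients, and matching this with $g_1'(\bar u +) = f'(\bar u)$ given by Theorem \ref{convex_fundamental_thm}(\ref{convex_fundamental_thm_2}) applied to $g_1$ on the ambient interval.

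Finally, (2) and (4) follow from (1) and (3) respectively via the standard convex-analysis identity $g'(u-) = \lim_{v \to u^-} g'(v+)$, which allows the pointwise right-derivative inequalities to pass to left derivatives in the limit. The main subtle point I anticipate is precisely the boundary $u = \bar u$ in items (3)--(4): the symmetric tangent-line argument used in (1) is unavailable there because $\bar u$ is an endpoint of the domain of $g_3$, so one must instead exploit the direct bound $g_3 \leq f$ via difference quotients.
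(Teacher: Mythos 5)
Your argument for points (1) and (2), and for the shock case of (3)--(4), is correct and rather elementary; it also differs from the paper's in a genuine way. You case-split on whether $u$ itself is a shock point of $\conv_{[a,b]}f$, use the global tangent $L$ for the no-shock case of (1), and compare secant slopes for the shock cases; the paper instead case-splits on whether $\bar u$ is a shock point and tracks the maximal shock interval at $\bar u$ via Proposition~\ref{tocca} and Corollary~\ref{RP_ridotto}. Your decomposition is more local and, in (1), avoids the auxiliary corollaries entirely. The observation that (3)--(4) cannot mirror (1)--(2) because the tangent-line comparison points the wrong way is also correct.

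The gap is in the no-shock case of (3) (and hence (4), which you derive from it). You invoke Theorem~\ref{convex_fundamental_thm}(\ref{convex_fundamental_thm_2}) to conclude $g_1'(u) = f'(u) = g_3'(u)$, and at the boundary $u = \bar u$ you additionally need the one-sided derivative $f'(\bar u+)$ to exist to compare difference quotients. But Theorem~\ref{convex_fundamental_thm} assumes $f \in C^2$, while Proposition~\ref{vel_aumenta} is stated for $f$ merely \emph{continuous}, and it is applied in Section~\ref{sect_wavefront} with $f$ replaced by the piecewise affine interpolation $f_\e$, whose pointwise derivative does not exist at the nodes $\Z\e$. So the step $g_1'(u) = f'(u) = g_3'(u)$ is not available in the generality required. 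The repair is short and does not need differentiability of $f$: if $g_1(u) = f(u)$ for some $u \in [\bar u, b)$, then Proposition~\ref{tocca} gives $g_1|_{[u,b]} = \conv_{[u,b]}f$; sandwiching ($g_1(u) \le g_3(u) \le f(u)$) gives $g_3(u) = f(u)$ too, so Proposition~\ref{tocca} applied to $g_3$ on $[\bar u,b]$ gives $g_3|_{[u,b]} = \conv_{[u,b]}f = g_1|_{[u,b]}$, whence $g_3'(u+) = g_1'(u+)$. This also disposes of the endpoint $u = \bar u$ without a separate difference-quotient argument.
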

\begin{proof}
We prove only the first point, the other ones being similar. Let $u \in [a, \bar u)$. 

If $\conv_{[a,b]}f(\bar u) = f(\bar u)$, then by Proposition \ref{tocca}, $\conv_{[a,b]}f|_{[a, \bar u]} = \conv_{[a,\bar u]}f$ and then we have done. 

Otherwise, if $\conv_{[a,b]}f(\bar u) < f(\bar u)$, denote by $(u_1, u_2)$ the maximal shock interval of $\conv_{[a,b]}f$, such that $\bar u \in (u_1, u_2)$. Hence, by Corollary \ref{RP_ridotto}, $\conv_{[a,b]}f|_{[a, u_1]} = \conv_{[a,\bar u]}f|_{[a, u_1]}$. Thus if $u \in [a, u_1)$, we have completed.

For $u = u_1$, if 
\begin{equation*}
\Big(\frac{d}{du}\conv_{[a,\bar{u}]}f\Big)(u_1+) < \Big(\frac{d}{du}\conv_{[a,b]}f\Big)(u_1+),
\end{equation*}
since $\conv_{[a,\bar u]} f(u_1) = \conv_{[a,b]}f(u_1)$, then $\conv_{[a,\bar u]} f < \conv_{[a,b]}f$ on $(u_1, u_1 + \delta)$ for some $\delta$ sufficiently small, but this is a contradiction, by definition of convex envelope.

Finally, if $u \in (u_1, \bar u)$, then 
\[
\begin{split}
\Big(\frac{d}{du}\conv_{[a,\bar{u}]}f\Big)(u+) \geq&~ \Big(\frac{d}{du}\conv_{[a,\bar{u}]}f\Big)(u_1+) \\
\geq&~ \Big(\frac{d}{du}\conv_{[a,b]}f\Big)(u_1+) \\
=&~ \Big(\frac{d}{du}\conv_{[a,b]}f\Big)(u).
\end{split}
\]
We have used that $\conv_{[a.b]} f$ is affine on $[u_1,u_2]$, by Proposition \ref{shock}.
\end{proof}

\begin{proposition}
\label{differenza_vel}
Let $f: \R \to \R$ be continuous; let $a < \bar{u} < b$. Then
\begin{enumerate}
\item for each $u_1, u_2 \in [a, \bar{u})$, $u_1 < u_2$,
\begin{align*}
\Big(\frac{d}{du}\conv_{[a,\bar{u}]}f\Big)(u_2+) - \Big(& \frac{d}{du}\conv_{[a,\bar{u}]}f\Big)(u_1+) \\
\geq&~ \Big(\frac{d}{du}\conv_{[a,b]}f\Big)(u_2+) - \Big(\frac{d}{du}\conv_{[a,b]}f\Big)(u_1+);  
\end{align*}
 \item for each $u_1, u_2 \in (a, \bar{u}]$, $u_1 < u_2$, 
\begin{align*}
\Big(\frac{d}{du}\conv_{[a,\bar{u}]}f\Big)(u_2-) - \Big(&\frac{d}{du}\conv_{[a,\bar{u}]}f\Big)(u_1-) \\ \geq&~
\Big(\frac{d}{du}\conv_{[a,b]}f\Big)(u_2-) - \Big(\frac{d}{du}\conv_{[a,b]}f\Big)(u_1-);
\end{align*}
 \item for each $u_1, u_2 \in [\bar{u},b)$, $u_1 < u_2$,
\begin{align*}
\Big(\frac{d}{du}\conv_{[\bar{u},b]}f\Big)(u_2+) -\Big(&\frac{d}{du}\conv_{[\bar{u},b]}f\Big)(u_1+) \\ \geq&~ \Big(\frac{d}{du}\conv_{[a,b]}f\Big)(u_2+) - \Big(\frac{d}{du}\conv_{[a,b]}f\Big)(u_1+);
\end{align*}
 \item for each $u_1, u_2 \in (\bar{u},b]$, $u_1 < u_2$, 
\begin{align*}
\Big(\frac{d}{du}\conv_{[\bar{u},b]}f\Big)(u_2-) -\Big(&\frac{d}{du}\conv_{[\bar{u},b]}f\Big)(u_1-) \\ \geq&~ \Big(\frac{d}{du}\conv_{[a,b]}f\Big)(u_2-) - \Big(\frac{d}{du}\conv_{[a,b]}f\Big)(u_1-).
\end{align*}
\end{enumerate}
\end{proposition}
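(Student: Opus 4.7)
\medskip

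\noindent\textbf{Proof proposal.} I plan to prove item (1); items (2)--(4) will follow by completely symmetric arguments, replacing right derivatives by left derivatives and/or swapping the roles of $[a,\bar u]$ and $[\bar u,b]$. Set $h := \conv_{[a,\bar u]} f$ and $H := \conv_{[a,b]} f$. The goal is to show
\[
h'(u_2+) - h'(u_1+) \;\geq\; H'(u_2+) - H'(u_1+) \qquad \text{for all } a \leq u_1 < u_2 < \bar u.
\]

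The strategy is a dichotomy on the behavior of $H$ at $\bar u$. If $H(\bar u) = f(\bar u)$, then Proposition \ref{tocca} yields $H|_{[a,\bar u]} = h$, and the inequality is in fact an equality. Otherwise $H(\bar u) < f(\bar u)$, and $\bar u$ lies in a maximal shock interval $(u_\ast,u^\ast)$ of $H$. By Proposition \ref{shock}, $H$ is affine on $[u_\ast,u^\ast]$, so $H'$ is constant on $(u_\ast,u^\ast)$; and by Corollary \ref{RP_ridotto}, $H$ and $h$ agree on $[a,u_\ast]$, so their one-sided derivatives coincide at every point of $[a,u_\ast)$.

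I then split according to the location of $u_1,u_2$ relative to $u_\ast$. If $u_2 \leq u_\ast$, both sides are equal by the agreement above. If $u_2 > u_\ast$, then $u_2 \in (u_\ast,u^\ast)$ (since $u_2 < \bar u < u^\ast$), hence $H'(u_2+)$ equals the common constant value of $H'$ on $(u_\ast,u^\ast)$. Now two subcases arise. When $u_1 \leq u_\ast$, I use $h'(u_1+) = H'(u_1+)$ together with $h'(u_2+) \geq H'(u_2+)$ from Proposition \ref{vel_aumenta} to subtract and conclude. When $u_\ast < u_1 < u_2 < \bar u$, both $u_1$ and $u_2$ sit in the shock interval, so $H'(u_2+) - H'(u_1+) = 0$, whereas the left-hand side is nonnegative by convexity of $h$.

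I do not foresee a genuinely hard step: the argument is a straightforward case analysis that reduces everything to Proposition \ref{tocca}, Corollary \ref{RP_ridotto} and Proposition \ref{vel_aumenta}, with convexity providing the trivial bound in the residual subcase. The mildest subtlety is correctly pairing the one-sided derivative (right vs.\ left) with the endpoint of the sub-interval one compares to (i.e.\ $u_\ast$ vs.\ $u^\ast$), and making sure the ``touching'' case is invoked at the right endpoint: for item (2) I will test $H(\bar u) = f(\bar u)$ exactly as here; for items (3)--(4) I will instead test $H(\bar u)=f(\bar u)$ and replace $(u_\ast,u^\ast)$ by the maximal shock interval of $H$ containing $\bar u$, using that $H$ agrees with $\conv_{[\bar u,b]} f$ on $[u^\ast,b]$, which is the mirror image of Corollary \ref{RP_ridotto}.
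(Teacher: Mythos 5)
Your approach is correct and is essentially the paper's: the paper treats this proposition as an ``easy consequence'' of Proposition \ref{vel_aumenta}, whose own proof already contains exactly your dichotomy ($H(\bar u)=f(\bar u)$ versus the maximal shock interval $(u_\ast,u^\ast)$ of $H=\conv_{[a,b]}f$ containing $\bar u$, with Corollary \ref{RP_ridotto} giving $h=H$ on $[a,u_\ast]$ and the affinity of $H$ on $[u_\ast,u^\ast]$ from Proposition \ref{shock} handling the remaining positions of $u_1,u_2$). One boundary point deserves a word: when $u_2=u_\ast$ the two sides need not be \emph{equal} (the right derivatives $h'(u_\ast+)$ and $H'(u_\ast+)$ can differ, the latter being the shock slope), and likewise your claim $h'(u_1+)=H'(u_1+)$ fails if $u_1=u_\ast$; in both cases the desired inequality still follows at once, from $h'(u_\ast+)\ge H'(u_\ast+)$ (Proposition \ref{vel_aumenta}) in the first case, and from $H'(u_1+)=H'(u_2+)$ together with convexity of $h$ in the second.
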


\begin{proof}
Easy consequence of previous proposition.
\end{proof}

\begin{corollary}
\label{stesso_shock}
Let $f: \R \to \R$ be continuous and let $a < \bar{u} < b$. Let $u_1,u_2 \in [a,\bar u]$, $u_1<u_2$. If $u_1,u_2$ belong to the same shock interval of $\conv_{[a,\bar u]}f$, then they belong to the same shock interval of $\conv_{[a,b]}f$.
\end{corollary}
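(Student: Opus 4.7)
The plan is to exhibit a single open interval that serves as a shock interval for both convex envelopes simultaneously, so that no delicate argument comparing maximal shock intervals is needed.

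First I would observe the following monotonicity under restriction of convex envelopes: since $\conv_{[a,b]}f$ is convex on $[a,b]$, its restriction to $[a,\bar u]$ is a convex function bounded above by $f|_{[a,\bar u]}$; by the very definition of $\conv_{[a,\bar u]}f$ as the supremum of such functions, this yields
\[
\conv_{[a,b]}f(u) \leq \conv_{[a,\bar u]}f(u) \qquad \text{for every } u \in [a,\bar u].
\]

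Next, let $J \subseteq [a,\bar u]$ be an open shock interval of $\conv_{[a,\bar u]}f$ containing both $u_1$ and $u_2$, so that $\conv_{[a,\bar u]}f(u) < f(u)$ for every $u \in J$. Combining this with the inequality above, for every $u \in J$ we obtain
\[
\conv_{[a,b]}f(u) \leq \conv_{[a,\bar u]}f(u) < f(u).
\]
Hence $J$ itself is a shock interval of $\conv_{[a,b]}f$, and it contains both $u_1$ and $u_2$, which is exactly the desired conclusion.

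There is essentially no obstacle here: the entire argument reduces to the elementary monotonicity property of convex envelopes under restriction of the base interval, plus the definition of shock interval. No appeal to Proposition \ref{vel_aumenta}, Proposition \ref{tocca}, or Proposition \ref{differenza_vel} is required, although one could alternatively deduce the statement from Proposition \ref{differenza_vel} by noting that $\conv_{[a,\bar u]}f$ has zero derivative jump between $u_1$ and $u_2$ (since it is affine on $J$ by Proposition \ref{shock}), forcing the right derivative of $\conv_{[a,b]}f$ to be constant on $[u_1,u_2]$ and thus $\conv_{[a,b]}f$ to be affine there; however, this is a heavier route and still needs the strict inequality argument above to conclude that the affine piece is actually below $f$.
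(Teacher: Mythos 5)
Your proof is correct. The key inequality $\conv_{[a,b]}f \leq \conv_{[a,\bar u]}f$ on $[a,\bar u]$ is exactly right: the restriction of $\conv_{[a,b]}f$ to $[a,\bar u]$ is convex and bounded above by $f$ there, hence is a competitor in the supremum defining $\conv_{[a,\bar u]}f$. Since the paper's notion of shock interval does not require maximality, exhibiting the single open interval $J$ on which $\conv_{[a,b]}f \leq \conv_{[a,\bar u]}f < f$ immediately gives a shock interval of $\conv_{[a,b]}f$ containing both $u_1$ and $u_2$, and nothing more is needed. The paper states this as a corollary with no written proof, placed directly after Propositions \ref{vel_aumenta} and \ref{differenza_vel}, so the intended derivation is evidently the derivative-comparison route you sketch at the end: affineness of $\conv_{[a,\bar u]}f$ on the shock interval plus Proposition \ref{differenza_vel} forces $\conv_{[a,b]}f$ to be affine on $[u_1,u_2]$. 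As you correctly observe, that route is both heavier and incomplete on its own, since affineness of $\conv_{[a,b]}f$ does not by itself yield the strict inequality $\conv_{[a,b]}f < f$ required by the definition of shock interval — one still needs the pointwise ordering of the two envelopes to close that gap. Your direct argument is therefore the cleaner one, and it is the one that actually matches how the corollary is used later (e.g.\ in Proposition \ref{W_unite_realta} and Claim \ref{W_riemann_dopo}, in contrapositive form).
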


\begin{proposition}
\label{incastro}
Let $f: \R \to \R$ be smooth; let $u_1 < u_2 < u_3 < u_4 < u_5$ be real numbers. Assume that
\begin{enumerate}
 \item $\conv_{[u_1,u_4]}f(u_2) = f(u_2)$;
 \item $\conv_{[u_2,u_5]}f(u_3) = f(u_3)$.
\end{enumerate}
Then $\conv_{[u_1,u_5]}f(u_2) = f(u_2)$.
\end{proposition}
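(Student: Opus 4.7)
The strategy I would use is to construct, by concatenation, a convex function on $[u_1,u_5]$ lying below $f$ and passing through $(u_2,f(u_2))$; once such a function exists, the definition of convex envelope immediately forces $\conv_{[u_1,u_5]}f(u_2)\ge f(u_2)$, and combined with the opposite inequality (which is automatic) this gives the claim.

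Concretely, I would define
\[
g(u):=\begin{cases}\conv_{[u_1,u_2]}f(u) & u\in[u_1,u_2],\\ \conv_{[u_2,u_5]}f(u) & u\in[u_2,u_5].\end{cases}
\]
Since $\conv_{[a,b]}f$ agrees with $f$ at the endpoints $a,b$, the two pieces both equal $f(u_2)$ at the junction, so $g$ is well defined and satisfies $g\le f$ on $[u_1,u_5]$. It is convex on each half, so the only thing to check is that the left derivative at $u_2$ does not exceed the right derivative there, i.e.\
\begin{equation*}
\bigl(\tfrac{d}{du}\conv_{[u_1,u_2]}f\bigr)(u_2-)\ \le\ \bigl(\tfrac{d}{du}\conv_{[u_2,u_5]}f\bigr)(u_2+).
\end{equation*}
Once this is established, $g$ is a convex function on $[u_1,u_5]$ below $f$, so $\conv_{[u_1,u_5]}f\ge g$, giving $\conv_{[u_1,u_5]}f(u_2)\ge g(u_2)=f(u_2)$ and we are done.

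The main obstacle, which is exactly the step that forces us to invoke both hypotheses, is proving the derivative inequality above: a direct use of Proposition \ref{vel_aumenta} would give us the wrong direction, since shrinking the right endpoint of the interval $[u_2,\cdot]$ \emph{increases} the right derivative at $u_2$. I would get around this by inserting the intermediate interval $[u_2,u_3]$ as a bridge, exploiting both hypotheses as follows. From hypothesis $(1)$ and Proposition \ref{tocca}, $\conv_{[u_1,u_4]}f$ splits as $\conv_{[u_1,u_2]}f\cup\conv_{[u_2,u_4]}f$; the convexity of the whole on $[u_1,u_4]$ then yields
\[
\bigl(\tfrac{d}{du}\conv_{[u_1,u_2]}f\bigr)(u_2-)\ \le\ \bigl(\tfrac{d}{du}\conv_{[u_2,u_4]}f\bigr)(u_2+).
\]
Applying Proposition \ref{vel_aumenta}(1) with $a=u_2$, $\bar u=u_3$, $b=u_4$ at the point $u=u_2$ gives
\[
\bigl(\tfrac{d}{du}\conv_{[u_2,u_4]}f\bigr)(u_2+)\ \le\ \bigl(\tfrac{d}{du}\conv_{[u_2,u_3]}f\bigr)(u_2+).
\]
Finally, hypothesis $(2)$ and Proposition \ref{tocca} (applied on $[u_2,u_5]$ at the internal contact point $u_3$) give $\conv_{[u_2,u_5]}f|_{[u_2,u_3]}=\conv_{[u_2,u_3]}f$, hence
\[
\bigl(\tfrac{d}{du}\conv_{[u_2,u_3]}f\bigr)(u_2+)=\bigl(\tfrac{d}{du}\conv_{[u_2,u_5]}f\bigr)(u_2+).
\]
Chaining these three relations produces the required inequality, makes $g$ convex at $u_2$, and closes the argument. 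The real content, then, is the clever routing through $[u_2,u_3]$: hypothesis $(1)$ controls the \emph{left} derivative of $\conv_{[u_1,u_2]}f$ at $u_2$ via the auxiliary envelope on $[u_2,u_4]$, while hypothesis $(2)$ is precisely what lets us inflate that envelope up to $[u_2,u_5]$ without losing monotonicity of the one-sided derivative at $u_2$.
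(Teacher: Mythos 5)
Your proof is correct and follows essentially the same route as the paper: both construct the competitor $\conv_{[u_1,u_2]}f\cup\conv_{[u_2,u_3]}f\cup\conv_{[u_3,u_5]}f$ (you package it as $\conv_{[u_1,u_2]}f\cup\conv_{[u_2,u_5]}f$, which by hypothesis (2) and Proposition \ref{tocca} is the same function), and both invoke the intermediate envelope on $[u_2,u_3]$ as the bridge. The only cosmetic difference is how convexity of the competitor is certified: the paper overlaps two convex functions $\conv_{[u_1,u_3]}f$ and $\conv_{[u_2,u_5]}f$ that agree on $[u_2,u_3]$ and appeals to a gluing lemma, while you check the one-sided derivative inequality at $u_2$ by hand via Proposition \ref{vel_aumenta}; both are valid, and yours makes the role of the two hypotheses slightly more visible at the cost of a bit more bookkeeping.
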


Let us first prove the following lemma.
\begin{lemma}
In the same setting of Proposition \ref{incastro}, let $g: [u_1,u_3] \to \R$, $h: [u_2,u_5] \to \R$ be convex functions. If $g = h$ on $[u_2,u_3]$, then $g \cup h|_{[u_3,u_5]}$ is convex.  
\end{lemma}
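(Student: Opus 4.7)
The plan is to verify convexity of $F := g \cup h|_{[u_3,u_5]}$ directly via the secant-slope characterization of convexity. First, note that $F$ is well-defined because $g(u_3)=h(u_3)$ (by the coincidence hypothesis on $[u_2,u_3]$), and it is automatically convex on each of the two subintervals $[u_1,u_3]$ and $[u_3,u_5]$ (where it equals $g$ and $h|_{[u_3,u_5]}$ respectively). So convexity on $[u_1,u_5]$ will follow once we check the slope inequality for triples that straddle the junction $u_3$.

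For the junction I would compare one-sided derivatives at $u_3$. The standard monotonicity of secant slopes of a convex function gives, for every $a \in [u_1,u_3)$ and every $c \in (u_3,u_5]$,
\[
\frac{g(u_3)-g(a)}{u_3-a} \leq g'(u_3-) \qquad \text{and} \qquad h'(u_3+) \leq \frac{h(c)-h(u_3)}{c-u_3}.
\]
Consequently, once we know that $g'(u_3-) \leq h'(u_3+)$, the inequality
\[
\frac{F(u_3)-F(a)}{u_3-a} \leq \frac{F(c)-F(u_3)}{c-u_3}
\]
follows, and together with the convexity of $F$ on each subinterval this yields full convexity of $F$ on $[u_1,u_5]$.

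The only remaining point — and essentially the only place where the hypothesis "$g=h$ on $[u_2,u_3]$" is used — is to note that the coincidence of $g$ and $h$ on the non-degenerate interval $[u_2,u_3]$ forces the equality $g'(u_3-) = h'(u_3-)$ of left derivatives at $u_3$. Combined with the convexity of $h$ on $[u_2,u_5]$, which gives $h'(u_3-) \leq h'(u_3+)$, we obtain $g'(u_3-) \leq h'(u_3+)$, closing the argument. I do not foresee any real obstacle here; the one subtlety to keep in mind is that agreement at the single point $u_3$ would not suffice — one genuinely needs agreement on an interval to the left of $u_3$ to compare the left derivatives of $g$ and $h$ at that point.
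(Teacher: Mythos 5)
Your argument is correct. The paper's own proof is a one-line appeal to the fact that convexity is a local property: since $g=h$ on the nondegenerate overlap $[u_2,u_3]$, the glued function $F := g \cup h|_{[u_3,u_5]}$ coincides with $h$ on all of $[u_2,u_5]$, so $F$ is convex on each of the two overlapping intervals $[u_1,u_3]$ and $[u_2,u_5]$, and locality gives convexity on the union. You instead partition the domain at the seam $u_3$ (rather than using an overlap) and verify compatibility of the one-sided derivatives there, reducing everything to $g'(u_3-) = h'(u_3-) \leq h'(u_3+)$. The two routes are essentially the same idea in different clothing: the paper's version is shorter because it delegates the gluing to a known principle, while yours is more self-contained and quantitative, at the cost of implicitly invoking the standard fact that a function convex on $[u_1,u_3]$ and on $[u_3,u_5]$ with $F'(u_3-)\leq F'(u_3+)$ is convex on $[u_1,u_5]$ (which you state but do not prove; it is true and elementary). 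You correctly isolate where the hypothesis $u_2<u_3$ enters — agreement of $g$ and $h$ at the single point $u_3$ would not control the left derivative — which is exactly the observation that makes the paper's ``immediate'' proof work as well.
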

\begin{proof}
Immediate consequence of the fact that `being convex' is a local property.
\end{proof}

Let us now prove the proposition. 

\begin{proof}[Proof of Proposition \ref{incastro}]
Since $\conv_{[u_1,u_4]}f(u_2) = f(u_2)$, by Corollary \ref{RP_ridotto}, $\conv_{[u_1,u_3]}f(u_2) = f(u_2)$; hence 
\begin{equation*}
g := \conv_{[u_1,u_3]} f = \conv_{[u_1,u_2]} f \cup \conv_{[u_2,u_3]} f.  
\end{equation*}
Moreover, by Proposition \ref{tocca},
\begin{equation*}
h:= \conv_{[u_2,u_5]} f = \conv_{[u_2,u_3]} f \cup \conv_{[u_3,u_5]} f.  
\end{equation*}
Hence, by previous lemma, $\varphi:= g \cup h|_{[u_3,u_5]}$ is convex; moreover, by definition, $\varphi \leq f$ and, for this reason, $\varphi \leq \conv_{[u_1,u_5]}f$. Hence
\[
f(u_2) = \conv_{[u_1,u_2]}f(u_2) = g(u_2) = \varphi(u_2) \leq \conv_{[u_1,u_5]}f (u_2)\leq f(u_2)
\]
and so the thesis follows.
\end{proof}

\begin{proposition}
\label{convex_approximation}
Let $f$ be a continuous function on $\R$. Let $a < b$ and let $a_k < b_k$, $k \in \N$, two sequences such that $a_k \searrow a, b_k \nearrow b$. Assume that for each $k \in \N$ there is $u_k \in [a_k,b_k]$ such that 
\[
\conv_{[a_k,b_k]} f(u_k) = f(u_k).
\]
If $u_k \rightarrow \bar u$, then
\[
\conv_{[a,b]} f(\bar u) = f(\bar u).
\]
\end{proposition}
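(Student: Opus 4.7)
The plan is to produce, for each $k$, an affine minorant $\ell_k$ of $f$ on $[a_k,b_k]$ that agrees with $f$ at $u_k$, and then to extract a subsequence whose limit is an affine minorant of $f$ on $[a,b]$ agreeing with $f$ at $\bar u$. Such a minorant immediately forces $\conv_{[a,b]}f(\bar u)=f(\bar u)$ by Definition \ref{convex_fcn}.

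First I would dispose of the trivial boundary cases $\bar u=a$ and $\bar u=b$, where any convex envelope automatically coincides with $f$. Hence I may assume $\bar u\in(a,b)$; combined with $a_k\searrow a$ and $b_k\nearrow b$, this guarantees that for all $k$ large enough $u_k$ lies strictly between $a_k$ and $b_k$, with $u_k-a_k\to\bar u-a>0$ and $b_k-u_k\to b-\bar u>0$.

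Next, since $\conv_{[a_k,b_k]}f$ is convex and touches $f$ at the interior point $u_k$, I would choose a supporting affine function $\ell_k(x)=f(u_k)+m_k(x-u_k)$ with $\ell_k\leq\conv_{[a_k,b_k]}f\leq f$ on $[a_k,b_k]$. Evaluating at the endpoints gives
\[
\frac{f(u_k)-f(a_k)}{u_k-a_k}\leq m_k\leq \frac{f(b_k)-f(u_k)}{b_k-u_k},
\]
and by continuity of $f$ both bounds tend to finite limits. The sequence $(m_k)$ is therefore bounded, and along a subsequence $m_k\to m$ for some $m\in\R$, whence $\ell_k(x)\to \ell(x):=f(\bar u)+m(x-\bar u)$ pointwise.

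To conclude I would pass to the limit in $\ell_k\leq f$: for a fixed $x\in(a,b)$, $x$ eventually belongs to $(a_k,b_k)$, so $\ell_k(x)\leq f(x)$, and continuity of $f$ extends $\ell\leq f$ to all of $[a,b]$. Since $\ell$ is (trivially) convex and dominated by $f$ on $[a,b]$, Definition \ref{convex_fcn} yields $\ell\leq\conv_{[a,b]}f$, and evaluating at $\bar u$ gives $f(\bar u)=\ell(\bar u)\leq\conv_{[a,b]}f(\bar u)\leq f(\bar u)$. The only delicate point is the uniform bound on the slopes $m_k$: this could fail if $u_k$ sat at an endpoint of $[a_k,b_k]$ while $\bar u$ lay in the interior of $[a,b]$, but the reduction in the first step ensures $u_k$ is bounded away from both $a_k$ and $b_k$ for large $k$, making the estimate above effective.
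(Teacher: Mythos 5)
Your proof is correct, but it takes a genuinely different route from the paper's. The paper argues by contradiction: assuming $\conv_{[a,b]}f(\bar u) < f(\bar u)$, it places $\bar u$ in a maximal shock interval $(\tilde a,\tilde b)$ of $\conv_{[a,b]}f$ and splits into two cases according to whether $\tilde b<b$ or $\tilde b=b$; in the first case it uses the restriction results for convex envelopes (Proposition \ref{tocca} and Corollary \ref{RP_ridotto}) to transfer the contact condition from $[a,b_k]$ to $[a,b]$ and pass to the limit, while in the second it exhibits a chord of $\conv_{[a,b_k]}f$ forced to lie below $(u_k,f(u_k))$, contradicting convexity. You instead argue directly: a contact point of the envelope in the interior of $[a_k,b_k]$ is witnessed by a supporting affine minorant $\ell_k$ of $f$; the slopes $m_k$ are bounded because $u_k$ stays uniformly away from the endpoints (this is exactly where your preliminary reduction to $\bar u\in(a,b)$ is needed); and a subsequential limit of the $\ell_k$ is an affine minorant of $f$ on $[a,b]$ touching $f$ at $\bar u$, which pins down $\conv_{[a,b]}f(\bar u)=f(\bar u)$ by Definition \ref{convex_fcn}. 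Your argument is self-contained (it does not invoke the shock-interval machinery), avoids the boundary case distinction, and rests on the standard duality between contact points of the convex envelope and supporting affine minorants, so it generalizes easily; the paper's proof stays within the shock-interval language on which the rest of the section is built. The one step you assert without proof --- that $\conv_{[a,b]}f$ coincides with $f$ at the endpoints $a$ and $b$ --- is a standard fact about convex envelopes of continuous functions (and is used implicitly elsewhere in the paper), so the boundary reduction is harmless.
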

\begin{proof}
By simplicity, assume $a_k = a$ for each $k$, the general case being entirely similar. By contradiction, assume 
$\conv_{[a,b]} f(\bar u) < f(\bar u)$. There is a maximal shock interval $(\tilde a, \tilde b)$ containing $\bar u$ and w.l.o.g. we can assume $\tilde a = a$. 

If $\tilde b < b$, for $k$ sufficiently large, $u_k < \tilde b < b_k \leq b$. Hence
\[
f(\tilde b) = \conv_{[a,b]} f(\tilde b) \overset{\text{(Cor. \ref{RP_ridotto})}}{=} \conv_{[a,b_k]} f(\tilde b),
\]
and thus
\[
f(u_k) = \conv_{[a,b_k]} f(u_k) \overset{\text{(Prop. \ref{tocca})}}{=} \conv_{[a,\tilde b]} f(u_k) \overset{\text{(Prop. \ref{tocca})}}{=} \conv_{[a,b]} f(u_k).
\]
Passing to the limit, we get a contradiction.

On the other hand, if $\tilde b = b$, one can find a spherical neighborhood $N_1$ of $(\bar u, f(\bar u))$ in $\R^2$ and a spherical neighborhood $N_2$ of $(b,f(b))$ in $\R^2$ with the following property: for any $(u_1,v_1) \in N_1$ and for any $(u_2,v_2) \in N_2$, if $r = r(u)$ is the line joining $(a,f(a))$ and  $(u_2,v_2)$, then $v_1 > r(u_1)$. Now observe that by continuity of $f$, for $k$ sufficiently large, $(u_k,f(u_k)) \in N_1$ and $(b_k,f(b_k)) \in N_2$. Hence, denoting by $r_k$ the line joining $(a,f(a))$ and $(b_k,f(b_k))$, we get 
\[
\conv_{[a,b_k]} f(u_k) = f(u_k) > r_k(u_k),
\]
a contradiction, since $\conv_{[a,b_k]}$ is a convex function whose graph contains points $(a,f(a)), (b_k,f(b_k))$.
\end{proof}

\begin{proposition}
\label{diff_vel_proporzionale_canc}
Let $f$ be a smooth function, let $a < \bar u < b$. Then
\begin{equation*}
\bigg(\frac{d}{du}\conv_{[a,\bar u]} f\bigg)(\bar u-) - \bigg(\frac{d}{du} \conv_{[a,b]}f\bigg)(\bar u) 
\leq \lVert f'' \lVert_{L^\infty(a,b)} (b - \bar u).
\end{equation*}
Moreover, if $f_\e$ is the piecewise affine interpolation of $f$ with grid size $\e$, it holds
\begin{equation*}
\bigg(\frac{d}{du}\conv_{[a,\bar u]} f_\e\bigg)(\bar u -) - \bigg(\frac{d}{du} \conv_{[a,b]}f_\e\bigg)(\bar u-) 
\leq \lVert f'' \lVert_{L^\infty(a,b)} (b - \bar u).
\end{equation*}
\end{proposition}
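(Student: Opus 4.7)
Write $h := \conv_{[a,b]} f$ and $g := \conv_{[a,\bar u]} f$, and set $L := g'(\bar u-)$, $R := h'(\bar u)$. Proposition \ref{vel_aumenta} applied to the triple $a < \bar u < b$ gives $L \geq R$, so the quantity to be controlled is already non-negative and the task is to produce the matching upper bound.

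The plan is to split the argument according to whether $\bar u$ is a contact point of $h$ with $f$. If $h(\bar u) = f(\bar u)$, Proposition \ref{tocca} yields $h\big|_{[a,\bar u]} = \conv_{[a,\bar u]} f = g$, from which $L = h'(\bar u-) = h'(\bar u) = R$ by Theorem \ref{convex_fundamental_thm}(\ref{convex_fundamental_thm_1}), and the estimate is trivial. Otherwise $\bar u$ belongs to a maximal shock interval $(u_1,u_2)$ of $h$; by Proposition \ref{shock} the function $h$ is affine on $[u_1,u_2]$ with slope $R$, with $h(u_1) = f(u_1)$ and $h(u_2) = f(u_2)$.

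In this second case, Corollary \ref{RP_ridotto} together with the identity $g(u_1) = h(u_1) = f(u_1)$ and Proposition \ref{tocca} gives $g\big|_{[u_1,\bar u]} = \conv_{[u_1,\bar u]} f$, so $L$ is the left derivative at $\bar u$ of $\conv_{[u_1,\bar u]} f$. I would then represent $L$ and $R$ as values of $f'$: for $R$, I would use Theorem \ref{convex_fundamental_thm}(\ref{convex_fundamental_thm_2}) at $u_2$ to get $R = f'(u_2)$ when $u_2 < b$, and the mean value theorem applied to the chord identity $R(b-u_1) = f(b) - f(u_1)$ when $u_2 = b$; for $L$, I would distinguish the sub-case in which $\conv_{[u_1,\bar u]} f$ touches $f$ arbitrarily close to $\bar u$ from the left (giving $L = f'(\bar u)$ via Theorem \ref{convex_fundamental_thm}(\ref{convex_fundamental_thm_2})) from the sub-case in which $\conv_{[u_1,\bar u]} f$ is affine on a last shock interval $(v,\bar u)$ with $v \in [u_1,\bar u)$ (in which case $L$ is the slope of that affine piece, equal to $f'(v)$ by the tangent condition when $v > u_1$ or to $f'(\xi_L)$ for some $\xi_L \in (u_1,\bar u)$ by MVT when $v = u_1$). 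The estimate would then follow by invoking Theorem \ref{convex_fundamental_thm}(\ref{convex_fundamental_thm_3}) (i.e. $|f'(\xi)-f'(\eta)| \leq \|f''\|_{L^\infty(a,b)} |\xi - \eta|$) applied to the chosen representatives.

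The hard part, and the main obstacle I would need to resolve, is to make sure that the two points $\xi_L,\xi_R$ representing $L$ and $R$ can be arranged so that $|\xi_L - \xi_R| \leq b - \bar u$. For this I would exploit the Lipschitz continuity of $h'$ (Theorem \ref{convex_fundamental_thm}(\ref{convex_fundamental_thm_3})) and the fact that the auxiliary function $\phi(u) := f(u) - h(u)$ is non-negative on $[u_1,b]$ and vanishes at both $u_1$ and $b$, which forces $f'$ to take the value $R$ at some $\xi_R \in (u_1,b)$ by Rolle's theorem and to be bounded from below by $R$ at $u_1$ and from above by $R$ at $b$; combining this sign information with Proposition \ref{incastro} should let me pick $\xi_R$ on the correct side of $\bar u$ whenever the sub-case analysis for $L$ pushes $\xi_L$ to the left of $\bar u$.

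The estimate for the piecewise affine interpolation $f_\e$ would be obtained by repeating verbatim the argument above, replacing Theorem \ref{convex_fundamental_thm}(\ref{convex_fundamental_thm_3}) by Proposition \ref{convex_fundamental_thm_affine} at the final Lipschitz step and using the piecewise-constant structure of $(d/du)\conv_{[a,b]}f_\e$ to carry over the representations of $L$ and $R$ as slopes of affine pieces of $f_\e$.
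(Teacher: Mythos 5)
Your reduction to the case in which $\bar u$ lies in a maximal shock interval $(u_1,u_2)$ of $\conv_{[a,b]}f$, and the identification of $L$ with the left derivative at $\bar u$ of $\conv_{[u_1,\bar u]}f$ via Corollary \ref{RP_ridotto} and Proposition \ref{tocca}, are correct. The gap sits precisely at the step you yourself flag as the main obstacle: representing $L=f'(\xi_L)$ and $R=f'(\xi_R)$ and then arranging $|\xi_L-\xi_R|\le b-\bar u$. This cannot be arranged in general. When $\conv_{[u_1,\bar u]}f$ ends with a shock interval $(v,\bar u)$, the mean value theorem only places $\xi_L$ somewhere in $(v,\bar u)$, and $v$ (hence $\xi_L$) may lie far to the left of $\bar u$; the natural representative for $R$ is $\xi_R=u_2>\bar u$, so $|\xi_L-\xi_R|$ can be of order $u_2-v$, much larger than $b-\bar u$. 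Choosing $\xi_R$ ``on the correct side of $\bar u$'' does not help, because the obstruction is the magnitude of $\xi_R-\xi_L$, not its sign, and neither Rolle's theorem applied to $f-h$ nor Proposition \ref{incastro} gives any control of that magnitude by $b-\bar u$. A single application of the Lipschitz bound to two representative points is simply not the mechanism by which the factor $b-\bar u$ appears.

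The missing idea, which is what the paper's proof uses, is a convexity (weighted-average) identity. Let $r$ be the line through $(\bar u,f(\bar u))$ with slope $L$ and let $\tilde c\in[a,\bar u]$ be the abscissa of its intersection with the graph of $\conv_{[a,b]}f$; then
\[
R=\frac{(\bar u-\tilde c)\,L+(u_2-\bar u)\,\lambda}{u_2-\tilde c},
\qquad
\lambda:=\frac{f(u_2)-f(\bar u)}{u_2-\bar u},
\]
so that
\[
L-R=\frac{L-\lambda}{u_2-\tilde c}\,(u_2-\bar u).
\]
Since $L=f'(\xi)$ and $\lambda=f'(\eta)$ for suitable $\xi,\eta\in[\tilde c,u_2]$, the quotient $(L-\lambda)/(u_2-\tilde c)$ is bounded by $\lVert f''\rVert_{L^\infty(a,b)}$ even though $|\xi-\eta|$ may be comparable to $u_2-\tilde c\gg b-\bar u$; the small factor $u_2-\bar u\le b-\bar u$ enters as the length weight in the average, not as a distance between points where $f'$ is evaluated. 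Without this identity (or an equivalent device) your plan does not close, and the same remark applies to the piecewise affine case, where the paper repeats the identical computation for $f_\e$.
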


\begin{proof}
Let us first prove the inequality for $f$ smooth. Set $g := \conv_{[a,b]}f$, $h := \conv_{[a,\bar u]}f$. Let $(c,d) \subseteq [a,b]$ be the maximal shock interval of $g$ which $\bar u$ belongs to (if it does not exist, the proof is trivial, because of Theorem \ref{convex_fundamental_thm}). Let $r = r(u)$ be the line passing through $(\bar u, f(\bar u))$ with slope $h'(\bar u)$ and let $\tilde c$ be the first coordinate of the intersection point between $r(u)$ and $g$ in the interval $[a,\bar u]$.

It holds
\[
h'(\bar u-) = \frac{r(\bar u) - r(\tilde c)}{\bar u - \tilde c},
\]
and
\begin{equation*}
g'(\bar u)  =  \frac{g(d) - g(\tilde c)}{d - \tilde c}. 
\end{equation*}
Moreover define
\begin{eqnarray*}
\lambda :=  \frac{g(d) - r(\bar u)}{d-\bar u}.
\end{eqnarray*}

Clearly
\[
g'(\bar u)(d-\tilde c) = h'(\bar u-)(\bar u - \tilde c) + \lambda (d-\bar u),
\]
and thus
\[
g'(\bar u) = \frac{h'(\bar u-)(\bar u - \tilde c) + \lambda (d-\bar u)}{d -\tilde c}.
\]

Hence
\begin{equation}
\label{diff_lambda}
h'(\bar u-) - g'(\bar u) = \frac{h'(\bar u-) - \lambda}{d-\tilde c}(d-\bar u).
\end{equation}

Now observe that there must be $\xi \in [\tilde c,\bar u]$ such that $h'(\bar u-) = f'(\xi)$. Indeed, if there exists a strictly increasing sequence $(u_n)$ converging to $\bar u$ such that $f(u_n) = h(u_n) = \conv_{[a,\bar u]} f(u_n)$, then by Theorem \ref{convex_fundamental_thm}, Point \ref{convex_fundamental_thm_2}, $f'(u_n) = h'(u_n)$ for each $n$; passing to the limit one obtains $f'(\bar u) = h'(\bar u-)$. Otherwise, if such a sequence does not exist, then one can easily find $c^* \geq c$ such that $(c^*, \bar u)$ is a maximal shock interval. This means that
\[
h'(\bar u-) = \frac{h(\bar u) - h(c^{*})}{\bar u -  c^{*}} = \frac{f(\bar u) - f(c^{*})}{\bar u - c^{*}} = f'(\xi),
\]
for some $\xi \in (\tilde c,\bar u)$.

Moreover, since
\[
\lambda =  \frac{g(d) - r(\bar u)}{d-\bar u} = \frac{f(d) - f(\bar u)}{d - \bar u},
\]
there must be $\eta \in (\bar u, d)$ such that $\lambda = f'(\eta)$.

Hence, from \eqref{diff_lambda}, we obtain
\begin{align*}
h'(\bar u-) - g'(\bar u) =&~ \frac{h'(\bar u-) - \lambda}{d-\tilde c}(d-\bar u) \\
=&~ \frac{f'(\xi) - f'(\eta)}{d-\tilde c}(d-\bar u) \\ 
\leq&~ \lVert f'' \lVert_{L^\infty(a,b)} (d-\bar u) \\
\leq&~ \lVert f'' \lVert_{L^\infty(a,b)} (b-\bar u).
\end{align*}

Concerning the piecewise affine case, substituting $f$ with $f_\e$ one obtains the same inequality \eqref{diff_lambda}. Now observe that as in the smooth case one can find $\eta \in (\bar u, d)$ such that $\lambda = f'(\eta)$. Moreover it is also easy to see that there must be $\xi \in [\tilde c, \bar u]$ such that $h'(\bar u) = f'(\xi)$. Using this, one concludes the proof as in the smooth case.
\end{proof}


\section{Wavefront Tracking Approximation}
\label{sect_wavefront}

In this section we prove the main interaction estimate \eqref{E_W_est_fin} for an approximate solution of the Cauchy problem \eqref{cauchy} obtained by the wavefront tracking algorithm, thus giving a proof of Theorem \ref{W_main_thm}. First we briefly recall how such an approximate solution is constructed, mainly with the aim to set the notations used later on.

For any $\e > 0$ let $f_\e$ be the piecewise affine interpolation of $f$ with grid size $\e$; let $\bar u_\e$ an approximation of the initial datum $\bar u$ (in the sense that $\bar u_\e \rightarrow \bar u$ in $L^1$-norm, as $\e \rightarrow 0$) of the Cauchy problem \eqref{cauchy}, such that $\bar u_\e$ has compact support, it takes values in the discrete set $\Z\e$, and 
\begin{equation}
\label{bd_su_dato_iniziale}
\TV(\bar u_\e) \leq \TV(\bar u).
\end{equation}
Let $\xi_1 < \dots < \xi_K$ be the points where $\bar u_\e$ has a jump. At each $\xi_k$, consider the left and the right limits $\bar u_\e (\xi_k -), \bar u_\e(\xi_k+) \in \Z\e$. Solving the corresponding Riemann problems with flux function $f_\e$, we thus obtain a local solution $u_\e = u_\e(t) = u_\e(t,x)$, defined for $t$ sufficiently small. From each $\xi_k$ some wavefronts supported on discontinuity lines of $u_\e$, referred also as shocks (positive or negative, according to the sign of the jump) or contact discontinuities, emerge. When two (or more) discontinuity lines supporting wavefronts meet (we will refer to this situation as an \emph{interaction-cancellation}), we can again solve the new Riemann problem generated by the interaction-cancellation, according to the above procedure, with flux $f_\e$, since the values of $u_\e(t,\cdot)$ always remain within the set $\Z\e$. The solution is then prolonged up to a time $t_2 > t_1$ where other wavefronts meet, and so on. \\
One can prove \cite{bre_00}, \cite{daf_72} that the total number of interaction-cancellation points is finite, and hence the solution can be prolonged for all $t \geq 0$, thus defining an approximate solution $u_\e = u_\e(t,x)$, piecewise constant, with values in the set $\Z\e$. 

Let $\{(t_j,x_j)\}$, $j \in \{1,2,\dots,J\}$, be the point in the $t,x$-plane where an interaction-cancellation between two (or more) wavefronts occurs in the approximate solution $u_\e$. Let us suppose that $t_j < t_{j+1}$ and for every $j$ exactly two discontinuities meet in $(t_j, x_j)$. This is a standard assumption, achieved by slightly perturbing the wavefront speed. We also set $t_0 := 0$.


\subsection{Definition of waves for wavefront tracking}
\label{Front_Waves}

In this section we define the notion of \emph{wave}, the notion of \emph{position of a wave} and the notion of \emph{speed of a wave}. By definition of wavefront solution, for each time $t \geq 0$, $u_\e(t,\cdot)$ is a piecewise constant function, which takes values in the set $\Z\e$. Hence $\TV(u_\e(0,\cdot))$ is an integer multiple of $\e$.

\subsubsection{Enumeration of waves}
\label{Front_eow}

In this section we define the notion of \emph{enumeration of waves} related to a function $u: \R_x \to \R$ of the single variable $x$: in the following sections, $u$ will be the piecewise constant, $\e$-approximate solution of the Cauchy problem \eqref{cauchy} for fixed time $t$, considered as a function of $x$.

\begin{definition}
\label{W_eow}
Let $u: \R \to \R$, $u \in BV(\R)$, be a piecewise constant, right continuous function, which takes values in the set $\Z\e$. An \emph{enumeration of waves} for the function $u$ is a 3-tuple $(\mathcal{W}, \mathtt x, \hat u)$, where 
\[
\begin{array}{ll}
\mathcal{W} \subseteq \N & \text{is \emph{the set of waves}},\\
\mathtt x: \mathcal{W} \to (-\infty, +\infty] & \text{is \emph{the position function}}, \\
\hat u: \mathcal{W} \to \Z\e & \text{is \emph{the right state function}}, \\
\end{array}
\]
with the following properties:
\begin{enumerate}
 \item the function $\mathtt x$ takes values only in the set of discontinuity points of $u$;
 \item the restriction of the function $\mathtt x$ to the set of waves where it takes finite values is increasing; 
 \item for given $x_0 \in \R$, consider $\mathtt x^{-1}(x_0) = \{s \in \mathcal{W} \ | \ \mathtt x(s) = x_0\}$; then it holds: \begin{enumerate}
   \item if $u(x_0-) < u(x_0)$, then $\hat u|_{\mathtt x^{-1}(x_0)}: \mathtt x^{-1}(x_0) \to (u(x_0-), u(x_0)] \cap \Z\e$ is strictly increasing and bijective; 
   \item if $u(x_0-) > u(x_0)$, then $\hat u|_{\mathtt x^{-1}(x_0)}: \mathtt x^{-1}(x_0) \to [u(x_0), u(x_0-)) \cap \Z\e$ is strictly decreasing and bijective; 
   \item if $u(x_0-) = u(x_0)$, then $\mathtt x^{-1}(x_0) = \emptyset$.
 \end{enumerate}
\end{enumerate}
\end{definition}

Given an enumeration of waves as in Definition \ref{W_eow}, we define the \emph{sign of a wave $s \in \W$} with finite position (i.e. such that $\mathtt x(s) < +\infty$) as follows:
\begin{equation}
\label{W_sign}
\mathcal{S}(s) := \sign \Big[u(\mathtt x(s)) - u(\mathtt x(s)-) \Big].
\end{equation}

We immediately present an example of enumeration of wave which will be fundamental in the sequel. 

\begin{example}
\label{W_initial_eow}
Fix $\e>0$ and let $\bar u_\e \in BV(\R)$ be the approximate initial datum of the Cauchy problem \eqref{cauchy}, with compact support and taking values in $\Z\e$. The total variation of $\bar u_\e$ is an integer multiple of $\e$. Let 
\[
U: \R \to [0, \TV(\bar u_\e)], \quad x \mapsto U(x) := \TV(\bar u_\e; (-\infty, x]),
\]
be the total variation function. Then define:
\[
\mathcal{W} := \Big\{1,2,\dots, \frac{1}{\e}\TV(\bar u_\e) \Big\}
\]
and
\[
\mathtt x_0 : \mathcal{W} \to (-\infty, +\infty], \quad s \mapsto \mathtt x_0(s) := \inf \Big\{x \in (-\infty, +\infty] \ | \ \e s \leq U(x) \Big\}. 
\]
Moreover, recalling \eqref{W_sign}, we define 
\begin{equation*}
\hat u: \mathcal{W} \to \R, \quad s \mapsto \hat u(s) := \bar u_\e(\mathtt x_0(s)-) + \mathcal{S}(s)\Big[\e s - U(\mathtt x_0(s)-) \Big ].
\end{equation*}

It is fairly easy to verify that $\mathtt x_0$, $\hat u$ are well defined and that they provide an enumeration of waves, in the sense of Definition \ref{W_eow}.
\end{example}


Let us now give another definition.

\begin{definition}
\label{W_speed_function}
Consider a function $u$ as in Definition \ref{W_eow} and let $(\mathcal{W}, \mathtt x, \hat u)$ be an enumeration of waves for $u$. The \emph{speed function} $\sigma: \mathcal{W} \to [0,1] \cup \{+\infty\}$ is defined as follows:
\begin{equation}
\label{E_speed_W_def_2}
\sigma(s) := \left\{
\begin{array}{ll}
+\infty & \text{if } \mathtt x(s) = +\infty, \\
\Big(\frac{d}{du}\conv_{[u(\mathtt x(s)-),u(\mathtt x(s))]}f_\e \Big) \big( (\hat u(s)-\e, \hat u(s) ) \big) & \text{if } \mathcal{S}(s) = +1, \\ 
\Big(\frac{d}{du}\conc_{[u(\mathtt x(s)),u(\mathtt x(s)-)]}f_\e \Big) \big( (\hat u(s), \hat u(s) +\e) \big) & \text{if } \mathcal{S}(s) = -1. 
\end{array}
\right.
\end{equation}
\end{definition}
Roughly speaking, $\sigma(s)$ is the speed given to the wave $s$ by the Riemann problem located at $\mathtt x(s)$.

\begin{remark}
\label{W_speed_increasing_wrt_waves}
Notice that for each $x_0 \in \R$, $\sigma$ restricted to $\mathtt x^{-1}(x_0)$ is increasing by the monotonicity properties of derivatives the convex/concave envelopes (or by the study of the Riemann solver).
\end{remark}


\subsubsection{Position and speed of the waves}
\label{W_pswaves}

Consider the Cauchy problem \eqref{cauchy} and fix $\e >0$; let $u_\e$ be piecewise constant wavefront solution.
For the initial datum $u_\e(0,\cdot)$, consider the enumeration of waves $(\mathcal{W}, \mathtt x_0, \hat u)$ provided in Example \ref{W_initial_eow}; let $\mathcal{S}$ be the sign function defined in \eqref{W_sign} for this enumeration of waves. 

Now our aim is to define two functions
\[
\mathtt x: [0, +\infty)_t \times \mathcal{W} \to \R_x \cup \{+\infty\}, \qquad
\sigma: [0, +\infty)_t \times \mathcal{W} \to [0,1] \cup \{+\infty\},
\]
called \emph{the position at time $t \in [0, +\infty)$ of the wave $s \in \mathcal{W}$} and \emph{the speed at time $t \in [0, +\infty)$ of the wave $s \in \mathcal{W}$}. As one can imagine, we want to describe the path followed by a single wave $s \in \mathcal{W}$ as time goes on and the speed assigned to it by the Riemann problems it meets along the way. Even if there is a slight abuse of notation (in this section $\mathtt x$ depends also on time), we believe that the context will avoid any misunderstanding.

The function $\mathtt x$ is defined by induction, partitioning the time interval $[0, +\infty)$ in the following way
\[
[0, +\infty) = \{0\} \cup (0, t_1] \cup \dots \cup (t_j, t_{j+1}] \cup \dots \cup (t_{J-1}, t_J] \cup (t_J, +\infty).
\]

First of all, for $t = 0$ we set $\mathtt x(0,s) := \mathtt x_0(s)$, where $\mathtt x_0(\cdot)$ is the position function in the enumeration of waves of Example \ref{W_initial_eow}. Clearly $(\mathcal{W}, \mathtt x(0, \cdot), \hat u)$ is an enumeration of waves for the function $u_\e(0, \cdot)$ as a function of $x$ ($\hat u$ being the right state function, as in the example above).

%

Assume to have defined $\mathtt x(t,\cdot)$ for every $t \leq t_j$ and let us define it for $t \in (t_j, t_{j+1}]$ (or $t \in (t_J, +\infty)$). The speed $\sigma(t,s)$ for $t \in [0,t_j]$ is computed accordingly to \eqref{E_speed_W_def_2}.

\noindent For $t < t_{j+1}$ (or $t_J < t < +\infty$) set
\begin{equation*}
\mathtt x(t,s) := \mathtt x(t_j, s) + \sigma(t_j, s)(t-t_j).
\end{equation*}

\noindent For $t=t_{j+1}$ set
\begin{equation*}
\mathtt x(t_{j+1},s) := \mathtt x(t_j,s) +\sigma(t_j,s)(t_{j+1} - t_j)
\end{equation*}
if $\mathtt x(t_j,s) +\sigma(t_j,s)(t_{j+1} - t_j)$ is not the point of interaction/cancellation $x_{j+1}$; otherwise for the waves $s$ such that $\mathtt x(t_j,s) +\sigma(t_j,s)(t_{j+1} - t_j) =  x_{j+1}$ and
\[
\mathcal{S}(s)u_\e(t_{j+1},x_{j+1}-) \leq \mathcal{S}(s) \hat u(s) - \e \leq \mathcal{S}(s) \hat u(s) \leq \mathcal{S}(s) u_\e(t_{j+1},x_{j+1})\]
(i.e. the ones surviving the possible cancellation in $(t_{j+1},x_{j+1})$)
define
\[
\mathtt x(t_{j+1},s) := \mathtt x(t_j,s) +\sigma(t_j,s)(t_{j+1} - t_j) =  x_{j+1}.
\]
where $\mathcal{S}(s)$ is defined in \eqref{W_sign}, using the enumeration of waves for the initial datum. To the waves $s$ canceled by a possible cancellation in $(t_{j+1},x_{j+1})$ we assign $\mathtt x(t_{j+1},s) := +\infty$.

The following lemma proves that the above procedure produces an enumeration of waves. 

\begin{lemma}
\label{W_lemma_eow}
For any $\bar t \in (t_j, t_{j+1}]$ (resp. $\bar t \in (t_J, +\infty)$), the 3-tuple $(\mathcal{W}, \mathtt x(\bar t, \cdot), \hat u)$ is an enumeration of waves for the piecewise constant function $u_\e(\bar t,\cdot)$.
\end{lemma}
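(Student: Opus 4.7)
I would proceed by induction on $j$, with the base case $j=0$ handled by Example~\ref{W_initial_eow}. Assume $(\mathcal{W}, \mathtt x(t_j,\cdot), \hat u)$ is already an enumeration of waves for $u_\e(t_j,\cdot)$. I would first analyze what happens at each discontinuity point $x$ of $u_\e(t_j,\cdot)$: by condition (3) of Definition~\ref{W_eow}, the fiber $F_x := \mathtt x(t_j,\cdot)^{-1}(x)$ is a set of waves of a common sign $\mathcal{S}$, whose $\hat u$-values bijectively enumerate the $\Z\e$-points inside the jump $[u_\e(t_j,x-), u_\e(t_j,x)]$ (or its reverse). The Riemann problem at $x$ is solved via the convex (or concave) envelope of $f_\e$ on this interval; by Proposition~\ref{convex_fundamental_thm_affine}, the derivative of this envelope is piecewise constant on $\Z\e$-intervals and nondecreasing in $u$. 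Formula~\eqref{E_speed_W_def_2} therefore assigns to each $s \in F_x$ exactly the speed of the outgoing wavefront of the Riemann solver containing the state $\hat u(s)$, and waves whose $\hat u$ belong to the same linear segment of the envelope share a common speed.

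\emph{Propagation on $(t_j, t_{j+1})$.} For $\bar t \in (t_j, t_{j+1})$, no two wavefronts have yet collided (by definition of $t_{j+1}$), so the relative order between waves in distinct fibers $F_x$, $F_{x'}$ is preserved. Within a single fiber $F_x$, Remark~\ref{W_speed_increasing_wrt_waves} ensures that $s \mapsto \sigma(t_j,s)$ is monotone, and hence so is $s \mapsto \mathtt x(\bar t, s) = \mathtt x(t_j,s) + \sigma(t_j,s)(\bar t - t_j)$; waves sharing a speed stay at a common new position, which is by construction a discontinuity of $u_\e(\bar t,\cdot)$. This verifies (1) and (2) of Definition~\ref{W_eow}. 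For (3), the subdivision of $F_x$ into constant-speed subsets matches the partition of the $\Z\e$-points of the jump at $x$ induced by the linear segments of the convex/concave envelope, so on each new discontinuity emerging from $x$ the map $\hat u$ bijectively enumerates precisely the $\Z\e$-points of the corresponding jump, with the correct sign.

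\emph{Handling the interaction/cancellation time $\bar t = t_{j+1}$.} For waves whose straight-line position at $t_{j+1}$ differs from the collision point $x_{j+1}$, the preceding argument applies verbatim. At $x_{j+1}$ two incoming wavefronts $w_L$ and $w_R$ meet, and we need to check (3) there. If $w_L$ and $w_R$ have the same sign (interaction), no wave is sent to $+\infty$; the incoming $\hat u$-values tile two consecutive $\e$-segments, so together they tile, bijectively and with the common sign, the combined jump $[u_\e(t_{j+1},x_{j+1}-), u_\e(t_{j+1},x_{j+1})]\cap\Z\e$. If $w_L$ and $w_R$ have opposite signs (cancellation), the selection rule
\[
\mathcal{S}(s)\, u_\e(t_{j+1}, x_{j+1}-) \le \mathcal{S}(s)\, \hat u(s) - \e \le \mathcal{S}(s)\, \hat u(s) \le \mathcal{S}(s)\, u_\e(t_{j+1}, x_{j+1})
\]
retains precisely the waves of the dominating side whose $\hat u$-values lie outside the overlap of the two opposite-oriented $\e$-segments, thus realizing the residual jump bijectively and with the correct sign, while the others are pushed to $+\infty$.

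\emph{Main obstacle.} The only genuinely delicate verification is the cancellation case at $(t_{j+1},x_{j+1})$: one must confirm that the combinatorial selection rule above produces exactly the $\Z\e$-points of the surviving jump. This is a bookkeeping argument that uses three facts: $u_\e$ takes values in $\Z\e$, the $\hat u$-values of $F_x$ form an interval of $\Z\e$ of length $|w_L|/\e$ (respectively $|w_R|/\e$), and the colliding wavefronts share the boundary value $u_\e(t_{j+1},x_{j+1}\mp)$ on opposite sides; once this is spelled out, the overlap removed equals $\min(|w_L|,|w_R|)/\e$ waves on each side, and the surviving waves carry consecutive $\hat u$-values filling the new jump. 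I expect the rest of the argument to reduce to invoking Definition~\ref{W_eow} pointwise, with Proposition~\ref{convex_fundamental_thm_affine} and Remark~\ref{W_speed_increasing_wrt_waves} doing the real work for the order/speed compatibility.
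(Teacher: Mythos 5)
Your proposal is correct and follows essentially the same route as the paper: induction on $j$, verification of Properties (1)--(3) of Definition \ref{W_eow}, with Property (2) handled via the monotonicity of speeds within a fiber (Remark \ref{W_speed_increasing_wrt_waves}) plus the no-crossing property on $(t_j,t_{j+1})$, and Property (3) reduced to a case analysis at the collision point $(t_{j+1},x_{j+1})$ distinguishing interaction from cancellation. The cancellation bookkeeping you flag as the main obstacle is exactly the case the paper spells out, by comparing the fibers at an intermediate time $\tilde t\in(t_j,t_{j+1})$ with the surviving range $(u_\e(\tilde t,\xi_1-),u_\e(\tilde t,\xi_2)]$.
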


\begin{proof} We prove separately that the Properties (1-3) of Definition \ref{W_eow} are satisfied.

\smallskip
\noindent {\it Proof of Property (1).} By definition of wavefront solution, $\mathtt x(\bar t, \cdot)$ takes values only in the set of discontinuity points of $u_\e(\bar t, \cdot)$.

\smallskip
\noindent {\it Proof of Property (2).} Let $s < s'$ be two waves and assume that $\mathtt x(\bar t, s), \mathtt x(\bar t, s') < +\infty$. By contradiction, suppose that $\mathtt x(\bar t, s) > \mathtt x(\bar t, s')$.  Since by the inductive assumption at time $t_j$, the 3-tuple $(\mathcal{W}, \mathtt x(t_j, \cdot), \hat u)$ is an enumeration of waves for the function $u_\e(t_j,\cdot)$, it holds $\mathtt x(t_j,s) \leq \mathtt x(t_j,s')$. Two cases arise:
\begin{itemize}
\item If $\mathtt x(t_j, s) = \mathtt x(t_j, s')$, then it must hold $\sigma(t_j, s) > \sigma(t_j, s')$, but this is impossible, due to Remark \ref{W_speed_increasing_wrt_waves}. 

\item If $\mathtt x(t_j, s) < \mathtt x(t_j, s')$, then lines $t \mapsto \mathtt x(t_j,s) + \sigma(t_j, s)(t-t_j)$ and $t \mapsto \mathtt x(t_j,s') + \sigma(t_j, s')(t-t_j)$ must intersect at some time $\tau \in (t_j, \bar t)$, but this is impossible, by definition of wavefront solution and times $(t_j)_j$.
\end{itemize}

\smallskip
\noindent {\it Proof of Property (3).}  For $t < t_{j+1}$ or $t=t_{j+1}$ and for discontinuity points $x \neq x_{j+1}$, the third property of an enumeration of waves is straightforward. So let us check the third property only for time $t = t_{j+1}$ and for the discontinuity point $x_{j+1}$. Fix any time $\tilde t \in (t_j, t_{j+1})$; according to assumption on binary intersections, you can find two points $\xi_1, \xi_2 \in \R$ such that for any $s$ such that $\mathtt x(t_j,s) + \sigma(t_j,s)(t_{j+1} - t_j) = x_{j+1}$, either $\mathtt x(\tilde t,s) = \xi_1$ or $\mathtt x(\tilde t,s) = \xi_2$ and moreover $u_\e(\tilde t, \xi_1-) = u_\e(t_{j+1}, x_{j+1}-)$, $u_\e(\tilde t, \xi_2) = u_\e(t_{j+1}, x_{j+1})$,  $u_\e(\tilde t, \xi_1) = u_\e(\tilde t, \xi_2-)$. 

We now just consider two main cases: the other ones can be treated similarly. Recall that at time $\tilde t < t_{j+1}$, the $3$-tuple $(\W, \mathtt x(\tilde t, \cdot), \hat u)$ is an enumeration of waves for the piecewise constant function $u_\e(\tilde t, \cdot)$.

If $u_\e(\tilde t, \xi_1-) < u_\e(\tilde t, \xi_1) = u_\e(\tilde t,\xi_2-) < u_\e(\tilde t, \xi_2)$, then 
$$\hat u|_{\mathtt x^{-1}(\tilde t,\xi_1)}: \mathtt x^{-1}(\tilde t, \xi_1) \to (u_\e(\tilde t, \xi_1-), u_\e(\tilde t, \xi_1)] \cap \Z\e$$ 
and 
$$\hat u|_{\mathtt x^{-1}(\tilde t,\xi_2)}: \mathtt x^{-1}(\tilde t, \xi_2) \to (u_\e(\tilde t, \xi_2-), u_\e(\tilde t, \xi_2)] \cap \Z\e$$ 
are strictly increasing and bijective; observing that in this case $\mathtt x^{-1}(t_{j+1},x_{j+1}) = \mathtt x^{-1}(\tilde t, \xi_1) \cup \mathtt x^{-1}(\tilde t, \xi_2)$, one gets the thesis.

If $u_\e(\tilde t, \xi_1-) < u_\e(\tilde t, \xi_2) < u_\e(\tilde t, \xi_1) = u_\e(\tilde t, \xi_2-)$, then 
$$\hat u|_{\mathtt x^{-1}(\tilde t,\xi_1)}: \mathtt x^{-1}(\tilde t, \xi_1) \to (u_\e(\tilde t, \xi_1-), u_\e(\tilde t, \xi_1)] \cap \Z\e$$
is strictly increasing and bijective; observing that in this case 
$$\mathtt x^{-1}(t_{j+1},x_{j+1}) = \Big\{s \in \mathtt x^{-1}(\tilde t, \xi_1) \ | \ \hat u(s) \in (u_\e(\tilde t, \xi_1-), u_\e(\tilde t, \xi_2)] \Big\},$$
one gets the thesis. 
\end{proof}

\begin{remark}
For fixed wave $s$, functions $\mathtt x(\cdot, s), \sigma(\cdot,s)$ are right-continuous. Moreover $\sigma(\cdot,s)$ is piecewise constant.
\end{remark}


Finally we introduce the following notation. Given a time $t \in [0, +\infty)$ and a position $x \in (-\infty, +\infty]$, we set
\begin{equation*}
\W(t) :=  \{s \in \W \ | \ \mathtt x(t,s) < +\infty \}, \qquad
\W(t,x) := \{s \in \W \ | \ \mathtt x(t,s) = x \}.
\end{equation*}
We will call $\W(t)$ the set of the \emph{real waves}, while we will say that a wave $s$ is \emph{removed} at time $t$ if $\mathtt x(t,s) = + \infty$. It it natural to define the interval of existence of $s \in \W(0)$ by 
\[
T(s) := \sup \Big\{t \in [0, +\infty) \ | \ \mathtt x(t,s) < +\infty \Big\}.
\]

\subsubsection{Interval of waves}
\label{W_iow}

In this section we define the interval of waves and we prove an important result about it. The necessity of introducing this notion is due to the fact that we avoid relabeling the waves $s$.

\begin{definition}
\label{W_iow_def}
Let $\bar t$ be a fixed time and $\mathcal{I} \subseteq \W(\bar t)$. We say that $\mathcal{I}$ is an \emph{interval of waves} at time $\bar t$ if for any given $s_1, s_2 \in \mathcal{I}$, with $s_1 < s_2$, and for any $p \in \W(\bar t)$
\[
s_1 \leq p \leq s_2 \Longrightarrow p \in \mathcal{I}.
\]
We say that an interval of waves $\mathcal{I}$ is \emph{homogeneous} if for each $s,s' \in \mathcal{I}$, $\mathcal{S}(s) = \mathcal{S}(s')$. If waves in $\mathcal{I}$ are positive (resp. negative), we say that $\mathcal{I}$ is a \emph{positive} (resp. \emph{negative}) interval of waves. 
\end{definition}

\begin{proposition}
\label{W_interval_waves}
Let $\mathcal{I} \subseteq \W(\bar t)$ be a positive (resp. negative) interval of waves. Then the restricion of $\hat u$ to $\mathcal{I}$ is strictly increasing (resp. decreasing) and $\bigcup_{s \in \mathcal{I}} (\hat u(s) - \e, \hat u(s)]$ (resp. $\bigcup_{s \in \mathcal{I}} [\hat u(s), \hat u(s)+\e)$) is an interval in $\R$.
\end{proposition}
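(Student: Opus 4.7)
The plan is to prove both assertions by combining the monotonicity of the position function (Property (2) of Definition \ref{W_eow}) with the bijectivity of $\hat u$ on each fiber $\mathtt x^{-1}(x_0)$ (Property (3)). I treat only the positive case, the negative one being symmetric.

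For the strict monotonicity of $\hat u|_{\mathcal{I}}$, I take $s < s'$ in $\mathcal{I}$ and distinguish two cases according to the positions. If $\mathtt x(\bar t, s) = \mathtt x(\bar t, s')$, Property (3a) applied to their common position yields $\hat u(s) < \hat u(s')$ directly. If instead $\mathtt x(\bar t, s) < \mathtt x(\bar t, s')$, then by Property (2) every $p \in \W(\bar t)$ whose position lies strictly between $\mathtt x(\bar t, s)$ and $\mathtt x(\bar t, s')$ must satisfy $s < p < s'$, hence $p \in \mathcal{I}$ by the interval hypothesis, and therefore $\mathcal{S}(p) = +1$. By Property (3c), the only discontinuity points of $u_\e(\bar t, \cdot)$ in the open interval $(\mathtt x(\bar t, s), \mathtt x(\bar t, s'))$ are positions of some wave in $\W(\bar t)$, and all those jumps are positive, so $u_\e(\bar t, \cdot)$ is nondecreasing from $\mathtt x(\bar t, s)$ to $\mathtt x(\bar t, s')-$. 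Combining this with the estimates $\hat u(s) \leq u_\e(\bar t, \mathtt x(\bar t, s))$ and $u_\e(\bar t, \mathtt x(\bar t, s')-) < \hat u(s')$, both provided by Property (3a), delivers the strict inequality $\hat u(s) < \hat u(s')$.

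For the second claim, it suffices to show that whenever $s_1 < s_2$ are consecutive elements of $\mathcal{I}$ one has $\hat u(s_2) = \hat u(s_1) + \e$, since all $\hat u$-values lie in $\e\Z$ and the half-open intervals $(\hat u(s)-\e, \hat u(s)]$ then chain into a single interval. The interval property forces consecutive elements of $\mathcal{I}$ to be consecutive also in $\W(\bar t)$. If $\mathtt x(\bar t, s_1) = \mathtt x(\bar t, s_2)$, Property (3a) gives a strictly increasing bijection from $\mathtt x^{-1}(\mathtt x(\bar t, s_1))$ onto the consecutive integer multiples of $\e$ in $(u_\e(\bar t, \mathtt x(\bar t, s_1)-), u_\e(\bar t, \mathtt x(\bar t, s_1))]$, so $s_1, s_2$ map to consecutive such multiples and the claim follows. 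Otherwise $\mathtt x(\bar t, s_1) < \mathtt x(\bar t, s_2)$, and by consecutivity in $\W(\bar t)$ combined with Property (2) no wave of $\W(\bar t)$ has position in the open interval $(\mathtt x(\bar t, s_1), \mathtt x(\bar t, s_2))$; Property (3c) then rules out any discontinuity of $u_\e(\bar t, \cdot)$ there, so $u_\e(\bar t, \mathtt x(\bar t, s_1)) = u_\e(\bar t, \mathtt x(\bar t, s_2)-)$. A further use of Property (2) shows that $s_1$ is the maximum of $\mathtt x^{-1}(\mathtt x(\bar t, s_1))$ and $s_2$ the minimum of $\mathtt x^{-1}(\mathtt x(\bar t, s_2))$, so Property (3a) gives $\hat u(s_1) = u_\e(\bar t, \mathtt x(\bar t, s_1))$ and $\hat u(s_2) = u_\e(\bar t, \mathtt x(\bar t, s_2)-) + \e$, which together yield $\hat u(s_2) - \hat u(s_1) = \e$.

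The only genuinely delicate point is the handling of ties in $\mathtt x$: several waves may share a position, so one cannot directly identify the ordering on $\mathcal{I}$ with the spatial ordering. The remedy, used throughout the plan, is to split every argument into the "same position" subcase (where Property (3a) takes over) and the "strictly increasing positions" subcase (where the interval hypothesis is used to exclude negative jumps in between, and consecutivity in $\W(\bar t)$ is used to exclude any jump in between); after this split the rest is mere bookkeeping on the $\e$-grid.
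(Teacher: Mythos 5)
Your proof is correct and relies on the same structural facts as the paper's: monotonicity of $\mathtt x$ (Property (2)), the fiberwise bijectivity of $\hat u$ (Property (3a)), and the matching of the constant states $u_\e(\bar t, \xi_k) = u_\e(\bar t, \xi_{k+1}-)$ across adjacent fibers. The paper packages these into a single statement, namely that $\hat u$ restricted to $\bigcup_k \W(\bar t, \xi_k)$ (the union of fibers between $\mathtt x(\bar t,s)$ and $\mathtt x(\bar t,s')$) is a strictly increasing bijection onto $(u_\e(\bar t,\xi_0-), u_\e(\bar t,\xi_K)] \cap \Z\e$, from which both conclusions follow at once; you instead argue pairwise, proving strict monotonicity by a same-position versus different-position split, and then the interval property by the identity $\hat u(s_2) = \hat u(s_1) + \e$ for consecutive elements of $\mathcal{I}$. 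The underlying mathematics is the same; the paper's global bijection is shorter, while your consecutive-pair reduction makes the $\e$-grid bookkeeping fully explicit. One small citation slip: the step ``a discontinuity of $u_\e(\bar t,\cdot)$ at $x_0$ forces $\mathtt x^{-1}(x_0)\neq\emptyset$'' follows from (3a)--(3b), whose image sets are nonempty when the jump is nonzero, not from (3c), which is the converse implication.
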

\begin{proof}
Assume $\mathcal{I}$ is positive, the other case being similar. First we prove that $\hat u$ restricted to $\mathcal{I}$ is increasing. Let $s, s' \in \mathcal{I}$, with $s < s'$. Let $\xi_0 := \mathtt x(\bar t, s) < \xi_1 < \dots < \xi_K := \mathtt x(\bar t, s')$ be the discontinuity points of $u_\e(\bar t, \cdot)$ between $\mathtt x(\bar t,s)$ and $\mathtt x(\bar t,s')$. By definition of `interval of waves' and by the fact that each wave in $\mathcal{I}$ is positive, for any $k = 0, 1, \dots, K$, $\W(\bar t, \xi_k)$ contains only positive waves. Thus, by Definition \ref{W_eow} of enumeration of waves, and by the fact that for each $k=0, \dots, K-1$, $u_\e(\bar t, \xi_k) = u_\e(\bar t, \xi_{k+1}-)$, the restriction 
\begin{equation}
\label{W_hat_u_interval_waves}
\hat u : \bigcup_{k=0}^{K} \W(\bar t, \xi_k) \to (u_\e(\bar t, \xi_0 -),u_\e(\bar t, \xi_K)] \cap \Z\e
\end{equation}
is strictly increasing and bijective, and so $\hat u(s) < \hat u(s')$; hence $\hat u|_{\mathcal{I}}$ is strictly increasing.

In order to prove that $\bigcup_{s \in \mathcal{I}} (\hat u(s) - \e, \hat u(s)]$ is a interval in $\R$, it is sufficient to prove the following: for any $s<s'$ in $ \mathcal{I}$ and for any $m \in \Z$ such that $\hat u(s) < m\e \leq \hat u(s')$, there is $p \in \mathcal{I}$, $s < p \leq s'$ such that $\hat u (p) = m\e$. This follows immediately from the fact that the map in \eqref{W_hat_u_interval_waves} is bijective and strictly increasing.
\end{proof}

\begin{definition}
\label{W_artificial_speed}
Let $I \subseteq \R$ be an interval in $\R$, such that $\inf I, \sup I \in \Z\e$. Let $s$ be any positive (resp. negative) wave such that $(\hat u(s) - \e, \hat u(s)) \subseteq I$ (resp. $(\hat u(s), \hat u(s)+\e) \subseteq I$). The quantity $\sigma(I,s) := \frac{d}{du} \conv_I f_\e \Big( (\hat u(s) - \e, \hat u(s) ) \Big)$ (resp. $\sigma(I,s) := \frac{d}{du} \conc_I f_\e \Big( (\hat u(s), \hat u(s)+\e ) \Big)$) is called \emph{the (artificial) speed given to the wave $s$ by the Riemann problem $I$}.

Moreover we will say that \emph{the Riemann problem $I$ divides $s,s'$} if $(\hat u(s) - \e, \hat u(s)), (\hat u(s') - \e, \hat u(s'))$ (resp.  $(\hat u(s), \hat u(s)+\e), (\hat u(s'), \hat u(s')+\e)$ ) do not belong to the same shock component of $\conv_I f_\e$ (resp. $\conc_I f_\e$).
\end{definition}

\begin{remark}
\label{W_artificial_speed_remark}
Let $\mathcal{I}$ be any positive (resp. negative) interval of waves at fixed time $\bar t$. By Proposition \ref{W_interval_waves}, the set $I := \bigcup_{s \in \mathcal{I}} (\hat u(s) -\e, \hat u(s)]$ 
(resp. $I = \bigcup_{s \in \mathcal{I}} [\hat u(s), \hat u(s)+\e)$) is an interval in $\R$. Hence, we will also write $\sigma(\mathcal{I},s)$ instead of $\sigma(I,s)$ and call it the speed given to the waves $s$ by the Riemann problem $\mathcal{I}$. Moreover, we will also say that the Riemann problem $\mathcal{I}$ divides $s,s'$ if the Riemann problem $I$ does.
\end{remark}

\subsection{The main theorem in the wavefront tracking approximation}
\label{section_W_main_thm}

Now we state the main result for the wavefront tracking approximation, namely Theorem \ref{W_main_thm}. For easiness of the reader we repeat the statement below.

As in the previous section, let $u_{\e} = u_{\e}(t,x)$ be an $\e$-wavefront solution of the Cauchy problem \eqref{cauchy}; consider the enumeration of waves and the related position function $\mathtt x = \mathtt x(t,s)$ and speed function $\sigma = \sigma(t,s)$ constructed in previous section.  Fix a wave $s \in \W(0)$ and consider the function $t \mapsto \sigma(t,s)$. By construction it is finite valued until the time $T(s)$, after which its value becomes $+\infty$; moreover it is piecewise constant, right continuous, with jumps possibly located at times $t = t_j, j \in 1,\dots,J$.

The results we are going to prove is

\begin{theorem2}
The following holds:
\[
\sum_{j=1}^{J} \sum_{s \in \W(t_j)} |\sigma(t_j, s) - \sigma(t_{j-1}, s)||s| \leq (3 + 2 \log(2)) \lVert f'' \lVert_{L^\infty} \TV(u(0,\cdot))^2,
\]
where $|s| := \e$ is \emph{the strength of the wave $s$}.
\end{theorem2}

We recall the following definition. 

\begin{definition}
\label{W_int_canc_points}
For each $j=1,\dots,J$, we will say that $(t_j,x_j)$ is an \emph{interaction point} if the wavefronts which collide in $(t_j,x_j)$ have the same sign. An interaction point will be called \emph{positive} (resp. \emph{negative}) if all the waves located in it are positive (resp. negative). Moreover we will say that $(t_j,x_j)$ is a \emph{cancellation point} if the wavefronts which collide in $(t_j,x_j)$ have opposite sign. 
\end{definition}

The first step in order to prove Theorem \ref{W_main_thm} is to reduce the quantity we want to estimate, namely
\[
\sum_{j=1}^{J} \sum_{s \in \W(t_j)} |\sigma(t_j, s) - \sigma(t_{j-1}, s)||s|,
\]
to a two separate estimates, according to $(t_j,x_j)$ being an interaction or a cancellation:
\[
\begin{split}
\sum_{j=1}^{J} \sum_{s \in \W(t_j)} |\sigma(t_j, s) - \sigma(t_{j-1}, s)||s| 
=&~ \sum_{\substack{(t_j,x_j) \\ \text{interaction}}} \sum_{s \in \W(t_j)} |\sigma(t_j, s) - \sigma(t_{j-1}, s)||s| \\
&~ + \sum_{\substack{(t_j,x_j) \\ \text{cancellation}}} \sum_{s \in \W(t_j)} |\sigma(t_j, s) - \sigma(t_{j-1}, s)||s|.
\end{split}
\]

The estimate on the cancellation points is fairly easy. First of all define for each cancellation point $(t_j,x_j)$ the \emph{amount of cancellation} as follows:
\begin{equation}
\label{W_canc_0}
\mathcal{C}(t_j,x_j) := \TV(u_\e(t_{j-1}, \cdot)) - \TV(u_\e(t_j,\cdot)).
\end{equation}

\begin{proposition}
\label{W_canc_3}
Let $(t_j,x_j)$ be a cancellation point. Then
\[
\sum_{s \in \W(t_j)} |\sigma(t_j, s) - \sigma(t_{j-1}, s)||s|  \leq \lVert f'' \lVert_{L^\infty}  \TV(u(0,\cdot)) \C(t_j,x_j).
\]
\end{proposition}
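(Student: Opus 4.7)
The plan is first to reduce the sum to the waves actually affected by the cancellation, then to telescope the velocity changes into a single vertical gap between $f_\e$ and a convex envelope, and finally to bound that gap using the error formula for linear interpolation of the smooth flux $f$.

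\textbf{Reduction to waves at $x_j$.} Any $s \in \mathcal{W}(t_j)$ with $\mathtt x(t_j,s) \ne x_j$ has its trajectory and its local Riemann problem unaffected by the collision at $(t_j,x_j)$, so $\sigma(t_j,s)=\sigma(t_{j-1},s)$ and its contribution vanishes. Only the waves surviving the cancellation, i.e.\ those with $\mathtt x(t_j,s)=x_j$, contribute.

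\textbf{Setup and telescoping.} Let $u^L,u^M,u^R$ be the three constant states of $u_\e$ meeting at $(t_j,x_j)$, read from left to right. Since this is a cancellation, the two colliding wavefronts have opposite sign. Up to relabeling (the three remaining sign/ordering configurations are symmetric under $u\mapsto -u$ and $\conv \leftrightarrow \conc$) I treat the case $u^L<u^R<u^M$, in which the surviving wavefront at $x_j$ is positive with values in $[u^L,u^R]$. For each surviving wave $s$, $\hat u(s)\in\{u^L+\e,\dots,u^R\}$, and Definition \ref{W_speed_function} gives
\[
\sigma(t_{j-1},s)=\bigl(\conv_{[u^L,u^M]}f_\e\bigr)'\bigl((\hat u(s)-\e,\hat u(s))\bigr),\qquad
\sigma(t_j,s)=\bigl(\conv_{[u^L,u^R]}f_\e\bigr)'\bigl((\hat u(s)-\e,\hat u(s))\bigr).
\]
By Proposition \ref{vel_aumenta} the second quantity is $\geq$ the first, so the absolute value can be dropped. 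Multiplying by $\e$ and summing over $\hat u(s)$ telescopes each sum into the increment of the corresponding envelope between $u^L$ and $u^R$; since both envelopes equal $f_\e$ at $u^L$ and $\conv_{[u^L,u^R]}f_\e$ additionally equals $f_\e$ at $u^R$,
\[
S\;:=\;\sum_{s\in\mathcal{W}(t_j,x_j)}\bigl|\sigma(t_j,s)-\sigma(t_{j-1},s)\bigr|\,\e \;=\; f_\e(u^R)-\conv_{[u^L,u^M]}f_\e(u^R).
\]

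\textbf{Bound on the gap.} If $\conv_{[u^L,u^M]}f_\e(u^R)=f_\e(u^R)$ then $S=0$; otherwise let $(a^*,b^*)$ be the maximal shock interval of $\conv_{[u^L,u^M]}f_\e$ containing $u^R$. Since $f_\e$ is piecewise affine with breakpoints on $\e\Z$, so is its convex envelope, and therefore $a^*,b^*\in \e\Z$ with $u^L\leq a^*\leq u^R\leq b^*\leq u^M$. On $[a^*,b^*]$ the envelope is the chord joining $(a^*,f_\e(a^*))$ and $(b^*,f_\e(b^*))$, and because $f_\e$ coincides with $f$ on $\e\Z$,
\[
S \;=\; f(u^R) - \Bigl[f(a^*)+\tfrac{f(b^*)-f(a^*)}{b^*-a^*}(u^R-a^*)\Bigr].
\]
This is the error at $u^R$ of the affine interpolant of $f$ on $[a^*,b^*]$; the Lagrange form of the remainder yields $S=\tfrac12 f''(\xi)(u^R-a^*)(u^R-b^*)$ for some $\xi\in(a^*,b^*)$. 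Using $S\geq 0$, the bounds $b^*-u^R\leq u^M-u^R=\mathcal{C}(t_j,x_j)/2$, $u^R-a^*\leq u^R-u^L\leq \TV(u_\e(t_{j-1},\cdot))\leq \TV(\bar u)$, and the fact that $\TV$ is a Lyapunov functional, I get
\[
S \;\leq\; \tfrac12\lVert f''\rVert_{L^\infty}(u^R-a^*)(b^*-u^R) \;\leq\; \tfrac14 \lVert f''\rVert_{L^\infty}\,\TV(\bar u)\,\mathcal{C}(t_j,x_j),
\]
which is stronger than the claimed bound.

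\textbf{Where the difficulty lies.} Once the right reformulation is found, the estimate is short. The key observation is the telescoping identity, which replaces a sum of $O(\TV/\e)$ speed differences by the single geometric quantity $f_\e(u^R)-\conv_{[u^L,u^M]}f_\e(u^R)$ -- the vertical deviation of $f_\e$ from its convex envelope at the endpoint of the surviving wavefront. I expect the only bookkeeping difficulty to be a uniform treatment of the four possible sign/ordering configurations at the collision, but all four reduce to the estimate above after reflection $u\mapsto -u$ and/or the swap $\conv\leftrightarrow \conc$.
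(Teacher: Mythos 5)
Your proof is correct, and the key step is genuinely different from the paper's. The paper does not telescope: it bounds each individual speed change $\sigma(t_j,s)-\sigma(t_{j-1},s)$ by the speed change of the \emph{rightmost} surviving wave, i.e.\ by $\big(\tfrac{d}{du}\conv_{[u_L,u_R]}f_\e\big)(u_R-)-\big(\tfrac{d}{du}\conv_{[u_L,u_M]}f_\e\big)(u_R-)$ (this is Proposition \ref{differenza_vel}), pulls that constant out of the sum, bounds the total strength $\sum_s|s|$ by $\TV(\bar u)$, and then invokes Proposition \ref{diff_vel_proporzionale_canc} --- a separate secant-line computation --- to control that single speed difference by $\lVert f''\rVert_{L^\infty}(u_M-u_R)\le\lVert f''\rVert_{L^\infty}\mathcal C(t_j,x_j)$. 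You instead sum the speed changes \emph{exactly}, converting them into the vertical gap $f_\e(u_R)-\conv_{[u_L,u_M]}f_\e(u_R)$, and bound this height by the Lagrange error of linear interpolation of $f$ on the maximal shock interval containing $u_R$. Your route is more elementary at this spot (it bypasses Proposition \ref{diff_vel_proporzionale_canc} entirely, whose proof is itself a half-page of secant-line geometry) and yields the sharper constant $\tfrac14$, using the exact identity $u_M-u_R=\mathcal C(t_j,x_j)/2$ where the paper settles for $u_M-u_R\le\mathcal C(t_j,x_j)$. What the paper's formulation buys is that, being phrased purely in terms of derivatives of convex envelopes, it transfers verbatim to the Glimm-scheme analogue (Proposition \ref{canc_3}), where the sum becomes an integral over a continuum of waves; your telescoping identity also has a continuous counterpart, so this is a matter of convenience rather than necessity. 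Note also that Proposition \ref{diff_vel_proporzionale_canc} is still needed elsewhere (e.g.\ in \eqref{W_canc_2}'s twin \eqref{W_lemma_tre} inside Theorem \ref{W_increasing}), so the paper loses nothing by using it here as well. All the individual steps of your argument check out: the waves off $x_j$ indeed keep their speed, the surviving waves all come from the left (positive) incoming front in the configuration $u^L<u^R<u^M$, Proposition \ref{vel_aumenta} justifies dropping the absolute values, and the endpoints $a^*,b^*$ of the maximal shock interval lie in $\Z\e$ so that $f_\e$ and $f$ agree there.
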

\begin{proof}
Let $u_L, u_M$ be respectively the left and the right state of the left wavefront involved in the collision at point $(t_j,x_j)$ and let $u_M, u_R$ be respectively the left and the right state of the right wavefront involved in the collision at point $(t_j,x_j)$, so that $u_L = {\displaystyle \lim_{x \nearrow x_j}} u_\e(t_j, x)$ and $u_R = u_\e(t_j, x_j)$. Without loss of generality, assume $u_L < u_R < u_M$.

Then we have
\begin{equation}
\label{W_canc_1}
\begin{split}
& \sum_{s \in \W(t_j)} |\sigma(t_j, s) - \sigma(t_{j-1}, s)||s|  \\ 
& \quad \ \ = \sum_{s \in \W(t_j)}  \bigg[ \bigg(\frac{d}{du} \conv_{[u_L, u_R]} f_\e \bigg) \Big( (\hat u(s) -\e, \hat u(s))\Big) - \bigg(\frac{d}{du} \conv_{[u_L, u_M]} f_\e \bigg)\Big( (\hat u(s)-\e, \hat u(s))\Big) \bigg]|s| \\
& \overset{\text{(Prop. \ref{differenza_vel})}}{\leq} 
\sum_{s \in \W(t_j)}  \bigg[ \bigg(\frac{d}{du} \conv_{[u_L, u_R]} f_\e \bigg)(u_R-) - \bigg(\frac{d}{du} \conv_{[u_L, u_M]} f_\e \bigg)(u_R-) \bigg]|s| \\
& \quad \ \ =  \bigg[ \bigg(\frac{d}{du} \conv_{[u_L, u_R]} f_\e \bigg)(u_R-) - \bigg(\frac{d}{du} \conv_{[u_L, u_M]} f_\e \bigg)(u_R-) \bigg]  \sum_{s \in \W(t_j)} |s| \\
& \quad \ \ \leq \bigg[ \bigg(\frac{d}{du} \conv_{[u_L, u_R]} f_\e \bigg)(u_R-) - \bigg(\frac{d}{du} \conv_{[u_L, u_M]} f_\e \bigg)(u_R-) \bigg]  \TV(u_\e(0,\cdot)) .
\end{split}
\end{equation}
Now observe that, by Proposition \ref{diff_vel_proporzionale_canc}, 
\begin{equation}
\label{W_canc_2}
\begin{split}
\bigg(\frac{d}{du} \conv_{[u_L, u_R]} f_\e \bigg)(u_R-) - \bigg(\frac{d}{du} \conv_{[u_L, u_M]} f_\e \bigg)(u_R-) 
\leq&~ \lVert f'' \lVert_{L^\infty} (u_M - u_R) \\
\leq&~ \lVert f'' \lVert_{L^\infty} \mathcal{C}(t_j,x_j).
\end{split}
\end{equation}

Hence, from \eqref{W_canc_1} and \eqref{W_canc_2}, we obtain
\begin{equation*}
\sum_{s \in \W(t_j)} |\sigma(t_j, s) - \sigma(t_{j-1}, s)||s|  \leq   
\lVert f'' \lVert_{L^\infty} \TV(u_\e(0,\cdot)) \mathcal{C}(t_j,x_j).
\end{equation*}
Together with \eqref{bd_su_dato_iniziale}, this concludes the proof.
\end{proof}

\begin{corollary}
\label{W_canc_4}
It holds
\[
\sum_{\substack{j \emph{ such that} \\ (t_j,x_j) \emph{ is a} \\ \emph{cancellation point}}} \sum_{s \in \W(t_j)} |\sigma(t_j, s) - \sigma(t_{j-1}, s)||s| \leq \lVert f'' \lVert_{L^\infty} \TV(u(0,\cdot))^2.
\]
\end{corollary}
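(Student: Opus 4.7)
The plan is to obtain the corollary simply by summing the estimate of Proposition~\ref{W_canc_3} over all cancellation times and then controlling the total amount of cancellation by the initial total variation.

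First, I would apply Proposition~\ref{W_canc_3} at each index $j$ for which $(t_j,x_j)$ is a cancellation point. This yields
\[
\sum_{\substack{j \\ (t_j,x_j) \text{ canc.}}} \sum_{s \in \W(t_j)} |\sigma(t_j, s) - \sigma(t_{j-1}, s)||s| \leq \lVert f'' \lVert_{L^\infty} \TV(u(0,\cdot)) \sum_{\substack{j \\ (t_j,x_j) \text{ canc.}}} \C(t_j,x_j),
\]
so the whole problem reduces to bounding the total amount of cancellation by $\TV(u(0,\cdot))$.

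For this, I would use the fact that for a scalar conservation law the total variation $t \mapsto \TV(u_\e(t,\cdot))$ is non-increasing, and moreover it is constant across interaction times (two same-sign wavefronts merge into wavefronts whose total signed strength is unchanged) while at a cancellation time $(t_j,x_j)$ it decreases by exactly $\C(t_j,x_j)$, by the very definition \eqref{W_canc_0}. Telescoping,
\[
\sum_{\substack{j \\ (t_j,x_j) \text{ canc.}}} \C(t_j,x_j) = \sum_{\substack{j \\ (t_j,x_j) \text{ canc.}}} \bigl[ \TV(u_\e(t_{j-1},\cdot)) - \TV(u_\e(t_j,\cdot)) \bigr] \leq \TV(u_\e(0,\cdot)) \leq \TV(u(0,\cdot)),
\]
where the last inequality uses \eqref{bd_su_dato_iniziale}.

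Substituting this bound into the previous display gives the claim. There is no real obstacle here: once Proposition~\ref{W_canc_3} is in hand, the only ingredient is the standard observation that $\TV(u_\e(t,\cdot))$ is a Lyapunov functional whose total decrease along the evolution is bounded by its initial value, and whose decrease is concentrated at cancellation times.
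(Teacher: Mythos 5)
Your proposal is correct and follows essentially the same route as the paper: apply Proposition~\ref{W_canc_3} at each cancellation point, use the definition \eqref{W_canc_0} of $\C(t_j,x_j)$ to telescope the total amount of cancellation against the monotone decrease of $\TV(u_\e(t,\cdot))$, and conclude via \eqref{bd_su_dato_iniziale}. The only cosmetic difference is that the paper telescopes over all indices $j$ (the interaction terms being zero anyway), whereas you restrict to cancellation times and justify this by noting $\TV$ is constant at interactions; both are fine.
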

\begin{proof}
From \eqref{bd_su_dato_iniziale}, \eqref{W_canc_0} and Proposition \ref{W_canc_3} we obtain
\[
\begin{split}
\sum_{\substack{(t_j,x_j) \\ \text{cancellation}}} & \sum_{s \in \W(t_j)} |\sigma(t_j, s) - \sigma(t_{j-1}, s)||s|\\
& \leq  \lVert f'' \lVert_{L^\infty} \TV(u(0,\cdot)) \sum_{j=1}^J \Big[ \TV(u_\e(t_{j-1}, \cdot)) - \TV(u_\e(t_j, \cdot)) \Big] \\
& \leq  \lVert f'' \lVert_{L^\infty} \TV(u(0,\cdot)) \Big[ \TV(u_\e(0,\cdot)) - \TV(u_\e(t_J, \cdot)) \Big] \\
& \leq  \lVert f'' \lVert_{L^\infty} \TV(u(0,\cdot))^2,
\end{split}
\]
thus concluding the proof of the corollary.
\end{proof}

From now on, our aim is to prove that 
\begin{equation*}
\sum_{\substack{(t_j,x_j) \\ \text{interaction}}} \sum_{s \in \W(t_j)} |\sigma(t_j, s) - \sigma(t_{j-1}, s)||s| \leq \const \lVert f'' \lVert_{L^\infty} \TV(u(0,\cdot))^2.
\end{equation*}

As outlined in Section \ref{Sss_sketch_proof}, the idea is the following: we  define a positive valued functional $\fQ = \fQ(t)$, $t \geq 0$, such that $\fQ$ is piecewise constant in time, right continuous, with jumps possibly located at times $t_j, j=1,\dots,J$ and such that 
\begin{equation}
\label{boundQzero}
\fQ(0) \leq \lVert f'' \lVert_{L^\infty} \TV(u(0,\cdot))^2.
\end{equation}
Such a functional will have two properties:
\begin{enumerate}
\item for each $j$ such that $(t_j,x_j)$ is an interaction point, $\fQ$ is decreasing at time $t_j$ and its decrease bounds the quantity we want to estimate at time $t_j$ as follows:
\begin{equation}
\label{W_decrease}
\sum_{s \in \W(t_j)} |\sigma(t_j, s) - \sigma(t_{j-1}, s)||s| \leq 2 \Big[ \fQ(t_{j-1}) - \fQ(t_j) \Big];
\end{equation}
this is proved in Theorem \ref{W_decreasing};
\item for each $j$ such that $(t_j, x_j)$ is a cancellation point, $\fQ$ can increase at most by 
\begin{equation}
\label{W_increase}
\fQ(t_j) - \fQ(t_{j-1}) \leq \log(2) \lVert f'' \lVert_{L^\infty} \TV(u(0,\cdot)) \big[ \mathcal{C}(t_j,x_j) \big];
\end{equation}
this is proved in Theorem \ref{W_increasing}.
\end{enumerate}
Using the two estimates above, we obtain the following proposition, which completes the proof of Theorem \ref{W_main_thm}.

\begin{proposition}
\label{W_thm_interaction}
It holds
\begin{equation*}
\sum_{\substack{(t_j,x_j) \\ \emph{interaction}}} \sum_{s \in \W(t_j)} |\sigma(t_j, s) - \sigma(t_{j-1}, s)||s| \leq 2 ( 1 + \log(2)) \lVert f'' \lVert_{L^\infty} \TV(u(0,\cdot))^2.
\end{equation*}
\end{proposition}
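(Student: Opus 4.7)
The plan is to treat Proposition~\ref{W_thm_interaction} as a pure bookkeeping consequence of the three announced properties of the functional $\fQ(t)$: the initial bound \eqref{boundQzero}, the interaction decrease \eqref{W_decrease}, and the controlled cancellation increase \eqref{W_increase}. These will be established in Theorems~\ref{W_decreasing} and~\ref{W_increasing} below, together with an elementary estimate on $\fQ(0)$ obtained from the Lipschitz regularity of convex-envelope speeds (Proposition~\ref{convex_fundamental_thm_affine}). Once they are available, the proposition reduces to a short telescoping argument on the right-continuous, piecewise constant function $t \mapsto \fQ(t)$, whose jumps live in the finite set $\{t_j\}_{j=1}^J$ and which is nonnegative by construction.

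Concretely, I would partition the jump indices into the subset $\mathcal J_I$ of interactions and the subset $\mathcal J_C$ of cancellations. Applying \eqref{W_decrease} term by term gives
\[
\sum_{j \in \mathcal J_I} \sum_{s \in \W(t_j)} |\sigma(t_j,s) - \sigma(t_{j-1},s)|\,|s| \ \leq\ 2 \sum_{j \in \mathcal J_I} \big[\fQ(t_{j-1}) - \fQ(t_j)\big].
\]
Since the left-hand side of \eqref{W_decrease} is nonnegative, this bound automatically forces $\fQ(t_{j-1}) \geq \fQ(t_j)$ at every $j \in \mathcal J_I$, so $\fQ$ can increase only at cancellation times. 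Combining \eqref{W_increase} with the classical estimate $\sum_{j \in \mathcal J_C} \C(t_j,x_j) \leq \TV(\bar u_\e) \leq \TV(u(0,\cdot))$, which comes from the monotonicity of $t \mapsto \TV(u_\e(t,\cdot))$ and \eqref{bd_su_dato_iniziale}, the total positive variation of $\fQ$ is at most $\log(2)\,\lVert f'' \rVert_{L^\infty}\,\TV(u(0,\cdot))^2$. Using $\fQ \geq 0$ and \eqref{boundQzero}, the identity relating negative variation, positive variation, and endpoint values of a bounded-variation function yields
\[
\sum_{j \in \mathcal J_I} \big[\fQ(t_{j-1}) - \fQ(t_j)\big] \ \leq\ \fQ(0) + \text{(positive variation of }\fQ\text{)} \ \leq\ (1+\log 2)\,\lVert f'' \rVert_{L^\infty}\,\TV(u(0,\cdot))^2,
\]
and multiplying by $2$ closes the proof.

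The real obstacle is therefore not in this proposition but in establishing \eqref{W_decrease} and \eqref{W_increase}. The interaction decay will require a careful analysis of the artificial Riemann problems attached to every pair of waves $(s,s')$ and of how they change when a real interaction fuses two wavefronts (the wave pattern of Figure~\ref{fig:figura37}); the cancellation bound will rely on controlling the change of the speeds assigned by those artificial Riemann problems via Proposition~\ref{diff_vel_proporzionale_canc}, applied pair by pair to the waves eliminated at $(t_j,x_j)$.
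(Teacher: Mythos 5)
Your proposal is correct and follows essentially the same route as the paper: the paper's proof of Proposition \ref{W_thm_interaction} is exactly the bookkeeping you describe, namely applying \eqref{W_decrease} over interaction times, adding and subtracting the cancellation increments so that the full sum telescopes to $\fQ(0)-\fQ(t_J)\leq \fQ(0)$, and then controlling the cancellation increments via \eqref{W_increase} together with $\sum_j \mathcal C(t_j,x_j)\leq \TV(\bar u_\e)$ and $\fQ(0)$ via \eqref{boundQzero}. Your phrasing in terms of positive/negative variation of the piecewise constant function $t\mapsto\fQ(t)$ is just a repackaging of that same telescoping identity, and your identification of where the real work lies (Theorems \ref{W_decreasing} and \ref{W_increasing}) matches the paper's structure.
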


\begin{proof}
By direct computation,
\begin{multline*}
\sum_{\substack{(t_j,x_j) \\ \text{interaction}}} \sum_{s \in \W(t_j)} |\sigma(t_j, s) - \sigma(t_{j-1}, s)||s| \\
\begin{aligned}
\text{(by \eqref{W_decrease})}  \leq &~ 2 \sum_{\substack{(t_j,x_j) \\ \text{interaction}}}  \big[ \fQ(t_{j-1}) - \fQ(t_j) \big] \\
\leq &~ 2 \Bigg[ \sum_{\substack{(t_j,x_j) \\ \text{interaction}}}  \big[ \fQ(t_{j-1}) - \fQ(t_j) \big] 
+ \sum_{\substack{(t_j,x_j) \\ \text{cancellation}}}  \big[ \fQ(t_{j-1}) - \fQ(t_j) \big] \\
& \qquad \qquad - \sum_{\substack{(t_j,x_j) \\ \text{cancellation}}}  \big[ \fQ(t_{j-1}) - \fQ(t_j) \big]  \Bigg] \\
= &~ 2 \Bigg[ \sum_{j=1}^J \big[ \fQ(t_{j-1}) - \fQ(t_j) \big] 
+ \sum_{\substack{(t_j,x_j) \\ \text{cancellation}}}  \big[ \fQ(t_j) - \fQ(t_{j-1}) \big]   \Bigg] \\
\text{(by \eqref{W_increase})}  \leq &~  2 \Bigg[ \sum_{j=1}^J \big[ \fQ(t_{j-1}) - \fQ(t_j) \big] 
+ \log(2) \sum_{\substack{(t_j,x_j) \\ \text{cancellation}}}  \lVert f'' \lVert_{L^\infty} \TV(u(0,\cdot)) \mathcal{C}(t_j,x_j) \Bigg] \\
\text{(by \eqref{W_canc_0}, \eqref{boundQzero})} \leq &~ 
2 \Big[ \fQ(0) + \log(2) \lVert f'' \lVert_{L^\infty} \TV(u(0,\cdot))^2 \Big] \\
\leq&~ 2 (1 + \log(2)) \lVert f'' \lVert_{L^\infty} \TV(u(0,\cdot))^2. 
\end{aligned}
\end{multline*}
\end{proof}

In the remaining part of this section we prove estimates \eqref{W_decrease} and \eqref{W_increase}.

\subsection{Analysis of waves collisions for wavefront tracking}
\label{W_waves_collision}

In this section we define the notion of pairs of waves which \emph{have never interacted before a fixed time $t$} and pairs of waves which \emph{have already interacted} and, for any pair of waves which have already interacted, we associate an artificial speed difference, which is some sense summarize their past common history.

\begin{definition}
\label{interagite_non_interagite}
Let $\bar t$ be a fixed time and let $s,s' \in \W(\bar t)$. We say that \emph{$s,s'$ interact at time $\bar t$} if $\mathtt x(\bar t, s) = \mathtt x(\bar t, s')$.  

We also say that \emph{they have already interacted at time $\bar t$} if there is $t \leq \bar t$ such that $s,s'$ interact at time $t$. Moreover we say that \emph{they have not yet interacted at time $\bar t$} if for any $t \leq \bar t$, they do not interact at time $t$. 
\end{definition}

\begin{lemma}
\label{W_interagite_stesso_segno}
Assume that the waves $s, s'$ interact at time $\bar t$. Then they have the same sign.
\end{lemma}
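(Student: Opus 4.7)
The plan is to reduce everything to the structure of an enumeration of waves at time $\bar t$, and use property (3) of Definition \ref{W_eow} applied to the common position of $s$ and $s'$.

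First I would set $x_0 := \mathtt x(\bar t, s) = \mathtt x(\bar t, s')$, so that $\{s, s'\} \subseteq \W(\bar t, x_0) = \mathtt x(\bar t, \cdot)^{-1}(x_0)$. By Lemma \ref{W_lemma_eow}, the triple $(\W, \mathtt x(\bar t, \cdot), \hat u)$ is an enumeration of waves for the piecewise constant function $u_\e(\bar t, \cdot)$; in particular $\mathcal{S}$ (defined at $t=0$ and propagated by the construction of $\mathtt x(t, \cdot)$) is the sign function attached to this enumeration via \eqref{W_sign}.

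Since $\W(\bar t, x_0) \neq \emptyset$, property (3c) of Definition \ref{W_eow} rules out $u_\e(\bar t, x_0-) = u_\e(\bar t, x_0)$, so a genuine jump occurs at $x_0$. If $u_\e(\bar t, x_0-) < u_\e(\bar t, x_0)$, property (3a) tells us that $\hat u$ maps $\W(\bar t, x_0)$ bijectively onto $(u_\e(\bar t, x_0-), u_\e(\bar t, x_0)] \cap \Z\e$; applying this to $s$ and $s'$ together with the definition \eqref{W_sign} gives $\mathcal{S}(s) = \mathcal{S}(s') = +1$. Symmetrically, if $u_\e(\bar t, x_0-) > u_\e(\bar t, x_0)$, property (3b) forces $\mathcal{S}(s) = \mathcal{S}(s') = -1$.

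There is no real obstacle here: the lemma is essentially a reading of property (3) of Definition \ref{W_eow}, and the only substantive input is Lemma \ref{W_lemma_eow}, which guarantees that our time-dependent triple remains a valid enumeration of waves so that the constant-in-time sign $\mathcal{S}$ is coherent with the jump of $u_\e(\bar t, \cdot)$ at $x_0$. Thus the entire proof will be short, amounting to this two-case dichotomy on the sign of the jump $u_\e(\bar t, x_0) - u_\e(\bar t, x_0-)$.
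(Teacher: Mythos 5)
Your proof is correct and follows the same route the paper intends: invoke Lemma \ref{W_lemma_eow} to ensure $(\W, \mathtt x(\bar t,\cdot), \hat u)$ is an enumeration of waves, then read off property (3) of Definition \ref{W_eow} at the common position $x_0$, using the time-independence of $\mathcal{S}$. The paper's proof is just the one-line summary of exactly this argument.
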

\begin{proof}
Easy consequence of definition of enumeration of waves and the fact that $\mathcal S(s)$ is independent of $t$. 
\end{proof}

\begin{lemma}
\label{W_quelle_in_mezzo_hanno_int}
Let $\bar t$ be a fixed time, $s,s' \in \W(\bar t)$, $s < s'$. Assume that $s, s'$ have already interacted at time $\bar t$. If $p, p' \in \W(\bar t)$ and $s \leq p \leq p' \leq s'$, then $p, p'$ have already interacted at time $\bar t$.
\end{lemma}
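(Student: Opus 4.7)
The plan is to chase the monotonicity property of $\mathtt x(t,\cdot)$ at the earlier time when $s$ and $s'$ occupied the same position.

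First I would observe that the hypothesis "$s,s'$ have already interacted at time $\bar t$" supplies a time $t_0 \le \bar t$ with $\mathtt x(t_0, s) = \mathtt x(t_0, s')$ (both finite). Next I need that $p, p' \in \W(t_0)$, i.e. they are still alive at the earlier time $t_0$. This is the one small thing to check, but it is immediate from the inductive construction of $\mathtt x$ in Section \ref{W_pswaves}: a wave is sent to $+\infty$ at a cancellation and then kept at $+\infty$, so the map $t \mapsto \W(t)$ is non-increasing in $t$. Hence $p, p' \in \W(\bar t) \subseteq \W(t_0)$, and in particular $\mathtt x(t_0, p), \mathtt x(t_0, p') < +\infty$.

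Now I apply Property (2) of an enumeration of waves (Definition \ref{W_eow}), combined with Lemma \ref{W_lemma_eow} which guarantees that $(\W, \mathtt x(t_0,\cdot), \hat u)$ is indeed an enumeration of waves for $u_\e(t_0,\cdot)$. Since $s \le p \le p' \le s'$ and all four positions are finite at $t_0$, monotonicity gives
\[
\mathtt x(t_0, s) \le \mathtt x(t_0, p) \le \mathtt x(t_0, p') \le \mathtt x(t_0, s').
\]
Because the two extremes coincide, the inner inequality collapses to $\mathtt x(t_0, p) = \mathtt x(t_0, p')$. By Definition \ref{interagite_non_interagite}, this means $p, p'$ interact at time $t_0$, and since $t_0 \le \bar t$, they have already interacted at time $\bar t$.

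There is essentially no obstacle here: the whole content of the lemma is the monotonicity property (2) of the position function plus the fact that, once removed, a wave stays removed. The only care needed is to verify that $p, p'$ were alive at the common interaction time $t_0$, which, as noted, is built into the construction of $\mathtt x(t, \cdot)$.
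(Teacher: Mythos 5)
Your proof is correct and follows essentially the same route as the paper: locate the earlier time $t_0$ at which $s,s'$ interact, note that $p,p' \in \W(\bar t) \subseteq \W(t_0)$ since removed waves stay removed, and then apply the monotonicity of $\mathtt x(t_0,\cdot)$ to squeeze the positions and conclude $\mathtt x(t_0,p) = \mathtt x(t_0,p')$. The paper's version is just more terse, stating the inclusion $\W(t_0) \supseteq \W(\bar t)$ without comment where you spell out why it holds.
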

\begin{proof}
Let $t$ be the time such that $s,s'$ interact at time $t$. Clearly $s,s',p,p' \in \W(t) \supseteq \W(\bar t)$. Since for $t$ fixed, $\mathtt x$ is increasing on $\W(t)$, it holds $\mathtt x(t,s) = \mathtt x(t,p) = \mathtt x(t, p') = \mathtt x(t, s')$. 
\end{proof}

Let $\bar s \in \W(\bar t)$. By Lemmas \ref{W_interagite_stesso_segno} and \ref{W_quelle_in_mezzo_hanno_int}, the set
\[
\mathcal{I}(\bar t, \bar s) := \Big\{s \in \W(\bar t) \ \Big| \ s \text{ has already interacted with } \bar s \text{ at time } \bar t\Big\}
\]
is an homogeneous interval of waves.
Moreover set
\begin{equation*}
L(\bar t, \bar s)  := \min \mathcal{I}(\bar t,\bar s), \ \ \  R(\bar t, \bar s)  := \max \mathcal{I}(\bar t,\bar s).  
\end{equation*}

Let $s, s'$ be two waves. Assume that $s < s'$ and that they have already interacted at a fixed time $\bar t$. Consider now the set 
\[
\mathcal{I}(\bar t, s, s') : = \mathcal{I}(\bar t, s) \cap \mathcal{I}(\bar t, s').
\]
This is clearly an interval of waves. Observe that $\min \mathcal{I}(\bar t,s,s') = L(\bar t,s')$ and $\max \mathcal{I}(\bar t, s,s') = R(\bar t,s)$. A fairly easy argument based on Lemma \ref{W_quelle_in_mezzo_hanno_int} implies that $\mathcal I(\bar t,s,s')$ is made of the waves $p$ which have interacted with both $s$ and $s'$.


\begin{definition}
\label{W_waves_divided}
Let $s,s' \in \W(\bar t)$ be two waves which have already interacted at time $\bar t$. We say that \emph{$s,s'$ are divided in the real solution at time $\bar t$} if 
\[
(\mathtt x(\bar t, s), \sigma(\bar t, s)) \neq (\mathtt x(\bar t, s'), \sigma(\bar t, s')),
\]
i.e. if at time $\bar t$ they have either different position, or the same position but different speed.

\noindent If they are not divided in the real solution, we say that \emph{they are joined in the real solution}.
\end{definition}

\begin{remark}
\label{rem_divise_solo_in_cancellazioni}
It $\bar t \neq t_j$ for each $j$, then two waves are divided in the real solution if and only if they have different position. The requirement to have different speed is needed at collision times, more precisely at cancellations.
\end{remark}

\begin{proposition}
\label{W_unite_realta}
Let $\bar t$ be a fixed time. Let $s,s' \in \W(\bar t)$. If $s,s'$ are not divided in the real solution at time $\bar t$, then the Riemann problem $\mathcal{I}(\bar t,s,s')$ does not divide them.
\end{proposition}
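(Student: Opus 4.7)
The plan is to argue by contradiction. Suppose $s,s' \in \W(\bar t)$ share the same position $x = \mathtt x(\bar t,s) = \mathtt x(\bar t,s')$ and the same real speed $\sigma(\bar t,s) = \sigma(\bar t,s')$, yet the artificial Riemann problem $\mathcal I(\bar t,s,s')$ divides them. Without loss of generality $s < s'$ and, by Lemma \ref{W_interagite_stesso_segno}, both waves are positive. Denote by $I = [a,b]$ the $u$-interval determined by $\mathcal I(\bar t,s,s')$ (Proposition \ref{W_interval_waves}). The contradictory hypothesis produces a grid value $\bar u \in [\hat u(s), \hat u(s')-\e] \cap \e\Z$ with $\conv_I f_\e(\bar u) = f_\e(\bar u)$ separating the shock components supporting $s$ and $s'$.

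The first step is to locate a ``splitting wave'' present in both the artificial and real pictures. Since by Proposition \ref{W_interval_waves} the map $\hat u$ is a strictly increasing bijection between $\mathcal I(\bar t,s,s')$ and its image in $\e\Z$, there is a unique $p \in \mathcal I(\bar t,s,s')$ with $\hat u(p) = \bar u$, and necessarily $s \le p < s'$. Monotonicity of $\mathtt x(\bar t,\cdot)$ on $\W(\bar t)$ forces $\mathtt x(\bar t,p) = x$, so $p$ sits on the very same wavefront as $s,s'$. By Remark \ref{W_speed_increasing_wrt_waves}, $\sigma(\bar t,p)$ is sandwiched between $\sigma(\bar t,s)$ and $\sigma(\bar t,s')$, hence equals their common value. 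Setting $J = [u_\e(\bar t,x-), u_\e(\bar t,x)]$, the equal derivatives of $\conv_J f_\e$ on the sub-intervals associated to $s$, $p$, $s'$, together with convexity, force $\conv_J f_\e$ to be affine with constant slope on the full interval $[\hat u(s)-\e, \hat u(s')]$.

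Next I extract the contradiction from the artificial side. By Proposition \ref{tocca} applied to $\conv_I f_\e(\bar u) = f_\e(\bar u)$ one decomposes $\conv_I f_\e = \conv_{[a,\bar u]} f_\e \cup \conv_{[\bar u,b]} f_\e$. Combining Proposition \ref{vel_aumenta} on each of the two halves with the affine behavior of $\conv_J f_\e$ on $[\hat u(s)-\e,\hat u(s')]$ established above, one derives that $f_\e(\bar u) = \conv_J f_\e(\bar u)$ as well. This forces $p$ to be a corner of the real convex envelope at $\bar u$: since $\conv_J f_\e$ is affine on $[\hat u(s)-\e, \hat u(s')]$ and touches $f_\e$ at the interior grid point $\bar u$, $f_\e$ must itself coincide with this affine function on the whole interval, i.e.\ $\hat u(p)$ lies in the interior of an affine piece of $f_\e$. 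An elementary inspection of the piecewise affine structure of $f_\e$ and of the definition of the speed function in \eqref{E_speed_W_def_2} then contradicts the assumption that $(\hat u(s)-\e,\hat u(s))$ and $(\hat u(s')-\e,\hat u(s'))$ lie in distinct shock components of $\conv_I f_\e$.

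The main obstacle I anticipate is that the artificial interval $I$ and the real interval $J$ need not be comparable by inclusion: past common partners of $s,s'$ that have since been separated from $x$ by intervening cancellations push $I$ outward, while waves currently at $x$ that never met both $s$ and $s'$ push $J$ outward. Bridging this asymmetry is the technical heart of the proof, and the natural tool is Proposition \ref{incastro}, which glues a touching point of $\conv_I f_\e$ at $\bar u$ to a touching point of $\conv_J f_\e$ at the same grid value through an intermediate interval around $\hat u(p)$ contained in both $I$ and $J$. Handling the asymmetry carefully, rather than the rest of the argument, is where the work lies.
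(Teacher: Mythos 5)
The difficulty you flag as ``the technical heart'' --- that the artificial interval $I$ and the real interval $J$ need not be comparable by inclusion --- is illusory, and this is precisely why your proof never closes. Waves can push $I$ outward relative to $[s,s']$, yes; but no wave can push $J$ outside $I$: every wave $p$ with $\mathtt x(\bar t,p)=\bar x=\mathtt x(\bar t,s)=\mathtt x(\bar t,s')$ \emph{interacts with both $s$ and $s'$ at time $\bar t$ itself} (Definition \ref{interagite_non_interagite} only requires equality of positions at a single time $\leq \bar t$), hence has ``already interacted'' with both and belongs to $\mathcal I(\bar t,s,s')$. Thus $\W(\bar t,\bar x)\subseteq \mathcal I(\bar t,s,s')$ and, passing to $\hat u$-images via Proposition \ref{W_interval_waves}, $J\subseteq I$ always. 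Once this containment is in hand the statement follows in one line: the real speeds of $s,s'$ are the speeds given by the sub-Riemann problem $\W(\bar t,\bar x)$, equality of these speeds places $s,s'$ in the same shock component of $\conv_J f_\e$, and Corollary \ref{stesso_shock} propagates this to the larger problem $I$. That is the paper's entire proof. Your plan to ``bridge the asymmetry'' with Proposition \ref{incastro} is solving a problem that does not exist, and since you defer that bridging as future work, the proposal is incomplete exactly where you locate its substance.

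There is also an unjustified step in the middle of your argument as written: the claim that Proposition \ref{tocca} plus Proposition \ref{vel_aumenta} plus the affineness of $\conv_J f_\e$ on $[\hat u(s)-\e,\hat u(s')]$ yields $f_\e(\bar u)=\conv_J f_\e(\bar u)$ does not follow from those ingredients alone --- transferring a touching point from $\conv_I f_\e$ to $\conv_J f_\e$ requires $J\subseteq I$ (then $\conv_J f_\e\geq \conv_I f_\e$ on $J$ gives it immediately), i.e.\ exactly the containment you declined to establish. Finally, the concluding ``elementary inspection'' is doing real work: a touching point of $\conv_J f_\e$ at $\bar u$ strictly between $\hat u(s)$ and $\hat u(s')$ contradicts membership in a common \emph{shock interval} but not, by itself, equality of the \emph{speeds} $\sigma(\bar t,s)=\sigma(\bar t,s')$ (the envelope can touch $f_\e$ in the interior of a region where $f_\e$ is itself affine), so the contradiction you aim for needs a further argument that is not supplied.
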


For the definition of the Riemann problem $\mathcal I(\bar t,s,s')$ see Definition \ref{W_artificial_speed} and Remark \ref{W_artificial_speed_remark}.

\begin{proof}
Let $\bar x = \mathtt x(\bar t, s) = \mathtt x(\bar t,s')$. Clearly $\W(\bar t, \bar x) \subseteq \mathcal{I}(\bar t,s,s')$. Observe that $\W(\bar t, \bar x)$ is an interval of waves and that by definition the real speed of the waves $\sigma(\bar t,s) = \sigma(\bar t,s')$ is the speed given $s,s'$ by the Riemann problem $\W(\bar t,\bar x)$. The conclusion is then a consequence of Corollary \ref{stesso_shock}.
\end{proof}

The remaining part of this section is devoted to prove the following proposition, which is in some sense the converse of the previous one and is a key tool in order estimate the increase and the decrease of the functional $\fQ$.

\begin{proposition}
\label{W_divise_tocca}
Let $\bar t$ be a fixed time. Let $s,s'$ be two waves which have already interacted at time $\bar t$. Assume that $s,s'$ are divided in the real solution. Let $p,p' \in \mathcal{I}(\bar t, s,s')$.
If $p,p'$ are divided in the real solution at time $\bar t$, then the Riemann problem $\mathcal{I}(\bar t, s,s')$ divides them.
\end{proposition}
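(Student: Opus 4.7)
The plan is to match the shock structure of $\conv_I f_\e$ (with $I$ the $u$-interval of the Riemann problem $\mathcal{I}(\bar t,s,s')$) against the structure of the real solution at time $\bar t$, by identifying enough touch points of the global convex envelope to isolate distinct real wavefronts into distinct shock components.

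First, by Lemma \ref{W_interagite_stesso_segno}, $\mathcal{I}(\bar t,s,s')$ is homogeneous and, as noted in the text, it coincides with the interval of waves $[s_{\min},s_{\max}] \cap \W(\bar t)$, where $s_{\min} := L(\bar t,s')$ and $s_{\max} := R(\bar t,s)$. WLOG it is positive (the negative case is symmetric via the concave envelope), so by Proposition \ref{W_interval_waves} the Riemann problem is defined on the interval $I = [\hat u(s_{\min})-\e,\, \hat u(s_{\max})] \subseteq \R$. Assume WLOG $p \leq p'$. Let $x_1 < \dots < x_K$ be the distinct positions occupied by waves of $\mathcal{I}(\bar t,s,s')$ at time $\bar t$ and set $u_k := u_\e(\bar t,x_k)$ for $k \in \{1,\dots,K\}$ together with $u_0 := u_\e(\bar t,x_1-)$. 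Since $\mathcal{I}$ is an interval of waves in the $\N$-labeling and labels are monotone in position, any hypothetical wavefront strictly between $x_k$ and $x_{k+1}$ would carry labels in $(s_{\min},s_{\max})$ and hence belong to $\mathcal{I}$ — contradicting the definition of the $x_k$'s. Therefore $u_\e(\bar t,x_{k+1}-) = u_k$ for every $k$, and $I \subseteq [u_0,u_K]$.

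The core claim is that each interior $u_k$, $k=1,\dots,K-1$, is a touch point of $\conv_I f_\e$, i.e.\ $\conv_I f_\e(u_k) = f_\e(u_k)$. To prove it, one shows first that the rightmost slope of $\conv_{[u_{k-1},u_k]} f_\e$ at $u_k-$ (which equals the speed of the last wave at $x_k$) does not exceed the leftmost slope of $\conv_{[u_k,u_{k+1}]} f_\e$ at $u_k+$ (the speed of the first wave at $x_{k+1}$). This ordering rests on the non-crossing of the wavefronts of $\mathcal{I}$ at $\bar t$, inherited from the ordering of the outgoing fan at their most recent common collision. The concatenation of the two local envelopes is then a convex function on $[u_{k-1},u_{k+1}]$ which lies below $f_\e$ and agrees with $f_\e$ at $u_k$, forcing $\conv_{[u_{k-1},u_{k+1}]} f_\e(u_k) = f_\e(u_k)$. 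Iterating Proposition \ref{incastro} promotes this to $\conv_{[u_0,u_K]} f_\e(u_k) = f_\e(u_k)$, and then Proposition \ref{vel_aumenta} applied to the smaller interval $I \subseteq [u_0,u_K]$ gives $\conv_I f_\e(u_k) \geq \conv_{[u_0,u_K]} f_\e(u_k) = f_\e(u_k)$; together with $\conv_I f_\e \leq f_\e$ this yields equality.

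With these touch points at hand, Proposition \ref{tocca} applied successively at $u_1,\dots,u_{K-1}$ decomposes $\conv_I f_\e$ as the union of its restrictions to the subintervals $[u_{k-1},u_k]\cap I$, so that every shock interval of $\conv_I f_\e$ lies in exactly one such piece. If $\mathtt x(\bar t,p) = x_{k_p} \neq x_{k_{p'}} = \mathtt x(\bar t,p')$, the intervals $(\hat u(p)-\e,\hat u(p))$ and $(\hat u(p')-\e,\hat u(p'))$ fall into distinct pieces and hence into distinct shock intervals of $\conv_I f_\e$, so the Riemann problem $\mathcal{I}(\bar t,s,s')$ divides $p,p'$. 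If instead $p,p'$ share the same position $x_{k_0}$ but have different speeds (possible only at a collision time, by Remark \ref{rem_divise_solo_in_cancellazioni}), the local envelope $\conv_{[u_{k_0-1},u_{k_0}]} f_\e$ has a touch point $u^*$ lying between $\hat u(p)$ and $\hat u(p')-\e$; by the decomposition $u^*$ is also a touch point of $\conv_I f_\e$, and so again $p,p'$ land in different shock components.

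The main obstacle is the slope-comparison step underlying the touch-point claim. While the speed ordering of outgoing wavefronts from a single Riemann solver is immediate, the waves in $\mathcal{I}(\bar t,s,s')$ may have undergone a complicated sequence of external interactions and cancellations after their last joint meeting, and one must verify that none of these destroys the non-crossing property between consecutive $x_k$. A careful case-by-case tracking of how each collision event acts on the artificial Riemann problem $\mathcal{I}$ — possibly organised as an induction on the number of such events — is where the bulk of the technical work lies, and it is what makes this converse of Proposition \ref{W_unite_realta} genuinely more delicate than the direct statement.
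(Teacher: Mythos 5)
There is a genuine gap, and it sits exactly where you locate ``the bulk of the technical work'': the slope comparison underlying your touch-point claim is not merely delicate to verify --- it is false as stated. You assert that the rightmost slope of $\conv_{[u_{k-1},u_k]}f_\e$ at $u_k-$ (the real speed of the last wave at $x_k$) does not exceed the leftmost slope of $\conv_{[u_k,u_{k+1}]}f_\e$ at $u_k+$ (the real speed of the first wave at $x_{k+1}$), invoking ``non-crossing''. But consecutive wavefronts carrying waves of $\mathcal I(\bar t,s,s')$ can perfectly well be approaching each other at time $\bar t$: that is precisely how already-interacted waves come to collide again. Concretely, after a cancellation splits $s$ and $s'$ into two spreading fronts, the left front can absorb a faster positive front arriving from its left whose waves have met $s$ but never $s'$ (hence lie outside $\mathcal I(\bar t,s,s')$); by Proposition \ref{vel_aumenta} this raises the real speeds of the waves at $x_1$, the two fronts then approach, and $\conv_{[u_0,u_2]}f_\e(u_1)<f_\e(u_1)$, so your concatenated envelope is not convex and the touch point at $u_1$ for the combined \emph{real} Riemann problem simply does not exist. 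The correct statement concerns the \emph{artificial} Riemann problem $I$, which by construction excludes the absorbed external waves and therefore assigns the last wave at $x_1$ a slower speed than the real one; your argument conflates the real local Riemann problem $[u_0,u_1]$ with the artificial piece $[\hat u(s_{\min})-\e,\,u_1]$, and this conflation is fatal, because the whole point of Proposition \ref{W_divise_tocca} is that division persists for the artificial problem even when the real adjacent Riemann problems would merge the waves.

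Consequently the ``core claim'' (each interior $u_k$ is a touch point of $\conv_I f_\e$) cannot be extracted from the wave configuration at the single time $\bar t$; it genuinely depends on the collision history. The paper proves it by induction on the collision times $t_j$: interactions are dismissed because $\mathcal I(t_j,s,s')$ either collapses to a single position or is unchanged, and at a cancellation the touch point separating $r_1-1$ from $r_1$ is transported from the Riemann problem $\mathcal I(t_{j-1},\tilde s,s')$ to $\mathcal I(t_j,s,s')$ by combining it with the post-cancellation Riemann problem $[r_1,r_2]$ via Proposition \ref{incastro}, Proposition \ref{tocca} and Corollary \ref{stesso_shock}. Your reduction of the proposition to the touch-point claim, and your handling of the same-position/different-speed case, are fine; but the touch-point claim is the entire content of the proposition, and the static route you propose to it fails.
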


\begin{proof}
Fix two waves $s < s'$. It is sufficient to prove the proposition only for times $t_j, j=0,\dots, J$. We proceed by induction on $j$. For $j=0$ the proof is obvious. Let us assume the lemma is true for $j-1$ and let us prove it for $j$. Suppose $s,s'$ to be divided in the real solution at time $t_j$. We can also assume w.l.o.g. that $s,s'$ are both positive.

When $(t_j,x_j)$ is an interaction the analysis is quite simple, while the cancellation case requires more effort.

\smallskip

\noindent \textbf{$(t_j,x_j)$ interaction.} Let us distinguish two cases. 

If $\mathcal{I}(t_j, s,s') \neq \mathcal{I}(t_{j-1},s,s')$, then waves $s,s'$ must be involved in the interaction, i.e. $\mathtt x(t_j,s) = \mathtt x(t_j,s') = x_j$. Since $(t_j,x_j)$ is an interaction point, $\sigma(t_j,s) = \sigma(t_j,s')$, and so $s,s'$ are not divided at time $t_j$, hence the statement of the proposition can not apply to $s,s'$.

If $\mathcal{I}(t_j, s,s') = \mathcal{I}(t_{j-1},s,s')$, take $p,p' \in \mathcal{I}(t_j, s,s')$, such that $p,p'$ are divided in the real solution. Since an interaction does not divide waves which were joined before the interaction, $p,p'$ were already divided at time $t_{j-1}$ and so by inductive assumption we have done.
\smallskip

\noindent \textbf{$(t_j,x_j)$ cancellation.}  Assume that $s,s'$ are divided in the real solution after the cancellation at time $t_j$. Moreover, w.l.o.g., assume that in $(t_j,x_j)$ two wavefronts collide, the one coming from the left is positive, the one coming from the right is negative; assume also that waves in $\W(t_j,x_j)$ are positive, and that $\mathtt x(t_j,s) \leq  \mathtt x(t_j,s') \leq x_j$ (the proof in the case $x_j < \mathtt x(t_j,s) \leq \mathtt x(t_j,s')$ is similar, but easier).

Set $\min \W(t_j,x_j) := r_1$, $\max \W(t_j,x_j) := r_2$. If $R(t_j, s) < r_1$, then $s,s'$ were already divided before the collision (i.e. at time $t_{j-1}$), $L(t_j, s') = L(t_{j-1},s')$ and $R(t_j,s) = R(t_{j-1},s)$ and so by inductive assumption we conclude. Hence assume $R(t_j,s) \in [r_1,r_2]$. If $L(t_j,s') = r_1$, then $\mathcal{I}(t_j,s,s') \subseteq \W(t_j,x_j)$ and so by Corollary \ref{stesso_shock}, we can again conclude; hence let us assume $L(t_j,s') < r_1$. 

Finally observe that we can assume $[L(t_j, s'), r_2] \cap \W(t_j) = [L(t_j,s'),r_2]$, i.e. no wave in  $[L(t_j,s'),r_2]$ has been canceled up to time $t_j$ (the general case can be treated in a similar way). See Figure \ref{fig:figura03}.

\begin{figure}
  \begin{center}
    \includegraphics[height=7.5cm,width=12cm]{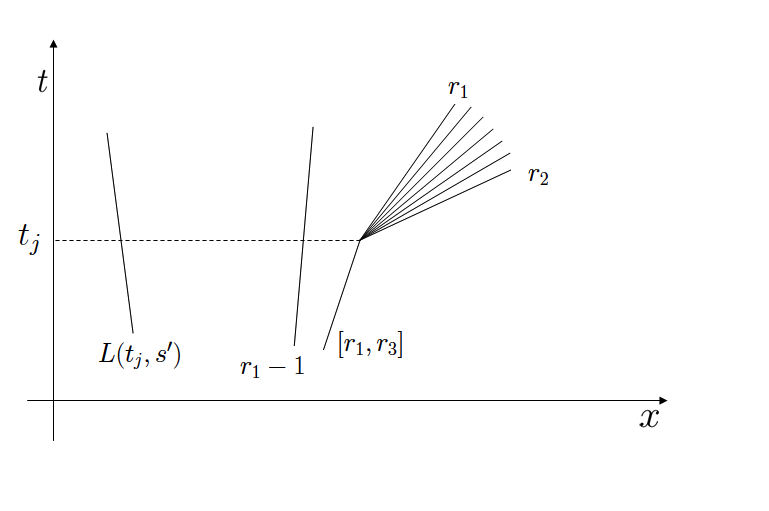}
    \caption{Graphical description of our notations.}
    \label{fig:figura03}
    \end{center}
\end{figure}

Let us observe that 
\begin{equation*}
L(t_j,s') = L(t_{j-1},s'), \  \ \ \  R(t_j,s)  \leq R(t_{j-1},s).
\end{equation*}
Now set 
\begin{equation*}
\tilde s :=
\begin{cases}
s & \text{if } s,s' \text{ were already divided at time } t_{j-1}, \\
L(t_j,s') & \text{otherwise}.
\end{cases}
\end{equation*}

We need now the following three claims.

\begin{claim}
\label{W_s_tilde_sep}
Waves $\tilde s, s'$ are divided in the real solution at time $t_{j-1}$.
\end{claim}
\begin{proof}[Proof of Claim \ref{W_s_tilde_sep}]
If $s,s'$ are divided in the real solution at time $t_{j-1}$, then by definition $\tilde s = s$ and then we have done. If $s,s'$ are not divided, this means that $\mathtt x(t_j,s) = \mathtt x(t_j,s') = x_j$, while by one of our assumption $\tilde s = L(t_j,s') < r_1 = \min \W(t_j,x_j)$; hence $\tilde s, s'$ have different position at time $t_j$. 
\end{proof}

\begin{claim}
\label{W_riemann_prima}
Waves $r_1-1$ and $r_1$ are divided by the Riemann problem $\mathcal{I}(t_{j-1},\tilde s,s')$. 
\end{claim}

\begin{proof}[Proof of Claim \ref{W_riemann_prima}]
By Claim \ref{W_s_tilde_sep}, $\tilde s, s'$ are divided in the real solution at time $t_{j-1}$. Moreover, by definition of $r_1$, also $r_1-1$ and $r_1$ are divided in the real solution at time $t_j$; since $r_1 = \min \W(t_j,x_j)$, $r_1-1$ and $r_1$ are divided also at time $t_{j-1}$. Hence by inductive assumption, the Riemann problem $\mathcal{I}(t_{j-1},\tilde s,s')$ divides $r_1-1$ and $r_1$.
\end{proof}

\begin{claim}
\label{W_riemann_dopo}
Waves $r_1-1$ and $r_1$ are divided by the Riemann problem $\mathcal{I}(t_j,s,s')$. 
\end{claim}

\begin{proof}[Proof of Claim \ref{W_riemann_dopo}]
%
Assume first that $s,s'$ are divided in the real solution at time $t_{j-1}$. In this case, by Claim \ref{W_riemann_prima}, $r_1 -1$ and $r_1$ are divided by the Riemann problem $\mathcal{I}(t_{j-1},s,s')$; hence by Corollary \ref{stesso_shock} they are divided also by the Riemann problem $\mathcal{I}(t_j,s,s')$ since $\mathcal{I}(t_j,s,s') \subseteq \mathcal{I}(t_{j-1},s,s')$.

Now assume $s,s'$ are joined in the real solution at time $t_{j-1}$. In this case $\mathtt x(t_j,s) = \mathtt x(t_j,s') = x_j$. By Claim \ref{W_riemann_prima}, the Riemann problem $\mathcal{I}(t_{j-1},\tilde s,s')$ divides $r_1-1$ and $r_1$. Moreover, since $s,s'$ are divided in the real solution at time $t_j$, the Riemann problem $\W(t_j,x_j) = [r_1,r_2]$ divides them. Hence, observing that $\min \mathcal{I}(t_{j-1},\tilde s,s') = L(t_{j-1},s') = L(t_j,s')$, by Proposition \ref{incastro}, the Riemann problem $[L(t_j,s'), r_2]$ divides $r_1-1$ and $r_1$. One concludes the proof just observing that $r_2 = R(t_j,s)$ and then $[L(t_j,s'), r_2] = \mathcal{I}(t_j,s,s')$.
\end{proof}

Now we are able to conclude the proof of our proposition. Take $p,p' \in \mathcal{I}(t_j,s,s')$ and assume that $p < p'$ are divided in the real solution at time $t_j$. 
\begin{enumerate}
\item If $p \in [L(t_j,s'),r_1-1]$ and $p' \in [r_1,R(t_j,s)]$, then by Claim \ref{W_riemann_dopo} $p,p'$ are divided by the Riemann problem $\mathcal{I}(t_j,s,s')$.
\item If $p,p' \in [L(t_j,s'),r_1-1]$, then they were already divided at time $t_{j-1}$ Hence, by inductive assumption and Claim \ref{W_s_tilde_sep}, the Riemann problem $\mathcal{I}(t_{j-1},\tilde s,s')$ divides $p,p'$. Thus by Claim \ref{W_riemann_prima} and Proposition \ref{tocca} also the Riemann problem $[L(t_j,s'), r_1-1]$ divides $p,p'$ and then by Claim \ref{W_riemann_dopo} and Proposition \ref{tocca}, the Riemann problem $\mathcal{I}(t_j,s,s')$ divides them.
\item Finally assume that $p,p' \in [r_1,R(t_j,s)]$. Since $p,p'$ are divided in the real solution at time $t_j$, one has that the Riemann problem $[r_1,r_2]$ divides them, and also the Riemann problem $[r_1,R(t_j,s)] \subseteq [r_1,r_2]$  divides them. Hence, by Claim \ref{W_riemann_dopo} and Proposition \ref{tocca}, the Riemann problem $\mathcal{I}(t_j,s,s')$ divides $p,p'$. 
\end{enumerate}
This concludes the proof of Proposition \ref{W_divise_tocca}.
\end{proof}

\subsection{\texorpdfstring{The functional $\fQ$ for wavefront tracking approximation}{The functional for wavefront tracking approximation}}
\label{W_functional_Q}

We can finally define the functional $\fQ$ and prove that it satisfies inequalities \eqref{W_decrease} and \eqref{W_increase}.

\subsubsection{Definition of $\fQ$}

First of all for any pair of wave $(s,s')$, $s<s'$, define \emph{the weight $\mathfrak q(t,s,s')$ of the pair of waves $s$, $s'$ at time $t$} in the following way:
\begin{equation}
\label{W_mathfrak_q}
\mathfrak{q}(t, s, s') :=
\begin{cases}
\dfrac{|\sigma(\mathcal{I}(t,s,s'), s') - \sigma(\mathcal{I}(t,s,s'), s)|}{|\hat u(s') - (\hat u(s)-\mathcal{S}(s) \e)|} & 
s,s' \text{already interacted at time } \bar t,
\\
\lVert f'' \lVert_{L^\infty} & \text{otherwise.}
\end{cases}
\end{equation}
Recall that $\sigma(\mathcal{I}(t,s,s'), s)$ (resp. $\sigma(\mathcal{I}(t,s,s'), s')$ ) is the speed given to the wave $s$ (resp. $s'$) by the Riemann problem $\mathcal{I}(t,s,s')$.

As an easy consequence of Proposition \ref{convex_fundamental_thm_affine}, we obtain that $\mathfrak{q}$ takes values in $[0,\lVert f'' \lVert_{L^\infty}]$. 

\begin{remark}
\label{W_rem_unite_realta}
If $s,s'$ are joined in the real solution, then by Proposition \ref{W_unite_realta} $\mathfrak{q}(t, s, s') = 0$.
\end{remark}

Finally set
\[
\mathfrak{Q}(t) :=  \sum_{\substack{s,s' \in \W(t) \\ s < s'}} \mathfrak{q}(t, s, s') |s||s'|.
\]
(Recall that $|s| = |s'| = \e$ is the strength of the waves $s,s'$ respectively.)

It is immediate to see that $\fQ$ is positive, piecewise constant, right continuous, with jumps possibly located at times $t_j, j=1,\dots,J$, and $\fQ(0) \leq \lVert f'' \lVert_{L^\infty} \TV(u_\e(0,\cdot))^2 \leq \lVert f'' \lVert_{L^\infty} \TV(u(0,\cdot))^2$. In the next two sections we prove that it also satisfies inequality \eqref{W_decrease} and \eqref{W_increase}. This completes the proof of Proposition \ref{W_thm_interaction}.

\subsubsection{Decreasing part of $\fQ$}

This section is devoted to prove inequality \eqref{W_decrease}. 

\begin{theorem}
\label{W_decreasing}
For any interaction point $(t_j,x_j)$, it holds
\begin{equation*}
\sum_{s \in \W(t_j)} |\sigma(t_j, s) - \sigma(t_{j-1}, s)||s| \leq 2 \big[ \fQ(t_{j-1}) - \fQ(t_j) \big].
\end{equation*}
\end{theorem}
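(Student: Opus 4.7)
Fix the interaction point $(t_j,x_j)$ and, without loss of generality, assume it is positive with states $u_L<u_M<u_R$. Write $\mathcal{L}:=\W(t_j,x_j)\cap\{\hat u\le u_M\}$ and $\mathcal{R}:=\W(t_j,x_j)\cap\{\hat u>u_M\}$ for the waves of the two colliding wavefronts. Before the collision $\sigma(t_{j-1},s)\equiv\sigma_L:=(f_\e(u_M)-f_\e(u_L))/(u_M-u_L)$ on $\mathcal{L}$ and $\sigma(t_{j-1},s)\equiv\sigma_R$ on $\mathcal{R}$, with $\sigma_L>\sigma_R$; after the collision $\sigma(t_j,s)$ is the slope of $\conv_{[u_L,u_R]}f_\e$ on $(\hat u(s)-\e,\hat u(s))$. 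First, only waves in $\mathcal{L}\cup\mathcal{R}$ have their speed changed, and the only pairs $(s,s')$ whose weight $\mathfrak{q}$ can change are those with one wave in $\mathcal{L}$ and the other in $\mathcal{R}$: pairs entirely inside $\mathcal{L}$ or entirely inside $\mathcal{R}$ were joined at $t_{j-1}$ and remain joined by Corollary~\ref{stesso_shock}, while for pairs with at most one wave in $\mathcal{L}\cup\mathcal{R}$ the common-interaction set $\mathcal{I}(t,s,s')$ is unchanged by this collision.

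\textbf{Telescoping identity for the l.h.s.} Since $\conv_{[u_L,u_R]}f_\e$ is convex with piecewise-constant derivative on the grid and $\sigma_L(u_M-u_L)=f_\e(u_M)-f_\e(u_L)$, a direct Riemann-sum computation gives
\[
\sum_{s\in\mathcal{L}}(\sigma_L-\sigma(t_j,s))\,\e \;=\; \sigma_L(u_M-u_L)-\bigl[\conv_{[u_L,u_R]}f_\e(u_M)-f_\e(u_L)\bigr] \;=\; f_\e(u_M)-\conv_{[u_L,u_R]}f_\e(u_M),
\]
and the same quantity is obtained symmetrically from $\mathcal{R}$. Hence the l.h.s.\ of the theorem equals exactly twice the ``triangle height'' $f_\e(u_M)-\conv_{[u_L,u_R]}f_\e(u_M)$ at the collision, and it suffices to prove
\[
\fQ(t_{j-1})-\fQ(t_j) \;\geq\; f_\e(u_M)-\conv_{[u_L,u_R]}f_\e(u_M).
\]

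\textbf{Matching the drop; main obstacle.} By the localization step, $\fQ(t_{j-1})-\fQ(t_j)=\sum_{s\in\mathcal{L},\,s'\in\mathcal{R}}[\mathfrak{q}(t_{j-1},s,s')-\mathfrak{q}(t_j,s,s')]\,\e^2$. Following the $\mathcal{L}_1,\mathcal{L}_2,\mathcal{R}_1,\mathcal{R}_2$ decomposition sketched in Section~\ref{Sss_sketch_proof} (whose contiguity follows from Lemma~\ref{W_quelle_in_mezzo_hanno_int}), pairs in $(\mathcal{L}_1\times\mathcal{R})\cup(\mathcal{L}_2\times\mathcal{R}_2)$ have never interacted and hence carry the maximal weight $\mathfrak{q}(t_{j-1},s,s')=\|f''\|_{L^\infty}$; pairs in $\mathcal{L}_2\times\mathcal{R}_1$ have already interacted, and since $\mathcal{I}(t_{j-1},s,s')\subseteq\mathcal{L}\cup\mathcal{R}=\mathcal{I}(t_j,s,s')$ Proposition~\ref{differenza_vel} yields $\mathfrak{q}(t_{j-1},s,s')\geq\mathfrak{q}(t_j,s,s')$, so these pairs contribute non-negatively. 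The main obstacle is the quantitative estimate that follows: one has to show that the ``uninteracted-pair budget'' $\|f''\|_{L^\infty}|(\mathcal{L}_1\times\mathcal{R})\cup(\mathcal{L}_2\times\mathcal{R}_2)|\,\e^2$ minus $\sum_{s\in\mathcal{L},\,s'\in\mathcal{R}}\mathfrak{q}(t_j,s,s')\,\e^2$ is at least the triangle height. This comparison ultimately rests on the Lipschitz bound of Proposition~\ref{convex_fundamental_thm_affine} applied to $\conv_{[u_L,u_R]}f_\e$ and on Proposition~\ref{W_divise_tocca}, which guarantees that the artificial Riemann problem $\mathcal{I}(t_j,s,s')$ faithfully detects the joined/divided status of each pair and prevents double-counting when converting the geometric quantity $f_\e(u_M)-\conv_{[u_L,u_R]}f_\e(u_M)$ into a sum over pairs of waves.
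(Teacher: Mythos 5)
Your localization and the identity expressing the left-hand side as twice the triangle height $f_\e(u_M)-\conv_{[u_L,u_R]}f_\e(u_M)$ are correct and coincide with the paper's Steps 1--2 (formula \eqref{W_zzero}), as is the decomposition into $\mathcal L_1,\mathcal L_2,\mathcal R_1,\mathcal R_2$. But the heart of the proof is missing. The correct reason why $\mathfrak q(t_j,s,s')=0$ on $\mathcal L\times\mathcal R$ is Proposition \ref{W_unite_realta} (the pair is joined in the real solution at $t_j$); your claim $\mathcal I(t_{j-1},s,s')\subseteq\mathcal L\cup\mathcal R=\mathcal I(t_j,s,s')$ is false in general, since both interaction sets may contain waves that met $s$ or $s'$ at earlier times and were later separated by a cancellation, so they need not be contained in, or equal to, $\W(t_j,x_j)$. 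Granting the vanishing of $\mathfrak q(t_j,\cdot,\cdot)$, the drop of $\fQ$ equals $\sum_{(s,s')\in\mathcal L\times\mathcal R}\mathfrak q(t_{j-1},s,s')\,\e^2$, and what must be proved is that this sum dominates the triangle height. Your reformulation of the remaining task --- that the ``uninteracted-pair budget'' minus $\sum\mathfrak q(t_j,s,s')\e^2$ exceeds the triangle height --- is not only left unproved but is false as stated: in the case $\mathcal L_1=\mathcal R_2=\emptyset$ (all waves of $\mathcal L$ already met all waves of $\mathcal R$, were split by a cancellation, and now recollide) that budget is zero while the triangle height is strictly positive. This is exactly the non-local-in-time scenario that makes the theorem hard.

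The indispensable step, which your proposal only gestures at, is the lower bound
\begin{equation*}
\sum_{(s,s')\in\mathcal L_2\times\mathcal R_1}\mathfrak q(t_{j-1},s,s')\,|s||s'|\ \geq\ \frac{|\sigma_m(\mathcal L_2)-\sigma_m(\mathcal R_1)|}{|\mathcal L|+|\mathcal R|}\,|\mathcal L_2||\mathcal R_1|.
\end{equation*}
The paper obtains it by: (a) observing that $\ell=\max\mathcal L$ and $r=\min\mathcal R$ are divided in the real solution at $t_{j-1}$, so Proposition \ref{W_divise_tocca} forces the artificial Riemann problem $\mathcal I(t_{j-1},\ell,r)=\mathcal L_2\cup\mathcal R_1$ to divide them; (b) the exact identity \eqref{W_uno} writing $|\sigma_m(\mathcal L_2)-\sigma_m(\mathcal R_1)||\mathcal L_2||\mathcal R_1|$ as a double sum of speed differences assigned by that Riemann problem; (c) Proposition \ref{vel_aumenta} to pass to the smaller Riemann problems $\mathcal I(t_{j-1},s,s')$, which can only increase each speed difference; and (d) the inequality $|\mathcal L|+|\mathcal R|\geq\hat u(s')-(\hat u(s)-\e)$ to recognize the weight $\mathfrak q(t_{j-1},s,s')$. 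None of (a)--(d) appears in operational form in your argument, and the Lipschitz bound of Proposition \ref{convex_fundamental_thm_affine} alone cannot replace them, so the proposal has a genuine gap.
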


By direct inspection of the proof one can verify that the constant $2$ is sharp.

\begin{proof}
Assume w.l.o.g. that all the waves in $\W(t_j,x_j)$ are positive. We partition $\W(t_j,x_j)$ through the equivalence relation
\[
s \sim s' \quad \text{if and only if} \quad \mathtt x(t_{j-1},s) = \mathtt x(t_{j-1},s'). 
\]
By our assumption $\W(t_j,x_j)$ is decomposed into two disjoint intervals of waves $\W(t_j,x_j) = \mathcal{L} \cup \mathcal{R}$ such that for each $s \in \mathcal{L}$ and $s' \in \mathcal{R}$, it holds $s <s'$. 

\smallskip
{\it Step 1.} First of all observe that by formula \eqref{W_mathfrak_q} and by Proposition \ref{W_unite_realta}, if $s<s'$, $(s,s') \in \W(t_j) \times \W(t_j)$, but $(s,s') \notin \mathcal{L} \times \mathcal{R}$, then $\mathfrak{q}(t_{j-1},s,s') = \mathfrak{q}(t_j,s,s')$. Indeed, if at least one between $s,s'$ does not belong to $\mathcal{L} \cup \mathcal{R}$, then $\mathcal{I}(t_j, s,s') = \mathcal{I}(t_j,s,s')$; on the other side, if $s,s' \in \mathcal{L}$ (or $s,s' \in \mathcal{R}$), then $s,s'$ are joined both before and after the interaction and for this reason, by Remark \ref{W_rem_unite_realta}, $\mathfrak{q}(t_{j-1},s,s') = \mathfrak{q}(t_j,s,s') = 0$. 
Now observe that , if $(s,s') \in \mathcal{L} \times \mathcal{R}$, then, by Remark \ref{W_rem_unite_realta}, $\mathfrak{q}(t_j, s,s') = 0$. Hence it is sufficient to prove that  
\begin{equation*}
\sum_{s \in \W(t_j)} |\sigma(t_j, s) - \sigma(t_{j-1}, s)||s| \leq 2 \sum_{(s,s') \in \mathcal{L} \times \mathcal{R}} \mathfrak{q}(t_{j-1}, s,s').
\end{equation*}

\smallskip
{\it Step 2.} For any positive interval of waves $\mathcal{I}$, define \emph{the strength of the interval $\mathcal{I}$} as 
\[
|\mathcal{I}| := \e \card (\mathcal{I}) = \sum_{s \in \mathcal{I}} |s|,
\]
and \emph{the mean speed of waves in $\mathcal{I}$} as 
\begin{equation*}
\sigma_m(\mathcal{I}) := 
\begin{cases}
\frac{f(\hat u(\max \mathcal{I})) - f(\hat u(\min \mathcal{I}) - \e)}{\hat u(\max \mathcal{I}) - (\hat u(\min \mathcal{I}) - \e)}
& \text{if } \mathcal{I} \neq \emptyset, \\
2 \lVert f'' \lVert_{L^\infty} & \text{if } \mathcal{I} = \emptyset.
\end{cases}
\end{equation*}

Now observe that 
\begin{equation}
\label{W_velocita}
\sigma_m(\mathcal{L} \cup \mathcal{R}) = \sigma_m(\W(t_j,x_j)) = \frac{\sigma_m(\mathcal{L})|\mathcal{L}| + \sigma_m(\mathcal{R})|\mathcal{R}|}{|\mathcal{L}|+|\mathcal{R}|}. 
\end{equation}
Hence
\begin{equation}
\label{W_zzero}
\begin{split}
\sum_{s \in \W(t_j)} |\sigma(t_j, s) - \sigma(t_{j-1}, s)||s| =&~ 
\sum_{s \in \mathcal{L}} |\sigma(t_j, s) - \sigma(t_{j-1}, s)||s|
 + \sum_{s \in \mathcal{R}} |\sigma(t_j, s) - \sigma(t_{j-1}, s)||s| \\
=&~ \Big(\sum_{s \in \mathcal{L}}|s| \Big) |\sigma_m(\mathcal{L} \cup \mathcal{R}) - \sigma_m(\mathcal{L})| +
\Big(\sum_{s \in \mathcal{L}}|s| \Big) |\sigma_m(\mathcal{L} \cup \mathcal{R}) - \sigma_m(\mathcal{R})| \\
=&~ |\sigma_m(\mathcal{L} \cup \mathcal{R}) - \sigma_m(\mathcal{L})||\mathcal{L}| +
|\sigma_m(\mathcal{L} \cup \mathcal{R}) - \sigma_m(\mathcal{R})||\mathcal{R}| \\
\text{(by \eqref{W_velocita})} \ =&~ 2 \  \frac{|\sigma_m(\mathcal{L}) - \sigma_m(\mathcal{R})|}{|\mathcal{L}| + |\mathcal{R}|} |\mathcal{L}||\mathcal{R}|.
\end{split}
\end{equation}

\smallskip
{\it Step 3.} Set $\ell := \max \mathcal{L}$, $r := \min \mathcal{R}$ and define (see Fig. \ref{fig:figura14})
\begin{align*}
\mathcal{L}_1 & := \Big\{s \in \mathcal{L} \ | \ s < L(t_{j-1},r)\Big\},  &   
\mathcal{R}_1 & := \Big\{s \in \mathcal{R} \ | \ s \leq R(t_{j-1},\ell)\Big\}, \\  
\mathcal{L}_2 & := \Big\{s \in \mathcal{L} \ | \ s \geq L(t_{j-1},r)\Big\},  &   
\mathcal{R}_2 & := \Big\{s \in \mathcal{R} \ | \ s > R(t_{j-1},\ell)\Big\}.
\end{align*}

\begin{figure}
  \begin{center}
    \includegraphics[height=7.5cm,width=12cm]{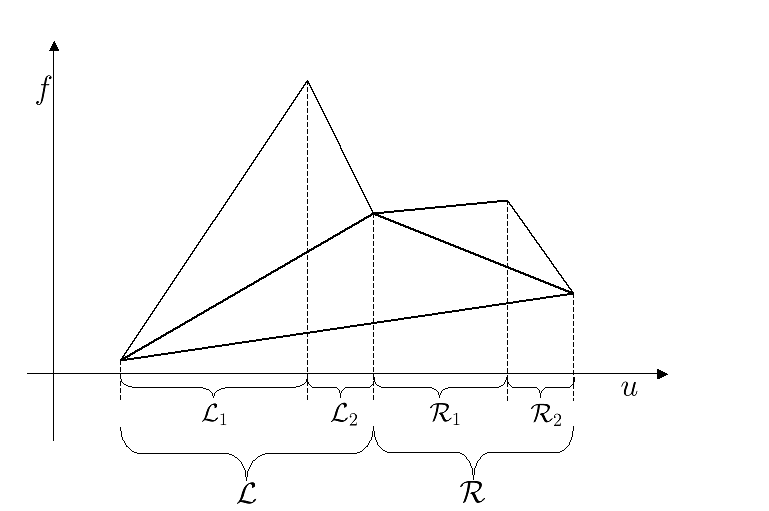}
    \caption{Families of waves which interact at point $(t_j, x_j)$.}
    \label{fig:figura14}
    \end{center}
\end{figure}

\noindent Thus
\begin{multline*}
|\sigma_m(\mathcal{L}) - \sigma_m(\mathcal{R})||\mathcal{L}||\mathcal{R}| \\
\begin{aligned}
& \leq 
|\sigma_m(\mathcal{L}_1) - \sigma_m(\mathcal{R})| |\mathcal{L}_1||\mathcal{R}|
+ |\sigma_m(\mathcal{L}_2) - \sigma_m(\mathcal{R})| |\mathcal{L}_2||\mathcal{R}| \\
& \leq 
|\sigma_m(\mathcal{L}_1) - \sigma_m(\mathcal{R})| |\mathcal{L}_1||\mathcal{R}|
+ |\sigma_m(\mathcal{L}_2) - \sigma_m(\mathcal{R}_1)| |\mathcal{L}_2||\mathcal{R}_1| 
+ |\sigma_m(\mathcal{L}_2) - \sigma_m(\mathcal{R}_2)| |\mathcal{L}_2||\mathcal{R}_2|.
\end{aligned}
\end{multline*}
Dividing by $|\mathcal{L}| + |\mathcal{R}|$ we get
\begin{multline}
\label{W_zero}
\frac{|\sigma_m(\mathcal{L}) - \sigma_m(\mathcal{R})|}{|\mathcal{L}| + |\mathcal{R}|} |\mathcal{L}||\mathcal{R}| \\
\begin{aligned}
& \leq 
\frac{|\sigma_m(\mathcal{L}_1) - \sigma_m(\mathcal{R})|}{|\mathcal{L}| + |\mathcal{R}|} |\mathcal{L}_1||\mathcal{R}|
+ \frac{|\sigma_m(\mathcal{L}_2) - \sigma_m(\mathcal{R}_1)|}{|\mathcal{L}| + |\mathcal{R}|} |\mathcal{L}_2||\mathcal{R}_1| 
+ \frac{|\sigma_m(\mathcal{L}_2) - \sigma_m(\mathcal{R}_2)|}{|\mathcal{L}| + |\mathcal{R}|} |\mathcal{L}_2||\mathcal{R}_2| \\
& \leq 
\Vert f'' \Vert_{L^\infty} |\mathcal{L}_1||\mathcal{R}|
+ \frac{|\sigma_m(\mathcal{L}_2) - \sigma_m(\mathcal{R}_1)|}{|\mathcal{L}| + |\mathcal{R}|} |\mathcal{L}_2||\mathcal{R}_1| 
+ \lVert f'' \Vert_{L^\infty} |\mathcal{L}_2||\mathcal{R}_2|, \\
\end{aligned}
\end{multline}
where the last inequality is a consequence of Lagrange's Theorem. 

\smallskip
{\it Step 4.} Let us now concentrate our attention on the second term of the last summation. Observe that waves $\ell, r$ are divided in the real solution at time $t_{j-1}$; hence, by Proposition \ref{W_divise_tocca}, they are divided by the Riemann problem $\mathcal{I}(t_{j-1},\ell,r) = \mathcal{L}_2 \cup \mathcal{R}_1$. Hence it is not difficult to see (it is the cubic estimate when the speeds are monotone) that we can write
\begin{equation}
\label{W_uno}
|\sigma_m(\mathcal{L}_2) - \sigma_m(\mathcal{R}_1)||\mathcal{L}_2||\mathcal{R}_1| = \sum_{(s,s') \in \mathcal{L}_2 \times \mathcal{R}_1} \Big( \sigma(\mathcal{I}(t_{j-1},\ell,r), s') - \sigma(\mathcal{I}(t_{j-1},\ell,r), s) \Big) |s| |s'| .
\end{equation}

Let us now observe that by definition of $\mathcal{L}_2, \mathcal{R}_1$, for any $s \in \mathcal{L}_2$ and $s' \in \mathcal{R}_1$, if $s,s'$ have already interacted at time $t_{j-1}$, then $\mathcal{I}(t_{j-1},s,s') \subseteq \mathcal{I}(t_{j-1},\ell,r)$. Together with Proposition \ref{vel_aumenta} and with the fact that $\ell, r$ are divided by the Riemann problem $\mathcal{I}(t_{j-1},\ell,r)$, this yields 
\begin{equation*}
\sigma(\mathcal{I}(t_{j-1},\ell,r), s') - \sigma(\mathcal{I}(t_{j-1},\ell,r), s) \leq 
\sigma(\mathcal{I}(t_{j-1},s,s'), s') - \sigma(\mathcal{I}(t_{j-1},s,s'), s), 
\end{equation*}
and thus
\begin{equation}
\label{W_due}
\begin{split}
\frac{\sigma(\mathcal{I}(t_{j-1},\ell,r), s') - \sigma(\mathcal{I}(t_{j-1},\ell,r), s) }{|\mathcal{L}|+|\mathcal{R}|} \leq&~
\frac{\sigma(\mathcal{I}(t_{j-1},s,s'), s') - \sigma(\mathcal{I}(t_{j-1},s,s'), s)}{|\mathcal{L}|+|\mathcal{R}|} \\ 
\leq&~ \frac{\sigma(\mathcal{I}(t_{j-1},s,s'), s') - \sigma(\mathcal{I}(t_{j-1},s,s'), s)}{\hat u(s') - (\hat u(s) -\e)} \\
=&~ \mathfrak{q}(t_{j-1},s,s').
\end{split}
\end{equation}

Instead, if $s,s'$ have not yet interacted at time $t_{j-1}$, using Lagrange's Theorem, we get
\begin{equation}
\label{W_tre}
\frac{\sigma(\mathcal{I}(t_{j-1},\ell,r), s') - \sigma(\mathcal{I}(t_{j-1},\ell,r), s) }{|\mathcal{L}|+|\mathcal{R}|} \leq \lVert f'' \lVert_{L^\infty} = \mathfrak{q}(t_{j-1},s,s').  
\end{equation}
Thus, by \eqref{W_uno}, \eqref{W_due}, \eqref{W_tre}, 
\begin{equation*}
\frac{|\sigma_m(\mathcal{L}_2) - \sigma_m(\mathcal{R}_1)|}{|\mathcal{L}| + |\mathcal{R}|} |\mathcal{L}_2||\mathcal{R}_1| \leq \sum_{(s,s') \in \mathcal{L}_2 \times \mathcal{R}_1} \mathfrak{q}(t_{j-1}, s,s')|s||s'|.
\end{equation*}

\smallskip
{\it Step 5.} Let us now observe that if $s \in \mathcal{L}_1$ and $s' \in \mathcal{R}$, then by definition of $\mathcal{L}_1$, $s,s'$ have not yet interacted at time $t_{j-1}$ and so $\mathfrak{q}(t_{j-1},s,s') = \lVert f'' \lVert_{L^\infty}$. The same holds if $s \in \mathcal{L}_2$ and $s' \in \mathcal{R}_2$. Hence, recalling \eqref{W_zzero} and \eqref{W_zero}, we get
\[
\begin{split}
\sum_{s \in \W(t_j)} |\sigma(t_j, s) - \sigma(t_{j-1}, s)||s| \leq&~
2 \Bigg[
\lVert f'' \lVert_{L^\infty} |\mathcal{L}_1||\mathcal{R}|
+ \frac{|\sigma_m(\mathcal{L}_2) - \sigma_m(\mathcal{R}_1)|}{|\mathcal{L}| + |\mathcal{R}|} |\mathcal{L}_2||\mathcal{R}_1| 
+ \lVert f'' \lVert_{L^\infty} |\mathcal{L}_2||\mathcal{R}_2| \Bigg] \\
\leq&~ 
2 \Bigg [ \sum_{(s,s') \in \mathcal{L}_1 \times \mathcal{R}} \mathfrak{q}(t_{j-1},s,s')|s||s'| 
+ \sum_{(s,s') \in \mathcal{L}_2 \times \mathcal{R}_1}\mathfrak{q}(t_{j-1},s,s')|s||s'| \\
& \quad \ + \sum_{(s,s') \in \mathcal{L}_2 \times \mathcal{R}_2}\mathfrak{q}(t_{j-1},s,s')|s||s'| \Bigg] \\
\leq&~
2 \sum_{(s,s') \in \mathcal{L} \times \mathcal{R}} \mathfrak{q}(t_{j-1},s,s')|s||s'|,
\end{split}
\]
which is what we wanted to obtain. 
\end{proof}

\subsubsection{Increasing part of $\fQ$}

This section is devoted to prove inequality \eqref{W_increase}, more precisely we will prove the following theorem.

\begin{theorem}
\label{W_increasing}
If $(t_j,x_j)$ is a cancellation point, then
\begin{equation*}
\fQ(t_j) - \fQ(t_{j-1}) \leq \log(2) \lVert f'' \lVert_{L^\infty} \TV(u(0,\cdot)) \big[ \mathcal{C}(t_j,x_j) \big],
\end{equation*}
where $\mathcal{C}(t_j,x_j)$ is the amount of cancellation at point $(t_j,x_j)$, defined in \eqref{W_canc_0}. 
\end{theorem}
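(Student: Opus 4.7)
My plan is to bound the positive jump of $\fQ$ at the cancellation pair by pair, exploiting Proposition \ref{diff_vel_proporzionale_canc} for the change of each weight $\mathfrak{q}$, and then to sum the contributions by a harmonic-type reorganisation. Without loss of generality I take the collision at $(t_j, x_j)$ to involve a positive left wavefront (states $u^L < u^M$) and a negative right one (states $u^M > u^R$) with $u^L < u^R < u^M$, so that positive waves survive. The canceled set $\mathcal{K}$ then consists of the positive waves with $\hat u \in (u^R, u^M]$ and the negative waves with $\hat u \in [u^R, u^M)$, and satisfies $|\mathcal{K}|\e = 2(u^M - u^R) = \C(t_j, x_j)$.

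The first reduction is to discard pairs $(s, s')$ with $\{s, s'\} \cap \mathcal{K} \neq \emptyset$: these contribute non-positively to $\fQ(t_j) - \fQ(t_{j-1})$, leaving
\[
\fQ(t_j) - \fQ(t_{j-1}) \leq \sum_{\substack{s < s' \\ s, s' \in \W(t_j)}} \bigl[\mathfrak{q}(t_j, s, s') - \mathfrak{q}(t_{j-1}, s, s')\bigr] \, |s|\,|s'|.
\]
For a surviving pair I would argue that no new $\mathcal{I}$-membership is created at the cancellation: by Lemma \ref{W_interagite_stesso_segno} opposite-sign waves are never in the same $\mathcal{I}$-class, and the surviving waves located at $x_j$ were already together in the left wavefront at $t_{j-1}$. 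Hence $\mathcal{I}(t_j, s, s') = \mathcal{I}(t_{j-1}, s, s') \setminus \mathcal{K}$, and the pair contributes only when this set difference is non-trivial. Lemma \ref{W_quelle_in_mezzo_hanno_int} together with Proposition \ref{W_interval_waves} forces the value-space interval of $\mathcal{I}(t_{j-1}, s, s')$ to be cut at exactly one end, at the level $u^R$ (the top for positive pairs, the bottom for negative pairs); combining the monotonicity of $\sigma$ under shrinkage (Proposition \ref{vel_aumenta}, Proposition \ref{differenza_vel}) with Proposition \ref{diff_vel_proporzionale_canc} applied at this truncation point yields, in the positive case,
\[
\mathfrak{q}(t_j, s, s') - \mathfrak{q}(t_{j-1}, s, s') \leq \frac{\lVert f'' \rVert_{L^\infty}\bigl(b - \hat u(s')\bigr)}{|\hat u(s') - \hat u(s) + \e|}, \qquad b := \text{top of }\mathcal{I}(t_{j-1}, s, s'),
\]
with $b \leq u^M$ enforced by the interval property at time $t_j$.

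The summation is the core of the argument. I would reorganise the double sum over surviving pairs into a double sum indexed by the canceled waves $k \in \mathcal{K}$ and by the surviving partners whose Riemann problem is truncated by the removal of $k$. For each fixed $k$ the inner sum is a truncated harmonic sum of the form $\sum_m \e/\bigl((m+1)\e\bigr)$, the upper cutoff being controlled by $\TV(u(0,\cdot))/\e$; a careful telescoping provides the aggregate constant $\log(2)$ instead of a larger harmonic constant. Combined with the outer factor $|\mathcal{K}|\e = \C(t_j,x_j)$ and the pointwise $\lVert f'' \rVert_{L^\infty}$, this delivers the target bound
\[
\fQ(t_j) - \fQ(t_{j-1}) \leq \log(2)\,\lVert f'' \rVert_{L^\infty}\,\TV(u(0, \cdot))\,\C(t_j, x_j).
\]

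The principal obstacle I anticipate is not the pointwise estimate, which is essentially Proposition \ref{diff_vel_proporzionale_canc}, but the precise bookkeeping required to extract the constant $\log(2)$ rather than a larger harmonic constant from the double sum. This reorganisation hinges crucially on the interval structure furnished by Propositions \ref{W_unite_realta}--\ref{W_divise_tocca}, which guarantee that $\mathcal{I}(t, s, s')$ truncates as a single interval and that the truncation can be coherently attributed to individual canceled waves. Once this structural input is in place, the remaining work is combinatorial rather than analytic.
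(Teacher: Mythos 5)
Your reduction to surviving pairs and the structural observation that $\mathcal I(t_j,s,s')$ is obtained from $\mathcal I(t_{j-1},s,s')$ by truncating the value interval at one end are consistent with what the paper does (its Lemma \ref{L_W_canc_lem_1} localizes the relevant pairs even further, to $s'\in[r_1,r_2]$ and $s\in[L(t_{j-1},r_2+1),r_2]$). The gap is in the two quantitative steps. First, your pointwise bound
\[
\mathfrak q(t_j,s,s')-\mathfrak q(t_{j-1},s,s')\ \leq\ \frac{\lVert f''\rVert_{L^\infty}\,(b-\hat u(s'))}{|\hat u(s')-\hat u(s)+\e|}
\]
is too lossy: its numerator is nonzero for essentially every pair, whereas the whole point is that the change of $\mathfrak q$ vanishes unless the pair \emph{straddles} a point where the pre- and post-cancellation convex envelopes acquire different derivative jumps. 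The paper's Lemma \ref{W_lemma_uno} encodes this by bounding the change by a \emph{second} difference, $\big[\varphi'(\hat u(s'))-\varphi'(\hat u(s))\big]$ with $\varphi'=h'-g'$ an increasing step function whose total variation is controlled by $\lVert f''\rVert_{L^\infty}\,\C(t_j,x_j)$ via Proposition \ref{diff_vel_proporzionale_canc}. If instead one uses a numerator that is uniformly of order $\lVert f''\rVert_{L^\infty}\C$ (or worse, $b-\hat u(s')$), the sum $\sum_{s<s'}\e^2/(\hat u(s')-\hat u(s)+\e)$ over all pairs in a value interval of length $\TV$ already diverges like $\TV\log(\TV/\e)$.

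Second, and more seriously, your summation scheme cannot produce the constant $\log(2)$. Indexing by canceled waves $k$ and summing a one-dimensional harmonic sum $\sum_m \e/((m+1)\e)$ with cutoff $\TV/\e$ gives $\log(\TV/\e)$, which blows up as $\e\to 0$; no telescoping rescues this, because the divergence is real for a one-dimensional sum. The actual mechanism is two-dimensional: one reorganizes the sum by the (finitely many) derivative-jump locations $\xi$ of $\varphi'$, and for each such $\xi$ the set of contributing pairs $(s,s')$ satisfies $\hat u(s)<\xi<\hat u(s')$, so the relevant quantity is
\[
\int_{a}^{\xi}\int_{\xi}^{b_2}\frac{du'\,du}{u'-u}\ \leq\ \log(2)\,(b_2-a),
\]
(the paper's \eqref{logaritmo}); the singularity of $1/(u'-u)$ sits only at the corner $u=u'=\xi$ and is integrable in two dimensions. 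Multiplying by the total jump mass $\varphi'(b_2-)-\varphi'(a+)\leq \lVert f''\rVert_{L^\infty}\C(t_j,x_j)$ then yields the claimed bound. Without this change of viewpoint — from ``one canceled wave, one harmonic sum'' to ``one jump of $\varphi'$, one two-dimensional integral'' — the proof does not close.
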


\begin{proof}
To simplify the notations, w.l.o.g. assume the following wave structure (see also the proof of Proposition \ref{W_divise_tocca} and Figure \ref{fig:figura03}):
\begin{itemize}
\item in $(t_j,x_j)$ two wavefronts collide, the one coming from the left is positive and contains all and only the waves in $[r_1,r_3] \subseteq \W(t_{j-1})$, the one coming from the right is negative;
\item in $(t_j,x_j)$ all and only the waves in $[r_1,r_2] \subseteq \W(t_j)$ survive;
\item $r_1 - 1 = \max \{s \in \W(t_j) \ | \ s <r_1\}$. 
\end{itemize}
It holds
\[
\begin{split}
\fQ(t_j) - \fQ(t_{j-1}) =&~ 
\sum_{\substack{s,s' \in \W(t_j) \\ s<s'}} \mathfrak{q}(t_j,s,s')|s||s'| -
\sum_{\substack{s,s' \in \W(t_{j-1}) \\ s<s'}} \mathfrak{q}(t_{j-1},s,s')|s||s'| \\
\leq&~ \sum_{\substack{s,s' \in \W(t_j) \\ s<s'}} [\mathfrak{q}(t_j,s,s')-\mathfrak{q}(t_{j-1},s,s')]|s||s'| \\
\leq&~ \sum_{\substack{s,s' \in \W(t_j) \\ s<s'}} [\mathfrak{q}(t_j,s,s')-\mathfrak{q}(t_{j-1},s,s')]^+ \ |s||s'|.
\end{split}
\]
The proof will follow by the next three lemmas.

\begin{lemma}
\label{L_W_canc_lem_1}
If $s < s'$ are waves such that $[\mathfrak{q}(t_j,s,s')-\mathfrak{q}(t_{j-1},s,s')]^+ > 0$, then $s' \in [r_1,r_2]$ and $s \in [L(t_{j-1},r_2+1), r_2]$.
\end{lemma}

\begin{proof}[Proof of Lemma \ref{L_W_canc_lem_1}]
First let us prove $s' \in [r_1,r_2]$. By contradiction, assume $s' \notin [r_1,r_2]$. Since $[\mathfrak{q}(t_j,s,s')-\mathfrak{q}(t_{j-1},s,s')]^+ > 0$, then $\mathfrak{q}(t_j,s,s') > 0$; hence, by Remark \ref{W_rem_unite_realta}, $s,s'$ are divided in the real solution at time $t_j$. Since $s' \notin [r_1,r_2]$, $s,s'$ are divided in the real solution also at time $t_{j-1}$. Moreover, by $[\mathfrak{q}(t_j,s,s')-\mathfrak{q}(t_{j-1},s,s')]^+ > 0$ and because $(t_j,x_j)$ is a cancellation point, it must hold $\mathcal{I}(t_j,s,s') \subsetneqq \mathcal{I}(t_{j-1},s,s')$. Assume now $s,s' < r_1$, the case $s,s' > r_3$ being similar. By Proposition \ref{W_divise_tocca} the Riemann problems $\mathcal{I}(t_{j-1},s,s')$ and $\mathcal{I}(t_j,s,s')$ divide waves $r_1-1$ and $r_1$. Moreover observe that $\mathcal{I}(t_{j-1},s,s') \cap \big\{r \leq r_1-1 \big\}  = \mathcal{I}(t_j,s,s') \cap \big\{r \leq r_1-1\big\}$. Hence, by Proposition \ref{tocca},
\[
\begin{split}
\sigma(\mathcal{I}(t_{j-1},s,s'),s) =&~ \sigma(\mathcal{I}(t_{j-1},s,s') \cap \big\{r \leq r_1-1\big\},s) \\
=&~ \sigma(\mathcal{I}(t_j,s,s')  \cap \big\{r \leq r_1-1\big\},s) \\
=&~ \sigma(\mathcal{I}(t_j,s,s'),s). 
\end{split}
\]
In a similar way, $\sigma(\mathcal{I}(t_{j-1},s,s'),s') = \sigma(\mathcal{I}(t_j,s,s'),s')$; thus $\mathfrak{q}(t_{j-1},s,s') = \mathfrak{q}(t_j,s,s')$, contradicting the assumption.

Let us now prove the second part of the statement, namely $s \in [L(t_{j-1},r_2+1), r_2]$. Since $s' \leq r_2$, then $s<r_2$. Assume by contradiction $s < L(t_{j-1},r_2+1)$. In this case $R(t_{j-1},s) \leq r_2$, hence $R(t_{j-1},s) = R(t_j,s)$; thus $\mathcal{I}(t_{j-1},s,s') = \mathcal{I}(t_j,s,s')$ and so  $\mathfrak{q}(t_{j-1},s,s') = \mathfrak{q}(t_j,s,s')$, a contradiction with the initial assumption.
\end{proof}

Consider the following values: 
\begin{align*}
a &:= \hat u(L(t_{j-1},r_2+1)) -\e, & b_1 &:= \hat u(r_1) -\e, & b_2 &:= \hat u(r_2), & b_3 &:= \hat u(r_3). 
\end{align*}
Define two functions $g,h: (-\infty, b_3] \to \R$, piecewise affine, with discontinuity points of the derivative in the set $\Z\e$ and such that 
\begin{align*}
g|_{[b_1,b_3]} & = \conv_{[b_1,b_3]} f, & h|_{[b_1,b_2]} & = \conv_{[b_1,b_2]} f, & g &= h \ \text{ on } (-\infty,b_1].
\end{align*}
Clearly $g$, $h$ with the above properties exist. 

\begin{lemma}
\label{W_lemma_uno}
For any $s' \in [r_1,r_2]$ and $s \in [L(t_{j-1},r_2+1), r_2]$, $s<s'$, it holds
\begin{equation}
\label{E_variat_spe_lem1}
\begin{split}
\big[ &\mathfrak{q}(t_j,s,s')-\mathfrak{q}(t_{j-1},s,s') \big]^+  \\
\leq&~ \frac
{\bigg| \Big[ h' \big( (\hat u(s')-\e, \hat u(s')) \big) - h'\big((\hat u(s)-\e, \hat u(s))\big) \Big] - \Big[ g'\big( (\hat u(s')-\e, \hat u(s')) \big) - g'\Big( (\hat u(s)-\e, \hat u(s)) \big) \Big] \bigg|}
{|\hat u(s') - (\hat u(s)-\e)|}. 
\end{split}
\end{equation}
\end{lemma}

In other words, we are saying that the maximal variation of $\mathfrak q(t,s,s')$ is controlled by the maximal variation of speed (the numerator of the r.h.s. of \eqref{E_variat_spe_lem1}) divided by the total variation between the two waves $s$, $s'$.

\begin{proof}[Proof of Lemma \ref{W_lemma_uno}]
Fix $s,s'$ as in the hypothesis and set for shortness
\begin{align*}
\mathcal{I}_{j-1} & := \mathcal{I}(t_{j-1},s,s'),       &  
\sigma_{j-1}(s)   & := \sigma(\mathcal{I}_{j-1},s),     &
\sigma_{j-1}(s')  & := \sigma(\mathcal{I}_{j-1},s'),    \\
\mathcal{I}_j     & := \mathcal{I}(t_j,s,s'),           &
\sigma_j(s)       & := \sigma(\mathcal{I}_j,s),         &
\sigma_j(s')  & := \sigma(\mathcal{I}_j,s').            \\
\end{align*}
By multiplication for $|\hat u(s') - (\hat u(s)-\e)|$, \eqref{E_variat_spe_lem1} becomes
\begin{align*}
\Big[ \Big( \sigma_j(s') - \sigma_j(s) \Big) - \Big( \sigma_{j-1}(s') - \sigma_{j-1}(s) \Big) \Big]^+
\leq&~ \bigg| \bigg(h'\Big( \big(\hat u(s')-\e, \hat u(s')\big)\Big) - h'\Big(\big(\hat u(s)-\e, \hat u(s)\big)\Big)\bigg) \\
&~ - \bigg(g'\Big(\big(\hat u(s')-\e, \hat u(s')\big)\Big) - g'\Big(\big(\hat u(s)-\e, \hat u(s)\big)\Big)\bigg) \bigg|.
\end{align*}

We consider two cases depending on the position of the wave $s$.

\smallskip
{\it Case 1.} Assume first $s \in [L(t_{j-1},r_2+1), r_1-1]$. In this case $s,s'$ are divided in the real solution both at time $t_{j-1}$ and at time $t_j$. Hence, by Proposition \ref{W_divise_tocca} the Riemann problems $\mathcal{I}_{j-1}$, $\mathcal{I}_j$ divide the waves $r_1-1$ and $r_1$. Thus, by Proposition \ref{tocca}, and by the fact that $\mathcal{I}_{j-1} \cap \big\{r \leq r_1-1\big\} = \mathcal{I}_j \cap \big\{r \leq r_1-1\big\}$, 
\begin{equation*}
\sigma_{j-1}(s) = \sigma(\mathcal{I}_{j-1} \cap \big\{r \leq r_1-1\big\},s) = \sigma(\mathcal{I}_j \cap \big\{r \leq r_1-1\big\},s) = \sigma_j(s),
\end{equation*}
whence 
\begin{equation}
\label{W_diff_sigma}
\sigma_{j-1}(s) - \sigma_j (s) = 0 = h'\Big((\hat u(s)-\e, \hat u(s))\Big) - g'\Big((\hat u(s)-\e, \hat u(s) )\Big)
\end{equation}
(recall that $g = h$ on $(-\infty, b_1]$).
Now distinguish two subcases:
	\begin{enumerate}
	\item If $\max \mathcal{I}_{j-1} \leq r_2$, then $\mathcal{I}_{j-1} \cap [r_1,r_2] = \mathcal{I}_j \cap [r_1,r_2]$, and so, arguing as above, $\sigma_{j-1}(s') = \sigma_j (s')$. 
	\item If $\max \mathcal{I}_{j-1} > r_2$, then $\max \mathcal{I}_j = r_2$ and so 
	\begin{equation*}
	\sigma_j(s') \overset{\text{(Prop. \ref{tocca})}}{=} \sigma(\mathcal{I}_j \cap [r_1,r_2],s') = \sigma([r_1,r_2], s') = h'\Big((\hat u(s')-\e, \hat u(s'))\Big),
	\end{equation*}
  	while
	\begin{equation*}
	\sigma_{j-1}(s') \overset{\text{(Prop. \ref{tocca})}}{=} \sigma(\mathcal{I}_{j-1} \cap [r_1,r_3],s') \overset{\text{(Prop. \ref{vel_aumenta})}}{\geq} \sigma([r_1,r_3],s') = g'\Big((\hat u(s')-\e, \hat u(s'))\Big), 
	\end{equation*}
	where the inequality is an easy consequence of Proposition \ref{vel_aumenta}.
	\end{enumerate}
In both case (a) and (b), 
\[
\sigma_j(s') - \sigma_{j-1}(s') \leq  h'\Big((\hat u(s')-\e, \hat u(s'))\Big) - g'\Big((\hat u(s')-\e, \hat u(s'))\Big).
\]
Together with \eqref{W_diff_sigma}, this yields the thesis.

\smallskip
{\it Case 2.} Assume now $s \in [r_1, r_2]$. In this case 
\begin{align*}
\min \mathcal{I}_{j-1}         & \leq r_1, &  \max \mathcal{I}_{j-1}      & =  r_3, \\
\min \mathcal{I}_j             & \leq r_1,   &  \max \mathcal{I}_j          & =  r_2.
\end{align*}
Since $s,s'$ are divided at time $t_j$ in the real solution, we can argue as in the previous case and use Proposition \ref{W_divise_tocca} to obtain
\begin{equation}
\label{W_sigma_n+1}
\sigma_{j}(s') - \sigma_{j}(s) = h'\Big((\hat u(s')-\e, \hat u(s'))\Big) - h'\Big((\hat u(s)-\e, \hat u(s))\Big).
\end{equation}
Observe that since $s,s'$ are joined before the collision in the real solution,
\[
g'\Big((\hat u(s')-\e, \hat u(s'))\Big) - g'\Big((\hat u(s)-\e, \hat u(s))\Big) = 0.
\]
Moreover  by Proposition \ref{W_unite_realta}, $\sigma_{j-1}(s') - \sigma_{j-1}(s) = 0$, which, together with \eqref{W_sigma_n+1}, yields the thesis.
\end{proof}

\begin{lemma}
\label{W_lemma_due}
Define $\varphi := h-g$. Then
\begin{multline*}
\sum_{\substack{s \in [L(t_{j-1},r_2+1), r_2] \\ s' \in [r_1,r_2], \ s<s'}} 
\frac
{\Big|\varphi' \big( (\hat u(s')-\e, \hat u(s') ) \big) - \varphi' \big( (\hat u(s)-\e, \hat u(s) ) \big) \Big| |s||s'|}
{|\hat u(s') - (\hat u(s)-\e)|} \\
\leq \log(2) \TV(u_\e(0,\cdot)) \Big[ h'(b_2-) - g'(b_2-)\Big].
\end{multline*}
\end{lemma}

\begin{proof}
First of all, let us observe that, since $\varphi'$ is increasing and $s<s'$ with positive sign $\mathcal S$, we can forget about the absolute value and get 
\begin{align*}
& \sum_{\substack{s \in [L(t_{j-1},r_2+1), r_2] \\ s' \in [r_1,r_2], \ s<s'}}
\frac
{\bigg[ \varphi' \Big( (\hat u(s')-\e, \hat u(s') ) \Big) - \varphi' \Big( (\hat u(s)-\e, \hat u(s) ) \Big) \bigg] |s||s'|}
{\hat u(s') - (\hat u(s)-\e)} \\
& \quad \leq 
\sum_{\substack{s \in [L(t_{j-1},r_2+1), r_2] \\ s' \in [L(t_{j-1},r_2+1), r_2] \\ s<s'}} 
\frac
{\bigg[\varphi' \Big( (\hat u(s')-\e, \hat u(s') ) \Big) - \varphi' \Big( (\hat u(s)-\e, \hat u(s) ) \Big) \bigg] |s||s'|}
{\hat u(s') - (\hat u(s)-\e)} \\
& \quad = 
\sum_{\substack{s \in [L(t_{j-1},r_2+1), r_2] \\ s' \in [L(t_{j-1},r_2+1), r_2] \\ s<s'}} 
\frac
{\bigg[ \varphi' \Big( (\hat u(s')-\e, \hat u(s') ) \Big) - \varphi' \Big( (\hat u(s)-\e, \hat u(s) ) \Big) \bigg]}
{\hat u(s') - (\hat u(s)-\e)}
\int_{\hat u(s)-\e}^{\hat u(s)} du \int_{\hat u(s')-\e}^{\hat u(s')} du'. \\
\end{align*}
Since $\varphi'\big((\hat u(s)-\e, \hat u(s) )\big) = \varphi'(u)$ for each $u \in (\hat u(s) -\e, \hat u(s))$, and similarly for $s'$, we can continue the chain of inequality in the following way:
\begin{align*}
\sum_{\substack{s \in [L(t_{j-1},r_2+1), r_2] \\ s' \in [L(t_{j-1},r_2+1), r_2] \\ s<s'}} &
\frac
{\bigg[\varphi' \Big( (\hat u(s')-\e, \hat u(s') ) \Big) - \varphi' \Big( (\hat u(s)-\e, \hat u(s) ) \Big) \bigg]}
{\hat u(s') - (\hat u(s)-\e)}
\int_{\hat u(s)-\e}^{\hat u(s)} du \int_{\hat u(s')-\e}^{\hat u(s')} du' \\
=&~ \sum_{\substack{s \in [L(t_{j-1},r_2+1), r_2] \\ s' \in [L(t_{j-1},r_2+1), r_2] \\ s<s'}} 
\frac{1}{\hat u(s') - (\hat u(s)-\e)}
\int_{\hat u(s)-\e}^{\hat u(s)} \int_{\hat u(s')-\e}^{\hat u(s')} (\varphi'(u') - \varphi'(u)) du'du \\
\leq&~ \sum_{\substack{s \in [L(t_{j-1},r_2+1), r_2] \\ s' \in [L(t_{j-1},r_2+1), r_2] \\ s<s'}} 
\int_{\hat u(s)-\e}^{\hat u(s)} \int_{\hat u(s')-\e}^{\hat u(s')} \frac{\varphi'(u') - \varphi'(u)}{u'-u} du'du \\
\leq&~  
\int_{a}^{b_2} \int_{u}^{b_2} \frac{\varphi'(u') - \varphi'(u)}{u'-u} du'du. \\
\end{align*}
Since $\varphi = h-g$ is piecewise affine, we can write
\begin{align*}
\int_{a}^{b_2} \int_{u}^{b_2} \frac{\varphi'(u') - \varphi'(u)}{u'-u} du'du 
= &~
\int_{a}^{b_2} \int_{u}^{b_2} \frac{1}{u'-u} \bigg[\sum_{\substack{m \in \Z \\ u < m\e < u'}} \varphi'(m\e+) - \varphi'(m\e-) \bigg]du'du \\
= &~
\sum_{\substack{m \in \Z \\ a < m\e < b_2}} [\varphi'(m\e+) - \varphi'(m\e-)] \int_{a}^{m\e} \int_{m\e}^{b_2} \frac{1}{u'-u} du'du. \\ 
\end{align*}
An elementary computation shows that 
\begin{equation}
\label{logaritmo}
\max_{\xi \in [a,b_2]} \int_{a}^{\xi} \int_{\xi}^{b_2} \frac{1}{u'-u} du'du = \log(2) (b_2-a).
\end{equation}
Hence
\begin{align*}
\sum_{\substack{m \in \Z \\ a < m\e < b_2}} [\varphi'(m\e+) - \varphi'(m\e-)] \int_{a}^{m\e} \int_{m\e}^{b_2} \frac{1}{u'-u} du'du
\leq&~ 
\log(2) (b_2 - a) \sum_{\substack{m \in \Z \\ a < m\e < b_2}} [\varphi'(m\e+) - \varphi'(m\e-)] \\ 
\leq&~
\log(2) \TV(u_\e(0,\cdot)) \big[ \varphi'(b_2-) - \varphi'(a+) \big] \\
\text{(since $\varphi'(a+) \geq 0$ by Proposition \ref{vel_aumenta})} \ \leq&~
\log(2) \TV(u_\e(0,\cdot)) \varphi'(b_2-) \\
\leq&~
\log(2) \TV(u_\e(0,\cdot)) \big[ h'(b_2-) - g'(b_2-) \big]. 
\end{align*}
This concludes the proof of Lemma \ref{W_lemma_due}.
\end{proof}

\textbf{Conclusion of proof of Theorem \ref{W_increasing}}. Observe that, by Proposition \ref{diff_vel_proporzionale_canc}, 
\begin{equation}
\label{W_lemma_tre}
h'(b_2-) - g'(b_2-) \leq \lVert f'' \lVert_{L^\infty} (b_3 - b_2) \leq \lVert f'' \lVert_{L^\infty} \ \mathcal{C}(t_j,x_j).
\end{equation}
Putting together Lemma \ref{W_lemma_uno}, Lemma \ref{W_lemma_due}, inequality \eqref{W_lemma_tre} and inequality \eqref{bd_su_dato_iniziale} one easily concludes the proof of the theorem.
\end{proof}

\section{Analysis of the Glimm scheme}
\label{S_glimm_scheme}

In this section we prove the main interaction estimate \eqref{E_G_est_fin} for an approximate solution of the Cauchy problem \eqref{cauchy} obtained by Glimm scheme. The line of the proof is very similar to the proof for the wavefront tracking case, even if a relevant number of technicalities arises. For this reason the structure of this section is equal to the structure of Section \ref{sect_wavefront}: throughout the remaining part of this paper, we will emphasize the differences in definitions and proofs between the wavefront algorithm and the Glimm scheme.

First let us briefly recall how Glimm scheme constructs an approximate solution. Fix $\e >0$. To construct an approximate solution $u_\e = u_\e(t,x)$ to the Cauchy problem \eqref{cauchy}, we start with a grid in the $(t,x)$ plane having step size $\Delta t = \Delta x = \e$, with nodes at the points
\[
P_{n,m} = (t_n,x_m) := (n \e, m \e), \qquad n,m \in \Z. 
\]
Moreover we shall need a sequence of real numbers $\vartheta_1, \vartheta_2, \vartheta_3, \dots$, uniformly distributed over the interval $[0,1]$. This means that, for every $\lambda \in [0,1]$, the percentage of points $\vartheta_n, 1 \leq n \leq N$, which fall inside $[0,\lambda]$ should approach $\lambda$ as $N \rightarrow \infty$:
\begin{equation*}
\lim_{N \rightarrow \infty} \frac{\card\{n \ | \ 1 \leq n \leq N, \vartheta_n \in [0, \lambda]\}}{N} = \lambda \hspace{1cm} \text{for each } \lambda \in [0,1]. 
\end{equation*}

At time $t=0$, the Glimm algorithm starts by taking an approximation $\bar u_\e$ of the initial datum $\bar u$, which is constant on each interval of the form $(x_{m-1}, x_m)$, and has jumps only at the nodal points $x_m := m \e$. We shall take (remember that $\bar u$ is right continuous)
\begin{equation}
\label{Glimm_initial_datum}
\bar u_\e(x) = \bar u(x_m) \hspace{1cm} \text{for all } x \in [x_m, x_{m+1}).
\end{equation}
Notice that, as in the wavefront tracking case, estimate \eqref{bd_su_dato_iniziale} holds also in this case. For times $t>0$ sufficiently small, the solution $u_\e = u_\e(t,x)$ is then obtained by solving the Riemann problems corresponding to the jumps of the initial approximation $u_\e(0, \cdot)$ at the nodes $x_m$. Since $0 < f'(u) < 1$ for any $u \in \R$, the solutions to the Riemann problems do not overlap and thus $u_\e(t)$ can be prolonged on the whole time interval $[0, \e)$. At time $t_1 = \e$ a restarting procedure is adopted: the function $u_\e(t_1-, \cdot)$ is approximate by a new function $u_\e(t_1+,\cdot)$ which is piecewise constant, having jumps exactly at the nodes $x_m = m \e$. Our approximate solution $u_\e$ can now be constructed on the further time interval $[\e, 2\e)$, again by piecing together the solutions of the various Riemann problems determined by the jumps at the nodal points $x_m$. At time $t_2 = 2\e$, this solution is again approximated by a piecewise constant function, etc...

A key aspect of the construction is the restarting procedure. At each time $t_n = n \e$, we need to approximate $u_\e(t_n-, \cdot)$ with a piecewise constant function $u_\e(t_n+, \cdot)$ having jumps precisely at the nodal points $x_m$. This is achieved by a random sampling technique. More precisely, we look at the number $\vartheta_n$ in the uniformly distributed sequence. On each interval $[x_{m-1},x_m)$, the old value of our solution at the intermediate point $\vartheta_n x_m + (1-\vartheta_n) x_{m-1}$ becomes the new value over the whole interval:
\[
u_\e(t_n+, x) := u_\e \big( t_n-, (\vartheta_n x_m + (1-\vartheta_n) x_{m-1}) \big) \hspace{1cm} \text{for all } x \in [x_{m-1},x_m).
\]

One can prove that, if the initial datum $\bar u$ has bounded total variation, then an approximate solution can be constructed by the above algorithm for all times $t \geq 0$. 
Finally set, by simplicity,
\[
u_{n,m} := u_\e(n \varepsilon, m\varepsilon).
\]

\subsection{Definition of waves for the Glimm scheme}
\label{S_front_glimm}

Similarly to Section \ref{Front_Waves}, we define the notion of \emph{wave}, the notion of \emph{position of a wave} and the notion of \emph{speed of a wave}.

\subsubsection{Enumeration of waves}

First of all, as we did for the wavefront tracking scheme in Section \ref{Front_eow}, we define the notion of \emph{enumeration of waves} related to a BV function $u: \R_x \to \R$ of the single variable $x$.

The biggest differences with the wavefront tracking case are that here we allow the set of waves $\W$ to be a subset of $\R$ (while in the wavefront case it is a subset of $\N$) and we allow the state function $\hat u$ to take values in $\R$ (while in the wavefront case it takes values in $\Z\e$). This is due to the fact that in an $\e$-approximate solution constructed with the wavefront algorithm any wavefront has strength at least equal to $\e$, while in an approximate solution obtained by the Glimm scheme rarefactions (i.e. infinitesimal wavefronts) are allowed.

\begin{definition}
\label{eow}
Let $u: \R \to \R$, $u \in BV(\R)$, be a piecewise constant, right continuous function, with jumps located at points of the form $m \varepsilon$, $m \in \Z$. An \emph{enumeration of waves} for the function $u$ is a 3-tuple $(\mathcal{W}, \mathtt x, \hat u)$, where 
\[
\begin{array}{ll}
\mathcal{W} \subseteq \R, & \text{is \emph{the set of waves}},\\
\mathtt x: \mathcal{W} \to (-\infty, +\infty], & \text{is \emph{the position function}}, \\
\hat u: \mathcal{W} \to \R, & \text{is \emph{the state function}}, \\
\end{array}
\]
with the following properties:
\begin{enumerate}
 \item the function $\mathtt x$ takes values only in the set $\Z\e \cup \{+\infty\}$;
 \item the restriction of the function $\mathtt x$ to the set of waves where it takes finite values is increasing; 
 \item for given $x_0 \in \R$, consider $\mathtt x^{-1}(x_0) = \{s \in \mathcal{W} \ | \ \mathtt x(s) = x_0\}$; then $\mathtt x^{-1}(x_0)$ is a finite union (possibily empty) of intervals of the form $(a,b]$; moreover it holds:
 \begin{enumerate}
   \item if $u(x_0-) < u(x_0)$, then $\hat u|_{\mathtt x^{-1}(x_0)}: \mathtt x^{-1}(x_0) \to (u(x_0-), u(x_0)]$ is strictly increasing and bijective, with $\frac{d \hat u}{ds}(s) = 1$ for each $s$ in the interior of $\mathtt x^{-1}(x_0)$; 
   \item if $u(x_0-) > u(x_0)$, then $\hat u|_{\mathtt x^{-1}(x_0)}: \mathtt x^{-1}(x_0) \to [u(x_0), u(x_0-))$ is strictly decreasing and bijective, with $\frac{d \hat u}{ds}(s) = -1$ for each $s$ in the interior of $\mathtt x^{-1}(x_0)$; 
   \item if $u(x_0-) = u(x_0)$, then $\mathtt x^{-1}(x_0) = \emptyset$.
 \end{enumerate}
\end{enumerate}
\end{definition}

Given an enumeration of waves as in Definition \ref{eow}, we define \emph{the sign} of a wave $s \in W$ with finite position (i.e. such that $\mathtt x(s) < +\infty$) as follows:
\begin{equation}
\label{sign}
\mathcal{S}(s) := \sign \Big[u(\mathtt x(s)) - u(\mathtt x(s)-) \Big].
\end{equation}


The analog of Example \ref{W_initial_eow} is the next example. 

\begin{example}
\label{initial_eow}
Fix $\e>0$ and let $\bar u_\e \in BV(\R)$ be the approximate initial datum of the Cauchy problem \eqref{cauchy} constructed in \eqref{Glimm_initial_datum}. Let
\[
U: \R \to [0, \TV(\bar u_\e)]
\]
be the total variation function, defined as $U(x) := \TV(\bar u_\e; (-\infty, x])$. Then define:
\[
\mathcal{W} := (0, \TV(\bar u_\e)]
\]
and
\[
\mathtt x_0 : \mathcal{W} \to (-\infty, +\infty], \qquad \mathtt x_0(s) := \inf\Big\{x \in (-\infty, +\infty] \ | \ s \leq U(x) \Big\}.
\]
Moreover, recalling \eqref{sign}, we define 
\begin{equation*}
\hat u: \mathcal{W} \to \R, \qquad \hat u(s) := \bar u_\e(\mathtt x_0(s)-) + \mathcal{S}(s)\Big[s - U(\mathtt x_0(s)-) \Big ].
\end{equation*}
One  can easily prove that $\hat u$ is continuous.
\end{example}


We also adapt Definition \ref{W_speed_function} in the following way.

\begin{definition}
\label{speed_function}
Consider a function $u$ as in Definition \ref{eow} and let $(\mathcal{W}, \mathtt x, \hat u)$ be an enumeration of waves for $u$. The \emph{speed function} $\sigma: \mathcal{W} \to [0,1] \cup \{+\infty\}$ is defined as follows:
\[
\sigma(s) := \left\{
\begin{array}{ll}
+\infty & \text{if } \mathtt x(s) = +\infty, \\
\Big(\frac{d}{du}\conv_{[u(\mathtt x(s)-),u(\mathtt x(s))]}f\Big)\big(\hat u(s)\big) & \text{if } \mathcal{S}(s) = +1, \\ 
\Big(\frac{d}{du}\conc_{[u(\mathtt x(s)),u(\mathtt x(s)-)]}f\Big)\big(\hat u(s)\big) & \text{if } \mathcal{S}(s) = -1. 
\end{array}
\right.
\]
\end{definition}


\subsubsection{Position and speed of the waves}
\label{pswaves}

Consider the Cauchy problem \eqref{cauchy} and fix $\e >0$; let $u_\e = u_\e(t,x)$ be the $\varepsilon$-Glimm solution, with sampling sequence $\{\vartheta_n\}_n$. For the (piecewise constant) initial datum $u_\e(0,\cdot)$, consider the enumeration of waves $(\mathcal{W}, \mathtt x_0, \hat u)$ provided in Example \ref{initial_eow}; let $\mathcal{S}$ be the sign function defined in \eqref{sign} for this enumeration of waves. 

Our aim is to define two functions
\[
\mathtt x: [0, +\infty)_t \times \mathcal{W} \to \R_x \cup \{+\infty\}, \qquad
\sigma: [0, +\infty)_t \times \mathcal{W} \to [0,1] \cup \{+\infty\},
\]
called \emph{the position at time $t \in [0, +\infty)$ of the wave $s \in \mathcal{W}$} and \emph{the speed at time $t \in [0, +\infty)$ of the wave $s \in \mathcal{W}$}.  

Let us begin with the definition of $\mathtt x$. We first define $\mathtt x$ on the domain $\N \varepsilon \times \mathcal{W}$ and then we extend it over all the domain $[0, +\infty)_t \times \mathcal{W}$.
The definition of $\mathtt x(n \varepsilon, s)$ is given by recursion on $n$, i.e. for each $n \in \N$ we define a function $\mathtt x(n \varepsilon, \cdot) : \mathcal{W} \to (-\infty, + \infty]$ such that the 3-tuple $(\mathcal{W}, \mathtt x(n \varepsilon, \cdot), \hat u)$ is an enumeration of waves for the piecewise constant function $u_\e(n \varepsilon, \cdot)$.

For $n = 0$ we set $\mathtt x(0,s) := x_0(s)$, where $x_0(\cdot)$ is the position function in the enumeration of waves of Example \ref{initial_eow}. Clearly $(\mathcal{W}, \mathtt x(0, \cdot), \hat u)$ is an enumeration of waves for the function $u_\e(0, \cdot)$.

Assume now to have obtained $\mathtt x(n \varepsilon, \cdot)$ with the property stated above and define $\mathtt x((n+1) \varepsilon, \cdot)$ in the following way. 

Let $\sigma(n \varepsilon, \cdot)$ be the speed function related to the piecewise constant function $u_\e(n \varepsilon, \cdot)$ and the enumeration of waves $(\mathcal{W}, \mathtt x(n \varepsilon, \cdot), \hat u)$. Then set
\begin{align*}
\mathcal{W}(n \varepsilon, m\e) :=&~ \{s \in \mathcal{W} \ | \ \mathtt x(n\varepsilon, s) = m\varepsilon\}, \\
\mathcal{W}^{(0)}(n \varepsilon, m\e) :=&~ \{s \in \mathcal{W} \ | \ \mathtt x(n\varepsilon, s) = m\varepsilon \text{ and } \sigma(n\varepsilon, s) \leq \vartheta_{n+1}\}, \\
\mathcal{W}^{(1)}(n \varepsilon, m\e) :=&~ \{s \in \mathcal{W} \ | \ \mathtt x(n\varepsilon, s) = m\varepsilon \text{ and } \sigma(n\varepsilon, s) > \vartheta_{n+1}\}. \\
\end{align*}

Define 
\begin{align*}
\mathcal{W}((n+1) \varepsilon, m\varepsilon) := \bigg\{ s \in \mathcal{W}^{(1)}(n \varepsilon, (m-1)\e) \cup \mathcal{W}^{(0)}(n \varepsilon, m\e) \ \Big| \ 
\mathcal{S}(s) = \sign({u_{n+1,m} - u_{n+1,m-1}}) \text{ and } & \\
\mathcal{S}(s)u_{n+1,m-1} \leq \mathcal{S}(s) \hat u(s) \leq \mathcal{S}(s)u_{n+1,m} & \bigg\}.
\end{align*}
where $\mathcal{S}(s)$ was defined in \eqref{sign}, with respect to the enumeration of waves for the initial datum, and  set
\begin{equation*}
\mathtt x((n+1)\varepsilon, s) :=
\begin{cases}
m \varepsilon & \text{if } s \in \mathcal{W}((n+1) \varepsilon, m\varepsilon), \\
+ \infty & \text{otherwise}.
\end{cases}
\end{equation*}
Observe that for each $s \in \mathcal{W}$, $\mathtt x((n+1) \varepsilon, s)$ is uniquely defined since, for $m_1 \neq m_2$, sets $\mathcal{W}^{(1)}(n \varepsilon, (m_i-1)\e) \cup \mathcal{W}^{(0)}(n \varepsilon, m_i\e)$, $i=1,2$, are disjoint.

A fairly easy extension of Lemma \ref{W_lemma_eow} shows that the recursive procedure generates an enumeration of waves at the times $n\e$. The only differences in the proof are that now the map $\hat u$ takes continuous values and that at each time $n\e$ you have to consider countably many disjoint Riemann problems, instead of a single one. 

We have just defined the position function $\mathtt x$ on the domain $\N \varepsilon \times \mathcal{W}$. Let us now extend it over all the domain $[0,+\infty)_t \times \mathcal{W}$, by setting, for $t \in [n\varepsilon, (n+1)\varepsilon)$,
\[
\mathtt x(t,s) :=
\begin{cases}
+\infty & \text{if } \mathtt x(n\varepsilon, s) = + \infty, \\
\mathtt x(n\varepsilon,s)  & \text{if } s \in \mathcal{W}^{(0)}(n\varepsilon, m\varepsilon), \\
\mathtt x(n\varepsilon,s) + t-n\varepsilon & \text{if } s \in \mathcal{W}^{(1)}(n\varepsilon, m\varepsilon). 
\end{cases}
\]
See Figure \ref{fig:figura32}.

\begin{figure}
  \begin{center}
    \includegraphics[height=5cm,width=8cm]{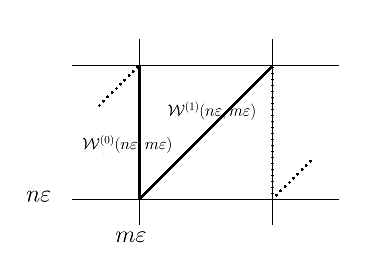}
    \caption{Graphical description of the definition of position of waves.}
    \label{fig:figura32}
    \end{center}
\end{figure}

The speed $\sigma: [0, +\infty)_t \times \W \to [0,1] \cup \{+\infty\}$ is defined setting for $t \in [n \varepsilon, (n+1) \varepsilon)$ and for $s \in \W$, $\sigma(t,s) := \sigma(n \varepsilon, s)$.
As in Section \ref{W_pswaves}, we set 
\[
\W(t) := \big\{ s \in \W \ | \ \mathtt x(t,s) < +\infty \big\}, \qquad
\W(t,x) := \big\{ s \in \W \ | \ \mathtt x(t,s) = x \big\}.
\]
and we will call $\W(t)$ the set of the \emph{real waves} at time $t$, while we will say that a wave $s$ is \emph{removed} at time $t$ if $\mathtt x(t,s) = + \infty$. Observe that the definition of $\W(t,x)$ agrees with the definition of $\W(n\varepsilon, m\varepsilon)$ given before, for $t=n \varepsilon, x=m\varepsilon$.
Finally for any $s \in \W(0)$ set
\[
T(s) := \sup \Big\{t \in [0, +\infty) \ | \ \mathtt x(t,s) < +\infty\Big\}.
\]
The curve $t \mapsto \mathtt x(t,s)$, for $t \in [0, T(s))$, as a curve in the $(t,x)$-plane, is piecewise linear with nodes in the grid points. 

\subsubsection{Interval of waves}

This section is analog of Section \ref{W_iow}. 

\begin{definition}
Let $\bar t$ be a fixed time and $\mathcal{I} \subseteq \W(\bar t)$. We say that $\mathcal{I}$ is an \emph{interval of waves} at time $\bar t$ if for any given $s_1, s_2 \in \mathcal{I}$, with $s_1 < s_2$, and for any $p \in \W(\bar t)$
\begin{equation*}
s_1 \leq p \leq s_2 \quad \Longrightarrow \quad p \in \mathcal{I}.
\end{equation*}
We say that an interval of waves $\mathcal{I}$ is \emph{homogeneous} if for each $s,s' \in \mathcal{I}$, $\mathcal{S}(s) = \mathcal{S}(s')$.  If waves in $\mathcal{I}$ are positive (resp. negative), we say that $\mathcal{I}$ is a \emph{positive} (resp. \emph{negative}) interval of waves. 
\end{definition}


The following proposition is the continuous version of Proposition \ref{W_interval_waves}. Its proof is very similar and for this reason we omit it.

\begin{proposition}
\label{interval_waves}
Let $\mathcal{I} \subseteq \W(\bar t)$ be an homogeneous interval of waves and assume that waves in $\mathcal{I}$ are positive (resp. negative). Then the restricion of $\hat u$ to $\mathcal{I}$ is strictly increasing (resp. decreasing) and its image is an interval (in $\R$). 
\end{proposition}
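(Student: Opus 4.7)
The plan is to mimic the proof of Proposition~\ref{W_interval_waves} in the continuous setting, replacing the discrete bijections $\mathtt x^{-1}(x_0) \to (u(x_0-),u(x_0)] \cap \Z\e$ with the continuous ones recorded in Property~(3) of Definition~\ref{eow}. Throughout I assume $\mathcal I$ is positive, the negative case being symmetric.

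First I would prove strict monotonicity of $\hat u|_{\mathcal I}$. Pick $s_1<s_2$ in $\mathcal I$ and list the discontinuity points of $u_\e(\bar t,\cdot)$ traversed from one to the other as
\[
\xi_0:=\mathtt x(\bar t,s_1)\leq \xi_1<\cdots<\xi_{K-1}<\xi_K:=\mathtt x(\bar t,s_2).
\]
By the monotonicity of $\mathtt x(\bar t,\cdot)$, every wave $p\in\W(\bar t)$ with $s_1\leq p\leq s_2$ satisfies $\mathtt x(\bar t,p)=\xi_k$ for some $k$, hence $p\in\mathcal I$ by the interval-of-waves property, and so $p$ is positive. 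This forces $u_\e(\bar t,\xi_k-)<u_\e(\bar t,\xi_k)$ at each such $\xi_k$, and Property~(3)(a) of Definition~\ref{eow} then gives a strictly increasing bijection $\hat u\colon\mathtt x^{-1}(\xi_k)\to(u_\e(\bar t,\xi_k-),u_\e(\bar t,\xi_k)]$. Since $u_\e$ is constant between consecutive discontinuities, $u_\e(\bar t,\xi_k)=u_\e(\bar t,\xi_{k+1}-)$, and chaining these half-open intervals yields $\hat u(s_1)\leq u_\e(\bar t,\xi_0)<u_\e(\bar t,\xi_K-)<\hat u(s_2)$ whenever $K\geq 1$. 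If $K=0$, the same Property~(3)(a) applied to the common fibre $\mathtt x^{-1}(\xi_0)$ gives $\hat u(s_1)<\hat u(s_2)$ directly.

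Next I would show that $\hat u(\mathcal I)$ is an interval. Given $\hat u(s_1)<v<\hat u(s_2)$ with $s_1,s_2\in\mathcal I$, the same chain $u_\e(\bar t,\xi_k)=u_\e(\bar t,\xi_{k+1}-)$ implies that $(\hat u(s_1),\hat u(s_2)]$ is covered by the half-open intervals $(u_\e(\bar t,\xi_k-),u_\e(\bar t,\xi_k)]$, so $v$ lies in some one of them, say with index $k$. Property~(3)(a) supplies a unique $s\in\mathtt x^{-1}(\xi_k)$ with $\hat u(s)=v$; the monotonicity of $\mathtt x(\bar t,\cdot)$ together with the just-proved monotonicity of $\hat u|_{\mathcal I}$ force $s_1<s<s_2$, and the interval-of-waves property places $s\in\mathcal I$.

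I do not anticipate any significant obstacle. The only mild difference from the wavefront tracking case is that $\hat u$ now takes continuous values with unit derivative on each fibre, so the bijections in Property~(3)(a) are with genuine half-open real intervals instead of their intersections with $\Z\e$; the gluing and bookkeeping arguments are otherwise verbatim the same.
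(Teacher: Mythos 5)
Your proposal is correct and follows essentially the same approach as the paper: chain the fibers $\mathtt x^{-1}(\xi_k)$ between $\mathtt x(\bar t,s_1)$ and $\mathtt x(\bar t,s_2)$, use the interval-of-waves and homogeneity hypotheses to see that each such $\xi_k$ carries a positive jump, and then invoke Property~(3)(a) of Definition~\ref{eow} to glue the strictly increasing bijections onto half-open real intervals, exactly as in the proof of Proposition~\ref{W_interval_waves}. One small slip at the end: to deduce $s_1<s<s_2$ you should appeal to the strict monotonicity of $\hat u$ on the whole union $\bigcup_k \mathtt x^{-1}(\xi_k)$ (which is what the chaining argument actually establishes, and what the paper records as a single strictly increasing bijection), not to the monotonicity of $\hat u|_{\mathcal I}$, since at that point you have not yet placed $s$ inside $\mathcal I$.
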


A priori an interval of waves could be a wild subset of $\R$: we show that it enjoys a particularly nice structure.

\begin{proposition}
\label{interval_countable_prop}
Let $\mathcal{I} \subseteq \W(n\e)$ be an interval of waves. Then $\mathcal{I}$ is an (at most countable) union of mutually disjoint intervals of real numbers.
\end{proposition}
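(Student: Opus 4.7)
The plan is to reduce the statement to the structural description of each fiber $\W(n\e, m\e)$ given by Property (3) of Definition \ref{eow}, and then to observe that an interval of waves is automatically ``convex'' inside each fiber.

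First I would decompose the set of real waves at time $n\e$ according to position:
\[
\W(n\e) \;=\; \bigsqcup_{m \in \Z} \W(n\e, m\e),
\]
where the union is disjoint because $\mathtt x(n\e, \cdot)$ is single-valued, and only countably many summands are nonempty since $\W \subseteq (0, \TV(\bar u_\e)]$ is bounded. By Property (3) of Definition \ref{eow} applied to the function $u_\e(n\e, \cdot)$ and the point $x_0 = m\e$, each fiber $\W(n\e, m\e) = \mathtt x(n\e, \cdot)^{-1}(m\e)$ is a \emph{finite} union of mutually disjoint intervals of the form $(a,b]$. Collecting these finite families over $m \in \Z$, I would obtain an at most countable family of mutually disjoint half-open intervals $\{I_k\}_{k \in K}$ (with $K$ countable) whose disjoint union is exactly $\W(n\e)$. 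Moreover, since the restriction of $\mathtt x(n\e, \cdot)$ to the finite-position waves is increasing (Property (2)), the intervals $I_k$ can be linearly ordered so that each one entirely precedes the next.

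Next I would analyze $\mathcal{I} \cap I_k$ for a generic $k$. The key point is the following observation: each $I_k$ is contained in a single fiber $\W(n\e, m\e)$, and in particular $I_k \subseteq \W(n\e)$ as a real interval. Therefore, if $s_1 < s_2$ are both in $\mathcal{I} \cap I_k$, then every $p \in [s_1, s_2]$ lies in $I_k \subseteq \W(n\e)$; by the definition of interval of waves (Definition \ref{W_iow_def} applied in the Glimm context), every such $p$ must belong to $\mathcal{I}$, and hence to $\mathcal{I} \cap I_k$. This shows that $\mathcal{I} \cap I_k$ is a real interval (its shape being inherited from that of $I_k$, up to the removal of one or both endpoints).

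Finally I would assemble
\[
\mathcal{I} \;=\; \bigsqcup_{k \in K} \bigl( \mathcal{I} \cap I_k \bigr),
\]
which is a disjoint union over a countable index set of real intervals; discarding the indices for which the intersection is empty gives the desired representation. I do not foresee a serious obstacle here: the only point that requires care is ensuring that the countable family $\{I_k\}$ extracted from Property (3) is really mutually disjoint and that the convexity argument on each $I_k$ is valid, both of which follow directly from the structure of enumerations of waves already established.
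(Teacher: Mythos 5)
Your proof is correct and follows essentially the same route as the paper's: both decompose $\W(n\e)$ into the position fibers $\W(n\e,m\e)$, invoke Property (3) of Definition \ref{eow} to see that each fiber is a finite union of half-open intervals contained in $\W(n\e)$, and then use the defining property of an interval of waves to conclude that the intersection with $\mathcal I$ is again a union of real intervals. The only (immaterial) difference is that you run the convexity argument on each constituent interval $I_k$ separately, whereas the paper argues at the level of a whole fiber via $a:=\inf$ and $b:=\sup$ of $\mathcal I\cap\W(n\e,m\e)$.
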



\begin{proof}
Write
\[
\mathcal{I} =\bigcup_{m \in \Z}  \mathcal{I} \cap \W(n\e,m\e).
\]
Clearly $\mathcal{I} \cap \W(n\e,m\e) \subseteq \W(n\e,m\e)$. If equality holds, then, by Definition \ref{eow} of enumeration of waves, $\mathcal{I} \cap \W(n\e,m\e)$ is a finite union of intervals of real numbers. Otherwise, if $\mathcal{I} \cap \W(n\e,m\e) \subsetneqq \W(n\e,m\e)$, then define
\begin{equation*}
a := \inf \big( \mathcal{I} \cap \W(n\e,m\e) \big), \qquad
b := \sup \big( \mathcal{I} \cap \W(n\e,m\e) \big).
\end{equation*}
By definition of interval of waves,
\begin{equation}
\label{interval_countable_eq}
\big( \mathcal{I}  \cap \W(n\e,m\e) \big) \setminus \{a,b\} = (a,b) \cap \W(n\e,m\e).
\end{equation}
The r.h.s. of \eqref{interval_countable_eq} is a finite union of mutually disjoint intervals of real numbers.
\end{proof}

\begin{corollary}
\label{iow_are_measurable}
Any interval of waves is Lebesgue-measurable.
\end{corollary}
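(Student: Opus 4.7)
The plan is to invoke Proposition \ref{interval_countable_prop} essentially directly, after a brief reduction from arbitrary times to grid times. The key observation is that the property of being an interval of waves depends only on the inclusion $\mathcal{I} \subseteq \W(\bar t)$ and on the natural order of $\R$ restricted to $\W(\bar t)$, so there is no dependence on $\bar t$ beyond the identification of the ambient set $\W(\bar t)$.

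First, I would note that by the recursive definition of $\mathtt x$ in Section \ref{pswaves}, waves can be removed (i.e., sent to $+\infty$) only at grid times $n\e$. Hence for any $\bar t \in [n\e,(n+1)\e)$ one has the equality $\W(\bar t) = \W(n\e)$ as subsets of $\R$. Consequently, if $\mathcal{I}$ is an interval of waves at time $\bar t$, then $\mathcal{I} \subseteq \W(n\e)$ and the defining condition of Definition \ref{W_iow_def} (given $s_1<s_2$ in $\mathcal{I}$ and $p \in \W(n\e)$ with $s_1 \leq p \leq s_2$, then $p \in \mathcal{I}$) is satisfied with respect to $\W(n\e)$. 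Thus $\mathcal{I}$ is an interval of waves at the grid time $n\e$ as well, and Proposition \ref{interval_countable_prop} applies.

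Next, applying Proposition \ref{interval_countable_prop}, I can write
\[
\mathcal{I} = \bigcup_{k \in K} J_k,
\]
where $K$ is at most countable and each $J_k$ is an interval of real numbers with pairwise disjoint $J_k$. Each $J_k$ is a Borel subset of $\R$, hence Lebesgue-measurable. Since the Lebesgue $\sigma$-algebra is closed under countable unions, $\mathcal{I}$ itself is Lebesgue-measurable, which is the claim.

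There is no genuine obstacle: the corollary is purely a structural consequence of Proposition \ref{interval_countable_prop}. The only mild subtlety is the passage from a general time $\bar t$ to a grid time $n\e$, and this is immediate once one observes that no wave is removed strictly between two consecutive grid times.
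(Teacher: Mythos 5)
Your proof is correct and follows exactly the route the paper intends: the corollary is stated as an immediate consequence of Proposition \ref{interval_countable_prop} (countable disjoint union of real intervals, hence Borel, hence Lebesgue-measurable), and your additional remark reducing an arbitrary time $\bar t$ to the grid time $n\e$ is a harmless and valid elaboration. Nothing is missing.
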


The next proposition is the area formula adapted to our setting.

\begin{proposition}
\label{cambio_di_variabile}
Let $g: \R \to \R$ be continuous, $g \geq 0$. Let $\mathcal{I} \subseteq \W(n\e,m\e)$ be a homogeneous interval of waves and let $I := \hat u(\mathcal{I})$. Then
\[\
\int_{\mathcal{I}} g(\hat u(s)) ds = \int_I g(u) du.
\]
\end{proposition}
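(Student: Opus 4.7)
My plan is to view this as a countable sum of elementary affine changes of variables for the piecewise affine, slope $\pm 1$ map $\hat u : \mathcal{I} \to I$. The hypotheses already assembled — particularly Definition \ref{eow}(3), Proposition \ref{interval_countable_prop}, and Proposition \ref{interval_waves} — essentially hand us the three ingredients needed: a decomposition of $\mathcal{I}$ into intervals, the fact that $\hat u$ has derivative $\pm 1$ almost everywhere on $\mathcal{I}$, and the monotonicity (hence injectivity modulo a null set) of $\hat u$ on $\mathcal{I}$.

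Concretely, I would proceed in three steps. First, using Proposition \ref{interval_countable_prop}, write $\mathcal{I} = \bigsqcup_{k} J_k$ as an at most countable, mutually disjoint union of real intervals. Second, since $\mathcal{I}$ is homogeneous with common sign $\mathcal{S} \in \{-1,+1\}$ and is contained in $\mathtt x^{-1}(m\e)$, Definition \ref{eow}(3) yields that $\hat u|_{\mathcal{I}}$ is continuous and satisfies $\frac{d\hat u}{ds}(s) = \mathcal{S}$ in the interior of each $J_k$. Hence $\hat u$ restricts on each $J_k$ to an affine bijection onto $\hat u(J_k)$ with $|\hat u'| \equiv 1$, and the classical change of variables gives
\[
\int_{J_k} g(\hat u(s))\, ds = \int_{\hat u(J_k)} g(u)\, du.
\]
Third, I would sum over $k$. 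Proposition \ref{interval_waves} ensures that $\hat u|_{\mathcal{I}}$ is strictly monotone onto the interval $I$, so the images $\hat u(J_k)$ are pairwise disjoint and their union equals $I$ up to the countable (hence Lebesgue-negligible) set of shared endpoints between adjacent $J_k$'s. Combining with the previous step,
\[
\int_{\mathcal{I}} g(\hat u(s))\, ds \;=\; \sum_k \int_{J_k} g(\hat u(s))\, ds \;=\; \sum_k \int_{\hat u(J_k)} g(u)\, du \;=\; \int_I g(u)\, du,
\]
where the rearrangements of sums and integrals are justified by nonnegativity of $g$ (monotone convergence/Tonelli).

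I do not anticipate a serious obstacle. The only mild technicality is bookkeeping at the endpoints of the $J_k$, where $\hat u$ may fail to be differentiable or where two images $\hat u(J_k)$, $\hat u(J_{k'})$ meet; but this is a countable set of real numbers, hence Lebesgue-negligible on both sides, and does not affect either integral. The homogeneity assumption is essential so that the slope of $\hat u$ is the same $\mathcal{S}$ on every component $J_k$ — without it, one would still get the formula by the same argument, but the image intervals could overlap and one would need to split $\mathcal{I}$ further into its $\mathcal{S}=+1$ and $\mathcal{S}=-1$ parts before applying the above reasoning.
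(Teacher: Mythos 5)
Your proof is correct and follows essentially the same route as the paper's: decompose $\mathcal{I}$ into countably many real intervals via Proposition \ref{interval_countable_prop}, use that $\hat u$ is affine with slope $\pm 1$ on each piece (Definition \ref{eow}) to apply the elementary change of variables, and sum using the monotonicity of $\hat u|_{\mathcal{I}}$ from Proposition \ref{interval_waves}. Nothing further is needed.
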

Observe that the function $g \circ \hat u$ is Lebesgue-measurable on $\mathcal{I}$ since it is composition of continuous functions.

Definition \ref{W_artificial_speed} and Remark \ref{W_artificial_speed_remark} in this context become respectively the following.

\begin{definition}
\label{artificial_speed}
Let $I \subseteq \R$ be an interval in $\R$. Let $s$ be any positive (resp. negative) wave such that $\hat u(s) \in I$. The quantity $\sigma(I,s) := \frac{d}{du} \conv_I f(\hat u(s))$ (resp. $\sigma(I,s) := \frac{d}{du} \conc_I f(\hat u(s))$) is called \emph{the (artificial) speed given to the wave $s$ by the Riemann problem $I$}. 

Moreover, if $s,s'$ are two waves such that $\hat u(s), \hat u(s') \in I$, we say that \emph{the Riemann problem $I$ divides $s,s'$} if $\hat u(s), \hat u(s')$ do not belong to the same wavefront of $\conv_I f$ (resp. $\conc_I f$).
\end{definition}

\begin{remark}
\label{artificial_speed_remark}
In the previous definition, if $I = \hat u(\mathcal{I})$, where $\mathcal{I}$ is any interval of waves at fixed time $\bar t$, we will also write $\sigma(\mathcal{I},s)$ instead of $\sigma(I,s)$ and call it the speed given to the waves $s$ by the Riemann problem $\mathcal{I}$. Moreover, we will also say that the Riemann problem $\mathcal{I}$ divides $s,s'$ if the Riemann problem $I$ does.
\end{remark}

\subsection{The main theorem in the Glimm scheme approximation}

Now we state the main interaction estimate for Glimm scheme. Fix a wave $s \in \W(0)$ and consider the function $t \mapsto \sigma(t,s)$. By construction it is finite valued until the time $T(s)$, after which its value becomes $+\infty$; moreover it is piecewise constant, right continuous, with jumps located at times $t = n\varepsilon, n \in \N$.

\begin{theorem1}
The following estimate holds:
\[
\sum_{n=1}^{+\infty} \int_{\W(n\varepsilon)} \big| \sigma(n \varepsilon, s) - \sigma((n-1)\varepsilon, s) \big| \, ds \leq (3 + 2 \log(2)) \lVert f'' \lVert_{L^\infty} \TV(u(0,\cdot))^2.
\]
\end{theorem1}

\noindent Corollary \ref{iow_are_measurable}, Definition \ref{speed_function} and Proposition \ref{cambio_di_variabile} gives that the integral is meaningful.

To prove Theorem \ref{main_thm} let us write
\begin{equation}
\label{da tempo a punti}
\sum_{n=1}^{+\infty} \int_{\W(n \varepsilon)} \big| \sigma(n \varepsilon, s) - \sigma((n-1)\varepsilon, s) \big| \, ds = 
\sum_{n=1}^{+\infty} \sum_{m \in \Z} \int_{\W(n \varepsilon, m\varepsilon)} \Big|\sigma(n \varepsilon, s) - \sigma((n-1)\varepsilon, s) \Big| \, ds.
\end{equation}
We spit the grid points in interaction points, cancellation points and no-collision points. We thus adapt Definition \ref{W_int_canc_points} as follows. 
\begin{definition}
Let $(n\varepsilon, m\varepsilon)$ be a grid point, $n \geq 1, m \in \Z$.

\noindent We say that $(n\varepsilon, m\varepsilon)$ is an \emph{interaction point} if $\W^{(1)}((n-1)\varepsilon, (m-1)\varepsilon))$ and $\W^{(0)}((n-1)\varepsilon, m\varepsilon)$ are both nonempty and have the same sign.

\noindent We say that $(n\varepsilon, m\varepsilon)$ is a \emph{cancellation point} if $\W^{(1)}((n-1)\varepsilon, (m-1)\varepsilon))$ and $\W^{(0)}((n-1)\varepsilon, m\varepsilon)$ are both nonempty and have different sign.

\noindent We say that $(n\varepsilon, m\varepsilon)$ is a \emph{no-collision point} if at least one among $\W^{(1)}((n-1)\varepsilon, (m-1)\varepsilon))$ and $\W^{(0)}((n-1)\varepsilon, m\varepsilon)$ is empty. 
\end{definition}

Accordingly, we split \eqref{da tempo a punti} into three terms:
\begin{align*}
\sum_{n=1}^{+\infty}& \sum_{m \in \Z} \int_{\W(n \varepsilon, m\varepsilon)} \big| \sigma(n \varepsilon, s) - \sigma((n-1)\varepsilon, s) \big| \, ds \\
=&~ 
\bigg( \sum_{\substack{(n\e,m\e) \\ \text{interaction}}} + \sum_{\substack{(n\e,m\e) \\ \text{cancellation}}} + \sum_{\substack{(n\e,m\e) \\ \text{no-collision}}} \bigg) \int_{\W(n \varepsilon, m\varepsilon)} \big| \sigma(n \varepsilon, s) - \sigma((n-1)\varepsilon, s) \big| \, ds. 
\end{align*}
The three terms are estimated in the next three propositions.

\begin{proposition}
\label{P_no_interaction}
Let $(n\varepsilon, m\varepsilon)$ be a no-collision point. Then
\[
\int_{\W(n \varepsilon, m\varepsilon)} \big| \sigma(n \varepsilon, s) - \sigma((n-1)\varepsilon, s) \big| \, ds = 0.
\]
\end{proposition}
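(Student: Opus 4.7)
The plan is to show that at a no-collision point the integrand $|\sigma(n\e,s)-\sigma((n-1)\e,s)|$ vanishes pointwise on $\W(n\e,m\e)$, by verifying that the new Riemann problem's convex envelope is a restriction of an adjacent old one and then invoking Proposition \ref{tocca}.

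Consider first the case $\W^{(1)}((n-1)\e,(m-1)\e)=\emptyset$: every wave emerging from the Riemann problem at $(m-1)\e$ at time $(n-1)\e$ has speed at most $\vartheta_n$, so no such wave has reached the sampling point $(m-1+\vartheta_n)\e$ by time $n\e-$, which gives $u_{n,m-1}=u_{n-1,m-1}$. The state $u_{n,m}$ is the self-similar Riemann solver for $[u_{n-1,m-1},u_{n-1,m}]$ evaluated at characteristic speed $\vartheta_n$; by the explicit description of the solver in terms of $\conv_{[u_{n-1,m-1},u_{n-1,m}]}f$, this state must satisfy
\begin{equation*}
\conv_{[u_{n-1,m-1},u_{n-1,m}]}f(u_{n,m})=f(u_{n,m}),
\end{equation*}
because it either lies outside every shock interval (rarefaction case) or is an endpoint of one (shock case). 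Applying Proposition \ref{tocca} at the point $u_{n,m}$ yields
\begin{equation*}
\conv_{[u_{n-1,m-1},u_{n-1,m}]}f\Big|_{[u_{n-1,m-1},u_{n,m}]}=\conv_{[u_{n-1,m-1},u_{n,m}]}f=\conv_{[u_{n,m-1},u_{n,m}]}f.
\end{equation*}

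Now $\W(n\e,m\e)\subseteq\W^{(0)}((n-1)\e,m\e)$, and monotonicity of $\frac{d}{du}\conv_{[u_{n-1,m-1},u_{n-1,m}]}f$ together with the structure of the sampling forces the states $\hat u(s)$ of waves in $\W^{(0)}((n-1)\e,m\e)$ to lie in $[u_{n-1,m-1},u_{n,m}]$; hence for each such $s$ the two speed formulas in Definition \ref{speed_function} give
\begin{equation*}
\sigma((n-1)\e,s)=\frac{d}{du}\conv_{[u_{n-1,m-1},u_{n-1,m}]}f(\hat u(s))=\frac{d}{du}\conv_{[u_{n,m-1},u_{n,m}]}f(\hat u(s))=\sigma(n\e,s),
\end{equation*}
so the integrand vanishes on $\W(n\e,m\e)$. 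The case $\W^{(0)}((n-1)\e,m\e)=\emptyset$ is completely analogous: one has $u_{n,m}=u_{n-1,m-1}$, while $u_{n,m-1}$ plays the role of the touching point of $\conv_{[u_{n-1,m-2},u_{n-1,m-1}]}f$, and $\W(n\e,m\e)\subseteq\W^{(1)}((n-1)\e,(m-1)\e)$ has states confined to $[u_{n,m-1},u_{n-1,m-1}]=[u_{n,m-1},u_{n,m}]$; negative Riemann problems are handled by replacing $\conv$ with $\conc$. The main technical delicacy is verifying that $u_{n,m}$ (resp.\ $u_{n,m-1}$) is actually a touching point of the envelope with $f$: this amounts to unpacking the Riemann solver in the self-similar variable $\xi=(x-m\e)/(t-(n-1)\e)$ and, in the measure-zero case in which $\vartheta_n$ coincides with the speed of some shock interval of the envelope, observing that the sampled value must still equal one of the two endpoints of that shock, both of which satisfy $\conv f=f$.
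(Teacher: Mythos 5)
Your proposal is correct and takes essentially the same route as the paper: establish $u_{n,m-1}=u_{n-1,m-1}$, verify that $\conv_{[u_{n-1,m-1},u_{n-1,m}]}f$ touches $f$ at $u_{n,m}$ (the paper states this as a consequence of the restarting procedure, which you unpack via the self-similar Riemann solver), apply Proposition~\ref{tocca} to identify the new envelope with a restriction of the old one, and conclude the pointwise equality of speeds on $\W(n\e,m\e)$. You spell out the touching-point verification and the location of the states $\hat u(s)$ in a bit more detail than the paper does, but the argument is the same.
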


\begin{proof}
Assume first $\W^{(1)}((n-1)\varepsilon, (m-1)\varepsilon)$ empty (the case $\W^{(0)}((n-1)\varepsilon, m\varepsilon) = \emptyset$ is similar). See Figure \ref{fig:figura33}. In this case
\[
u_{n,m-1} = u_{n-1,m-1}.
\]
Moreover, due to the restarting procedure of the Glimm scheme,
\begin{equation}
\label{f_=_inv}
f(u_{n,m}) = \frac{d}{du}\conv_{[u_{n-1,m-1},u_{n-1,m}]}f(u_{n,m}).
\end{equation}
Hence, according to Definition \ref{speed_function}, and by Proposition \ref{tocca}, for each $s \in \W(n \varepsilon, m \varepsilon) = \W^{(0)}((n-1)\varepsilon, m \varepsilon)$,
\begin{align*}
\sigma(n\varepsilon, s)                            =&~  \frac{d}{du}\conv_{[u_{n,m-1},u_{n,m}]}f(\hat u(s)) \\
\text{(by Proposition \ref{tocca} and \eqref{f_=_inv})}  =&~  \frac{d}{du}\conv_{[u_{n-1,m-1},u_{n-1,m}]}f(\hat u (s)) \\
                                                   =&~  \sigma((n-1)\e, s), 
\end{align*}
concluding the proof.
\end{proof}

\begin{figure}
  \begin{center}
    \includegraphics[height=5cm,width=10cm]{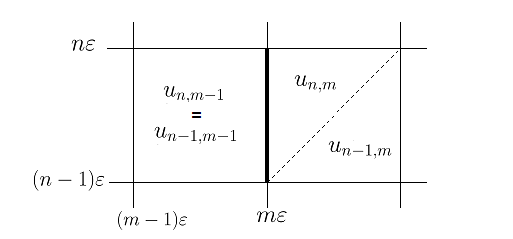}
    \caption{No-collision case when $\W^{(1)}((n-1)\e,(m-1)\e) = \emptyset$.}
    \label{fig:figura33}
    \end{center}
\end{figure}

For each grid point $(n\e,m\e)$, the amount of cancellation is defined by:
\begin{align*}
\mathcal{C}(n\e,m\e) :=&~ |u_{n,m-1} - u_{n-1,m-1}| + |u_{n-1,m-1} - u_{n,m}| - |u_{n,m-1} - u_{n,m}| \\
                     =&~ \Big| \W^{(1)}((n-1)\e, (m-1)\e) \cup \W^{(0)}((n-1)\e,m\e)\Big| - \big| \W(n\e,m\e) \big|.
\end{align*}
It is easy to see that for fixed $n \geq 1$,
\begin{equation*}
\sum_{m \in \Z} \mathcal{C}(n\e,m\e) = \TV(u_\e((n-1)\e, \cdot)) - \TV(u_\e(n\e, \cdot)).
\end{equation*}

\begin{proposition}
\label{canc_3}
Let $(n\varepsilon, m\varepsilon)$ be a cancellation point. Then
\[
\int_{\W(n \varepsilon, m\varepsilon)} \big| \sigma(n \varepsilon, s) - \sigma((n-1)\varepsilon, s) \big| \, ds \leq \lVert f'' \lVert_{L^\infty} \TV(u(0,\cdot)) \C(n\e, m\e).
\]
\end{proposition}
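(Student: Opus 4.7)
My plan is to mirror the wavefront-tracking argument of Proposition \ref{W_canc_3}, with one extra complication specific to the Glimm scheme: the Riemann problem at the grid point at time $n\e$ no longer shares its left endpoint with the Riemann problem that governed the old speeds. Without loss of generality, and by the symmetry between $\conv$ and $\conc$, I would assume the incoming left wavefront is positive ($u_{n-1,m-2}<u_{n-1,m-1}$), the incoming right one is negative ($u_{n-1,m-1}>u_{n-1,m}$), and the surviving waves at $(n\e,m\e)$ are positive, i.e.\ $u_{n,m-1}<u_{n,m}$. Since the right Riemann problem produces only negative waves, every $s\in\W(n\e,m\e)$ must sit in $\W^{(1)}((n-1)\e,(m-1)\e)$, and hence
\[
\sigma((n-1)\e,s)=\tfrac{d}{du}\conv_{[u_{n-1,m-2},u_{n-1,m-1}]} f(\hat u(s)),\qquad
\sigma(n\e,s)=\tfrac{d}{du}\conv_{[u_{n,m-1},u_{n,m}]} f(\hat u(s)),
\]
with $\hat u(s)\in(u_{n,m-1},u_{n,m}]$ and $u_{n-1,m-2}\le u_{n,m-1}<u_{n,m}\le u_{n-1,m-1}$.

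The Glimm-specific observation is that the resampled value $u_{n,m-1}$ always lies on the graph of $\conv_{[u_{n-1,m-2},u_{n-1,m-1}]} f$: the restart returns the value of the Riemann-problem profile at the sampling position, which is either an extreme state, a point of a rarefaction, or (when $\vartheta_n$ coincides with a shock speed) one of the endpoints of that shock, so in every case $\conv f=f$ there. Applying Proposition \ref{tocca} at this value then splits the old envelope, giving $\sigma((n-1)\e,s)=\tfrac{d}{du}\conv_{[u_{n,m-1},u_{n-1,m-1}]} f(\hat u(s))$ for $\hat u(s)>u_{n,m-1}$, and I am reduced to comparing two convex envelopes sharing the left endpoint $u_{n,m-1}$ and differing only in the right endpoint ($u_{n-1,m-1}$ for the old, $u_{n,m}\le u_{n-1,m-1}$ for the new). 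From here the wavefront argument applies: Proposition \ref{vel_aumenta} gives $\sigma(n\e,s)\ge\sigma((n-1)\e,s)$, Proposition \ref{differenza_vel} allows me to replace $\hat u(s)$ by $u_{n,m}-$ in the pointwise difference, and Proposition \ref{diff_vel_proporzionale_canc} bounds the resulting quantity by $\lVert f''\rVert_{L^\infty}\,(u_{n-1,m-1}-u_{n,m})$.

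Integrating over $\W(n\e,m\e)$ then yields
\[
\int_{\W(n\e,m\e)}\big|\sigma(n\e,s)-\sigma((n-1)\e,s)\big|\,ds \le \lVert f''\rVert_{L^\infty}\,(u_{n-1,m-1}-u_{n,m})\,|\W(n\e,m\e)|,
\]
and I conclude using $|\W(n\e,m\e)|=u_{n,m}-u_{n,m-1}\le\TV(u_\e(n\e,\cdot))\le\TV(u(0,\cdot))$ together with the bookkeeping identity $\C(n\e,m\e)=|\W^{(1)}((n-1)\e,(m-1)\e)|+|\W^{(0)}((n-1)\e,m\e)|-|\W(n\e,m\e)|=2(u_{n-1,m-1}-u_{n,m})$, which in particular gives $u_{n-1,m-1}-u_{n,m}\le\C(n\e,m\e)$. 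The main obstacle I expect is the justification of the claim that $u_{n,m-1}$ lies on the convex envelope of the preceding Riemann problem, which requires a small case analysis of the restart procedure, in particular to exclude the pathological possibility that the Glimm sampling could return an interior point of a shock plateau rather than one of its endpoints; once this is settled everything falls out from the convex-envelope tools developed in Section \ref{S_convex_env}.
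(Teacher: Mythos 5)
Your proof is correct and takes essentially the same route as the paper, which simply adapts the argument of Proposition \ref{W_canc_3}: drop the absolute value by Proposition \ref{vel_aumenta}, pass to the difference of envelope derivatives at $u_{n,m}-$ by Proposition \ref{differenza_vel}, bound it by $\lVert f''\rVert_{L^\infty}(u_{n-1,m-1}-u_{n,m})$ via Proposition \ref{diff_vel_proporzionale_canc}, and observe $u_{n-1,m-1}-u_{n,m}\le \C(n\e,m\e)$. The preliminary step you flag as the main obstacle --- that the resampled value $u_{n,m-1}$ lies on the graph of $\conv_{[u_{n-1,m-2},u_{n-1,m-1}]}f$ so that Proposition \ref{tocca} applies --- is correct and unproblematic: the Riemann solution only attains values at which the envelope coincides with $f$ (extreme states, rarefaction values, or shock endpoints), which is exactly the fact the paper invokes in the proof of Proposition \ref{P_no_interaction}.
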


\noindent The proof is completely similar to Proposition \ref{W_canc_3}.

Using previous proposition one gets the same estimate as in Corollary \ref{W_canc_4}.

\begin{corollary}
It holds
\[
\sum_{\substack{(n\e,m\e) \\ \mathrm{cancellation}}} \int_{\W(n \varepsilon, m\varepsilon)} \big| \sigma(n \varepsilon, s) - \sigma((n-1)\varepsilon, s) \big| \, ds \leq \lVert f'' \lVert_{L^\infty} \TV(u(0,\cdot))^2.
\]
\end{corollary}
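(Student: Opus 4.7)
The plan is to mimic precisely the argument for Corollary \ref{W_canc_4} in the wavefront tracking case, since the ingredients are already in place. First, I would invoke Proposition \ref{canc_3} termwise to obtain
\[
\sum_{\substack{(n\e,m\e) \\ \text{cancellation}}} \int_{\W(n\e,m\e)} \big|\sigma(n\e,s) - \sigma((n-1)\e,s)\big|\,ds \leq \|f''\|_{L^\infty}\TV(u(0,\cdot)) \sum_{\substack{(n\e,m\e) \\ \text{cancellation}}} \C(n\e,m\e),
\]
and then factor out the constant $\|f''\|_{L^\infty}\TV(u(0,\cdot))$ which does not depend on $(n,m)$.

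Next I would deal with the double sum over cancellation points. The key observation is that at an interaction point or at a no-collision point one has $\C(n\e,m\e)=0$ by the definition of the amount of cancellation (since in those cases no wave is removed). Therefore I may extend the inner sum over $m$ to all grid columns without changing its value, and use the identity stated just before Proposition \ref{canc_3},
\[
\sum_{m \in \Z} \C(n\e,m\e) = \TV(u_\e((n-1)\e,\cdot)) - \TV(u_\e(n\e,\cdot)).
\]

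Finally I would telescope the sum over $n\geq 1$:
\[
\sum_{n=1}^{+\infty}\sum_{m \in \Z} \C(n\e,m\e) = \TV(u_\e(0,\cdot)) - \lim_{n \to +\infty}\TV(u_\e(n\e,\cdot)) \leq \TV(u_\e(0,\cdot)) \leq \TV(u(0,\cdot)),
\]
where the last inequality follows from \eqref{bd_su_dato_iniziale}. Combining yields the desired quadratic bound. There is no real obstacle here: the statement is a bookkeeping consequence of Proposition \ref{canc_3}, the vanishing of $\C$ outside cancellation points, and monotone decay of the total variation along the Glimm approximation. The only minor point to check is that the sum over $m$ is in fact convergent for each $n$, which is immediate since $u_\e(n\e,\cdot)$ has compact support (inherited from $\bar u_\e$) and hence only finitely many columns carry a jump.
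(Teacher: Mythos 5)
Your proof is correct and follows essentially the same route as the paper's: apply Proposition \ref{canc_3} termwise, sum the cancellation amounts over $m$ using the identity $\sum_m \C(n\e,m\e) = \TV(u_\e((n-1)\e,\cdot)) - \TV(u_\e(n\e,\cdot))$, and telescope in $n$. One small caveat: in the Glimm setting the paper does not assume $\bar u_\e$ has compact support (unlike the wavefront case), so the convergence of $\sum_m \C(n\e,m\e)$ is better justified directly by nonnegativity of $\C$ together with the finite telescoping identity, which you invoke anyway.
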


The last estimate to prove is 
\begin{equation*}
\sum_{\substack{(n\e,m\e) \\ \text{interaction}}} \int_{\W(n\e, m\e)} \big| \sigma(n\e,s) - \sigma((n-1)\e,s) \big|\, ds \leq \const \lVert f'' \lVert_{L^\infty}  \TV(u(0,\cdot))^2.
\end{equation*}

As in the wavefront case, in the subsequent sections we will define a functional $\fQ: \N\e \to [0, +\infty)$ of the form
\[
\fQ = \fQ(n\e) = \iint_{\mathcal{D}(n\e)} \mathfrak{q}(n\e, s,s') \, ds ds',
\]
where 
\begin{equation}
\label{domain_of_integration}
\mathcal{D}(n\e) := \Big \{(s,s') \in \W(n\e) \times \W(n\e) \ \Big| \ s < s'\Big \}
\end{equation}
is \emph{the domain of integration} of the functional $\fQ$ at time $n\e$ and
\[
\mathfrak{q}: \N\e \times \mathcal{D}(n\e) \to \big[ 0,\lVert f'' \lVert_{L^\infty} \big]
\]
is a measurable function on $\mathcal{D}(n\e)$ for each $n \in \N$, called \emph{the weight of the pair of waves $(s,s')$ at time $n\e$}.
Such a functional will have two properties: for each time $n\e$
\begin{enumerate}[(a)]
\item there is a measurable subset $\mathcal{D}^I(n\e) \subseteq \mathcal{D}(n\e)$ such that
\begin{equation}
\label{decrease}
\begin{split}
\sum_{\substack{m \in \Z \text{ such that}\\ (n\e,m\e) \text{ is} \\ \text{interaction point}}} \int_{\W(n\e, m\e)}& \big| \sigma(n\e,s) - \sigma((n-1)\e,s) \big|\, ds \\ 
\leq&~ 2 \bigg[ \iint_{\mathcal{D}^I(n\e)} \mathfrak{q}((n-1)\e, s,s')\, dsds' - \iint_{\mathcal{D}^I(n\e)} \mathfrak{q}(n\e, s,s')\, dsds' \bigg],
\end{split}
\end{equation}
Theorem \ref{decrease_thm_enunciato} and Corollary \ref{decrease_thm_corollary};
\item on the complement $\mathcal{D}(n\e) \setminus \mathcal{D}^I(n\e)$,
\begin{equation}
\label{increase}
\begin{split}
\iint_{\mathcal{D}(n\e) \setminus \mathcal{D}^I(n\e)}& \mathfrak{q}(n\e, s,s')\, dsds' - 
\iint_{\mathcal{D}(n\e) \setminus \mathcal{D}^I(n\e)} \mathfrak{q}((n-1)\e, s,s')\, dsds'\\
&\leq \log(2) \lVert f'' \lVert_{L^\infty}  \TV(u(0,\cdot)) \Big[\TV(u_\e((n-1)\e,\cdot) - \TV(u_\e(n\e, \cdot)) \Big],
\end{split}
\end{equation}
Theorem \ref{increase_thm_enunciato} and Corollary \ref{increase_thm_corollary}.
\end{enumerate}

These two properties are the analogs of \eqref{W_decrease} and \eqref{W_increase}. As in Section \ref{section_W_main_thm}, \eqref{decrease} and \eqref{increase} implies the following proposition, which is the analog of Proposition \ref{W_thm_interaction}. 

\begin{proposition}
It holds
\begin{equation*}
\sum_{\substack{(n\e,m\e) \\ \mathrm{interaction}}} \int_{\W(n\e, m\e)} \big| \sigma(n\e,s) - \sigma((n-1)\e,s) \big|\, ds  \leq
2 (1 + \log(2)) \lVert f'' \lVert_{L^\infty}  \TV(u(0,\cdot))^2.
\end{equation*}
\end{proposition}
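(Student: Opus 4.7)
The plan is to adapt the argument of Proposition \ref{W_thm_interaction} to the Glimm setting. The new wrinkle is that, in the Glimm scheme, interactions and cancellations may occur simultaneously at different spatial grid points within the same time step $n\e$, so the decomposition ``interactions vs.\ cancellations'' is of the integration domain $\mathcal{D}(n\e)$ rather than of the set of times.

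For each $n\geq 1$, I would introduce the shorthand
\[
\fQ_I^n := \iint_{\mathcal{D}^I(n\e)} \mathfrak{q}(n\e,s,s')\,dsds', \qquad \widetilde{\fQ}_I^n := \iint_{\mathcal{D}^I(n\e)} \mathfrak{q}((n-1)\e,s,s')\,dsds',
\]
and analogously $\fQ_C^n,\widetilde{\fQ}_C^n$ on the complementary set $\mathcal{D}(n\e)\setminus \mathcal{D}^I(n\e)$. Inequality \eqref{decrease} is then the statement that the interaction contribution at time step $n\e$ is bounded by $2(\widetilde{\fQ}_I^n - \fQ_I^n)$, while \eqref{increase} says $\fQ_C^n - \widetilde{\fQ}_C^n \leq \log(2)\lVert f''\lVert_{L^\infty}\TV(u(0,\cdot))\,\mathcal{C}_n$, where $\mathcal{C}_n := \TV(u_\e((n-1)\e,\cdot)) - \TV(u_\e(n\e,\cdot))$.

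The core step is the algebraic identity
\[
\widetilde{\fQ}_I^n - \fQ_I^n = \bigl(\widetilde{\fQ}_I^n+\widetilde{\fQ}_C^n\bigr) - \bigl(\fQ_I^n+\fQ_C^n\bigr) + \bigl(\fQ_C^n - \widetilde{\fQ}_C^n\bigr).
\]
Here the second bracket equals $\fQ(n\e)$ by construction, and since $\mathcal{D}(n\e)\subseteq \mathcal{D}((n-1)\e)$ (waves can only be removed, by cancellations) together with $\mathfrak{q}\geq 0$, the first bracket satisfies $\widetilde{\fQ}_I^n+\widetilde{\fQ}_C^n = \iint_{\mathcal{D}(n\e)}\mathfrak{q}((n-1)\e,\cdot)\,ds ds' \leq \fQ((n-1)\e)$. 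Summing over $n\geq 1$ and applying \eqref{decrease} and \eqref{increase}, the first difference telescopes to $\fQ(0)$ and the third is controlled by the total cancellation, giving
\[
\sum_{\substack{(n\e,m\e) \\ \mathrm{interaction}}} \int_{\W(n\e,m\e)} \bigl|\sigma(n\e,s)-\sigma((n-1)\e,s)\bigr|\,ds \leq 2\fQ(0) + 2\log(2)\lVert f''\lVert_{L^\infty}\TV(u(0,\cdot))\sum_{n\geq 1}\mathcal{C}_n.
\]
Using $\fQ(0)\leq \lVert f''\lVert_{L^\infty}\TV(u(0,\cdot))^2$ (from the uniform bound $\mathfrak{q}\leq \lVert f''\lVert_{L^\infty}$) and $\sum_n\mathcal{C}_n \leq \TV(u_\e(0,\cdot)) \leq \TV(u(0,\cdot))$ produces exactly the claimed $2(1+\log(2))\lVert f''\lVert_{L^\infty}\TV(u(0,\cdot))^2$.

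This step is essentially bookkeeping once \eqref{decrease} and \eqref{increase} are in place; the substantive content of the section lies in proving those two facts, i.e.\ in Theorems \ref{decrease_thm_enunciato} and \ref{increase_thm_enunciato}. The only delicate point in the computation above is the inclusion $\mathcal{D}(n\e)\subseteq \mathcal{D}((n-1)\e)$, which, together with $\mathfrak{q}\geq 0$, allows replacing the integral over the moving (smaller) domain $\mathcal{D}(n\e)$ at time $(n-1)\e$ by the full $\fQ((n-1)\e)$ without losing the right inequality direction.
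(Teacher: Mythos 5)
Your proposal is correct and follows essentially the same route as the paper: the same add-and-subtract of the cancellation part of the weight over $\mathcal{D}(n\e)\setminus\mathcal{D}^I(n\e)$, the same telescoping of $\fQ((n-1)\e)-\fQ(n\e)$, and the same final bounds $\fQ(0)\leq \lVert f''\rVert_{L^\infty}\TV(u(0,\cdot))^2$ and $\sum_n \mathcal C_n \leq \TV(u(0,\cdot))$. You even make explicit the inclusion $\mathcal{D}(n\e)\subseteq\mathcal{D}((n-1)\e)$ needed to bound $\iint_{\mathcal{D}(n\e)}\mathfrak q((n-1)\e,\cdot)$ by $\fQ((n-1)\e)$, which the paper uses silently.
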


\begin{proof}
For fixed time $n\e$,
\begin{align*}
\sum_{\substack{m \in \Z \\ \text{ such that} (n\e,m\e) \text{ is}\\ \text{interaction point}}}& \int_{\W(n\e, m\e)} \big| \sigma(n\e,s) - \sigma((n-1)\e,s) \big|\, ds \\ 
\text{(using \eqref{decrease})} 
\leq&~ 2 \bigg [\iint\limits_{\mathcal{D}^I(n\e)} \mathfrak{q}((n-1)\e, s,s')\, dsds' - \iint\limits_{\mathcal{D}^I(n\e)} \mathfrak{q}(n\e, s,s')\, dsds' \bigg ] \\
=&~ 2 \bigg [\iint\limits_{\mathcal{D}^I(n\e)} \mathfrak{q}((n-1)\e, s,s')\, dsds' - \iint\limits_{\mathcal{D}^I(n\e)} \mathfrak{q}(n\e, s,s')\, dsds' \\
 &~ \qquad + \iint\limits_{\mathcal{D}(n\e) \setminus \mathcal{D}^I(n\e)} \mathfrak{q}((n-1)\e, s,s')\, dsds' - \iint\limits_{\mathcal{D}(n\e) \setminus \mathcal{D}^I(n\e)} \mathfrak{q}((n-1)\e, s,s')\, dsds'\\ 
 &~ \qquad + \iint\limits_{\mathcal{D}(n\e) \setminus \mathcal{D}^I(n\e)} \mathfrak{q}(n\e, s,s')\, dsds' - \iint\limits_{\mathcal{D}(n\e) \setminus \mathcal{D}^I(n\e)} \mathfrak{q}(n\e, s,s')\, dsds' \bigg ]\\
=&~ 2 \bigg[ \iint\limits_{\mathcal{D}(n\e)} \mathfrak{q}((n-1)\e, s,s')\, dsds'  -  \iint\limits_{\mathcal{D}(n\e)} \mathfrak{q}(n\e, s,s')\, dsds'\bigg] \\
 &~ + 2 \bigg[ \iint\limits_{\mathcal{D}(n\e) \setminus \mathcal{D}^I(n\e)} \mathfrak{q}(n\e, s,s')\, dsds'  - \iint\limits_{\mathcal{D}(n\e) \setminus \mathcal{D}^I(n\e)} \mathfrak{q}((n-1)\e, s,s')\, dsds' \bigg] \\
\leq&~ 2 \big[ \fQ((n-1)\e) - \fQ(n\e) \big]\\
&~ + 2 \bigg[ \iint\limits_{\mathcal{D}(n\e) \setminus \mathcal{D}^I(n\e)} \mathfrak{q}(n\e, s,s')\, dsds' - \iint\limits_{\mathcal{D}(n\e) \setminus \mathcal{D}^I(n\e)} \mathfrak{q}((n-1)\e, s,s')\, dsds'\bigg] \\
\text{(using \eqref{increase})} \leq & 2 \big[ \fQ((n-1)\e) - \fQ(n\e) \big] \\
&~ + 2 \log(2) \lVert f'' \lVert_{L^\infty} \TV(u(0,\cdot)) \Big[\TV(u_\e((n-1)\e)) - \TV(u_\e(n\e))\Big]. 
\end{align*}
Hence, exactly as in proof of Proposition \ref{W_thm_interaction}, 
\begin{align*}
\sum_{n = 1}^N& \sum_{\substack{m \in \Z \text{ s.t.} \\ (n\e,m\e) \text{ is}\\ \text{interaction point}}} \int_{\W(n\e, m\e)} \big| \sigma(n\e,s) - \sigma((n-1)\e,s) \big|\, ds \\
&\leq 2 \sum_{n = 1}^N \Big[\fQ((n-1)\e) - \fQ(n\e)\Big] \\
     & \quad  + 2 \log(2) \sum_{n=1}^N \lVert f'' \lVert_{L^\infty} \TV(u(0,\cdot)) \Big[\TV(u_\e((n-1)\e)) - \TV(u_\e(n\e))\Big] \\
&= 2 \Big[\fQ(0) - \fQ(N\e) \Big] + 2 \log(2) \lVert f'' \lVert_{L^\infty}  \TV(u(0,\cdot)) \Big[ \TV(u_\e(0,\cdot)) - \TV(u_\e(N\e)) \Big] \\
&\leq 2 \Big [\fQ(0) + \log(2) \lVert f'' \lVert_{L^\infty}  \TV(u(0,\cdot))^2 \Big ]\\
&\leq 2 (1 + \log(2)) \lVert f'' \lVert_{L^\infty}  \TV(u(0,\cdot))^2,
\end{align*}
where the last inequality is justified by the fact that
\begin{equation*}
\fQ(0) = \iint_{\mathcal{D}(0)} \mathfrak{q}(0,s,s') \, ds ds' \leq \lVert f'' \lVert_{L^\infty} |\mathcal{D}(0)| \leq \lVert f'' \lVert_{L^\infty}  \TV(u(0,\cdot))^2.
\end{equation*}
Passing to the limit as $N \rightarrow +\infty$, one completes the proof. 
\end{proof}

\subsection{Waves collisions in the Glimm scheme case}

This section introduces the notion of pairs of waves which have \emph{already interacted/not yet interacted}. Even if the definitions and the propositions are completely similar to the ones in Section \ref{W_waves_collision}, some technicalities arise, for example in the proof of Proposition \ref{divise_tocca}, which is substantially longer than the proof of the analogous Proposition \ref{W_divise_tocca}. The following definition is the same as Definition \ref{interagite_non_interagite}.

\begin{definition}
\label{D_Glimm_interacted}
Let $\bar t$ be a fixed time and let $s,s' \in \W(\bar t)$. We say that \emph{$s,s'$ interact at time $\bar t$} if $\mathtt x(\bar t, s) = \mathtt x(\bar t, s')$.  

We also say that \emph{they have already interacted at time $\bar t$} if there is $t \leq \bar t$ such that $s,s'$ interact at time $t$. Moreover we say that \emph{they have not yet interacted at time $\bar t$} if for any $t \leq \bar t$, they do not interact at time $t$. 
\end{definition}

\noindent Lemma \ref{W_interagite_stesso_segno} and Lemma \ref{W_quelle_in_mezzo_hanno_int} hold also in this case, namely:
\begin{enumerate}
 \item if $s, s'$ interact at time $\bar t$, then they have the same sign (Lemma \ref{W_interagite_stesso_segno});
 \item let $\bar t$ be a fixed time, $s,s' \in \W(\bar t)$, $s < s'$; assume that $s, s'$ have already interacted at time $\bar t$; if $p, p' \in [s, s'] \cap \W(\bar t)$, then $p, p'$ have already interacted at time $\bar t$ (Lemma \ref{W_quelle_in_mezzo_hanno_int}).
\end{enumerate}

\noindent As in Section \ref{W_waves_collision}, for any fixed time $\bar t \geq 0$ and for any $\bar s \in \W(\bar t)$, define 
\[
\mathcal{I}(\bar t, \bar s) := \Big\{s \in \W(\bar t) \ \Big| \ s \text{ has already interacted with } \bar s \text{ at time } \bar t\Big\};
\]
by Lemmas \ref{W_interagite_stesso_segno} and \ref{W_quelle_in_mezzo_hanno_int}, this is an homogeneous interval of waves.

If $s < s'$ have already interacted at a fixed time $\bar t$, set as before 
\[
\mathcal{I}(\bar t, s, s') : = \mathcal{I}(\bar t, s) \cap \mathcal{I}(\bar t, s').
\]
This is clearly an interval of waves and so by Proposition \ref{interval_waves} the image of 
\[
\hat u: \mathcal{I}(\bar t, s, s') \to \R
\]
is an interval in $\R$.

The following definition is the same as Definition \ref{W_waves_divided}.

\begin{definition}
\label{waves_divided}
Let $s,s' \in \W(\bar t)$ be two waves which have already interacted at time $\bar t$. We say that \emph{$s,s'$ are divided in the real solution at time $\bar t$} if 
\[
(\mathtt x(\bar t, s), \sigma(\bar t, s)) \neq (\mathtt x(\bar t, s'), \sigma(\bar t, s')),
\]
i.e. if at time $\bar t$ they have either different position, or the same position, but different speed.

\noindent If they are not divided in the real solution, we say that \emph{they are joined in the real solution}.
\end{definition}

\begin{remark}
As noted in Remark \ref{rem_divise_solo_in_cancellazioni}, in the wavefront tracking algorithm two waves can have same position but different speed only in cancellation points; on the contrary, in the Glimm scheme two waves can have same position but different speed at every time step.
\end{remark}

The analog of Proposition \ref{W_unite_realta} is the following proposition. We omit the proof.

\begin{proposition}
\label{unite_realta}
Let $\bar t = n\e, n \in \N$ be a fixed time. Let $s,s' \in \W(n\e)$. If $s,s'$ are not divided in the real solution at time $n\e$, then the Riemann problem $\mathcal{I}(n\e,s,s')$ does not divide them. 
\end{proposition}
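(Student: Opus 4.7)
The plan is to mimic the proof of the wavefront analogue (Proposition \ref{W_unite_realta}): identify a smaller, real Riemann problem in which $s$ and $s'$ already have equal speed, then use monotonicity of the derivative of convex envelopes (Proposition \ref{differenza_vel}) to propagate this equality to the larger artificial Riemann problem $\mathcal{I}(n\e,s,s')$.

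First I would unpack the hypothesis. By Definition \ref{waves_divided}, ``not divided in the real solution'' means $\mathtt x(n\e,s)=\mathtt x(n\e,s')=m\e$ and $\sigma(n\e,s)=\sigma(n\e,s')$, so $s,s'\in \W(n\e,m\e)$; by Lemma \ref{W_interagite_stesso_segno} the two waves have the same sign, and I may assume they are positive. Setting $R_0:=[u_\e(n\e,m\e-),u_\e(n\e,m\e)]=:[u^L,u^R]$, Definition \ref{speed_function} translates the equality of real speeds into
\[
\tfrac{d}{du}\conv_{R_0} f\,(\hat u(s)) \;=\; \tfrac{d}{du}\conv_{R_0} f\,(\hat u(s')).
\]
Next, every wave at position $m\e$ interacts with $s$ and with $s'$ at time $n\e$, so $\W(n\e,m\e)\subseteq \mathcal{I}(n\e,s,s')$. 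Since $\mathcal{I}(n\e,s,s')$ is a positive interval of waves (Lemmas \ref{W_interagite_stesso_segno}--\ref{W_quelle_in_mezzo_hanno_int}), Proposition \ref{interval_waves} ensures that $R:=\hat u(\mathcal{I}(n\e,s,s'))$ is a real interval $[a,b]$, and monotonicity of $\hat u$ gives $R_0\subseteq R$, i.e.\ $a\le u^L\le \hat u(s)\le \hat u(s')\le u^R\le b$.

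The core of the argument is then to propagate the equality of derivatives from $R_0$ to $R$ in two steps. Applying Proposition \ref{differenza_vel}(2) with big interval $[u^L,b]$ and subinterval $[u^L,u^R]$, evaluated at $u_1=\hat u(s)>u^L$ and $u_2=\hat u(s')\le u^R$, yields
\[
\tfrac{d}{du}\conv_{R_0} f(u_2-)-\tfrac{d}{du}\conv_{R_0} f(u_1-)
\;\ge\;
\tfrac{d}{du}\conv_{[u^L,b]} f(u_2-)-\tfrac{d}{du}\conv_{[u^L,b]} f(u_1-).
\]
The left-hand side is zero by Step~1 (using $C^1$ regularity from Theorem \ref{convex_fundamental_thm} to drop the $-$), while the right-hand side is $\ge 0$ by monotonicity of the derivative of a convex function; hence both vanish. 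A second application, now of Proposition \ref{differenza_vel}(4) with big interval $[a,b]$ and subinterval $[u^L,b]$ at the same $u_1,u_2$, transfers the same equality to $\conv_R f$. Since $\tfrac{d}{du}\conv_R f$ is monotone, the derivative is constant on $[\hat u(s),\hat u(s')]$, i.e.\ the two points belong to the same wavefront of $\conv_R f$, which is exactly the statement that $\mathcal{I}(n\e,s,s')$ does not divide $s,s'$ (Definition \ref{artificial_speed} and Remark \ref{artificial_speed_remark}).

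The main obstacle I anticipate is not the propagation step itself, but the bookkeeping of degenerate configurations: when $\hat u(s)=u^L$ or $\hat u(s')=u^R$, or when $R_0$ already shares an endpoint with $R$, one of the two applications of Proposition \ref{differenza_vel} becomes trivial and the other must be invoked with the appropriate one-sided derivative (point (1) in place of (2), or point (3) in place of (4)). Fortunately the $C^1$ smoothness of convex envelopes (Theorem \ref{convex_fundamental_thm}) makes all these variants equivalent, so the cases reduce to routine verification rather than genuinely new content.
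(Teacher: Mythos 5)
Your proof is correct and follows essentially the same route as the paper, which omits the proof of Proposition \ref{unite_realta} and refers to the wavefront analogue (Proposition \ref{W_unite_realta}): there the key observation is precisely that $\W(\bar t,\bar x)\subseteq \mathcal I(\bar t,s,s')$, so the real Riemann problem is a sub-problem of the artificial one, and the conclusion follows from the comparison results for convex envelopes under enlargement of the interval (Corollary \ref{stesso_shock}, itself a consequence of Propositions \ref{vel_aumenta}--\ref{differenza_vel}). Your two-sided application of Proposition \ref{differenza_vel}, phrased in terms of equality of derivatives rather than membership in a common shock interval, is an equivalent (and, given that Definition \ref{waves_divided} is stated via speeds, arguably cleaner) way of carrying out the same step.
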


Also Proposition \ref{W_divise_tocca} holds in this framework, but the proof is slightly different and more technical.

\begin{proposition}
\label{divise_tocca}
Let $\bar t = n\e, n \in \N$ be a fixed time. Let $s,s'$ be two waves which have already interacted at time $\bar t$. Assume that $s,s'$ are divided in the real solution, and let $p,p' \in \mathcal{I}(\bar t, s,s')$. If $p,p'$ are divided in the real solution at time $\bar t$, then the Riemann problem $\mathcal{I}(\bar t, s,s')$ divides them.
\end{proposition}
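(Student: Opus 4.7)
The proof proceeds by induction on $n$, adapting the argument of Proposition \ref{W_divise_tocca} to the Glimm setting. The base case $n=0$ is vacuous, since no pair of waves has yet interacted. For the inductive step, assume the statement holds at time $(n-1)\e$, and fix $s<s'$ divided in the real solution at time $n\e$, together with $p<p'$ in $\mathcal{I}(n\e,s,s')$ also divided in the real solution at time $n\e$; the goal is to show that the Riemann problem $\mathcal{I}(n\e,s,s')$ divides $p,p'$.

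The key difference from the wavefront tracking case is that during a single step $(n-1)\e \to n\e$ many interactions and cancellations occur simultaneously at distinct grid points, together with the random sampling. However, these events are spatially independent: each of them affects the interval of waves $\mathcal{I}(n\e,s,s')$ only at a single grid point between $\mathtt x(n\e,s)$ and $\mathtt x(n\e,s')$. We exploit this locality to reduce the proof to essentially the same case analysis as in Proposition \ref{W_divise_tocca}, focusing on one relevant grid point at a time and using Proposition \ref{tocca} to decompose the resulting Riemann problems.

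Concretely, fix a grid point $m\e$ between $\mathtt x(n\e,s)$ and $\mathtt x(n\e,s')$ at which a cancellation occurs during the step, and let $[r_1,r_3]\subseteq \W((n-1)\e)$ denote the waves arriving at $m\e$ from one side that are reduced to $[r_1,r_2]$ after the cancellation, in the notation of the proof of Proposition \ref{W_divise_tocca}. Introduce the auxiliary wave
\[
\tilde s := \begin{cases} s & \text{if } s,s' \text{ are divided in the real solution at time } (n-1)\e, \\ L(n\e,s') & \text{otherwise,} \end{cases}
\]
and establish, in order: (i) $\tilde s, s'$ are divided in the real solution at time $(n-1)\e$; (ii) by the inductive hypothesis applied to $\tilde s,s'$, the Riemann problem $\mathcal{I}((n-1)\e, \tilde s, s')$ divides $r_1-1$ and $r_1$; (iii) combining (ii) with the fact that the surviving Riemann problem $[r_1,r_2]$ at $(n\e,m\e)$ divides $s$ and $s'$, and applying Proposition \ref{incastro}, the Riemann problem $\mathcal{I}(n\e,s,s')$ divides $r_1-1$ and $r_1$. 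One then handles the three sub-cases for the position of $p,p'$ relative to $r_1$ via Proposition \ref{tocca}, Corollary \ref{stesso_shock} and the inductive hypothesis, exactly as in the wavefront proof.

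The main technical obstacle beyond the wavefront case is the random sampling: two waves sharing a grid position at time $(n-1)\e$ with different speeds are divided in the real solution at that time (in the sense of Definition \ref{waves_divided}) and may be sent to different grid points at time $n\e$ whenever $\vartheta_n$ separates their speeds. The crucial point is that within each $\W(n\e,m\e)$ the speed function $\sigma(n\e,\cdot)$ is still monotone (the same argument as in Remark \ref{W_speed_increasing_wrt_waves} applies, via the monotonicity of the derivative of the convex envelope), so the sampling is compatible with the Riemann solver structure and the inductive hypothesis transfers cleanly. With this in place, the cases in which no cancellation at a grid point between $\mathtt x(n\e,s)$ and $\mathtt x(n\e,s')$ shortens $\mathcal{I}$ reduce directly to the inductive hypothesis at time $(n-1)\e$, concluding the proof.
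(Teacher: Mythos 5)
There is a genuine gap. Your plan is to shrink the Glimm proof to the wavefront argument by treating each grid point separately and handling only the cancellation case, asserting that ``the cases in which no cancellation \dots shortens $\mathcal I$ reduce directly to the inductive hypothesis.'' This last reduction fails, and it is precisely where the Glimm proof departs from Proposition \ref{W_divise_tocca}. In the Glimm scheme two waves occupying the same grid point can still be \emph{divided} (same position, different speed, cf.\ Definition \ref{waves_divided} and the remark following it), so the statement is \emph{not} vacuous when $s,s'$ are both involved in an interaction at time $(n+1)\e$; in that case $\mathcal I(\cdot,s,s')$ can strictly \emph{grow} at the time step ($L(n+1)<L(n)$ or $R(n+1)>R(n)$, the situation isolated in Claim \ref{interazione}). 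Enlarging the underlying interval goes the wrong way for Corollary \ref{stesso_shock}: being divided by the smaller Riemann problem $\mathcal I(n\e,s,s')$ does not imply being divided by the larger one $\mathcal I((n+1)\e,s,s')$, so the inductive hypothesis cannot be invoked ``directly.'' The paper's proof exists essentially to handle this: it introduces the extreme grid values $u_{n+1,m_1},u_{n+1,m_2}$, constructs an auxiliary \emph{pair} $\tilde s,\tilde s'$ (by a four-case procedure depending on where $\hat u(s),\hat u(s')$ fall, not the single replacement $\tilde s$ you borrow from the wavefront case), proves via Lemma \ref{ultimo} and Lemma \ref{bdd} that the convex envelope over $[\tilde L(n),\tilde R(n)]$ splits at $u_{n+1,m_1}$ and $u_{n+1,m_2}$, and only then obtains the decomposition \eqref{riemann_dopo_sep} of $\conv_{[L(n+1),R(n+1)]}f$ into the three pieces $I_1,I_2,I_3$ from which the conclusion follows.

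A secondary problem: even in your cancellation case, step (iii) transplants the wavefront argument (Claim \ref{W_riemann_dopo}) which relies on $\min\mathcal I(t_{j-1},\tilde s,s')=L(t_{j-1},s')=L(t_j,s')$ to apply Proposition \ref{incastro}. In the Glimm setting the left endpoint of $\mathcal I(\cdot,s,s')$ can simultaneously move (an interaction or a second cancellation at the other extreme grid point $m_1\e$ during the same step), so this identity is not automatic; the paper's Claim \ref{divise_dopo} and Claim \ref{bounds_on_I} are needed to control what can happen at the two extreme grid points at once. Your locality claim (``each event affects $\mathcal I$ only at a single grid point'') is true per event, but both endpoints of $\mathcal I$ can be affected in the same time step, which is why the paper's case analysis is organized around the pair $(m_1,m_2)$ rather than around a single collision point.
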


\begin{proof}
The proof is by induction on the time step $n$. For $n = 0$, the statement is obvious. Let us assume the proposition is true for $t = n\e$ and let us prove it for $t = (n+1)\e$. Let $s,s' \in \W((n+1)\e)$ be two waves which have already interacted at time $(n+1)\e$ and assume $s,s'$ to be divided in the real solution at time $(n+1)\e$. We can also assume w.l.o.g. that $s,s'$ are both positive.

Let us define
\begin{align}
m_1 \e := &~ \min \Big\{\lim_{t \nearrow (n+1)\e} \mathtt x(t,p) \ \Big| \ p \in \mathcal{I}(n\e,s,s') \Big\}, \label{m_1} \\
m_2 \e := &~ \max \Big\{\lim_{t \nearrow (n+1)\e} \mathtt x(t,p) \ \Big| \ p \in \mathcal{I}(n\e,s,s') \Big\} - \e. \label{m_2} 
\end{align}
It is easy to see that $m_1, m_2$ exist and $m_1 \leq m_2 +1$. Assume first $m_1 = m_2 +1$, which means that the $\min$ and the $\max$ above coincide. See Figure \ref{fig:figura34}.

\begin{figure}
  \begin{center}
    \includegraphics[height=6cm,width=11cm]{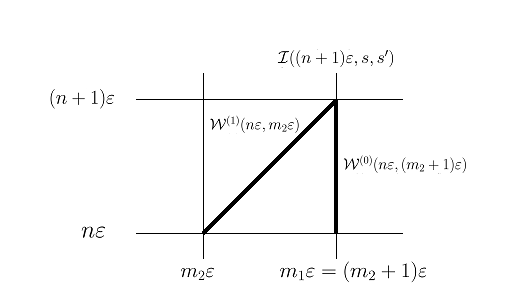}
    \caption{Case $m_1 = m_2 +1$.}
    \label{fig:figura34}
    \end{center}
\end{figure}

\noindent In this case 
\[
\mathcal{I}(n\e,s,s') \subseteq \W^{(1)}(n\e, m_2 \e) \cup \W^{(0)}(n\e, (m_2+1) \e).
\]
This implies $\mathcal{I}((n+1)\e,s,s') = \W(n\e, (m_2+1)\e)$ and so the thesis is easily proved, because the artificial Riemann problem coincides with the real one.

We can thus assume $m_1 < m_2+1$, i.e. $m_1 \leq m_2$. See Figure \ref{fig:figura35}.  Under this assumption, we first write down some useful claims. 

\begin{figure}
  \begin{center}
    \includegraphics[height=7cm,width=13cm]{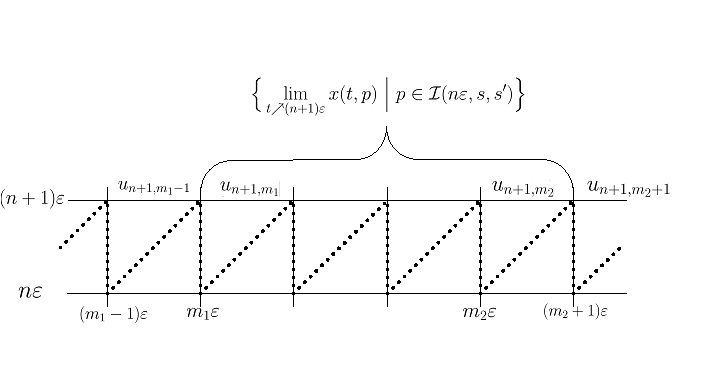}
    \caption{Case $m_1 < m_2 +1$.}
    \label{fig:figura35}
    \end{center}
\end{figure}

\begin{claim}
\label{bounds_on_I}
It holds
\[
\mathcal{I}(n\e,s,s') \subseteq \W^{(1)}(n\e, (m_1-1) \e) \cup \bigg[ \bigcup_{m_1 \leq m \leq m_2}\W(n\e, m \e)  \cup \W^{(0)}(n\e, (m_2+1) \e) \bigg].
\]
\end{claim}

\begin{proof}[Proof of Claim \ref{bounds_on_I}]
Let $p \in \mathcal{I}(n\e,s,s')$. By \eqref{m_1} and \eqref{m_2}, $\lim_{t \nearrow (n+1)\e} \mathtt x(t,p) = m\e$, for some $m_1 \leq m \leq m_2 +1$. Hence, $p \in \W^{(1)}(n\e, (m_1-1) \e) \cup \bigcup_{m_1 \leq m \leq m_2}\W(n\e, m \e)  \cup \W^{(0)}(n\e, (m_2+1) \e)$. 
\end{proof}

\begin{claim}
\label{segno}
Each wave in $\displaystyle{\bigcup_{m_1 \leq m \leq m_2}} \W(n\e, m \e)$ has positive sign.
\end{claim}

\begin{proof}[Proof of Claim \ref{segno}]
Use Definition \ref{eow} of enumeration of waves and Lemma \ref{W_interagite_stesso_segno}.
\end{proof}

\begin{claim}
\label{divise_dopo}
Let $p,p' \in \mathcal{I}((n+1)\e, s, s') \cap \mathcal{I}(n\e,s,s')$. Assume $p,p'$ are divided at time $t = (n+1)\e$, but not divided at time $t = n\e$. Then 
\begin{enumerate}[a)]
 \item either $p,p' \in \W^{(0)}(n\e, m_1\e)$ and waves in $\W^{(1)}(n\e, (m_1-1)\e)$ are negative;
 \item or $p,p' \in \W^{(1)}(n\e, m_2\e)$ and waves in $\W^{(0)}(n\e, (m_2+1)\e)$ are negative. 
\end{enumerate}
\end{claim}

\begin{proof}[Proof of Claim \ref{divise_dopo}]
Since $p,p'$ are not divided at time $t = n\e$, there is $\bar m \in \{m_1-1, \dots, m_2+1\}$ such that $p,p' \in \W^{(\alpha)}(n\e, \bar m \e)$, for some $\alpha \in \{0,1\}$. 
Assume $\alpha = 0$, the other case is completely similar. If $\W^{(1)}(n\e, (\bar m -1)\e) = \emptyset$ or $\mathcal{S} (\W^{(1)}(n\e, (\bar m -1)\e)) = +1$, this means that $((n+1)\e, \bar m \e)$ is either a no-collision point or an interaction point. By Proposition \ref{differenza_vel}, since $p,p'$ are not divided at time $n\e$, they cannot be divided at time $(n+1)\e$. Hence $\W^{(1)}(n\e, (\bar m -1)\e)) \neq \emptyset$ and $\mathcal{S} (\W^{(1)}(n\e, (\bar m -1)\e)) = -1$. Thus by Claim \ref{segno}, $\bar m -1 \notin [m_1, m_2]$, i.e. $\bar m \notin [m_1 +1, m_2 +1]$; on the other hand, by Claim \ref{bounds_on_I}, $\bar m \in [m_1, m_2+1]$. Hence $\bar m = m_1$ and the thesis is proved.
\end{proof}

Now for $j = n,n+1$, set
\begin{align*}
L(j) := & \inf \hat u(\mathcal{I}(j\e,s,s')), \qquad R(j) := \sup \hat u(\mathcal{I}(j\e,s,s')).
\end{align*}

\begin{claim}
\label{interazione}
The following holds:
\begin{enumerate}
 \item If $R(n+1) > R(n)$, then $s,s' \in \W^{(1)}(n\e, m_2 \e) \cup \W^{(0)}(n\e, (m_2+1) \e)$ and $R(n+1) = u_{n+1, m_2+1}$;  
 \item If $L(n+1) < L(n)$, then $s,s' \in \W^{(1)}(n\e, (m_1-1) \e) \cup \W^{(0)}(n\e, m_1 \e)$ and $L(n+1) = u_{n+1, m_1-1}$.
\end{enumerate}
\end{claim}

\begin{proof}[Proof of Claim \ref{interazione}]
Immediate from definition of $\mathcal{I}((n+1)\e,s,s')$.
\end{proof}

The following lemma provides some bounds on $L(j), R(j)$.

\begin{lemma}
\label{bdd}
The following hold:
\begin{enumerate}[a)]
 \item \label{Point_a_bdd} $L(n) \leq u_{n+1,m_1} \leq u_{n+1,m_2} \leq R(n)$;
 \item \label{Point_b_bdd} $\min\{u_{n+1,m_1}, u_{n+1,m_1-1}\}  \leq L(n+1) \leq u_{n+1, m_1}$;
 \item \label{Point_c_bdd} $u_{n+1, m_2} \leq R(n+1) \leq \max\{u_{n+1, m_2}, u_{n+1, m_2+1}\}$.
\end{enumerate}
\end{lemma}

\begin{proof}[Proof of Lemma \ref{bdd}]
We prove each point separately.

\smallskip
{\it Proof of Point \ref{Point_a_bdd}).} Define 
\begin{align*}
A :=&~ \Big\{p \in \mathcal{I}(n\e,s,s') \ \Big| \ \lim_{t \nearrow (n+1)\e}\mathtt x(t,p) = m_1 \e \Big\}, \\
B :=&~ \Big\{p \in \mathcal{I}(n\e,s,s') \ \Big| \ \lim_{t \nearrow (n+1)\e}\mathtt x(t,p) = (m_2+1) \e \Big\}.
\end{align*}
Clearly $A,B \neq \emptyset$ and $A,B \subseteq \mathcal{I}(n\e,s,s')$. Using the definition of $m_1,m_2, A,B$ and the fact that $\mathcal{I}(n\e,s,s')$ is an interval of waves, it is not difficult to see that 
\begin{align*}
\inf \hat u(A) = & \ \inf \hat u (\mathcal{I}(n\e,s,s')), & \inf \hat u(B) = & \ u_{n+1, m_2}, \\
\sup \hat u(A) = & \ u_{n+1, m_1},                        & \sup \hat u(B) = & \ \sup \hat u (\mathcal{I}(n\e,s,s')). 
\end{align*}

Hence, $L(n) = \inf \hat u(A) \leq \sup \hat u(A) = u_{n+1,m_1} \leq u_{n+1,m_2} = \inf \hat u(B) \leq \sup \hat u(B) = R(n)$.

\smallskip
{\it Proof of Point \ref{Point_b_bdd}).} If either $((n+1)\e, m_1 \e)$ is a no-collision point, or it is an interaction point, but $L(n+1) = L(n)$, then clearly $u_{n+1,m_1-1} \leq L(n) = L(n+1) \leq u_{n+1, m_1}$ and then the thesis holds; if $((n+1)\e, m_1 \e)$ is an interaction point and $L(n+1) < L(n)$, then it must hold $L(n+1) = u_{n+1,m_1-1} \leq L(n) < u_{n+1,m_1}$, and hence the thesis holds; if $((n+1)\e, m_1 \e)$ is a cancellation point, then either $u_{n+1,m_1-1} < u_{n+1,m_1}$ and so $u_{n+1,m_1-1} \leq L(n+1) \leq u_{n+1,m_1}$, or $u_{n+1,m_1-1} \geq u_{n+1,m_1}$ and so  $u_{n+1,m_1} = L(n+1)$, concluding the proof.

\smallskip
{\it Proof of Point \ref{Point_c_bdd}).} Same proof as the previous point. 
\end{proof}

\begin{lemma}
\label{ultimo}
Let $\tilde s, \tilde s' \in \mathcal{I}(n\e,s,s')$ two waves which have already interacted at time $t = n\e$, but are divided at time $n\e$. Take any $u_{j,m}$, with $j \in \{n,n+1\}, m \in \Z$, $m_1 \leq m \leq m_2$. If 
\[
\inf \hat u(\mathcal{I}(n\e,\tilde s, \tilde s')) \leq u_{j,m} \leq \sup \hat u(\mathcal{I}(n\e,\tilde s, \tilde s')),
\] 
then 
\begin{equation*}
\conv_{[\inf \hat u(\mathcal{I}(n\e,\tilde s, \tilde s')), \sup \hat u(\mathcal{I}(n\e,\tilde s, \tilde s'))]} f
= \conv_{[\inf \hat u(\mathcal{I}(n\e,\tilde s, \tilde s')), u_{j,m}]} f \cup 
\conv_{[u_{j,m}, \sup \hat u(\mathcal{I}(n\e,\tilde s, \tilde s'))]} f.
\end{equation*}
\end{lemma}

\begin{proof}[Proof of Lemma \ref{ultimo}]
We can assume that both open intervals
\[
\big( \inf \hat u(\mathcal{I}(n\e,\tilde s, \tilde s')), u_{j,m} \big) \quad \text{and} \quad \big( u_{j,m}, \sup \hat u(\mathcal{I}(n\e,\tilde s, \tilde s')) \big)
\]
are non-empty (otherwise the proof is trivial). Take a sequence $\{p_k\}_k \subseteq \mathcal{I}(n\e,\tilde s,\tilde s')$ such that $\hat u(p_k) < u_{j,m}$ and $\hat u(p_k) \nearrow u_{j,m}$ and a sequence $\{p'_k\}_k \subseteq \mathcal{I}(n\e,\tilde s, \tilde s')$ such that $\hat u(p'_k) > u_{j,m}$ and $\hat u(p'_k) \searrow u_{j,m}$. Since $\hat u(p_k) < u_{j,m} < \hat u(p'_k)$, then $p_k, p'_k$ are divided at time $t = n\e$ and then, since by inductive assumption the statement of the proposition holds at time $n\e$, there is $u_k \in (\hat u(p_k), \hat u(p'_k))$ such that 
\[
\conv_{[\inf \hat u(\mathcal{I}(n\e,\tilde s, \tilde s')), \sup \hat u(\mathcal{I}(n\e,\tilde s, \tilde s'))]}f(u_k) = f(u_k).
\]
Passing to the limit as $k \rightarrow +\infty$ and using Proposition \ref{tocca}, we get the thesis. 
\end{proof}

We now construct two waves $\tilde s, \tilde s'$ with the following two properties:
\begin{enumerate}[a)]
 \item \label{a} $\tilde s, \tilde s'$ are divided at time $n\e$,
 \item \label{b} $\tilde L(n) := \inf \hat u(\mathcal{I}(n\e,\tilde s, \tilde s')) \leq u_{n+1,m_1} \leq u_{n+1,m_2} \leq \sup \hat u(\mathcal{I}(n\e,\tilde s, \tilde s')) =: \tilde R(n)$. 
\end{enumerate}
Notice that we do not require $\tilde s, \tilde s'$ to exist at time $(n+1)\e$. The procedure is:
\begin{enumerate}
\item if at least one of the states $\hat u(s), \hat u(s')$ belongs to $(u_{n+1,m_1},u_{n+1,m_2}]$, then set $\tilde s := s, \tilde s':=s'$. By Claim \ref{divise_dopo}, Property \ref{a}) holds. By Lemma \ref{bdd}, also Property \ref{b}) holds;
\item similarly, if $\hat u(s) \in [L(n), u_{n+1,m_1}]$ and $\hat u(s') \in (u_{n+1,m_2}, R(n)]$, set $\tilde s := s, \tilde s':=s'$;
\item if $\hat u(s), \hat u(s') \in (u_{n+1,m_2}, R(n)]$, then set $\tilde s':= s'$, and take as $\tilde s$ any wave such that 
$$
\lim_{t \nearrow (n+1)\e} \mathtt x(t, \tilde s) = m_1\e.
$$ 
By definition of $m_1$, and the assumption that $m_1 \leq m_2$, Property \ref{a}) holds; moreover observe that in this case $\tilde L(n) = L(n)$, while $\hat u(s') \leq \tilde R(n)$ and thus also Property \ref{b}) holds;
\item if $\hat u(s), \hat u(s') \in [L(n), u_{n+1,m_1}]$, then set $\tilde s:= s$, and take as $\tilde s'$ any wave such that 
$$
\lim_{t \nearrow (n+1)\e} \mathtt x(t, \tilde s') = m_2\e.
$$
As in the previous point, one can show that Properties \ref{a}) and \ref{b}) holds.
\end{enumerate}

Now, by \ref{a}), \ref{b}) and Lemma \ref{ultimo} one gets
\begin{equation}
\label{s_tilde_sep}
\conv_{[\tilde L(n), \tilde R(n)]}f = \conv_{[\tilde L(n), u_{n+1,m_1}]} f \cup \conv_{[u_{n+1,m_1},u_{n+1,m_2}]} f \cup \conv_{[u_{n+1,m_2}, \tilde R(n)]} f.
\end{equation}

\begin{lemma}
\label{lemma_riemann_dopo_sep}
It holds 
\begin{equation}
\label{riemann_dopo_sep}
\conv_{[L(n+1), R(n+1)]} f = \conv_{I_1} f \cup \conv_{I_2} f \cup \conv_{I_3} f,
\end{equation}
where 
\begin{align*}
I_1 := [L(n+1), u_{n+1,m_1}], \qquad
I_2 := [u_{n+1,m_1},u_{n+1,m_2}], \qquad
I_3 := [u_{n+1,m_2}, R(n+1)].
\end{align*}
\end{lemma}

\begin{proof}
We consider four cases.

\smallskip
\noindent {\it 1.} If at least one among $\hat u(s), \hat u(s')$ belongs to $(u_{n+1,m_1},u_{n+1,m_2}]$, then by our definition $\tilde s = s, \tilde s'=s'$. Moreover, by Claim \ref{interazione}, $L(n+1) \geq L(n) = \tilde L(n)$, $R(n+1) \leq R(n) = \tilde R(n)$; hence, by \eqref{s_tilde_sep} and Corollary \ref{stesso_shock}, we get the thesis.

\smallskip
\noindent {\it 2.} If $\hat u(s) \in [L(n), u_{n+1,m_1}]$ and $\hat u(s') \in (u_{n+1,m_2}, R(n)]$, argue as in previous case.

\smallskip
\noindent {\it 3.} If $\hat u(s), \hat u(s') \in (u_{n+1,m_2}, R(n)]$, then by our definition $\tilde s':= s'$, and $\tilde s$ is any wave such that $\lim_{t \rightarrow (n+1)\e} \mathtt x(t, \tilde s) = m_1\e$. Observe that in this case the Riemann problem $[u_{n+1,m_2}, u_{n+1, m_2+1}]$ is not solved by a single wavefront (since $s,s'$ at time $(n+1)\e$ are divided). Thus, using this fact, \eqref{s_tilde_sep} and Proposition \ref{incastro}, we get
\[
\conv_{[\tilde L(n), u_{n+1, m_2+1}]} f = \conv_{[\tilde L(n), u_{n+1,m_1}]} f \cup \conv_{I_2} f \cup \conv_{[u_{n+1,m_2}, u_{n+1, m_2+1}]} f.
\]
Since $\tilde L(n) = L(n) \leq L(n+1) \leq u_{n+1, m_1}$ and $u_{n+1, m_2} \leq R(n+1) = u_{n+1, m_2+1}$, using Corollary \ref{stesso_shock}, we get the thesis.

\smallskip
\noindent {\it 4.} The case $\hat u(s), \hat u(s') \in [L(n), u_{n+1,m_1}]$ is similar to previous point. 
\end{proof}

\noindent {\it Conclusion of the proof of Proposition \ref{divise_tocca}.} Take $p,p' \in \mathcal{I}((n+1)\e,s,s')$, divided at time $(n+1)\e$. We have to prove that the Riemann problem $[L(n+1), R(n+1)]$ divides them. We can assume $p,p' \in I_j$ for some $j=1,2,3$, where $I_j$ are the intervals defined in Lemma \ref{lemma_riemann_dopo_sep} (otherwise the proof is trivial). 

If $p,p' \in I_2 = [u_{n+1,m_1},u_{n+1,m_2}]$, then by Claim \ref{divise_dopo} $p,p'$ are divided at time $n\e$ and so by inductive assumption the Riemann problem $[\tilde L(n), \tilde R(n)]$ divides them. By \eqref{s_tilde_sep}, the Riemann problem $I_2 = [u_{n+1,m_1}, u_{n+1,m_2}]$ divides them and so by \eqref{riemann_dopo_sep}, also the Riemann problem $[L(n+1), R(n+1)]$ divides $p,p'$, which is what we wanted to prove.

Assume now $p, p' \in I_3 = [u_{n+1,m_2}, R(n+1)]$, the case $p,p' \in I_1$ being  similar. We know $p,p'$ are divided at time $(n+1)\e$. This means that the Riemann problem $[u_{n+1,m_2}, u_{n+1, m_2+1}]$ divides them. By Lemma \ref{bdd}, $R(n+1) \leq u_{n+1, m_2+1}$ and so also the Riemann problem $I_3 = [u_{n+1,m_2}, R(n+1)]$ divides $p,p'$. Hence, by \eqref{riemann_dopo_sep}, the Riemann problem $[L(n+1), R(n+1)]$ divides $p,p'$, which is what we wanted to prove.
\end{proof}

\subsection{\texorpdfstring{The functional $\fQ$ for the Glimm scheme}{The functional for the Glimm scheme}}
\label{Ss_glimm_funct_Q}

In this last section we consider the same functional $\fQ$ defined in Section \ref{W_functional_Q} for the wavefront tracking algorithm, adapted to the Glimm scheme and we prove inequalities \eqref{decrease} and \eqref{increase}. Differently from the wavefront tracking, where $\fQ$ is defined as a finite sum of weights, here $\fQ$ is defined as an integral and for this reason we have also to prove that it is well defined. 

\subsubsection{Definition of $\fQ$}

Recall from \eqref{domain_of_integration} the definition of the domain of integration:
\[
\mathcal{D}(n\e) := \Big\{(s,s') \in \W(n\e) \times \W(n\e) \ \Big| \ s < s'\Big\}.
\]
Next define the \emph{weight of the pair of waves $(s,s')$ at fixed time $n\e$} as
\[
\mathfrak{q}(n\e, s, s') :=
\begin{cases}
\dfrac{|\sigma(\mathcal{I}(n\e,s,s'), s') - \sigma(\mathcal{I}(n\e,s,s'), s)|}{|\hat u(s') - \hat u(s)|} 
& s, s' \text{ already interacted at time } n\e,
\\
\lVert f'' \lVert_{L^\infty} & \text{otherwise.}
\end{cases}
\]
As an easy consequence of Theorem \ref{convex_fundamental_thm}, Point (\ref{convex_fundamental_thm_3}), we obtain that $\mathfrak{q}$ takes values in $[0, \lVert f'' \lVert_{L^\infty}]$. Finally set
\[
\mathfrak{Q}(n\e) := \iint_{\mathcal{D}(n\e)} \mathfrak{q}(n\e, s, s') \, dsds'.
\]

First of all we have to prove that $\fQ$ is well defined, i.e. $\mathfrak q$ is integrable. 
Actually in the following proposition we prove an additional regularity property on $\W(n\e)$.  

\begin{proposition}
\label{W_partitioned}
It is possible to write $\W(n\e)$ as an (at most) countable union of mutually disjoint interval of waves $E_k$, 
\[
\W(n\e) = \bigcup_{k \in \N} E_k
\]
such that for each $k \in \N$ and for each $s,p \in E_k$, $\mathcal{I}(t,s) = \mathcal{I}(t,p)$.
\end{proposition}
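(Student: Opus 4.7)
The plan is to refine the natural partition $\W(n\e) = \bigcup_m \W(n\e,m\e)$ by the \emph{position history} of each wave. For $s \in \W(n\e)$, define
\[
\pi(s) := \bigl(\mathtt x(0,s), \mathtt x(\e,s), \ldots, \mathtt x(n\e,s)\bigr) \in (\Z\e)^{n+1};
\]
this is well defined because once a wave is removed it stays removed, so $\W(n\e) \subseteq \W(k\e)$ for all $k \le n$. Since the image of $\pi$ is at most countable, I will take the $E_k$'s to be the nonempty fibers $E_h := \pi^{-1}(h)$.

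That each $E_h$ is an interval of waves follows at once from Property (2) of Definition \ref{eow}: if $s_1 < s_2$ both lie in $E_h$ and $p \in \W(n\e)$ satisfies $s_1 \le p \le s_2$, the monotonicity of every $\mathtt x(k\e,\cdot)$ squeezes $h_k = \mathtt x(k\e,s_1) \le \mathtt x(k\e,p) \le \mathtt x(k\e,s_2) = h_k$, so $\pi(p) = h$.

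The heart of the proof is the claim that $\pi(s) = \pi(p)$ implies $\mathcal{I}(n\e,s) = \mathcal{I}(n\e,p)$. The idea is to reduce the definition of $\mathcal{I}$ to grid times. By the construction of $\mathtt x$ in Section \ref{pswaves}, on each strip $[k\e,(k+1)\e)$ every surviving wave is either static at its current grid point (if it belongs to $\W^{(0)}(k\e,\cdot)$) or moves at unit speed starting from its current grid point (if in $\W^{(1)}(k\e,\cdot)$); inspection of the four possible cases shows that two such trajectories cannot meet inside the half-open strip $[k\e,(k+1)\e)$ unless they already coincide at $k\e$. Consequently
\[
\mathcal{I}(n\e,s) \;=\; \bigl\{q \in \W(n\e) \ \big| \ \exists\, k \in \{0,\ldots,n\} \text{ with } \mathtt x(k\e,q) = \mathtt x(k\e,s)\bigr\},
\]
and the right-hand side depends on $s$ only through $\pi(s)$, so $\mathcal{I}(n\e,s) = \mathcal{I}(n\e,p)$. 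The main obstacle I anticipate is exactly this reduction-to-grid-times step; once it is established, everything else is routine bookkeeping with the definitions.
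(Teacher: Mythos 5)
Your proof is correct, but it takes a genuinely different route from the paper's. The paper works with the coarsest admissible partition: it sets $E(n\e,s) := \{p \in \W(n\e) \ | \ \mathcal{I}(n\e,p) = \mathcal{I}(n\e,s)\}$, proves directly that each such class is an interval of waves (Lemma \ref{L_eh_1}), and then --- this is where the real work sits --- shows by induction on the time step that every class contains a half-open interval $(s_1,s_2]$ around each of its points (Lemma \ref{L_open}), so that countability follows from separability of $\R$. You instead take the finest combinatorial partition, by the grid-time position history $\pi(s) \in (\Z\e)^{n+1}$: countability is then free (the index set is countable), the interval property is an immediate squeeze from the monotonicity of each $\mathtt x(k\e,\cdot)$, and all the content is concentrated in the single observation that interactions can only occur at grid times --- which holds because on each strip $[k\e,(k+1)\e)$ a surviving wave's position stays in $[m\e,(m+1)\e)$ with $m\e = \mathtt x(k\e,s)$, and these ranges are disjoint for distinct $m$. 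Your partition is strictly finer than the paper's (it separates waves that have the same interaction set but different histories), but the proposition only asks for \emph{some} countable partition into intervals of waves on which $\mathcal{I}(n\e,\cdot)$ is constant, and the downstream use (Borel measurability of $\mathfrak{q}(n\e,\cdot,\cdot)$) is insensitive to the refinement, so nothing is lost. What the paper's version buys is that its classes are exactly the level sets of $s \mapsto \mathcal{I}(n\e,s)$; what yours buys is the elimination of the inductive Lemma \ref{L_open} and of the separability argument.
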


Since derivation of convex envelopes is a Borel operation, it follows that $\mathfrak{q}(n\e,\cdot, \cdot)$ is Borel.

\begin{proof}
For each $s \in \W(n\e)$, define
\[
E(n\e,s) := \Big\{p \in \W(n\e) \ \Big| \ \mathcal{I}(n\e,s) = \mathcal{I}(n\e,p)\Big\}.
\]
We claim that each $E(n\e,s)$ is an interval of waves and for fixed $n \in \N$, the cardinality of $\{E(n\e,s) \ | \ s \in \W(n\e)\}$ is at most countable. This is proved using the following lemmas.

\begin{lemma}
\label{L_eh_1}
$E(n\e,s)$ is an interval of waves at time $n\e$. 
\end{lemma}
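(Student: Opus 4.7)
The plan is to establish both inclusions $\mathcal{I}(n\e, s) \subseteq \mathcal{I}(n\e, q)$ and $\mathcal{I}(n\e, q) \subseteq \mathcal{I}(n\e, s)$ for an arbitrary $q \in \W(n\e)$ with $p \leq q \leq p'$, where $p, p' \in E(n\e, s)$ are fixed with $p < p'$. This would give $q \in E(n\e, s)$ and prove the interval-of-waves property.

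The key tool is a \emph{sandwich principle} obtained by combining Property (2) of Definition \ref{eow} (monotonicity of $\mathtt x(t, \cdot)$ on $\W(t)$) with the observation that, if three waves $a \leq b \leq c$ all belong to $\W(t)$ and $\mathtt x(t, a) = \mathtt x(t, c)$, then by monotonicity $\mathtt x(t, a) \leq \mathtt x(t, b) \leq \mathtt x(t, c)$ forces $\mathtt x(t, b) = \mathtt x(t, a) = \mathtt x(t, c)$. In particular $b$ has interacted with both $a$ and $c$ at time $t$. All three waves exist at every time $t \leq n\e$, since they are assumed to belong to $\W(n\e)$.

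For the first inclusion, pick $r \in \mathcal{I}(n\e, s) = \mathcal{I}(n\e, p) = \mathcal{I}(n\e, p')$; then there exist $t_1, t_2 \leq n\e$ with $\mathtt x(t_1, r) = \mathtt x(t_1, p)$ and $\mathtt x(t_2, r) = \mathtt x(t_2, p')$. If $r \leq q$, apply the sandwich principle to $r \leq q \leq p'$ at time $t_2$ to conclude $\mathtt x(t_2, r) = \mathtt x(t_2, q)$, so $r \in \mathcal{I}(n\e, q)$; if instead $r \geq q$, apply it to $p \leq q \leq r$ at time $t_1$. This exhausts the cases.

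For the reverse inclusion, observe first that by the inclusion just proved (applied with $r := p$ and $r := p'$, each of which trivially lies in its own interaction set) both $p$ and $p'$ belong to $\mathcal{I}(n\e, q)$; hence there exist times $t_p, t_{p'} \leq n\e$ with $\mathtt x(t_p, p) = \mathtt x(t_p, q)$ and $\mathtt x(t_{p'}, p') = \mathtt x(t_{p'}, q)$. Now fix $r \in \mathcal{I}(n\e, q)$ and pick $t_r \leq n\e$ with $\mathtt x(t_r, r) = \mathtt x(t_r, q)$. Depending on the order of $r$ relative to $p, q, p'$, one of the four triples $(r, p, q)$ at $t_r$, $(p, r, q)$ at $t_p$, $(q, r, p')$ at $t_{p'}$, $(q, p', r)$ at $t_r$ provides a sandwich that forces $r$ to interact at the relevant time with $p$ or with $p'$. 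In either case $r \in \mathcal{I}(n\e, p) = \mathcal{I}(n\e, p') = \mathcal{I}(n\e, s)$, where crucially the hypothesis $p, p' \in E(n\e, s)$ is used to identify these two sets.

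I do not anticipate a genuine obstacle: the argument is essentially a careful bookkeeping of which collision time to invoke in each positional case, driven entirely by monotonicity of the wave labelling and the fact that the equality of two of the three quantities on a monotone chain forces equality throughout. The only point requiring care is the reverse inclusion, where the possibility that $r$ interacts only with one of $p, p'$ must be absorbed by the defining property of $E(n\e, s)$.
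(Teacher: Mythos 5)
Your proof is correct and follows essentially the same route as the paper's: both arguments hinge on the monotonicity of $\mathtt x(t,\cdot)$ on $\W(t)$ (equivalently Lemma \ref{W_quelle_in_mezzo_hanno_int}) to conclude that a wave sandwiched between two waves sharing a position must also share it. The paper phrases this as a contradiction from an element in $\mathcal{I}(n\e,s)\setminus\mathcal{I}(n\e,r)$ or $\mathcal{I}(n\e,r)\setminus\mathcal{I}(n\e,s)$, whereas you prove the two inclusions directly by choosing the appropriate collision time in each positional case; the underlying mechanism is identical.
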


\begin{proof}[Proof of Lemma \ref{L_eh_1}]
Let $p,p' \in E(n\e,s)$ and let $r$ such that $p<r<p'$. We have to prove that $r \in E(n\e,s)$. We have
\[
\mathcal{I}(n\e,p) = \mathcal{I}(n\e,s) = \mathcal{I}(n\e,p').
\]
By contradiction, assume $r \notin E(n\e,s)$, i.e. $\mathcal{I}(n\e,s) \neq \mathcal{I}(n\e,r)$. Hence, there is 
\[
r' \in \Big (\mathcal{I}(n\e, s) \setminus \mathcal{I}(n\e,r) \Big) \cup \Big (\mathcal{I}(n\e, r) \setminus \mathcal{I}(n\e,s) \Big).
\]
Assume $r' \in \mathcal{I}(n\e, s) \setminus \mathcal{I}(n\e,r)$. Hence $r' \in \mathcal{I}(n\e,p) = \mathcal{I}(n\e,p')$. Thus, whatever the position of $r'$ is, since $p < r < p'$, $r'$ must have already interacted with $r$, a contradiction. 

On the other hand, if $r' \in \mathcal{I}(n\e, r) \setminus \mathcal{I}(n\e,s)$, whatever the position of $r'$ is, $r'$ must have already interacted either with $p$ or with $p'$. Thus $r' \in \mathcal{I}(n\e,p) = \mathcal{I}(n\e,p') = \mathcal{I}(n\e,s)$, a contradiction.
\end{proof}

\begin{lemma}
\label{L_open}
Let $s \in \W(n\e)$. There exist $s_1, s_2 \in \R$ such that 
\begin{enumerate}
 \item $s \in (s_1, s_2]$;
 \item for each $p \in (s_1, s_2]$, $\mathcal{I}(n\e,s) = \mathcal{I}(n\e,p)$.
\end{enumerate}
\end{lemma}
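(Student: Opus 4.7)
The plan is to exploit two structural features of the Glimm enumeration of waves. First, by Property (3) of Definition \ref{eow}, for each grid time $k\e$ and each grid node $x_0\in\Z\e$, the fibre $\W(k\e,x_0)=\mathtt x(k\e,\cdot)^{-1}(x_0)$ is a finite union of mutually disjoint half-open intervals of the form $(a,b]$. Second, interactions during a half-open time strip $[k\e,(k+1)\e)$ are determined entirely by whether the two waves share a grid position at the grid time $k\e$. Combining these, the desired neighborhood $(s_1,s_2]$ of $s$ is obtained by intersecting, across $k=0,1,\ldots,n$, the half-open components that contain $s$.

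First I would establish the characterization
\[
\mathcal I(n\e,s)=\Big\{r\in\W(n\e)\ \Big|\ \exists\,k\in\{0,1,\ldots,n\}\ \text{such that}\ \mathtt x(k\e,r)=\mathtt x(k\e,s)\Big\}.
\]
The nontrivial inclusion reduces to showing that if two waves coincide at some $t\in[k\e,(k+1)\e)$, then they already coincided at $k\e$. From the explicit prescription of $\mathtt x(t,\cdot)$ on a half-step given in Section \ref{pswaves}, a wave sitting at the grid node $m\e$ at time $k\e$ occupies a position in $[m\e,(m+1)\e)$ throughout $[k\e,(k+1)\e)$ (since its speed lies in $[0,1]$); hence two waves that start from distinct grid nodes at time $k\e$ cannot meet in this strip.

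Next, for each $k\in\{0,1,\ldots,n\}$ I would set $x_k:=\mathtt x(k\e,s)\in\Z\e$, which is finite because $s\in\W(n\e)$ and a wave, once sent to $+\infty$, stays there. Let $(a_k,b_k]$ denote the unique connected component of $\W(k\e,x_k)$ containing $s$, which exists by Property (3) of Definition \ref{eow}. Define
\[
s_1:=\max_{0\le k\le n}a_k,\qquad s_2:=\min_{0\le k\le n}b_k.
\]
Then $s\in(s_1,s_2]\subseteq\bigcap_{k=0}^{n}(a_k,b_k]\subseteq\bigcap_{k=0}^{n}\W(k\e,x_k)$, so every $p\in(s_1,s_2]$ satisfies $\mathtt x(k\e,p)=x_k=\mathtt x(k\e,s)$ for all $k\le n$; in particular $p\in\W(n\e)$. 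Applying the characterization of $\mathcal I(n\e,\cdot)$ derived above to both $s$ and $p$, we see that a wave $r$ lies in $\mathcal I(n\e,s)$ iff it lies in $\mathcal I(n\e,p)$, whence $\mathcal I(n\e,s)=\mathcal I(n\e,p)$.

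No step presents a real obstacle: the only point that requires a short separate justification is the preliminary equivalence between ``interaction somewhere in $[k\e,(k+1)\e)$'' and ``coincidence at the grid time $k\e$'', which follows from the speed constraint $0\le\sigma\le 1$ built into the Glimm advection. Everything else is a finite intersection of structures already available in the enumeration of waves.
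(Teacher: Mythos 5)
Your proof is correct. The paper establishes the lemma by induction on the time step: the interval valid at time $n\e$ is intersected with the piece of the fibre at time $(n+1)\e$ containing $s$, and the contradiction argument in the inductive step rests on the fact that a new interaction at step $n+1$ is decided by the grid positions at time $(n+1)\e$ alone. Unrolling that induction yields essentially your $\bigcap_{k=0}^{n}(a_k,b_k]$, so the two constructions are parallel; what you genuinely add is to isolate and prove the characterization $\mathcal I(n\e,s)=\{r\in\W(n\e)\,:\,\mathtt x(k\e,r)=\mathtt x(k\e,s)\ \text{for some}\ k\le n\}$, which the paper uses only implicitly. Making that explicit is a clarification worth having, and it lets you conclude in one shot rather than by recursion. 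Two cosmetic remarks: inside a strip $[k\e,(k+1)\e)$ the trajectory $t\mapsto \mathtt x(t,s)$ has slope $0$ or $1$ by construction (the $\W^{(0)}/\W^{(1)}$ dichotomy), not slope $\sigma\in[0,1]$, though your conclusion that the position stays in $[m\e,(m+1)\e)$ is unaffected; and when you speak of the ``unique connected component $(a_k,b_k]$'' you are tacitly using that a finite union of half-open intervals of the form $(a,b]$ has maximal connected pieces again of that form, which is true but deserves a word.
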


\begin{proof}[Proof of Lemma \ref{L_open}]
The proof is by induction on the time step $n \in \N$. For $n=0$, assume $s \in \W(0,m\e)$; it is sufficient to choose $(s_1, s_2] := \W(0,m\e)$. 

Now assume the lemma holds for $n$ and prove it for $n+1$. Assume 
$s \in \W^{(0)}((n+1)\e, m\e)$ at time $(n+1)\e$ (the case $s \in \W^{(1)}((n+1)\e, m\e)$ is completely similar). Moreover assume $(s_1,s_2]$ to be the interval at time $n\e$ with the Properties 1, 2 of the statement of the Lemma. For time $(n+1)\e$ define 
\[
(\tilde s_1, \tilde s_2] := (s_1, s_2] \cap 
\W^{(0)}((n+1)\e, m\e).
\] 
Clearly Property (1) holds. Moreover, if Property (2) does not hold, there must be a wave $p \in (\tilde s_1, \tilde s_2]$ and another wave $r$ which has interacted neither with $s$ nor with $p$ at time $n\e$, but interacts with only one among $p,s$ at time $(n+1)\e$. This is impossible, since at time $(n+1)\e$, $s,p$ have the same position.
\end{proof}

\begin{lemma}
\label{L_eh_2}
For fixed $n \in \N$, for each $s,p \in \W(n\e)$, either $E(n\e,s) = E(n\e,p)$ or $E(n\e,s) \cap E(n\e,p) = \emptyset$. 
\end{lemma}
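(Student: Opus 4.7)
The plan is to observe that the statement is simply the standard fact that an equivalence relation partitions the underlying set into disjoint classes, so no new geometric content is needed beyond recognizing the equivalence structure. Specifically, I would define the binary relation $\sim$ on $\W(n\e)$ by
\[
s \sim p \quad \iff \quad \mathcal{I}(n\e, s) = \mathcal{I}(n\e, p),
\]
and observe that $\sim$ is reflexive, symmetric, and transitive, since set-equality is. By construction $E(n\e, s)$ is exactly the $\sim$-equivalence class of $s$, and the claim is the standard fact that two equivalence classes either coincide or are disjoint.

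More concretely, I would argue as follows. Suppose $E(n\e, s) \cap E(n\e, p) \neq \emptyset$ and pick some $r$ in the intersection. Then by the very definition of $E$, we have $\mathcal{I}(n\e, r) = \mathcal{I}(n\e, s)$ and $\mathcal{I}(n\e, r) = \mathcal{I}(n\e, p)$, hence $\mathcal{I}(n\e, s) = \mathcal{I}(n\e, p)$. Consequently, for any $q \in E(n\e, s)$ we have $\mathcal{I}(n\e, q) = \mathcal{I}(n\e, s) = \mathcal{I}(n\e, p)$, so $q \in E(n\e, p)$; the symmetric inclusion is identical, giving $E(n\e, s) = E(n\e, p)$.

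With Lemma \ref{L_eh_2} in hand, the proof of Proposition \ref{W_partitioned} is then complete: Lemma \ref{L_eh_1} shows that each class $E(n\e, s)$ is an interval of waves, Lemma \ref{L_open} shows that each such class contains a half-open interval $(s_1, s_2]$ around each of its points and is therefore of positive Lebesgue measure, and Lemma \ref{L_eh_2} shows that distinct classes are disjoint; together with the obvious fact $\W(n\e) = \bigcup_{s \in \W(n\e)} E(n\e, s)$, the at-most-countability follows because a family of pairwise disjoint subsets of $\R$ each containing a nonempty open interval must be countable. I do not anticipate any obstacle here; Lemma \ref{L_eh_2} is purely formal, and the work has already been done in the previous two lemmas.
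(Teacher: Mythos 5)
Your proof is correct and is essentially identical to the paper's: both recognize that $E(n\e,s)$ is the equivalence class of $s$ under the relation $\mathcal{I}(n\e,s)=\mathcal{I}(n\e,p)$ and apply the standard argument that two equivalence classes with a common element must coincide. No differences worth noting.
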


\begin{proof}[Proof of Lemma \ref{L_eh_2}]
Assume there is $p' \in E(n\e,s) \cap E(n\e,p)$. By simmetry, it is sufficient to prove that $E(n\e,s) \subseteq E(n\e,p)$. Take $p'' \in E(n\e,s)$. Then $\mathcal{I}(n\e,p'') = \mathcal{I}(n\e, s) = \mathcal{I}(n\e,p') = \mathcal{I}(n\e,p)$. Hence $p'' \in E(n\e,p)$.
\end{proof}

\begin{lemma}
For fixed $n \in \N$, $\{E(s) \ | \ s \in \W(n\e)\}$ is at most countable.
\end{lemma}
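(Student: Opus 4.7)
The plan is to exploit the previous two lemmas (Lemma \ref{L_open} and Lemma \ref{L_eh_2}) to show that each equivalence class $E(n\e,s)$ contains a nonempty relatively open piece of $\W(n\e)$, and then use the standard separability argument for pairwise disjoint open sets.

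More precisely, fix $n \in \N$. By Lemma \ref{L_open}, for every $s \in \W(n\e)$ there exist real numbers $s_1, s_2$ such that $s \in (s_1, s_2]$ and every wave $p \in (s_1, s_2]$ satisfies $\mathcal{I}(n\e,p) = \mathcal{I}(n\e,s)$; in particular $(s_1, s_2] \subseteq E(n\e,s)$. Since the empty set does not contain $s$, we must have $s_1 < s_2$, so $(s_1, s_2]$ is a genuine half-open interval of positive length. Hence each equivalence class $E(n\e,s)$ contains a nondegenerate interval, and in particular a rational number.

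By Lemma \ref{L_eh_2}, any two distinct classes $E(n\e,s)$, $E(n\e,p)$ are disjoint. Using the axiom of choice, pick for each class $E$ in $\{E(n\e,s) : s \in \W(n\e)\}$ a rational number $q_E \in E$ (such a rational exists inside the interval $(s_1,s_2]$ provided by Lemma \ref{L_open}). The map $E \mapsto q_E$ is then an injection from the family of equivalence classes into $\Q$, which yields that the family is at most countable.

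There is no real obstacle: once Lemma \ref{L_open} is established (providing an interval of positive length inside each class) and Lemma \ref{L_eh_2} (giving pairwise disjointness), the countability follows at once by the classical argument that a family of pairwise disjoint subsets of $\R$, each containing an interval of positive length, is at most countable.
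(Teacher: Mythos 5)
Your argument is correct and is essentially the paper's own proof: Lemma \ref{L_open} gives a nondegenerate interval inside each class, Lemma \ref{L_eh_2} gives pairwise disjointness, and separability of $\R$ (a rational in each interval) yields countability. The only cosmetic difference is that you invoke the axiom of choice to select the rationals, which is avoidable by taking the first rational in a fixed enumeration of $\Q$, but this does not affect correctness.
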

\begin{proof}

By Lemma \ref{L_open}, the interior of $E(n\e,s)$ is not empty. The conclusion follows from separability of $\R$. 
\end{proof}

\noindent {\it Conclusion of the proof of Proposition \ref{W_partitioned}.} As an immediate consequence, we have that $\Big\{E(n\e,s) \ \Big| \ s \in \W(n\e)\Big\}$ is a countable family of pairwise disjoint sets. 
\end{proof}

\subsubsection{Proof of \eqref{decrease}}

Let us fix $n \geq 0$ and let us define $\mathcal{D}^I((n+1)\e)$. For each $m \in \Z$ such that $((n+1)\e,m\e)$ is an interaction point, let us define
\[
\mathcal{L}((n+1)\e,m\e) :=
\Big \{s \in \W^{(1)}(n\e,(m-1)\e) \ \Big| \ \sigma(n\e,s) \neq \sigma((n+1)\e,s) \Big\},
\]
and
\[
\mathcal{R}((n+1)\e,m\e) :=
\Big\{s \in \W^{(0)}(n\e,m\e) \ \Big| \ \sigma(n\e,s) \neq \sigma((n+1)\e,s) \Big\}.
\]

Then define
\begin{equation}
\label{dominio_interazione}
\mathcal{D}^I((n+1)\e) := \bigcup_{\substack{m \ \text{s.t.} \\ ((n+1)\e,m\e) \ \text{is} \\ \text{interaction point}}}\mathcal{L}((n+1)\e,m\e) \times \mathcal{R}((n+1)\e,m\e).
\end{equation}

\begin{theorem}
\label{decrease_thm_enunciato}
For any interaction point $((n+1)\e,m\e)$ it holds
\begin{equation*}
\begin{split}
\int_{\W((n+1)\e, m\e)} \big| \sigma((n+1)\e,s&) - \sigma(n\e,s) \big|\, ds \\
\leq&~ 2 \bigg[ \iint_{\mathcal{L}((n+1)\e,m\e) \times \mathcal{R}((n+1)\e,m\e)} \mathfrak{q}(n\e,s,s')\, dsds' \\ 
&~ \quad - \iint_{\mathcal{L}((n+1)\e,m\e) \times \mathcal{R}((n+1)\e,m\e)} \mathfrak{q}((n+1)\e,s,s') \, dsds' \bigg]. 
\end{split}
\end{equation*}
\end{theorem}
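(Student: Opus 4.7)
The plan is to mirror the proof of Theorem \ref{W_decreasing}, adapting the discrete sums of the wavefront case to the continuous integrals that govern the Glimm setting. The starting observation is that both LHS and RHS ultimately reduce to a geometric quantity attached to the three convex envelopes $\conv_{[u_{n+1,m-1},u_{n,m-1}]}f$, $\conv_{[u_{n,m-1},u_{n+1,m}]}f$, and $\conv_{[u_{n+1,m-1},u_{n+1,m}]}f$. First I would assume the interaction is positive, identify $u_{n+1,m-1}$ as $\inf\{\hat u(s):s\in\W^{(1)}(n\e,(m-1)\e)\}$ (the value of the self-similar left Riemann solution at similarity parameter $\vartheta_{n+1}$), and observe via Proposition \ref{tocca} that $u_{n+1,m-1}$ is a contact point of $\conv_{[u_{n,m-2},u_{n,m-1}]}f$, so that $\sigma(n\e,s)=\frac{d}{du}\conv_{[u_{n+1,m-1},u_{n,m-1}]}f(\hat u(s))$ for $s\in\W^{(1)}(n\e,(m-1)\e)$; a symmetric identity applies on the right. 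Using Proposition \ref{vel_aumenta} to discard absolute values and the change of variables of Proposition \ref{cambio_di_variabile}, the LHS collapses by direct integration to
\[
\text{LHS}=2\bigl[f(u_{n,m-1})-\conv_{[u_{n+1,m-1},u_{n+1,m}]}f(u_{n,m-1})\bigr]=:2h,
\]
which is exactly the height of the triangle in Figure \ref{fig:figura19}.

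Next I would split the waves as in Step 3 of the proof of Theorem \ref{W_decreasing}: setting $\ell:=\sup\mathcal L((n+1)\e,m\e)$, $r:=\inf\mathcal R((n+1)\e,m\e)$, define
\[
\mathcal L_1:=\{s\in\mathcal L:s<L(n\e,r)\},\quad \mathcal L_2:=\mathcal L\setminus\mathcal L_1,\quad \mathcal R_1:=\{s'\in\mathcal R:s'\leq R(n\e,\ell)\},\quad \mathcal R_2:=\mathcal R\setminus\mathcal R_1,
\]
so that pairs in $\mathcal L_1\times\mathcal R$ and $\mathcal L_2\times\mathcal R_2$ have not yet interacted at time $n\e$ (hence $\mathfrak q(n\e,s,s')=\|f''\|_{L^\infty}$), while $\mathcal L_2\cup\mathcal R_1=\mathcal I(n\e,\ell,r)\cap(\mathcal L\cup\mathcal R)$. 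Decomposing $h$ across the point $u_{n,m-1}$ and inserting the chord slope of the artificial Riemann problem $\mathcal I(n\e,\ell,r)$, one obtains the Glimm analog of \eqref{W_zero}:
\[
h\leq \|f''\|_{L^\infty}\,|\hat u(\mathcal L_1)||\hat u(\mathcal R)|+\frac{|\sigma_m(\mathcal L_2)-\sigma_m(\mathcal R_1)|}{|\hat u(\mathcal L)|+|\hat u(\mathcal R)|}\,|\hat u(\mathcal L_2)||\hat u(\mathcal R_1)|+\|f''\|_{L^\infty}\,|\hat u(\mathcal L_2)||\hat u(\mathcal R_2)|,
\]
where the outer two estimates are Lagrange's theorem applied to $\frac{d}{du}\conv f$.

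For the central cross term I would invoke Proposition \ref{divise_tocca}: since $\ell,r$ are divided in the real solution at time $n\e$, the artificial Riemann problem $\mathcal I(n\e,\ell,r)$ divides them, and Corollary \ref{stesso_shock} propagates this to every pair $(s,s')\in\mathcal L_2\times\mathcal R_1$. One may then write
\[
|\sigma_m(\mathcal L_2)-\sigma_m(\mathcal R_1)|\,|\hat u(\mathcal L_2)||\hat u(\mathcal R_1)|=\iint_{\mathcal L_2\times\mathcal R_1}\bigl[\sigma(\mathcal I(n\e,\ell,r),s')-\sigma(\mathcal I(n\e,\ell,r),s)\bigr]\,ds\,ds',
\]
and apply Proposition \ref{vel_aumenta} together with the inclusion $\mathcal I(n\e,s,s')\subseteq\mathcal I(n\e,\ell,r)$ for $(s,s')\in\mathcal L_2\times\mathcal R_1$ to conclude
\[
\sigma(\mathcal I(n\e,\ell,r),s')-\sigma(\mathcal I(n\e,\ell,r),s)\leq \mathfrak q(n\e,s,s')\,|\hat u(s')-\hat u(s)|.
\]
Dividing by $|\hat u(\mathcal L)|+|\hat u(\mathcal R)|$ and bounding $|\hat u(s')-\hat u(s)|$ by this same quantity closes the cross-term into $\iint_{\mathcal L_2\times\mathcal R_1}\mathfrak q(n\e,s,s')\,ds\,ds'$. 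Assembling the three estimates and recognizing that $\|f''\|_{L^\infty}=\mathfrak q(n\e,s,s')$ on the two outer regions gives $h\leq\iint_{\mathcal L\times\mathcal R}\mathfrak q(n\e,s,s')\,ds\,ds'$.

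The last and, I expect, hardest step is to handle the subtracted integral $\iint_{\mathcal L\times\mathcal R}\mathfrak q((n+1)\e,s,s')\,ds\,ds'$. For pairs that end up joined in the real post-interaction solution (same shock component of $\conv_{[u_{n+1,m-1},u_{n+1,m}]}f$), Proposition \ref{unite_realta} makes this weight vanish. For pairs that remain divided after the interaction, one must exploit the fact that $\mathcal I((n+1)\e,s,s')\supseteq\W((n+1)\e,m\e)$, combine this with Proposition \ref{incastro} to compare the artificial post-interaction problem with the restriction of the pre-interaction problem $\mathcal I(n\e,s,s')$, and show via Proposition \ref{differenza_vel} that the residual weight is already subsumed by the slack in the previous inequalities. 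This is the analog of the final paragraph of Step 5 in the wavefront case, but technically more delicate because in the Glimm scheme the merged Riemann problem can genuinely split into several new wavefronts, so the vanishing of $\mathfrak q$ on $\mathcal L\times\mathcal R$ that came for free in the wavefront argument must here be traded for a quantitative comparison between the old and the new artificial speeds.
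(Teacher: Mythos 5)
Your proposal tracks the paper's argument for most of its length: the explicit computation of the left-hand side as twice the height $f(u_{n,m-1})-\conv_{[u_{n+1,m-1},u_{n+1,m}]}f(u_{n,m-1})$, the four-way splitting into $\mathcal L_1,\mathcal L_2,\mathcal R_1,\mathcal R_2$, the Lagrange bound on the outer blocks, and the treatment of the cross term via Proposition \ref{divise_tocca} and Proposition \ref{vel_aumenta} are exactly Steps 2--5 of the paper's proof. One technical caveat: you anchor the cross-term estimate on the extremal waves $\ell=\sup\mathcal L$, $r=\inf\mathcal R$ and the Riemann problem $\mathcal I(n\e,\ell,r)$, as in the wavefront case; in the continuum setting these extremal waves need not realize the required separation property, and the paper instead proves the key fact $\conv_{[u_1,u_2]}f(u_M)=f(u_M)$ (Lemma \ref{um_tocca}) by an approximation argument with interior sequences and Proposition \ref{convex_approximation}. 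This is fixable but not free.

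The genuine gap is your last step. You treat the subtracted integral $\iint_{\mathcal L\times\mathcal R}\mathfrak q((n+1)\e,s,s')\,ds\,ds'$ as the hardest part, envisaging pairs that "remain divided after the interaction" and proposing an unexecuted quantitative comparison via Propositions \ref{incastro} and \ref{differenza_vel}. This rests on a misreading of the definitions of $\mathcal L((n+1)\e,m\e)$ and $\mathcal R((n+1)\e,m\e)$: they contain only the waves whose speed actually changes at the interaction, i.e.\ precisely the waves whose states lie in the maximal shock interval of $\conv_{[u_{n+1,m-1},u_{n+1,m}]}f$ containing $u_{n,m-1}$ (waves outside that interval keep their speed by Proposition \ref{tocca} and Corollary \ref{RP_ridotto}). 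Hence every wave in $\mathcal L\cup\mathcal R$ acquires the \emph{same} speed $\lambda$ at time $(n+1)\e$, every pair in $\mathcal L\times\mathcal R$ is joined in the real solution, and Proposition \ref{unite_realta} gives $\mathfrak q((n+1)\e,s,s')=0$ identically on $\mathcal L\times\mathcal R$. The merged Riemann problem can indeed split into several wavefronts, but the waves belonging to the other wavefronts are excluded from $\mathcal L\cup\mathcal R$ by construction. So the step you flag as delicate is in fact immediate (it is Step 1 of the paper's proof), while the argument you sketch in its place is both unnecessary and, as written, not a proof.
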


\begin{proof}
For simplicity, let us set $\mathcal{L} := \mathcal{L}((n+1)\e,m\e)$, $\mathcal{R} := \mathcal{R}((n+1)\e,m\e)$. We can assume $\mathcal{L}, \mathcal{R} \neq \emptyset$ (otherwise the statement is trivial) and all the waves in $\mathcal{L}, \mathcal{R}$ to be positive. Moreover define, as in proof of Theorem \ref{W_decreasing},
\begin{align*}
\mathcal{L}_2 :=&~ \Big\{s \in \mathcal{L} \ \Big| \ \text{there is } s' \in \mathcal{R} \text{ such that } s' \in \mathcal{I}(n\e, s)\Big\}, \qquad \mathcal{L}_1 := \mathcal{L} \setminus \mathcal{L}_2, \\
\mathcal{R}_1 :=&~ \Big\{s' \in \mathcal{R} \ \Big| \ \text{there is } s \in \mathcal{L} \text{ such that } s \in \mathcal{I}(n\e, s')\Big\}, \qquad \mathcal{R}_2 := \mathcal{R} \setminus \mathcal{R}_1.
\end{align*}
Then set
\begin{align*}
u_L := \inf \hat u(\mathcal{L}), \qquad
u_M := \sup \hat u(\mathcal{L}) = \inf \hat u(\mathcal{R}), \qquad
u_R := \sup \hat u(\mathcal{R}),
\end{align*}
and
\begin{align*}
u_1 := 
\begin{cases}
\inf \hat u(\mathcal{L}_2) & \text{ if } \mathcal{L}_2 \neq \emptyset, \\
u_M                        & \text{ if } \mathcal{L}_2 =    \emptyset, \\
\end{cases} \qquad
u_2 := 
\begin{cases}
\sup \hat u(\mathcal{R}_1) & \text{ if } \mathcal{R}_1 \neq \emptyset, \\
u_M                        & \text{ if } \mathcal{R}_1 =    \emptyset. \\
\end{cases}
\end{align*}

\begin{figure}
  \begin{center}
    \includegraphics[height=7.5cm,width=12cm]{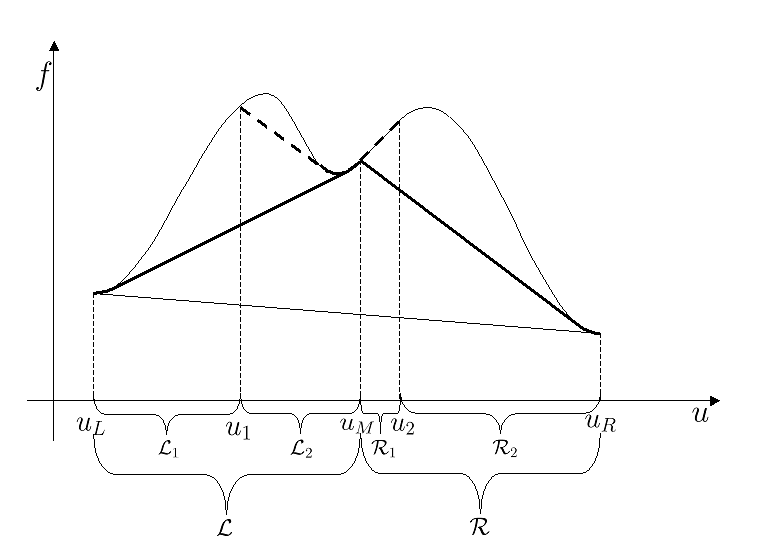}
    \caption{Families of waves which interact at point $((n+1)\e, m\e)$.}
    \label{fig:figura30}
    \end{center}
\end{figure}

\noindent See Figure \ref{fig:figura30}.

Moreover, given any positive interval of waves with nonempty interior, define \emph{the mean speed of waves in $\mathcal{I}$ at time $n\e$} as 
\begin{equation*}
\sigma_m(\mathcal{I}) := 
\begin{cases}
\displaystyle{\frac{f(\sup \hat u(\mathcal{I})) - f(\inf \hat u(\mathcal{I}))}{\sup \hat u(\mathcal{I}) - \inf \hat u(\mathcal{I})}} & \text{if } \mathcal{I} \neq \emptyset, \\
2 \lVert f'' \lVert_{L^\infty} & \text{if } \mathcal{I} = \emptyset.
\end{cases}
\end{equation*}

Observe that for each $s \in \mathcal{L} \cup \mathcal{R}$, $\sigma((n+1)\e,s)$ is equal to some constant $\lambda$, while 
\[
\sigma(n\e,s) = 
\begin{cases}
\frac{d}{du}\conv_{[u_L, u_M]} f(\hat u(s)) & \text{if } s \in \mathcal{L}, \\
\frac{d}{du}\conv_{[u_M, u_R]} f(\hat u(s)) & \text{if } s \in \mathcal{R}. 
\end{cases}
\]

Let us prove now the following 
\begin{lemma}
\label{um_tocca}
It holds $\conv_{[u_1,u_2]} f(u_M) = f(u_M)$.
\end{lemma}

\begin{proof}[Proof of Lemma \ref{um_tocca}]
Assume $u_1 < u_M < u_2$ (otherwise trivial). Take any sequence $(s_k)_{k \in \N}$ in $\mathcal{L}_2$ such that $\hat u(s_k) < u_M$, $\hat u(s_k) \nearrow u_M$ and $\sup \hat u(\mathcal{I}(n\e,s_k)) \nearrow u_2$. In a similar way, take another sequence $(s'_k)_{k \in \N}$ in $\mathcal{R}_1$ such that $\hat u(s'_k) > u_M$, $\hat u(s'_k) \searrow u_M$ and $\inf \hat u(\mathcal{I}(n\e,s'_k)) \searrow u_1$. For $k$ sufficiently large, $s'_k \in \mathcal{I}(n\e,s_k)$ and so, by Proposition \ref{divise_tocca}, $s_k, s'_k$ are divided by the Riemann problem $\mathcal{I}(n\e,s_k,s'_k)$; this means that there is some point $u_k$ such that 
\[
\conv_{\hat u (\mathcal{I}(n\e,s_k, s'_k))} f(u_k) = f(u_k).
\]
Finally, using Proposition \ref{convex_approximation}, one completes the proof.
\end{proof}

We  now complete the proof of the theorem. The steps are the same as in proof of Theorem \ref{W_decreasing}.

\textit{Step 1.} By definition of $\mathcal{L}, \mathcal{R}$, for any $(s,s') \in \mathcal{L} \times \mathcal{R}$, $s,s'$ are not divided in the real solution at time $(n+1)\e$; then, by Proposition \ref{unite_realta}, they are not divided by Riemann problem $\mathcal{I}((n+1)\e,s,s')$; hence, $\mathfrak{q}((n+1)\e,s,s') = 0$.

\textit{Step 2 and 3.} As in Theorem \ref{W_decreasing}, 
\begin{equation}
\label{zero}
\begin{split}
\int_{\W((n+1)\e, m\e)} &\big| \sigma((n+1)\e,s) - \sigma(n\e,s) \big|\, ds \\
 = &~ \int_{\mathcal{L}} \Big|\sigma((n+1)\e,s) - \sigma(n\e,s)\Big|\, ds + \int_\mathcal{R} \Big|\sigma((n+1)\e,s) - \sigma(n\e,s)\Big|\, ds \\
 = &~ \int_\mathcal{L} \Big(\sigma(n\e,s) - \sigma((n+1)\e,s)\Big)\, ds + \int_\mathcal{R} \Big(\sigma((n+1)\e,s) - \sigma(n\e,s)\Big)\, ds\\
 = &~  2 \cdot \Bigg [ \frac{f(u_M) - f(u_L)}{u_M - u_L} - \frac{f(u_R) - f(u_M)}{u_R - u_M}\Bigg ] \cdot \frac{(u_M - u_L)(u_R - u_M)}{u_R - u_L} \\
 = &~  2 \cdot \frac{|\sigma_m(\mathcal{L}) - \sigma_m(\mathcal{R})|}{|\mathcal{L}| + |\mathcal{R}|} \  |\mathcal{L}|  |\mathcal{R}| \\
 \leq &~  2 \Bigg [\frac{|\sigma_m(\mathcal{L}_1) - \sigma_m(\mathcal{R})|}{|\mathcal{L}| + |\mathcal{R}|} \  |\mathcal{L}_1|  |\mathcal{R}| 
+ \frac{|\sigma_m(\mathcal{L}_2) - \sigma_m(\mathcal{R}_1)|}{|\mathcal{L}| + |\mathcal{R}|} \  |\mathcal{L}_2|  |\mathcal{R}_1| \\ 
&~ + \frac{|\sigma_m(\mathcal{L}_2) - \sigma_m(\mathcal{R}_2)|}{|\mathcal{L}| + |\mathcal{R}|} \  |\mathcal{L}_2|  |\mathcal{R}_2| \Bigg]\\
\leq &~  2\Bigg [\lVert f'' \lVert_{L^\infty} |\mathcal{L}_1| |\mathcal{R}| + \frac{|\sigma_m(\mathcal{L}_2) - \sigma_m(\mathcal{R}_1)|}{|\mathcal{L}| + |\mathcal{R}|} \  |\mathcal{L}_2| |\mathcal{R}_1| + \lVert f'' \lVert_{L^\infty} |\mathcal{L}_2| |\mathcal{R}_2| \Bigg].
\end{split}
\end{equation}

\textit{Step 4.} Let us now concentrate our attention on the second term of the last summation. As a consequence of Lemma \ref{um_tocca}, we get
\begin{equation}
\label{uno}
\begin{split}
|\sigma(\mathcal{L}_2) - \sigma(\mathcal{R}_1)||\mathcal{L}_2| |\mathcal{R}_1|  
=&~ \Big|(f(u_2)-f(u_M))(u_M-u_1) - (f(u_M)-f(u_1))(u_2-u_M)\Big| \\
=&~ \Bigg | \int_{u_1}^{u_M} \int_{u_M}^{u_2} 
\bigg[\frac{d}{du}\conv_{[u_1,u_2]}f(u') - \frac{d}{du}\conv_{[u_1,u_2]}f(u) \bigg]dudu'
\Bigg | \\
=&~ \Bigg | \int_{\mathcal{L}_2} \int_{\mathcal{R}_1} 
\bigg[\frac{d}{du}\conv_{[u_1,u_2]}f(\hat u(s')) - \frac{d}{du}\conv_{[u_1,u_2]}f(\hat u(s)) \bigg]\, dsds' \Bigg |.
\end{split}
\end{equation}

We observe that, by definition of $u_1, u_2$, for any $s \in \mathcal{L}_2$ and $s' \in \mathcal{R}_1$, if $s,s'$ have already interacted at time $n\e$, then $\hat u(\mathcal{I}(n\e,s,s')) \subseteq [u_1,u_2]$. Together with Lemma \ref{um_tocca} and Proposition \ref{vel_aumenta}, this yields
\begin{equation*}
\frac{d}{du}\conv_{[u_1,u_2]}f(\hat u(s')) - \frac{d}{du}\conv_{[u_1,u_2]}f(\hat u(s)) 
 \leq \sigma(\mathcal{I}(n\e,s,s'),s') - \sigma(\mathcal{I}(n\e,s,s'),s),
\end{equation*}
and hence
\begin{equation}
\label{due}
\begin{split}
\frac{\frac{d}{du}\conv_{[u_1,u_2]}f(\hat u(s')) - \frac{d}{du}\conv_{[u_1,u_2]}f(\hat u(s))}{|\mathcal{L}|+|\mathcal{R}|}
\leq&~ \frac{\sigma(\mathcal{I}(n\e,s,s'),s') - \sigma(\mathcal{I}(n\e,s,s'),s)}{|\mathcal{L}|+|\mathcal{R}|} \\
\leq&~ \frac{\sigma(\mathcal{I}(n\e,s,s'),s') - \sigma(\mathcal{I}(n\e,s,s'),s)}{\hat u(s') - \hat u(s)} \\
=&~ \mathfrak{q}(n\e,s,s').
\end{split}
\end{equation}
Instead, if $s,s'$ have not yet interacted at time $n\e$, using Lagrange's Theorem, we get
\begin{equation}
\label{tre}
\frac{\frac{d}{du}\conv_{[u_1,u_2]}f(\hat u(s')) - \frac{d}{du}\conv_{[u_1,u_2]}f(\hat u(s))}{|\mathcal{L}|+|\mathcal{R}|} \leq \lVert f'' \lVert_{L^\infty} = \mathfrak{q}(n\e,s,s').
\end{equation}

Thus, by \eqref{uno}, \eqref{due}, \eqref{tre},
\begin{equation*}
\frac{|\sigma_m(\mathcal{L}_2) - \sigma_m(\mathcal{R}_1)||\mathcal{L}_2| |\mathcal{R}_1|}{|\mathcal{L}|+|\mathcal{R}|} 
\leq \iint_{\mathcal{L}_2 \times \mathcal{R}_1}\mathfrak{q}(n\e,s,s')\, dsds'.
\end{equation*}

\textit{Step 5.} Finally observe that if $s \in \mathcal{L}_1$ and $s' \in \mathcal{R}$, then by definition of $\mathcal{L}_1$, $s,s'$ have not yet interacted at time $n\e$. The same holds if $s \in \mathcal{L}_2$ and $s' \in \mathcal{R}_2$. Hence, recalling \eqref{zero}, we get
\begin{multline*}
\int_{\W((n+1)\e, m\e)} \Big|\sigma((n+1)\e,s) - \sigma(n\e,s)\Big|\, ds \\  
\begin{aligned}
& \leq 2\bigg [\lVert f'' \lVert_{L^\infty}  |\mathcal{L}_1| |\mathcal{R}| + \frac{|\sigma_m(\mathcal{L}_2) - \sigma_m(\mathcal{R}_1)|}{|\mathcal{L}| + |\mathcal{R}|} \cdot |\mathcal{L}_2| |\mathcal{R}_1| + \lVert f'' \lVert_{L^\infty} |\mathcal{L}_2| |\mathcal{R}_2| \bigg], 
\\
& \leq 2 \bigg[ \iint_{\mathcal{L}_1 \times \mathcal{R}} \mathfrak{q}(n\e,s,s')\, dsds' 
+ \iint_{\mathcal{L}_2 \times \mathcal{R}_1} \mathfrak{q}(n\e,s,s')\, dsds' 
+ \iint_{\mathcal{L}_2 \times \mathcal{R}_2} \mathfrak{q}(n\e,s,s')\, dsds' 
\bigg] \\
& = 2\iint_{\mathcal{L} \times \mathcal{R}} \mathfrak{q}(n\e,s,s')\, dsds' \\
& = 2 \bigg[ \iint_{\mathcal{L} \times \mathcal{R}} \mathfrak{q}(n\e,s,s')\, dsds' - \iint_{\mathcal{L} \times \mathcal{R}} \mathfrak{q}((n+1)\e,s,s')\, dsds'\bigg],
\end{aligned} 
\end{multline*}
where the last equality is a direct consequence of Point 1.
\end{proof}

As a corollary, we immediately obtain inequality \eqref{decrease}.

\begin{corollary}
\label{decrease_thm_corollary}
It holds
\begin{align*}
\sum_{\substack{m \in \Z \ \mathrm{ s.t.}\\ ((n+1)\e,m\e)  \\ \emph{interaction}}}& \int_{\W((n+1)\e, m\e)} \big| \sigma((n+1)\e,s) - \sigma(n\e,s) \big|\, ds \\ 
\leq&~ 2 \Bigg [ \iint_{\mathcal{D}^I((n+1)\e)} \mathfrak{q}(n\e, s,s')\, dsds' - 
\iint_{\mathcal{D}^I((n+1)\e)} \mathfrak{q}((n+1)\e, s,s')\, dsds' \Bigg ].
\end{align*}
\end{corollary}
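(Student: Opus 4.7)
The approach is essentially to sum the single-point estimate from Theorem \ref{decrease_thm_enunciato} over all interaction grid points $m$ at the time step $(n+1)\e$. The theorem provides, for each such $m$, a bound on $\int_{\W((n+1)\e,m\e)} |\sigma((n+1)\e,s) - \sigma(n\e,s)|\,ds$ by twice the decrease of the weight $\mathfrak q$ integrated over the rectangle $\mathcal L((n+1)\e,m\e) \times \mathcal R((n+1)\e,m\e)$, and these rectangles are precisely the building blocks of $\mathcal D^I((n+1)\e)$ in definition \eqref{dominio_interazione}.

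The only nontrivial point is that the rectangles $\mathcal L((n+1)\e, m\e) \times \mathcal R((n+1)\e, m\e)$, as $m$ varies over interaction points, are pairwise disjoint as subsets of $\W((n+1)\e) \times \W((n+1)\e)$. First I would observe that by construction $\mathcal L((n+1)\e, m\e) \subseteq \W^{(1)}(n\e, (m-1)\e)$ and $\mathcal R((n+1)\e, m\e) \subseteq \W^{(0)}(n\e, m\e)$. Since the sets $\W^{(1)}(n\e,(m-1)\e)$ and $\W^{(0)}(n\e, m\e)$, as $m$ ranges over $\Z$, form a disjoint family (a wave at time $n\e$ lies in exactly one such set, according to which grid point it occupies and whether $\sigma(n\e,s) \leq \vartheta_{n+1}$), the "left factors" $\mathcal L((n+1)\e, m\e)$ are mutually disjoint and the "right factors" $\mathcal R((n+1)\e, m\e)$ are mutually disjoint; this forces the rectangles themselves to be pairwise disjoint.

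Given disjointness, and the measurability of each $\mathcal L$ and $\mathcal R$ (which follows from the fact that they are intervals of waves, using Corollary \ref{iow_are_measurable}), Fubini gives
\[
\sum_{\substack{m \text{ s.t.} \\ ((n+1)\e,m\e) \\ \text{interaction}}} \iint_{\mathcal L((n+1)\e,m\e) \times \mathcal R((n+1)\e,m\e)} \mathfrak q(k\e, s,s')\,dsds' = \iint_{\mathcal D^I((n+1)\e)} \mathfrak q(k\e,s,s')\,dsds'
\]
for $k = n$ and $k = n+1$. Summing the inequality of Theorem \ref{decrease_thm_enunciato} over all interaction points $m$ and applying this identity to both the $\mathfrak q(n\e,\cdot,\cdot)$ and $\mathfrak q((n+1)\e,\cdot,\cdot)$ terms yields the claimed bound.

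The main potential obstacle is verifying the disjointness of the rectangles cleanly: one must be careful that a single pair $(s,s')$ could a priori be labeled as belonging to different interaction points if the positions of $s,s'$ at time $(n+1)\e$ were ambiguous. This is ruled out by the unique assignment given by $\mathtt x((n+1)\e, \cdot)$ and the structure of $\W^{(0)}, \W^{(1)}$ established in Section \ref{pswaves}. Once this is in place, the corollary follows with no further computation.
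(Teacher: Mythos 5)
Your proposal is correct and follows exactly the paper's route: the paper's own proof is a one-line remark that the corollary is an immediate consequence of Theorem \ref{decrease_thm_enunciato}, the definition \eqref{dominio_interazione} of $\mathcal{D}^I((n+1)\e)$, and the pairwise disjointness of the rectangles $\mathcal{L}((n+1)\e,m\e) \times \mathcal{R}((n+1)\e,m\e)$ for distinct $m$. Your write-up simply makes explicit why the disjointness holds (the left factors sit in the mutually disjoint sets $\W^{(1)}(n\e,(m-1)\e)$), which is a fair and correct elaboration of the same argument.
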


\begin{proof}
Immediate consequence of Theorem \ref{decrease_thm_enunciato}, definition of $\mathcal{D}^I((n+1)\e)$ in \eqref{dominio_interazione} and the fact that for $m \neq m'$, $\mathcal{L}((n+1)\e,m\e) \times \mathcal{R}((n+1)\e,m\e)$ and $\mathcal{L}((n+1)\e,m'\e) \times \mathcal{R}((n+1)\e,m'\e)$ are disjoint. 
\end{proof}


\subsubsection{Proof of \eqref{increase}}

Let $n\e$ be a fixed time. First of all let us prove the following

\begin{proposition}
\label{maximal_hom_iow}
It is possible to write $\W(n\e)$ as a (locally finite) countable union of mutually disjoint, maximal (with respect to set inclusion) homogenous interval of waves $\mathcal M_k$,
\begin{equation*}
\W(n\e) = \bigcup_{k \in \Z} \mathcal{M}_k.
\end{equation*}
\end{proposition}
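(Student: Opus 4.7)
The plan is to identify each $\mathcal M_k$ as a maximal homogeneous interval of waves via an equivalence relation, and to read off local finiteness from the position structure of the enumeration of waves.

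More precisely, I would first declare $s \sim s'$ (for $s, s' \in \W(n\e)$) iff the sign function $\mathcal S$ is constant on $[s, s']_\W := \{p \in \W(n\e) : \min(s,s') \leq p \leq \max(s,s')\}$. Reflexivity and symmetry are trivial, and transitivity reduces to a case analysis on the relative order of $s, s', s''$: in every case $[s, s'']_\W$ is contained in $[s, s']_\W \cup [s', s'']_\W$, so a common constant sign on the two smaller blocks transfers to the larger one. Equivalently, one can set
\[
\mathcal M(s) := \bigcup \big\{ \mathcal I \subseteq \W(n\e) : s \in \mathcal I \text{ and } \mathcal I \text{ is a homogeneous interval of waves} \big\},
\]
which collects all homogeneous intervals of waves containing $s$, and check the two formulations are equivalent.

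Next I would verify that each equivalence class $\mathcal M := [s]_\sim$ is a (maximal) homogeneous interval of waves. Homogeneity is built into the definition. For the interval-of-waves property, given $p_1 < p_2$ in $\mathcal M$ and $p \in \W(n\e)$ with $p_1 \leq p \leq p_2$, one shows $p \sim s$ by splitting cases according to whether $s$ lies to the left of, between, or to the right of $p_1, p_2$: in each case $[s,p]_\W$ fits inside a union of blocks on which $\mathcal S$ is already known to be constant. Maximality is then immediate, and a short argument shows that two classes sharing a point must coincide, so the $\mathcal M_k$ are pairwise disjoint and exhaust $\W(n\e)$.

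Finally, for local finiteness I would use Definition \ref{eow}: every set $\W(n\e, m\e)$ is a finite union of half-open intervals all of whose waves share one sign, hence $\W(n\e, m\e)$ lies in a single class $\mathcal M_k$. Therefore distinct classes correspond to disjoint subsets of the grid $\Z\e$, and any bounded $x$-interval contains only finitely many grid points $m\e$, so meets only finitely many $\mathcal M_k$. In particular the family is at most countable and can be indexed by $\Z$. The only step requiring genuine care is the case analysis used to verify that $\mathcal M(s)$ is an interval of waves (equivalently, that $\sim$ is transitive); the rest follows from the definitions and the structure of $\W(n\e,m\e)$ already encoded in the notion of enumeration of waves.
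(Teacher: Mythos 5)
Your proof is correct and follows essentially the same route as the paper: the paper also partitions $\W(n\e)$ by the equivalence relation ``$[s,s']\cap\W(n\e)$ is a homogeneous interval of waves'' (which coincides with your sign-constancy relation, since any such trace is automatically an interval of waves) and deduces countability from the fact that all waves sharing a position lie in one class. You simply spell out the transitivity check, the maximality, and the local-finiteness via grid points, which the paper leaves implicit.
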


\begin{proof}
It is sufficient to partition $\W(n\e)$ with respect to the equivalence relation
\begin{equation*}
s \sim s' \quad \Longleftrightarrow \quad [s,s'] \cap \W(n\e) \text{ is an homogeneous interval of waves}.
\end{equation*}
This partition is countable, since any two waves having the same position belong to the same equivalence class.
\end{proof}

For any interval $\mathcal{M}_k$, let us define the following quantities:
\begin{align*}
&L^{(k)}\e := \inf \big\{\mathtt x(n\e,p) \ \big| \ p \in \mathcal{M}_k \big\}, & a_1^{(k)} :=&~ \inf \hat u(\mathcal{M}_k), \\
& R^{(k)}\e := \sup \big\{\mathtt x(n\e,p) \ \big| \ p \in \mathcal{M}_k \big\}, & b_1^{(k)} :=&~ \sup \hat u(\mathcal{M}_k).
\end{align*}
Then set
\begin{equation*}
a_3^{(k)} = 
\begin{cases}
\sup \hat u \big( \W^{(0)}(n\e,L^{(k)}\e) \big) & \text{if } L^{(k)} > -\infty \text{ and } \W^{(0)}(n\e,L^{(k)}\e) \neq \emptyset, \\
a_1^{(k)}                            & \text{if } L^{(k)} = -\infty \text{ or } \W^{(0)}(n\e,L^{(k)}\e) = \emptyset, 
\end{cases}
\end{equation*}
and, in a similar way,
\begin{equation*}
b_3^{(k)} = 
\begin{cases}
\inf \hat u \big(\W^{(1)}(n\e,R^{(k)}\e) \big) & \text{if } R^{(k)} < +\infty \text{ and } \W^{(1)}(n\e,R^{(k)}\e) \neq \emptyset, \\
b_1^{(k)}                            & \text{if } R^{(k)} = +\infty \text{ or } \W^{(1)}(n\e,R^{(k)}\e) = \emptyset. 
\end{cases}
\end{equation*}
Finally define
\begin{equation*}
a_2^{(k)} = 
\begin{cases}
\inf \hat u \big(\W^{(0)}(n\e,L^{(k)}\e) \cap \W((n+1)\e)\big)
& 
\begin{aligned} 
& \text{if } L^{(k)} > -\infty \text{ and} \\ & \W^{(0)}(n\e,L^{(k)}\e) \cap \W((n+1)\e) \neq \emptyset,
\end{aligned}
\\ \\
a_3^{(k)}             &
\begin{aligned} 
& \text{if } L^{(k)} = -\infty \text{ or} \\ & \W^{(0)}(n\e,L^{(k)}\e) \cap \W((n+1)\e) = \emptyset, 
\end{aligned}
\\
\end{cases}
\end{equation*}
and
\begin{equation*}
b_2^{(k)} = 
\begin{cases}
\sup \hat u \big(\W^{(1)}(n\e,R^{(k)}\e)  \cap \W((n+1)\e)\big)
& 
\begin{aligned} 
& \text{if } R^{(k)} < +\infty \text{ and} \\ & \W^{(1)}(n\e,R^{(k)}\e) \cap \W((n+1)\e) \neq \emptyset, 
\end{aligned}
\\ \\
b_3^{(k)}                            & 
\begin{aligned} 
& \text{if } R^{(k)} = +\infty \text{ or} \\ & \W^{(1)}(n\e,R^{(k)}\e) \cap \W((n+1)\e) = \emptyset. 
\end{aligned}
\\
\end{cases}
\end{equation*}
Clearly $ a_1^{(k)} \leq a_2^{(k)} \leq a_3^{(k)} \leq b_3^{(k)} \leq b_2^{(k)} \leq b_1^{(k)}$.

\begin{theorem}
\label{increase_thm_enunciato}
For any maximal interval $\mathcal{M}_k$, it holds
\begin{align*}
\iint_{(\mathcal{M}_k \times \mathcal{M}_k) \cap \mathcal{D}((n+1)\e)} &\big[ \mathfrak{q}((n+1)\e,s,s') - \mathfrak{q}(n\e,s,s') \big]^+ \, dsds' \\
\leq&~ \log(2) \lVert f'' \lVert_{L^\infty}  \mathcal{L}^1(\mathcal{M}_k) \Big [ (b_1^{(k)}-b_2^{(k)}) + (a_2^{(k)} -a_1^{(k)})\Big ].
\end{align*}
\end{theorem}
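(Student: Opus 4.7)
The plan is to mirror the proof of Theorem \ref{W_increasing} in the wavefront tracking case. Assume $\mathcal{M}_k$ is positive (the negative case is symmetric) and abbreviate $a_i := a_i^{(k)}, b_i := b_i^{(k)}$. By Proposition \ref{interval_waves} together with $|d\hat u/ds| = 1$ on the interior of the pieces of $\mathcal{M}_k$, the map $\hat u$ is a Lebesgue-measure-preserving bijection of $\mathcal{M}_k$ onto $[a_1, b_1]$, so $\mathcal{L}^1(\mathcal{M}_k) = b_1 - a_1$. By maximality of $\mathcal{M}_k$, any cancellation during the Glimm step at time $(n+1)\e$ that strips waves from $\mathcal{M}_k$ must occur at the boundary columns $L^{(k)}\e$ or $R^{(k)}\e$, with cancellation amounts precisely $a_2 - a_1$ and $b_1 - b_2$.

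First I would prove a localization lemma analogous to Lemma \ref{L_W_canc_lem_1}: if $(s, s') \in \mathcal{M}_k \times \mathcal{M}_k$ with $\hat u(s) < \hat u(s')$ satisfies $[\mathfrak{q}((n+1)\e, s, s') - \mathfrak{q}(n\e, s, s')]^+ > 0$, then $\hat u(s) \in [a_1, a_3]$ or $\hat u(s') \in [b_3, b_1]$. Indeed, if $\hat u(s), \hat u(s') \in (a_3, b_3)$, then neither endpoint of $\mathcal{I}(\cdot, s, s')$ is modified by the step, and Proposition \ref{tocca} together with Proposition \ref{divise_tocca} forces the artificial speeds assigned to $s, s'$ at the two times to coincide.

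Next I would carry out the analog of Lemma \ref{W_lemma_uno}. Define on $[a_1, b_1]$ two continuous functions $g := \conv_{[a_1, b_1]} f$ and $h$, where $h$ equals $\conv_{[a_2, b_2]} f$ on $[a_2, b_2]$ and is extended by $h \equiv g$ on $[a_1, b_1] \setminus ([a_1, a_3] \cup [b_3, b_1])$; by Proposition \ref{tocca}, the two agree on $[a_3, b_3]$. Set $\varphi := h - g$. A careful case analysis, using Proposition \ref{divise_tocca} to locate the artificial speeds within the ``real'' convex envelopes, Proposition \ref{tocca} to reduce the intermediate Riemann problems $\mathcal{I}(\cdot, s, s')$ to the fixed middle region $[a_3, b_3]$, and Proposition \ref{vel_aumenta} to dominate the reduced envelopes by those of $g$ and $h$, gives the pointwise bound
\begin{equation*}
[\mathfrak{q}((n+1)\e, s, s') - \mathfrak{q}(n\e, s, s')]^+ \leq \frac{\varphi'(\hat u(s')) - \varphi'(\hat u(s))}{\hat u(s') - \hat u(s)}.
\end{equation*}
Monotonicity of $\varphi'$ follows from a further application of Proposition \ref{vel_aumenta} comparing $h'$ and $g'$ at the two boundaries.

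The proof then concludes via the area formula (Proposition \ref{cambio_di_variabile}) and the identity \eqref{logaritmo}: the double integral on the left-hand side of the theorem is bounded by
\begin{equation*}
\int_{a_1}^{b_1} \int_u^{b_1} \frac{\varphi'(u') - \varphi'(u)}{u' - u}\, du'\, du \leq \log(2)(b_1 - a_1)\bigl[\varphi'(b_1-) - \varphi'(a_1+)\bigr],
\end{equation*}
and Proposition \ref{diff_vel_proporzionale_canc} applied at each endpoint controls the bracket by $\|f''\|_{L^\infty}[(b_1 - b_2) + (a_2 - a_1)]$, yielding the announced estimate via $\mathcal{L}^1(\mathcal{M}_k) = b_1 - a_1$. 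The main obstacle I expect is the pointwise bound in the third step: one must dissect the configurations of $(\hat u(s), \hat u(s'))$ with respect to the two danger regions $[a_1, a_3]$ and $[b_3, b_1]$ and carefully propagate the comparisons between artificial and real convex envelopes through the telescoping decompositions of Proposition \ref{tocca}; once this bookkeeping is done, the measure-theoretic reformulation and the $\log 2$ calculation carry over from the wavefront case with only cosmetic changes, since the continuous structure of $\hat u$ removes the need for the discrete piecewise-affine approximation used there.
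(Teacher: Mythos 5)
Your outline follows the paper's skeleton---pointwise bound via an auxiliary pair $(g,h)$, the area formula (Prop.~\ref{cambio_di_variabile}), the $\log 2$ identity \eqref{logaritmo}, and Prop.~\ref{diff_vel_proporzionale_canc} at the endpoints---and the localization step you propose corresponds to Case~1 of the paper's Lemma~\ref{lemma_uno}. However, your choice of auxiliary functions is genuinely different from the paper's and does not work. You take $g := \conv_{[a_1,b_1]}f$ and $h := \conv_{[a_2,b_2]}f$, the convex envelopes over the \emph{entire} state range of $\mathcal M_k$. For the definition of $h$ to be consistent (equal both to $g$ and to $\conv_{[a_2,b_2]}f$ on $(a_3,b_3)$), and for your appeal to Prop.~\ref{tocca} to apply, you would need $\conv_{[a_1,b_1]}f$ and $\conv_{[a_2,b_2]}f$ to touch $f$ at $a_3$ and $b_3$. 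This is not established and is generically false: at time $n\e$ the block $\mathcal M_k$ spans several grid columns, each carrying its own Riemann problem, and the envelope over the whole of $[a_1,b_1]$ (which corresponds to no actual Riemann problem in the approximate solution) may well have a shock interval straddling $a_3$ or $b_3$ even though the \emph{local} envelope over the boundary column does touch $f$ there.

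This is not merely a bookkeeping glitch, because it is precisely what makes the pointwise estimate go in the right direction. The artificial speed difference at time $(n+1)\e$ is computed from the envelope over $\hat u(\mathcal I((n+1)\e,s,s'))$, a \emph{subset} of $[a_2,b_2]$; by Prop.~\ref{differenza_vel}, shrinking the base interval only increases derivative differences, so for your $h$ one gets
\[
\sigma\big(\mathcal I((n+1)\e,s,s'),s'\big) - \sigma\big(\mathcal I((n+1)\e,s,s'),s\big) \;\geq\; h'(\hat u(s')) - h'(\hat u(s)),
\]
a \emph{lower} bound, while dominating $[\mathfrak q((n+1)\e,s,s') - \mathfrak q(n\e,s,s')]^+$ requires an upper bound. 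The paper avoids this by defining $g$ and $h$ \emph{locally}: $g$ agrees with $\conv_{[a_1,a_3]}f$ and $\conv_{[b_3,b_1]}f$ on the two boundary strips, $h$ with $\conv_{[a_2,a_3]}f$ and $\conv_{[b_3,b_2]}f$, and on the middle piece $[a_3,b_3]$ both equal a common but otherwise arbitrary $C^{1,1}$ extension. Proposition~\ref{divise_tocca} then guarantees that the artificial Riemann problems $\mathcal I(\cdot,s,s')$ split exactly at $a_3$ and $b_3$, so on the boundary strips the artificial speed differences coincide with $h'$ at time $(n+1)\e$ and dominate $g'$ at time $n\e$, which is what yields the pointwise bound with the correct sign. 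Replacing your $g,h$ by the paper's local pair makes the rest of your outline go through essentially unchanged.
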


\begin{proof}
We will not write the index $k$ and we assume that waves in $\mathcal{M} = \mathcal{M}_k$ are positive, the other case being entirely similar. First of all let $g,h: [a_1, b_1] \to \R$ be two $\mathcal{C}^{1,1}$ function with the following properties:
\begin{align*}
g_{|[a_1,a_3]} & = \conv_{[a_1,a_3]}f,     &     h_{|[a_2,a_3]} & = \conv_{[a_2,a_3]}f,   \\
g_{|[b_3,b_1]} & =  \conv_{[b_3,b_1]}f,    &     h_{|[b_3,b_2]} & =  \conv_{[b_3,b_2]}f, \\
\end{align*}
and set
\[
g_{|[a_3,b_3]} = h_{[a_3,b_3]}.
\]
See Figure \ref{fig:figura31}. It is easy to see that some $g,h$ with the above properties exist.

\begin{figure}
  \begin{center}
    \includegraphics[height=7.5cm,width=12cm]{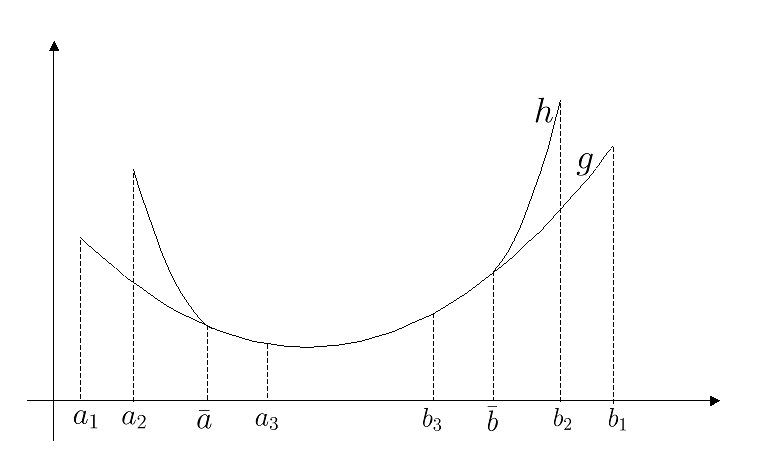}
    \caption{Graph of $g$ and $h$.}
    \label{fig:figura31}
    \end{center}
\end{figure}

\begin{lemma}
\label{lemma_uno}
For any $(s,s') \in (\mathcal{M} \times \mathcal{M}) \cap \mathcal{D}((n+1)\e)$,
\begin{equation}
\label{passaggio_gh}
\big[ \mathfrak{q}((n+1)\e, s,s') - \mathfrak{q}(n\e,s,s') \big]^+  
\leq \frac{\Big| \big( h'(\hat u(s')) - h'(\hat u(s)) \big) - \big( g'(\hat u(s')) - g'(\hat u(s)) \big) \Big|}{|\hat u(s')- \hat u(s)|}. 
\end{equation}
\end{lemma}

\begin{proof}
If $s,s'$ have not yet interacted at time $n\e$, then the l.h.s. of \eqref{passaggio_gh} is equal to zero. Thus we can assume $s,s'$ have already interacted at time $n\e$. In this case set for simplicity 
\begin{align*}
\mathcal{I}_{n+1} :=&~ \mathcal{I}((n+1)\e,s,s'), & 
\sigma_{n+1}(s')  :=&~ \sigma(\mathcal{I}_{n+1},s'), &
\sigma_{n}(s')    :=&~ \sigma(\mathcal{I}_{n},s'), \\
\mathcal{I}_{n}   :=&~ \mathcal{I}(n\e,s,s'), & 
\sigma_{n+1}(s)   :=&~ \sigma(\mathcal{I}_{n+1},s), &
\sigma_{n}(s)     :=&~ \sigma(\mathcal{I}_{n},s).
\end{align*}
The proof reduces to prove the following inequality:
\begin{equation}
\label{da_provare}
\Big[ \big( \sigma_{n+1}(s') - \sigma_{n+1}(s) \big) - \big( \sigma_n(s') - \sigma_n(s) \big) \Big]^+ 
\leq \Big| \big( h'(\hat u(s')) - h'(\hat u(s)) \big) - \big( g'(\hat u(s')) - g'(\hat u(s)) \big) \Big|.
\end{equation}

If $s,s'$ are not divided in the real solution at time $(n+1)\e$, then, by Proposition \ref{unite_realta}, $\sigma_{n+1}(s') - \sigma_{n+1}(s) = 0$; in this case the l.h.s. of \eqref{da_provare} is equal to zero.

Let us then assume $s,s'$ are divided at time $(n+1)\e$. We consider four main cases.

\smallskip
{\it Case 1.} Assume $\hat u(s), \hat u(s') \in (a_3,b_3]$. In this case let us observe what follows:
	\begin{enumerate}
	 \item The r.h.s. of \eqref{da_provare} is equal to zero;
	 \item $s,s'$ are divided also at time $n\e$, due to their position;
	 \item $\mathcal{I}_n \cap (a_3,b_3] \subseteq \mathcal{I}_{n+1} \cap (a_3,b_3]$, because in $(a_3,b_3]$ only interactions can occur;
	 \item for $j \in \{n,n+1\}$ and $u \in \{a_3,b_3\}$, if $u \in \overline{\hat u(\mathcal{I}_{j})}$, then $\conv_{\hat u(\mathcal{I}_{j})} f(u) = f(u)$. Indeed let us prove this equality for $j = n+1, u=a_3$, the other ones being similar. If $a_3 = \inf \hat u(\mathcal{I}_{n+1})$, we have done. So let us assume $a_3 > \inf \hat u(\mathcal{I}_{n+1})$. Then one can find two sequences $(p_k)_k, (p'_k)_k$ in $\mathcal{I}_{n+1}$ such that $\hat u(p_k) < a_3$ and $\hat u(p_k) \nearrow a_3$, while $\hat u(p'_k) > a_3$ and $\hat u(p'_k) \searrow a_3$ and, for each $k$, $p_k,p'_k$ have already interacted at time $(n+1)\e$. Hence, since $s,s'$ are divided at time $(n+1)\e$, by Proposition \ref{divise_tocca}, $p_k, p_k'$ are divided by the Riemann problem $\mathcal{I}_{n+1}$, i.e. there is $u_k \in (\hat u(p_k), \hat u(p'_k))$ such that $\conv_{\hat u (\mathcal{I}_{n+1})} f(u_k) = f(u_k)$. Passing to the limit, we get the thesis.
	\end{enumerate}
\noindent Thus for $j \in \{n,n+1\}$, $\conv_{\hat u(\mathcal{I}_j)} f = \conv_{\hat u(\mathcal{I}_j) \cap [a_3,b_3]} f$ on $\hat u(\mathcal{I}_j) \cap [a_3,b_3]$. Hence, 
\begin{align*}
\sigma_n(s') - \sigma_n(s) 
= &~ \bigg (\frac{d}{du}\conv_{\hat u(\mathcal{I}_n)} f \bigg) \big(\hat u(s')\big) - \bigg (\frac{d}{du}\conv_{\hat u(\mathcal{I}_n)} f \bigg) \big(\hat u(s)\big) \\
= &~ \bigg (\frac{d}{du}\conv_{\hat u(\mathcal{I}_n) \cap [a_3,b_3]} f \bigg) \big(\hat u(s')\big) - \bigg (\frac{d}{du}\conv_{\hat u(\mathcal{I}_n)\cap [a_3,b_3]} f \bigg) \big(\hat u(s)\big) \\
(\text{by Prop. \ref{vel_aumenta}}) \ \geq &~ \bigg (\frac{d}{du}\conv_{\hat u(\mathcal{I}_{n+1}) \cap [a_3,b_3]} f \bigg) \big(\hat u(s')\big) - \bigg (\frac{d}{du}\conv_{\hat u(\mathcal{I}_{n+1})\cap [a_3,b_3]} f \bigg) \big(\hat u(s)\big) \\
=&~ \bigg (\frac{d}{du}\conv_{\hat u(\mathcal{I}_{n+1})} f \bigg) \big(\hat u(s')\big) - \bigg (\frac{d}{du}\conv_{\hat u(\mathcal{I}_{n+1})} f \bigg) \big(\hat u(s)\big) \\
=&~ \sigma_{n+1}(s') - \sigma_{n+1}(s).
\end{align*}
Thus, the l.h.s. of \eqref{da_provare} is equal to zero.

\smallskip
{\it Case 2.} Assume now $\hat u(s), \hat u(s') \in (b_3, b_2]$. In this case 
\begin{align*}
\inf \hat u(\mathcal{I}_n), \inf \hat u(\mathcal{I}_{n+1})    \leq b_3, \qquad \sup \hat u(\mathcal{I}_n) = b_1, \qquad \sup \hat u(\mathcal{I}_{n+1}) =  b_2.
\end{align*}
Since $s,s'$ are divided at time $(n+1)\e$ in the real solution, we can argue as in the previous case and use Proposition \ref{divise_tocca} to obtain
\begin{eqnarray*}
\conv_{\hat u(\mathcal{I}_{n+1})} f = \conv_{\hat u(\mathcal{I}_{n+1}) \cap [b_3,b_2]} f = \conv_{[b_3,b_2]} f = h \hspace{1cm} \text{on } [b_3,b_2].
\end{eqnarray*}
Hence 
\begin{equation}
\label{sigma_n+1}
\sigma_{n+1}(s') - \sigma_{n+1}(s) = h'(\hat u(s')) - h'(\hat u(s)).
\end{equation}
Now distinguish two possibilities:
\begin{enumerate}
\item $g'(\hat u(s')) - g'(\hat u(s)) = 0$: in this case $\hat u(s), \hat u(s')$ belong to the same wavefront interval of $g|_{[b_3,b_2]} = \conv_{[b_3,b_1]}f$; thus they are not divided in the real solution at time $n\e$ and so by Proposition \ref{unite_realta}, $\sigma_n(s') - \sigma_n(s) = 0$, which, together with \eqref{sigma_n+1}, yields the thesis.
\item $g'(\hat u(s')) - g'(\hat u(s)) > 0$: this means that $s,s'$ are divided in the real solution at time $n\e$ and so, using the same argument as before, by Proposition \ref{divise_tocca},
\begin{equation*}
\conv_{\hat u(\mathcal{I}_{n})} f = \conv_{\hat u(\mathcal{I}_{n}) \cap [b_3,b_1]} f = \conv_{[b_3,b_1]} f = g \hspace{1cm} \text{on } [b_3,b_1],
\end{equation*} 
which, together with \eqref{sigma_n+1}, yields the thesis.
\end{enumerate}

\smallskip
{\it Case 3.} Assume now $\hat u(s) \in (a_3, b_3]$ and $\hat u(s') \in (b_3, b_2]$. In this case $s,s'$ are divided in the real solution also at time $n\e$. Hence, arguing as in the first point, 
\begin{equation}
\label{b1}
\conv_{\hat u(\mathcal{I}_n)}f |_{\hat u(\mathcal{I}_n) \cap [a_3,b_1]} =  \conv_{\hat u(\mathcal{I}_n) \cap [a_3,b_3]}f \cup \conv_{\hat u(\mathcal{I}_n) \cap [b_3,b_1]}f,
\end{equation}
and
\begin{equation}
\label{b2}
\conv_{\hat u(\mathcal{I}_{n+1})}f |_{\hat u(\mathcal{I}_{n+1}) \cap [a_3,b_2]} 
=  \conv_{\hat u(\mathcal{I}_{n+1}) \cap [a_3,b_3]}f \cup \conv_{\hat u(\mathcal{I}_{n+1}) \cap [b_3,b_2]}f.
\end{equation}
Thus, observing that 
\begin{equation}
\label{b3}
\hat u(\mathcal{I}_{n+1}) \cap [a_3,b_3] = \hat u(\mathcal{I}_{n}) \cap [a_3,b_3]
\end{equation}
($s,s'$ can not interact at time $(n+1)\e$),
\begin{align*}
\sigma_{n+1}(&s) = \frac{d}{du}\conv_{\hat u(\mathcal{I}_{n+1})} f(\hat u(s)) 
                \overset{\eqref{b2}}{=} \frac{d}{du}\conv_{\hat u(\mathcal{I}_{n+1}) \cap [a_3,b_3]} f(\hat u(s)) \\
                &\, \overset{\eqref{b3}}{=} \frac{d}{du}\conv_{\hat u(\mathcal{I}_{n}) \cap [a_3,b_3]} f(\hat u(s)) 
                \overset{\eqref{b1}}{=} \frac{d}{du}\conv_{\hat u(\mathcal{I}_{n})} f(\hat u(s)) 
                = \sigma_n(s),
\end{align*}
from which we deduce
\begin{equation}
\label{diff_sigma}
\sigma_{n+1}(s) - \sigma_n(s) = 0 = h'(\hat u(s)) - g'(\hat u(s)).
\end{equation}
For $s'$, distinguish two cases:
	\begin{enumerate}
	 \item If $\sup \hat u(\mathcal{I}_n) < b_2$, then $\hat u(\mathcal{I}_{n+1}) \cap [b_3,b_2] = \hat u(\mathcal{I}_{n}) \cap [b_3,b_1]$, and so
	 \begin{align*}
\sigma_{n+1}(s') =&~ \frac{d}{du}\conv_{\hat u(\mathcal{I}_{n+1})} f(\hat u(s')) 
                 \overset{\eqref{b2}}{=} \frac{d}{du}\conv_{\hat u(\mathcal{I}_{n+1}) \cap [b_3,b_2]} f(\hat u(s')) \\
                 =&~ \frac{d}{du}\conv_{\hat u(\mathcal{I}_{n}) \cap [b_3,b_1]} f(\hat u(s')) 
                 \overset{\eqref{b1}}{=} \frac{d}{du}\conv_{\hat u(\mathcal{I}_{n})} f(\hat u(s')) 
                 = \sigma_n(s').
	\end{align*}
	
	\item If $\sup \hat u(\mathcal{I}_n) \geq b_2$, then $\sup \hat u(\mathcal{I}_{n+1}) = b_2$ and so 
	 \begin{align*}
\sigma_{n+1}(s') =&~ \frac{d}{du}\conv_{\hat u(\mathcal{I}_{n+1})} f(\hat u(s')) 
                 \overset{\eqref{b2}}{=} \frac{d}{du}\conv_{\hat u(\mathcal{I}_{n+1}) \cap [b_3,b_2]} f(\hat u(s')) \\
                 =&~ \frac{d}{du}\conv_{[b_3,b_2]} f(\hat u(s')) 
                 = h(\hat u(s')),
	\end{align*}
while
	 \begin{align*}
\sigma_{n}(s') =&~ \frac{d}{du}\conv_{\hat u(\mathcal{I}_{n})} f(\hat u(s')) 
               \overset{\eqref{b1}}{=} \frac{d}{du}\conv_{\hat u(\mathcal{I}_{n}) \cap [b_3,b_1]} f(\hat u(s')) \\
\geq&~ \frac{d}{du}\conv_{[b_3,b_1]} f(\hat u(s')) 
                                          = g(\hat u(s')),
	\end{align*}
	where the inequality is due to Proposition \ref{vel_aumenta}.
	\end{enumerate}
In both cases (1) and (2), $\sigma_{n+1}(s') - \sigma_n(s') \leq h(\hat u(s')) - g(\hat u(s'))$. Together with \eqref{diff_sigma}, this yields the thesis.

\smallskip
{\it Case 4.} Assume now $\hat u(s) \in (a_2,a_3]$, $\hat u(s') \in (b_3,b_2]$. As in the previous point, 
\begin{align*}
\sigma_{n+1}(s') - \sigma_n(s') \leq h'(\hat u(s')) - g'(\hat u(s')), \qquad
\sigma_{n+1}(s) - \sigma_n(s) \geq h'(\hat u(s)) - g'(\hat u(s)),
\end{align*}
and so the thesis follows.

\smallskip
The proof of the remaining cases $\hat u(s) \in (a_2,a_3]$, $\hat u(s') \in (a_3,b_3]$ and $\hat u(s) \in (a_2,a_3]$, $\hat u(s') \in (a_2,a_3]$ is similar to Case (3) and Case (2) respectively.
\end{proof}

\begin{lemma}
\label{lemma_due}
We have
\begin{align*}
\iint_{(\mathcal{M} \times \mathcal{M}) \cap \mathcal{D}((n+1)\e)} \frac{\big| \big( h'(\hat u(s')) - h'(\hat u(s)) \big) - \big( g'(\hat u(s')) - g'(\hat u(s)) \big) \big|}{|\hat u(s')- \hat u(s)|}\, dsds'& \\
\leq \const \mathcal{L}^1(\mathcal{M}) \Big[ \big(g'(a_2) - h'(a_2)\big) + \big(&h'(b_2) - g'(b_2)\big) \Big].
\end{align*}
\end{lemma}

\begin{proof}
The proof is the analog of Lemma \ref{W_lemma_due}. In fact
\begin{multline*}
\iint_{(\mathcal{M} \times \mathcal{M}) \cap \mathcal{D}((n+1)\e)} \frac{\big| \big( h'(\hat u(s')) - h'(\hat u(s)) \big) - \big( g'(\hat u(s')) - g'(\hat u(s)) \big) \big|}{|\hat u(s')- \hat u(s)|} dsds' \\
\begin{aligned}
\text{(by Proposition \ref{cambio_di_variabile})} & = \int_{a_2}^{b_2} \int_{u}^{b_2} \frac{\big| \big( h'(u') - h'(u) \big) - \big( g'(u') - g'(u) \big) \big|}{u'- u} du'du \\
& = \int_{a_2}^{b_2} \int_{u}^{b_2} \frac{1}{u'- u} \bigg| \int_u^{u'} [h''(\xi) - g''(\xi)]d\xi
 \bigg|du'du \\
& \leq \int_{a_2}^{b_2} \int_{u}^{b_2} \frac{1}{u'- u} \int_u^{u'} |h''(\xi) - g''(\xi)|d\xi
 du'du \\
& \leq \int_{a_2}^{b_2} \big|h''(\xi) - g''(\xi)\big| \bigg(\int_{a_2}^\xi \int_\xi^{b_2}  \frac{1}{u'- u}    du'du \bigg) d\xi.
\end{aligned}
\end{multline*}
Hence, by \eqref{logaritmo},
\begin{multline*}
\iint_{(\mathcal{M} \times \mathcal{M}) \cap \mathcal{D}((n+1)\e)} \frac{\big| \big( h'(\hat u(s')) - h'(\hat u(s)) \big) - \big( g'(\hat u(s')) - g'(\hat u(s)) \big) \big|}{|\hat u(s')- \hat u(s)|} dsds' \\
\begin{aligned}
& \leq \log(2) (b_2 - a_2) \int_{a_2}^{b_2} |h''(\xi) - g''(\xi)|  d\xi \\
& \leq \log(2) \mathcal{L}^1(\mathcal{M})  \bigg[ \int_{a_2}^{a_3} |h''(\xi) - g''(\xi)|  d\xi
+ \int_{a_3}^{b_3} |h''(\xi) - g''(\xi)|  d\xi + \int_{b_3}^{b_2} |h''(\xi) - g''(\xi)|  d\xi \bigg].
\end{aligned}
\end{multline*}
Now remember that $g$ and $f$ coincide on $[a_3,b_3]$. Moreover, by Corollary \ref{RP_ridotto}, there are $\bar a \in [a_2, a_3], \bar b \in [b_3,b_2]$ such that $g = h$ on $[\bar a, a_3] \cup [b_3, \bar b]$ and $g$ is affine on $[a_2, \bar a]$ and $[\bar b, b_2]$. 
Thus
\begin{multline*}
\iint_{(\mathcal{M} \times \mathcal{M}) \cap \mathcal{D}((n+1)\e)} \frac{\big| \big( h'(\hat u(s')) - h'(\hat u(s)) \big) - \big( g'(\hat u(s')) - g'(\hat u(s)) \big) \big|}{|\hat u(s')- \hat u(s)|} dsds' \\
\begin{aligned}
& \leq \log(2) \mathcal{L}^1(\mathcal{M}) \bigg[ \int_{a_2}^{\bar a} |h''(\xi)|  d\xi + \int_{\bar b}^{b_2} |h''(\xi)|  d\xi \bigg] \\
& =    \log(2) \mathcal{L}^1(\mathcal{M}) \bigg[ \int_{a_2}^{\bar a} h''(\xi)  d\xi + \int_{\bar b}^{b_2} h''(\xi)  d\xi \bigg] \\
& = \log(2) \mathcal{L}^1(\mathcal{M}) \Big[ \big(h'(\bar a) - h'(a_2)\big) + \big(h'(b_2) - h'(\bar b)\big) \Big] \\
& = \log(2) \mathcal{L}^1(\mathcal{M}) \Big[ \big(g'(a_2) - h'(a_2)\big) + \big(h'(b_2) - g'(b_2)\big) \Big], 
\end{aligned}
\end{multline*}
concluding the proof of Lemma \ref{lemma_due}.
\end{proof}

\noindent{\it End of the proof of Theorem \ref{decrease_thm_enunciato}}. Let us observe that, by Proposition \ref{diff_vel_proporzionale_canc}, 
\begin{align}
\label{lemma_tre}
(g'(a_2) - h'(a_2)) & \leq \lVert f'' \lVert_{L^\infty}  (a_2 - a_1),  &  (h'(b_2) - g'(b_2)) & \leq \lVert f'' \lVert_{L^\infty}  (b_1 - b_2).
\end{align}
Putting together Lemma \ref{lemma_uno}, Lemma \ref{lemma_due} and inequalities \eqref{lemma_tre}, one easily concludes the proof of the theorem.
\end{proof}

\begin{corollary}
\label{increase_thm_corollary}
It holds
\begin{align*}
\iint_{\mathcal{D}((n+1)\e) \setminus \mathcal{D}^I((n+1)\e)}& \big[ \mathfrak{q}((n+1)\e, s,s') - 
\mathfrak{q}(n\e, s,s') \big]\, dsds'\\
&\leq \log(2) \lVert f'' \lVert_{L^\infty}  \TV(u(0,\cdot)) \Big[\TV(u_\e(n\e,\cdot) - \TV(u_\e((n+1)\e, \cdot))\Big].  
\end{align*}
\end{corollary}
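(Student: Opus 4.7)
The strategy is to reduce the corollary to Theorem \ref{increase_thm_enunciato} summed over the maximal homogeneous intervals furnished by Proposition \ref{maximal_hom_iow}, and then to identify the resulting boundary contributions with the cancellation amounts occurring during the time step $n \to n+1$.

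First I would bound the integrand from above by its positive part and enlarge the domain of integration, obtaining
\[
\iint_{\mathcal{D}((n+1)\e) \setminus \mathcal{D}^I((n+1)\e)} \big[\mathfrak{q}((n+1)\e, s, s') - \mathfrak{q}(n\e, s, s') \big]\, dsds' \leq \iint_{\mathcal{D}((n+1)\e)} \big[\mathfrak{q}((n+1)\e, s, s') - \mathfrak{q}(n\e, s, s') \big]^+ dsds'.
\]
Writing $\W(n\e) = \bigcup_k \mathcal{M}_k$ by Proposition \ref{maximal_hom_iow}, I would then show that only the diagonal pieces $(\mathcal{M}_k \times \mathcal{M}_k) \cap \mathcal{D}((n+1)\e)$ contribute: if $(s,s')$ lies in $\mathcal{M}_k \times \mathcal{M}_l$ with $k \neq l$ and $s < s'$, then $[s,s'] \cap \W(n\e)$ contains waves of both signs, so by Lemma \ref{W_interagite_stesso_segno} combined with Lemma \ref{W_quelle_in_mezzo_hanno_int} (and the time independence of $\mathcal{S}$) the waves $s,s'$ cannot have interacted by time $n\e$; hence $\mathfrak{q}(n\e, s, s') = \lVert f'' \lVert_{L^\infty}$ is the maximal value, and the positive part of the change vanishes at such $(s,s')$.

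Applying Theorem \ref{increase_thm_enunciato} on each $\mathcal{M}_k$ and bounding $\mathcal{L}^1(\mathcal{M}_k) \leq \mathcal{L}^1(\W(n\e)) = \TV(u_\e(n\e,\cdot)) \leq \TV(u(0,\cdot))$ (via Example \ref{initial_eow} and \eqref{bd_su_dato_iniziale}), the estimate reduces to
\[
\sum_k \Big[(b_1^{(k)} - b_2^{(k)}) + (a_2^{(k)} - a_1^{(k)})\Big] \leq \TV(u_\e(n\e,\cdot)) - \TV(u_\e((n+1)\e,\cdot)).
\]

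The main obstacle is this last inequality; it is the only step where the specific geometry of the Glimm restarting procedure enters. The key observation is that, by Remark \ref{W_speed_increasing_wrt_waves}, the waves of $\W^{(0)}(n\e, L^{(k)}\e)$ are exactly those in $\W(n\e, L^{(k)}\e)$ with the $\hat u$-values closest to $u_{n, L^{(k)} - 1}$, so $\inf \hat u\big(\W^{(0)}(n\e, L^{(k)}\e)\big) = a_1^{(k)}$ whenever $\W^{(0)}(n\e, L^{(k)}\e) \neq \emptyset$ (with the appropriate interpretation of $\inf$ depending on the sign of $\mathcal{M}_k$). Moreover, by maximality of $\mathcal{M}_k$, if $\W^{(1)}(n\e, (L^{(k)}-1)\e)$ is nonempty its waves have sign opposite to that of $\mathcal{M}_k$, so the grid point $((n+1)\e, L^{(k)}\e)$ hosts a cancellation; a direct computation in the spirit of Proposition \ref{canc_3} then identifies $a_2^{(k)} - a_1^{(k)}$ with the amount of $\mathcal{M}_k$-waves wiped out, equal exactly to $\mathcal{C}((n+1)\e, L^{(k)}\e)/2$, and symmetrically $b_1^{(k)} - b_2^{(k)} = \mathcal{C}((n+1)\e, (R^{(k)}+1)\e)/2$. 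Since each cancellation grid point $((n+1)\e, m\e)$ lies at the common boundary of two adjacent maximal homogeneous intervals of opposite sign (contributing once as a right boundary $R^{(l)} = m-1$ and once as a left boundary $L^{(k)} = m$), it contributes a total of $\mathcal{C}((n+1)\e, m\e)$ to the sum. Summing over all cancellation grid points and invoking the identity $\sum_m \mathcal{C}((n+1)\e, m\e) = \TV(u_\e(n\e,\cdot)) - \TV(u_\e((n+1)\e,\cdot))$ established just before Proposition \ref{canc_3} completes the argument.
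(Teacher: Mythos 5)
Your proof is correct and follows the same route as the paper's: bound by the positive part and enlarge the domain, discard the off-diagonal pairs $\mathcal{M}_k \times \mathcal{M}_l$ ($k \neq l$) because such pairs have not yet interacted at time $n\e$ and therefore carry the maximal weight $\lVert f''\lVert_{L^\infty}$, apply Theorem \ref{increase_thm_enunciato} on each diagonal block, and identify $\sum_k \big[(b_1^{(k)}-b_2^{(k)})+(a_2^{(k)}-a_1^{(k)})\big]$ with $\big|\W(n\e)\setminus\W((n+1)\e)\big| = \TV(u_\e(n\e,\cdot)) - \TV(u_\e((n+1)\e,\cdot))$. The only difference is that the paper asserts this last identity directly from the definitions of the $a_i^{(k)}$, $b_i^{(k)}$, while you unwind it explicitly through the cancellation amounts $\mathcal{C}((n+1)\e,m\e)$, each split as $\mathcal{C}/2$ between the two adjacent maximal homogeneous intervals of opposite sign -- a more detailed but equivalent bookkeeping of the same step.
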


\begin{proof}
We first observe that
\begin{equation}
\label{decr_1}
\begin{split}
\iint_{\mathcal{D}((n+1)\e) \setminus \mathcal{D}^I((n+1)\e)}& \big[ \mathfrak{q}((n+1)\e, s,s') - 
\mathfrak{q}(n\e, s,s') \big]\, dsds'\\
\leq&~ \iint_{\mathcal{D}((n+1)\e) \setminus \mathcal{D}^I((n+1)\e)} \big[ \mathfrak{q}((n+1)\e, s,s') - 
\mathfrak{q}(n\e, s,s') \big]^+\, dsds'\\ 
\leq&~ \iint_{\mathcal{D}((n+1)\e)} \big[ \mathfrak{q}((n+1)\e, s,s') - 
\mathfrak{q}(n\e, s,s') \big]^+\, dsds'. 
\end{split}  
\end{equation}

Now, if $s \in \mathcal{M}_k$, $s' \in \mathcal{M}_{k'}$, with $k \neq k'$, then $s,s'$ have not yet  interacted at time $n\e$; this means that $\mathfrak{q}(n\e,s,s') = \lVert f'' \lVert_{L^\infty} $ and hence
\[
\big[ \mathfrak{q}((n+1)\e, s,s') - \mathfrak{q}(n\e, s,s') \big]^+ = 0.
\]
Thus we can continue the chain of inequalities in \eqref{decr_1} as follows
\begin{align*}
\iint_{\mathcal{D}((n+1)\e)}& \big[ \mathfrak{q}((n+1)\e, s,s') - 
\mathfrak{q}(n\e, s,s') \big]^+\, dsds' \\
& \leq \sum_{k \in \Z} \iint_{(\mathcal{M}_k \times \mathcal{M}_k) \cap \mathcal{D}((n+1)\e)} \big[ \mathfrak{q}((n+1)\e, s,s') - 
\mathfrak{q}(n\e, s,s') \big]^+\, dsds' \\
& \leq \log(2) \lVert f'' \lVert_{L^\infty}  \TV(u(0,\cdot))\sum_{k \in \Z} \big[ (b_1^{(k)}-b_2^{(k)}) + (a_2^{(k)} -a_1^{(k)}) \big],
\end{align*}  
where the last inequality is an immediate consequence of Theorem \ref{decrease_thm_enunciato}.

By the definition of $a_1^{(k)}$, $a_2^{(k)}$, $b_1^{(k)}$, $b_2^{(k)}$,
\begin{align*}
\sum_{k \in \Z} \big[ (b_1^{(k)}-b_2^{(k)}) + (a_2^{(k)} -a_1^{(k)}) \big] 
=&~ \big| \W(n\e) \setminus \W((n+1)\e) \big| \\
=&~ \TV(u_\e(n\e,\cdot)) - \TV(u_\e((n+1)\e, \cdot)). 
\end{align*}
and this concludes the proof. 
\end{proof}

\appendix

\section{A counterexample to an estimate in the proof of Lemma 2 in \cite{anc_mar_11_CMP}}
\label{App_conter}

As we said in the Introduction, in order to obtain the estimate \eqref{E_quadratic_G} and hence to prove Theorem \ref{main_thm}, the authors restrict the evaluation of the oscillation of $G$ to regions belonging to a disjoint partition of the plane: each region $\mathcal{T}_i$ is called a \emph{tree} in \cite{anc_mar_11_CMP}. We give the definition of tree only for a wavefront solution to a scalar conservation law, because it is definitely simpler.

We recall that in \cite{anc_mar_11_CMP} a \emph{splitting} is a cancellation point $(\bar t, \bar x)$ where the solution to the Riemann problem $[u(\bar t,\bar x-),u(\bar t,\bar x+)]$ is made of more than one wavefront.

\begin{definition}
\label{Def_tree}
If at $(\bar t, \bar x)$ a splitting occurs, then the \emph{tree starting at $(\bar t,\bar x)$} is the collection of all the backward trajectories $\{x_i(t), t \in (t_i,\bar t)\}$ of the waves $s_i$ surviving after the cancellation, such that either $t_i = 0$ or $t_i$ is the last time when a splitting involving $s_i$ occurs before $\bar t$.
\end{definition}


It is fairly easy to see that the trees are disjoint, since in each tree only interactions and cancellations occur but no splitting.

The fundamental estimate is thus the one contained in \cite{anc_mar_11_CMP}, Lemma 2: the oscillation of $G(t)$ restricted to a tree $\mathcal T$ is bounded by
\begin{equation}
\label{E_wrong_tree_G}
\mathrm{Osc}(G,\mathcal T) \leq \mathcal O(\|f''\|_{L^\infty}) \Big\{ \text{cubic interactions and [cancellations$\times \TV(u)$] in } \mathcal T \Big\}.
\end{equation}
where the 'cubic interactions' term corresponds by definition to the decreases of the functional $Q^{\text{BB}}$ defined in \eqref{E_cubic_Q_def}, on the tree $\mathcal{T}$. In fact in the scalar case no waves of other families are present.
They thus conclude that
\begin{align*}
G(0) =&~ \sum_{\text{trees } \mathcal T} \mathrm{Osc}(G,\mathcal T) \crcr
\leq&~ \mathcal O(\|f''\|_{L^\infty}) \sum_{\text{trees } \mathcal T} \Big\{ \text{cubic interaction estimates and [cancellations$\times \TV(u)$] in } \mathcal T \Big\} \crcr
\leq&~ \mathcal O(\|f''\|_{L^\infty}) \Big\{ \text{cubic interaction estimates and [cancellations$\times \TV(u)$] in } \R^+ \times \R \Big\} \crcr
\leq&~ \mathcal O(\|f''\|_{L^\infty}) \TV(\bar u)^2.
\end{align*}

A key point in the proof is estimates (4.84), (4.85) in \cite{anc_mar_11_CMP}, where the authors bounds the variation in time of the speeds of waves belonging to a given tree in terms of the cubic amount of interaction and the cancellation.

Now we present an explicit counterexample of a tree in which this estimate fails. 
The idea is that there can be a large interaction in a tree, and a vary small cancellation at its top, corresponding to a splitting of some waves. Then the quantity
\[
\frac{|\sigma(w) - \sigma(w')| |w| |w'|}{|w| + |w'|}
\]
estimated at the interaction time is of the order of $\TV(u)^2$, and thus the variation of speed of the wavefronts is of order $\TV(u)$ (see \eqref{W_zzero}).
The cancellation can be made arbitrarily small, so that the estimate which should be proved becomes
\begin{align*}
\text{variation of speed of $w$} \simeq &~ \mathcal O(\|f''\|_{L^\infty}) \TV(u) \\
\leq &~ \mathcal O(\|f''\|_{L^\infty})  \Big[ \TV(u)^3 + \text{ cancellation} \Big] \\
\leq &~ \mathcal O(\|f''\|_{L^\infty}) \TV(u)^3.
\end{align*}

Clearly, being the l.h.s. linear and the r.h.s. cubic, this estimate sounds wrong, and this is proved in the explicit example right below.

Define the flux $f$ (it is only $C^{1,1}$, but it is clear that we can make it $C^2$): for $0 < \e \ll L \ll 1$ and $\alpha>0$,
\begin{equation}
\label{E_controesempio_f}
f(u) :=
\begin{cases}
- \frac{1}{2}\alpha (u + 2L)^2 + \alpha L^2 & u \leq - L, \crcr
\frac{1}{2} \alpha u^2 & -L < u \leq \e, \crcr
- \frac{1}{2} \alpha (u - 2\e)^2 + \alpha \e^2 & \e < u \leq 3 \e, \crcr
- \alpha \e (u - 3 \e) + \frac{1}{2} \alpha \e^2 & u > 3\e.
\end{cases}
\end{equation}

\begin{figure}
  \begin{center}
    \includegraphics[height=7.5cm,width=12cm]{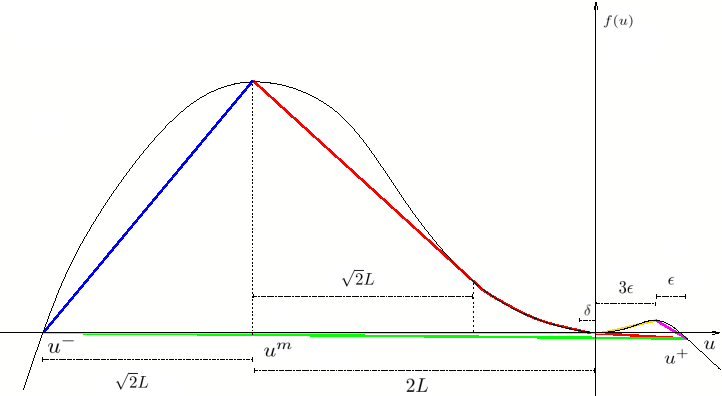}
    \caption{The flux $f$ defined in \eqref{E_controesempio_f}.}
    \label{fabio01}
    \end{center}
\end{figure}

\noindent For $\delta = \e (\sqrt{17} - 4)$, define also
\[
u^- = - L \big( 2 + \sqrt{2} \big), \quad u^m = - 2L, \quad u^+ = \frac{7 \e^2 + \delta^2}{2(\e - \delta)} = 4\e.
\]
and consider the waves obtained by the solution to the Riemann problems $[u^-,u^m]$ in $x = 0$ and $[u^m,u^+]$ in $x=1$.

The solution of the Riemann problem at $x=0$ is a single shock $\bar w_1$ of strength $\sqrt{2} L$ traveling with speed $\alpha L/\sqrt{2}$, while the solution of the Riemann problem at $x=1$ has a first shock $\bar w_2=[-2L, - L (2 - \sqrt{2})]$ of strength $\sqrt{2} L$ traveling with speed
\[
- \alpha L \big( 2 - \sqrt{2} \big) < - \frac{1}{2} \alpha L,
\]
a rarefaction $[-L(2-\sqrt(2)), -\delta]$ and a second shock $[-\delta,u^+]$ traveling with speed $-\alpha \delta$.

Since every wavefront starting from $u^-$ and connecting to a point $u \leq - \delta$ has positive speed, then after a time $\frac{1}{\alpha \delta}$ all the waves have collided into a single wavefront with a speed
\[
\frac{-1/2 \alpha \e^2}{4 \e + L (2 + \sqrt{2})}.
\]

Now consider a contact discontinuities with size $[4\e, 3\e]$, located at $x = 1 + \e/\delta$. Since it is traveling with speed $-\alpha \e$, this discontinuity meets the other waves at $\bar t \geq 1/(\alpha \delta)$, i.e. after the large wavefront $[-L(2+\sqrt{2}),4\e]$ has formed, and it is fairly easy to see that this point corresponds to a splitting. Hence this wave configuration corresponds to a tree.

We now estimate the amount of interaction calculated as the decrease of the cubic functional $Q^{\text{BB}}$ of \cite{bia_bre_02}: since it is bounded by the area, then we can write
\begin{equation}
\label{AM_cubic}
\begin{split}
Q^{\text{BB}} \leq&~ \text{area of the triangle } (u^-,f(u^-)), (u^m,f(u^m)), (u^+,f(u^+)) \crcr
=&~ \frac{1}{2} \alpha \bigg( (2 + \sqrt{2}) L^3 + 4 \e L^2 + \frac{1}{\sqrt{2}} \e^2 L \bigg) \\
= & ~ \const \Big[ \alpha  (L+\e)^3 + \e \Big].
\end{split}
\end{equation}
The amount of cancellation is $\e$.

For the couple of wavefronts $\bar w_1, \bar w_2$, formula (4.84) of \cite{anc_mar_11_CMP} bounds the variation of speed of $\bar w_1$ between $t=0$ and the splitting time $\bar t+$ by
\begin{align*}
\big\|\sigma^{\bar w_1}(\bar t+) - \sigma^{\bar w_1}(0) \big\|_{L^\infty} = & ~ \const \Big[ \text{interaction + cancellation} \Big] \\
= & ~ \const \Big[ \alpha  (L+\e)^3 + \e \Big].
\end{align*}
We have used \eqref{AM_cubic}.

On the other hand, an explicit computation gives 
\[
\big\|\sigma^{\bar w_1}(\bar t+) - \sigma^{\bar w_1}(0) \big\|_{L^\infty} = \frac{\alpha L}{\sqrt(2)}.
\]
It is clear that since we can choose $\e \leq L^3$ and $L \ll 1$, then we cannot bound the difference in speed with the amount of interaction and cancellation.

The same reasoning holds for formula (4.85) in \cite{anc_mar_11_CMP}.


\end{document}